%% file: main.tex
\documentclass[12pt, reqno]{amsart}
\usepackage[utf8]{inputenc}
\usepackage[letterpaper,top=1.5in,bottom=1.5in,left=1in,right=1in]{geometry}
\usepackage[english]{babel}
\usepackage[T1]{fontenc}
\usepackage{graphicx,caption,subcaption}
\usepackage{amsmath,amsthm,amssymb}
\usepackage{mathabx,epsfig}
\usepackage{mathrsfs} 
\usepackage{comment}
\usepackage[colorlinks=true, allcolors=blue]{hyperref}
\usepackage{enumitem}

\usepackage[textsize=small]{todonotes}

\usepackage{array}

\usepackage[scr=boondox]{mathalpha} 

\usepackage{tikz-cd,tikz}
\usetikzlibrary{matrix}

\def\from{\colon}
\def\Cbb{\mathbb{C}}

\def\Hbb{\mathbb{H}}
\def\Rbb{\mathbb{R}}

\def\Ccal{\mathcal{C}}

\def\Fcal{\mathcal{F}}
\def\Gcal{\mathcal{G}}
\def\Hcal{\mathcal{H}}
\def\Mcal{\mathcal{M}}
\def\Ncal{\mathcal{N}}

\def\Tcal{\mathcal{T}}
\def\Vcal{\mathcal{V}}

\def\MF{\mathcal{MF}}
\def\PMF{\mathcal{PMF}}

\newcommand{\pair}[2]{\left< #1, #2 \right>}

\DeclareMathOperator{\EL}{EL}
\DeclareMathOperator{\Vertex}{Vertex}
\DeclareMathOperator{\Edge}{Edge}
\DeclareMathOperator{\Hor}{Hor}

\DeclareMathOperator{\Proj}{\bf{P}}
\DeclareMathOperator{\Pull}{Pull}
\DeclareMathOperator{\Push}{Push}
\DeclareMathOperator{\interior}{int}

\DeclareMathOperator{\supp}{supp}
\DeclareMathOperator{\link}{link}
\DeclareMathOperator{\pre}{pre}
\DeclareMathOperator{\Area}{Area}
\DeclareMathOperator{\Teich}{Teich}
\DeclareMathOperator{\MCG}{MCG}

\newtheorem{thm}{Theorem}[section]

\newtheorem{lem}[thm]{Lemma}
\newtheorem{prop}[thm]{Proposition}

\newtheorem{rem}[thm]{Remark}

\newtheorem{theorem}{Theorem}
\newtheorem{question}{Question}

\newtheorem{conjecture}[theorem]{Conjecture}

\theoremstyle{definition}
\newtheorem{defn}[thm]{Definition}

\theoremstyle{remark}

\title[Puncture-forgetting maps for measured foliations]{Puncture-forgetting maps for measured foliations and applications in Teichm\"uller space and complex dynamics}
\begin{document}

\author{Jeremy Kahn}
\address{Department of Mathematics, Brown University, Providence, RI 02912}
\email{jeremy\_kahn@brown.edu}

\author{Insung Park}
\address{Institute of Mathematical Sciences, Stony Brook University, Stony Brook, NY 11794}
\email{insung.park@stonybrook.edu}

\begin{abstract}
We introduce puncture-forgetting maps for measured foliations and investigate their relations with mapping class groups, Teichm\"uller spaces, and extremal length. To this end, we develop the notions of cube complexes of pre-homotopic multicurves and tree coordinate systems on CAT(0) cube complexes. As an application to the dynamics of post-critically finite rational maps on the Riemann sphere, we obtain a partial result toward the finite curve attractor conjecture posed by Kevin Pilgrim. We also apply these methods to uncover a relation between horospheres and geodesic flows in the universal curve over Teichm\"uller space.
\end{abstract}

\maketitle
\tableofcontents

\section{Introduction}
The idea of puncture-forgetting in the study of topological objects and geometric structures of surfaces has been developed in many aspects, e.g., the Birman exact sequence of mapping class groups and the puncture-forgetting maps between Teichm\"uller spaces. However, the idea of puncture-forgetting has not been applied to measured foliations, which are also a classical object in the study of surfaces. One reason might be the lack of continuity; puncture-forgetting maps between Teichm\"uller spaces do not continuously extend to the Thurston boundaries, which are the spaces of projective measured foliations. In this article, we introduce a natural (and upper semi-continuous) definition of puncture-forgetting maps $\Phi$ for measured foliations. We also study cube complexes of pre-homotopic multicurves, which serve as a foundation for constructing $\Phi$. Moreover, we investigate the properties of $\Phi$ in relation to Teichm\"uller geometry and explore its applications in the dynamics of rational maps on the Riemann sphere.

\subsection*{Puncture-forgetting maps for measured foliations}
We let $S_{g, n}$ denote the ``standard model'' of a surface with genus $g$ and $n$ punctures, and let $i\from S_{g,n+1} \to S_{g, n}$ be the standard inclusion. Throughout, we assume $\chi(S_{g,n})<0$ and denote by $x$ the only point in $S_{g,n}$ not in the image of $i$. Let $C(S_{g,n})$ be the set of homotopy classes of essential simple closed curves of $S_{g,n}$, and let $\MF(S_{g,n})$ be the set of equivalence classes of measured foliations on $S_{g,n}$. We have $i_* \from C(S_{g, n+1}) \to C(S_{g, n})\cup \{0\}$, where we denote by $0$ the homotopy classes of $n$ peripheral loops. Denote by $\pi\from \Tcal_{g, n+1} \to \Tcal_{g, n}$ the puncture forgetting maps between Teichm\"uller spaces.

The set $\MF(S_{g,n+1})$ can be embedded in the space of functionals $(C(S_{g,n+1}))^*$ by mapping each $\Fcal \in \MF(S_{g,n+1})$ to 
\[
    \pair{\Fcal}{-}\from [\alpha] \mapsto \pair{\Fcal}{[\alpha]}\quad \forall [\alpha] \in C(S_{g,n+1}),
\]
where $\pair{-}{-}$ denotes the geometric intersection number.
We can define a puncture-forgetting map for functionals $\varphi\from (C(S_{g,n+1}))^* \to (C(S_{g,n}))^*$ by
\[
    \varphi(f)([\alpha]):=\inf_{i_*[\gamma]=[\alpha]} f([\gamma]).
\]
Then, for every $\Fcal\in \MF(S_{g,n+1})$, we obtain a functional $\varphi(\Fcal)\from C(S_{g,n})\to \Rbb$ defined by
\begin{equation}\label{eqn:PForget Functional}
    \varphi(\Fcal)([\alpha])= \varphi([\alpha]) \mapsto \pair{\Fcal}{[\alpha]})
=    \inf_{i_*[\gamma]=[\alpha]} \pair{\Fcal}{[\gamma]}.
\end{equation}
In many cases, we obtain $\varphi(\Fcal)=0$, where $0$ is the constant functional that maps its domain to zero. We would like to realize the functional $\varphi(\Fcal)$ as the intersection number $\pair{\Phi(\Fcal)}{-}$ for some measured foliation $\Phi(\Fcal) \in \MF(S_{g,n})$.  Hence, we introduce the null measured foliation, denoted by $0$, defined by the property that $\pair{0}{[\alpha]} = 0$ for all $[\alpha] \in C(S_{g,n})$. We then have:

\begin{theorem}\label{theorem:MF}
There is a map $\Phi\from \MF(S_{g,n+1}) \to \MF(S_{g,n})$ such that for any $[\zeta] \in C(S_{g,n})$,
    \[
        \pair{\Phi(\Fcal)}{[\zeta]}=\varphi(\Fcal)([\zeta]).
    \]
    Moreover, the map $\Phi$ satisfies the following properties.
\begin{enumerate}
    \item For each $[\zeta] \in C(S_{g, n})$, the function
    $\Fcal \mapsto \pair{\Phi(\Fcal)}{[\zeta]}$ is upper semi-continuous. 
    \item The vanishing locus $\Phi^{-1}(0)$ is a countable intersection of open dense subsets. In particular, $\Phi(\Fcal)=0$ if $\Fcal$ is invariant under a point-pushing pseudo-Anosov mapping class in $\Push(\pi_1(S_{g,n},x))$.
    \item (Theorem \ref{thm:InequalityEL}) For every $X\in \Tcal_{g,n+1}$ and $\Fcal \in \MF(S_{g,n+1})$, we have
    \[
        \EL(\Phi(\Fcal);\pi(X)) = \inf_{X'\in \pi^{-1}(\pi(X))}\EL(\Fcal;X'),
    \]
    where $\EL(\Fcal;X)$ denotes the extremal length of $\Fcal$ on $X$. The infimum is attained as the minimum if and only if $\Fcal$ is $x$-non-generic of non-annular type.
    \item (Theorem \ref{thm:LowBddTeichDist}) Suppose $\Phi(\Fcal)\neq 0$. For any $X\in \Tcal_{g,n+1}$, denote by $(X_t)_{t\ge0}$ the Teichm\"{u}ller geodesic starting from $X_0=X$ generated by $\Fcal$ such that $d_{\Teich}(X_0,X_t)=t$, where $d_{\Teich}$ denotes the Teichm\"uller metric. Then, for any $t\ge0$, we have
    \[
        t - \frac{1}{2} \log \frac{\EL(\Fcal;X_0)}{\EL(\Phi(\Fcal);\pi(X_0))} ~\le~ d_{\Teich}(\pi(X_0),\pi(X_t))) ~\le~ t.
    \]
    \item (Theorem \ref{thm:Shards of memory}) The set of $x$-generic minimal measured foliations $\Fcal$ with $\Phi(\Fcal)\neq 0$ is dense in $\MF(S_{g,n+1})$.
\end{enumerate}
Moreover, the statements (1)--(4) generalize to the setting of forgetting $m>1$ punctures from $S_{g,n+m}$ to $S_{g,n}$(Proposition \ref{prop:m Punctures}).
\end{theorem}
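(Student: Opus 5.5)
The plan is to build $\Phi$ first on weighted multicurves and then extend it to all of $\MF(S_{g,n+1})$ by a monotone limiting argument, checking afterwards that the extension still computes $\varphi$. Items (1)–(5) are then derived from this construction.

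\textbf{Multicurves.} Let $\Fcal=\sum_j a_j\gamma_j$ be a weighted multicurve on $S_{g,n+1}$. Under $i_*$ several components $\gamma_j$ may become homotopic (pre-homotopic curves) or become peripheral or trivial. I would consider all multicurves $\Gamma$ on $S_{g,n+1}$ whose components become pairwise disjoint or homotopic after forgetting $x$. The cube complex of pre-homotopic multicurves parametrizes the ways of sliding $x$ across the annuli and bigons created by forgetting the puncture. The claim to prove is that
\[
\inf_{i_*[\gamma]=[\zeta]}\pair{\Fcal}{[\gamma]}
\]
is realized by a measured foliation on $S_{g,n}$. That foliation is obtained by taking the pushforward $i_*\Fcal$ and then cancelling weight along pairs of curves that bound a twice-punctured disk containing $x$ on opposite sides. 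I would formalize this with the tree coordinate system on the CAT(0) cube complex: the position of $x$ corresponds to a vertex, and the infimum over lifts $\gamma$ of $\zeta$ becomes a minimization over lifts of $\zeta$ in the universal cover $\Hbb^2\to S_{g,n}$. This minimum is attained by a projection in each tree factor, and the output is again a weighted multicurve.

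\textbf{General foliations.} For a general $\Fcal$, I would write $\Fcal$ as a decreasing limit in the sense of intersection functionals. Then $\varphi(\Fcal)=\inf$ of linear functionals, so $\varphi(\Fcal)$ is a pointwise infimum of continuous functions $\Fcal\mapsto\pair{\Fcal}{\gamma}$. This already gives the upper semicontinuity in (1). To show that $\varphi(\Fcal)$ lies in $\MF(S_{g,n})\cup\{0\}$, I would lift to the universal cover of $S_{g,n}$, where $x$ has infinitely many lifts. There, $\varphi(\Fcal)(\zeta)$ is the minimum over choices of homotopy class of $\zeta$ relative to the lifts of $x$. I would identify the minimizing foliation as the "$x$-collapse" of $\Fcal$: remove the leaves and regions (half-planes of the leaf space tree) that are separated from $\zeta$ only by $x$, and straighten the rest.

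\textbf{Items (2)–(5).}
\begin{enumerate}
\item[(2)] The sets $\{\pair{\Phi(\Fcal)}{\zeta}<\epsilon\}$ are open by (1), and they are dense because they contain all point-pushing images that wrap $\Fcal$ around $x$. Taking the intersection over $\zeta$ and $\epsilon=1/k$ gives a countable intersection of open dense sets. If $\Fcal$ is invariant under a point-pushing pseudo-Anosov $f$, then $\Phi(\Fcal)=\Phi(f^k\Fcal)$ for all $k$. Since $i_*f=\mathrm{id}$, the infimum can use the lifts $f^{k}\gamma$, and $\pair{\Fcal}{f^{-k}\gamma}=\lambda^{-k}\pair{\Fcal}{\gamma}\to0$ along the stable direction.
\item[(3)] I would use Kerckhoff's formula $\EL(\Fcal;X)=\sup_\gamma \pair{\Fcal}{\gamma}^2/\EL(\gamma;X)$ together with the fact that $\inf_{X'}\EL(\gamma;X')$ over the fiber equals $\EL(i_*\gamma;\pi(X))$ (filling in a puncture). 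The inequality $\ge$ follows from this. The reverse inequality needs the holomorphic quadratic differential of $\Phi(\Fcal)$ on $\pi(X)$: placing $x$ at a suitable point gives $X'$ with matching extremal length, approximately or exactly. Exact attainment happens precisely when $x$ can be placed at a point of the singular foliation without annular degeneration, which is the non-generic, non-annular case.
\item[(4)] The upper bound follows because $\pi$ is $1$-Lipschitz. For the lower bound, Kerckhoff's formula gives $d(\pi X_0,\pi X_t)\ge \tfrac12\log\bigl(\EL(\Phi\Fcal;\pi X_t)/\EL(\Phi\Fcal;\pi X_0)\bigr)$. Then (3) gives $\EL(\Phi\Fcal;\pi X_t)\ge\EL(\Phi\Fcal;\pi X_t)\cdot 1$, and since $\Phi(e^{t}\Fcal\text{-scaling})$ is homogeneous, we get $\EL(\Phi\Fcal;\pi X_t)\ge e^{-2t}$-type bounds. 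Concretely, the vertical foliation contracts by $e^{-t}$, so I would use the horizontal/vertical duality and the fact that $\EL(\Fcal;X_t)=e^{2t}\EL(\Fcal;X_0)$ for the expanding foliation. Combined with $\EL(\Phi\Fcal;\pi X_t)\le\EL(\Fcal;X_t)$ and reversed via (3) for a lower bound, this yields the stated estimate.
\item[(5)] I would start from $x$-non-generic foliations with $\Phi\ne0$, which are easy to produce by placing $x$ on a saddle connection. I would then perturb by Dehn twists on curves disjoint from $x$ to reach minimal, $x$-generic foliations. Upper semicontinuity alone does not preserve $\Phi\neq0$, so I would need an explicit lower bound for $\varphi$ that is stable under these perturbations.
\end{enumerate}

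The main obstacle is the realization step: showing that the infimum functional $\varphi(\Fcal)$ is actually an intersection functional of a measured foliation. This is where the cube complex and tree-coordinate machinery is essential. The limiting arguments in (3) and (5) are secondary difficulties.
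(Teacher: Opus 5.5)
\textbf{Where the proposal breaks down: realizing $\varphi(\Fcal)$.} Your treatment of (1), and the skeletons of (2) and (4), match the paper. The central step, however, is not supplied, and neither mechanism you propose can supply it. That step is showing $\varphi(\Fcal)=\pair{\Phi(\Fcal)}{\cdot}$ for $x$-generic $\Fcal$.

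First, a pointwise decreasing approximation $\pair{\mu_k}{\cdot}\downarrow\pair{\Fcal}{\cdot}$ by weighted multicurves (as elements of $\MF$, i.e.\ disjoint curves) cannot exist when $\Fcal$ is minimal. If $\gamma$ is a component of $\mu_k$, then $\pair{\mu_k}{[\gamma]}=0<\pair{\Fcal}{[\gamma]}$, so no such $\mu_k$ even dominates $\Fcal$. Had such an approximation existed, interchanging the two infima and using compactness of $\{\pair{\cdot}{[Z]}\le B\}$ would finish the proof, so this is exactly where the difficulty sits.

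Second, the ``$x$-collapse'' is not a well-defined operation here. For $x$-generic minimal $\Fcal$ the leaf $L_x$ is a dense ray, so no region is cut off by $x$. Moreover, by (2), $\Phi(\Fcal)=0$ on a residual set of minimal filling $\Fcal$, so $\Phi(\Fcal)$ does not come from modifying $\Fcal$ near $x$ and straightening. (Incidentally, in your multicurve step pre-homotopic components add their weights; only pre-peripheral components disappear.)

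\textbf{The missing idea.} In the paper, only $x$-non-generic foliations get a local definition (Cases (A)--(C)). That definition is realized after finitely many canonical simple pulls (Lemma~\ref{lem:non-gen_FiniteCanSimPulls}, Proposition~\ref{prop:PForget NonGeneric}). For $x$-generic minimal $\Fcal$ the argument runs as follows.
\begin{enumerate}
\item Take a pre-minimal lift $\Gamma$ of a doubly filling multicurve $Z$ that determines $\MF(S_{g,n})$.
\item Pulling along the initial segment of $L_x$ gives canonical simple pulls. These orient $X_Z\cong\mathscr{X}_Z$ toward a Roller boundary point.
\item The tree coordinate system and mono-outgoing cubes show that, along a single directed path, \emph{every} component's intersection with $\Fcal$ decreases to its own infimum simultaneously (Theorem~\ref{thm:MulticurveReduction}).
\item Cocompactness of the $\pi_1(S_{g,n})$-action then gives point-pushes $[\psi_l]$ with $\pair{[\psi_l]\cdot\Fcal}{[\Gamma']}$ bounded for a filling $\Gamma'$. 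Hence there is a limit $\Gcal$, which is $x$-non-generic of non-annular type and realizes the infima on $\Gamma'$.
\item One sets $\Phi(\Fcal):=\Phi(\Gcal)$ and extends to all $\zeta$ by adding one curve at a time to $\Gamma$ (Proposition~\ref{prop:LimitMFfromIntReduction}).
\end{enumerate}

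\textbf{Problems in the dependent items.}

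In (3), the inequality $\ge$ does not follow from Kerckhoff's formula as you indicate. For fixed $X'$, every lift $\gamma$ of $\zeta$ has $\EL(\gamma;X')\ge\EL(\zeta;\pi(X))$, often strictly (e.g.\ when $x$ lies inside the extremal annulus of $\zeta$). That is the wrong direction for bounding $\sup_\gamma\pair{\Fcal}{[\gamma]}^2/\EL(\gamma;X')$ from below. The paper instead follows the gradient flow of $\EL(\Fcal;\cdot)$ on the fiber, shows it recurs, extracts a non-generic limit $\Hcal$ of point-pushes, and compares $\Phi(\Gcal)\le\Phi(\Hcal)$. The inequality $\le$ uses $\EL(\Fcal;[\psi_l]^{-1}\cdot X')=\EL([\psi_l]\cdot\Fcal;X')\to\EL(\Gcal;X')$, which again needs the realization construction.

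In (4), you have the right ingredients but the wrong sign. The generating foliation satisfies $\EL(\Fcal;X_t)=e^{-2t}\EL(\Fcal;X_0)$. Then $\EL(\Phi(\Fcal);\pi(X_t))\le\EL(\Fcal;X_t)$ from (3), together with Kerckhoff's distance formula, gives the bound in one line.

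In (2), point-pushing leaves $\Phi$ invariant, so point-push images of $\Fcal$ cannot give density. Density of $\Phi^{-1}(0)$ comes from density of the invariant foliations of point-pushing pseudo-Anosovs. That in turn follows from minimality of the $\MCG^+(S_{g,n+1})$-action on $\PMF$ and normality of $\Push(\pi_1(S_{g,n},x))$.

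In (5), integral Dehn twists are not small perturbations. The stable lower bound you ask for is the paper's actual mechanism. The first $N$ canonical simple pulls persist under small perturbation (Lemma~\ref{lem:stability of finite simple pulls}). Lemma~\ref{lem:SmallChange2} turns an immediately non-generic foliation into an eventually non-generic one while lowering $\varphi(\cdot)([\gamma_0])$ by less than $\epsilon_i$. Iterating with $\sum\epsilon_i$ small yields a minimal limit with $\varphi\neq0$.
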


A Teichm\"uller geodesic $(X_t)$ is {\it generated by $\Fcal$} if it is vertically stretched and horizontally contracted by $e^t$ with respect to the quadratic differential whose horizontal foliation is equivalent to $\Fcal$. 

Suppose $\Fcal$ is a measured foliation on $S_{g,n+1}$.  
A point $y \in S_{g,n+1}$ is called a {\it singular point} of $\Fcal$ if either $y$ is one of the $n+1$ marked points or $\deg_\Fcal(y) \neq 2$. The $1$-prong singularity, i.e., $\deg_\Fcal(y)=1$, is only allowed when $y$ is a marked point.
A {\it leaf} of $\Fcal$ is a maximal immersed $1$--dimensional manifold satisfying: (1) its endpoints, if any, are singular points of $\Fcal$, and (2) its interior contains no singular points. A {\it saddle connection} is a compact leaf, joining two singular points. A measured foliation $\Fcal$ is {\it minimal} if it has positive intersection number with every homotopy class of simple closed curves, i.e., $\pair{\Fcal}{[\gamma]} > 0$ for all $[\gamma] \in C(S_{g,n+1})$.

Recall $x$ denotes the marked point of $S_{g,n+1}$ we forget. We say that $\Fcal \in \MF(S_{g,n+1})$ is {\it $x$-generic} if $\deg_\Fcal(x)=1$ and the leaf containing $x$ as an endpoint is an immersion of $[0,\infty)$ (Definition \ref{defn:genericMF}).

When $\Fcal$ is $x$-non-generic, we can construct $\Phi(\Fcal)$ in a natural way. See Figure \ref{fig:Forget_nondeg} and Proposition \ref{prop:PForget NonGeneric}.

When $\Fcal$ is $x$-generic, we prove the realization of $\varphi(\Fcal)$ as an intersection number with a measured foliation $\Phi(\Fcal)$ in Section \ref{subsec:Proof of Thm1 I}. We first establish this for minimal foliations. Subsequently, we discuss how to reduce the case of non-minimal foliations to the minimal case. The decomposition of measured foliations into annulus and minimal components is discussed in Section \ref{subsec:NonMinimal}. The statements (1) and (2) of Theorem \ref{theorem:MF} are also proven in Section \ref{subsec:Proof of Thm1 I}.




\subsection*{Horospherical limit set}
For a non-elementary subgroup $G$ of $\MCG^+(S_{g,n})$, its {\it (small) horospherical limit set} $\Lambda^{\rm hor}_G\subset \PMF(S_{g,n})$ is the collection of $\Proj(\Fcal)$, where $\Proj\from \MF(S_{g,n})\setminus\{0\}\to \PMF(S_{g,n})$, such that for all $r>0$ and $X\in \Tcal_{g,n}$ (with $B_h(\Fcal,r)$ defined in \eqref{eqn:horo_ball_sphere}),
\[
    G\cdot X\,\cap\,B_h(\Fcal,r)\neq \emptyset, 
\]
or equivalently,
\begin{equation}\label{eqn:Inf_EL_Group_Action}
    \inf_{g\in G}\EL(\Fcal,g\cdot X)=0.
\end{equation}
\begin{theorem}\label{theorem:Horo_Limit_Set}
    Let $G:=\Push(\pi_1(S_{g,n},x))\le \MCG^+(S_{g,n+1})$. Then,
    \[
        \Lambda^{\rm hor}_G
        =
        \left\{
        \Proj(\Fcal): \Phi(\Fcal)=0
        \right\}.
    \]
    Moreover, $\Lambda^{\rm hor}_G$ is a countable intersection of open dense subsets of $\PMF(S_{g,n+1})$ such that its complement contains a dense subset of projective minimal measured foliations.
\end{theorem}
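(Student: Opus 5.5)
The argument splits into two parts. First we show that $\Proj(\Fcal)\in\Lambda^{\rm hor}_G$ if and only if $\Phi(\Fcal)=0$. Then we deduce the topological statements from items (2) and (5) of Theorem \ref{theorem:MF}.

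For the first part, the starting point is that $G=\Push(\pi_1(S_{g,n},x))$ is the kernel of the Birman exact sequence. Hence every $g\in G$ acts trivially on $\Tcal_{g,n}$ after forgetting $x$: we have $\pi(g\cdot X)=\pi(X)$, so the orbit $G\cdot X$ lies in the fiber $\pi^{-1}(\pi(X))$. Item (3) of Theorem \ref{theorem:MF} then gives
\[
\EL(\Phi(\Fcal);\pi(X)) \;=\; \inf_{X'\in\pi^{-1}(\pi(X))}\EL(\Fcal;X') \;\le\; \inf_{g\in G}\EL(\Fcal;g\cdot X).
\]
For the two directions:
\begin{itemize}
\item If $\Proj(\Fcal)\in\Lambda^{\rm hor}_G$, the right-hand side vanishes by \eqref{eqn:Inf_EL_Group_Action}. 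So $\EL(\Phi(\Fcal);\pi(X))=0$, which forces $\Phi(\Fcal)=0$, since a nonzero measured foliation has positive extremal length.
\item Conversely, if $\Phi(\Fcal)=0$, the fiber infimum is $0$, and we must show that the orbit infimum is also $0$. The fiber $\pi^{-1}(\pi(X))$ is identified with the universal cover $\Hbb$ of the punctured surface $\pi(X)$, and $G\cong\pi_1$ acts on it cocompactly modulo cusps. Choose $X'_k$ in the fiber with $\EL(\Fcal;X'_k)\to0$. By equivariance, we may translate $X'_k$ by elements $g_k\in G$ into a fixed fundamental domain, which is compact away from the cusp neighborhoods around the other punctures.
\end{itemize}

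Two situations arise in the converse direction. If the translated points $g_k^{-1}X'_k$ stay in a compact set $K$, then Kerckhoff's formula shows that extremal lengths at bounded Teichm\"uller distance are comparable: $\EL(\Fcal;Y)\le e^{2d}\EL(\Fcal;Y')$. Hence $\EL(\Fcal; g_k\cdot X)\to 0$ as well, after fixing the translate at distance at most $\mathrm{diam}(K)+d(X,K)$. If instead $x$ approaches a puncture of $\pi(X)$ in the fiber, the degenerating curve is the loop around $x$ and that puncture. Its hyperbolic length goes to $0$, so $\EL(\Fcal;X'_k)$ is bounded below by a comparison with $i(\Fcal,\cdot)$ on curves crossing the thin part. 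The cleanest way to handle this is to avoid the cusp case altogether: a cusp excursion has infimum realized in a region where the surface is a pinched annulus, and we argue that $\EL(\Fcal;\cdot)$ restricted to the fiber is proper near cusps, or at least bounded below by its value on the horocycle boundary, using monotonicity of extremal length under inclusion of annuli. Then one can replace each $X'_k$ by a point of the truncated fiber with no larger extremal length, up to a bounded factor. This cusp analysis is the step I expect to be the main obstacle. The fallback is to note that the fiber, truncated at cusps, is $G$-cocompact, and that Teichm\"uller distance inside a cusp excursion grows only logarithmically, which should suffice to show the infimum is approached in the thick part.

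For the topological part, item (2) of Theorem \ref{theorem:MF} says that $\Phi^{-1}(0)$ is a countable intersection of open dense subsets of $\MF(S_{g,n+1})$. This set is invariant under positive scaling, because $\Phi$ is homogeneous; homogeneity follows directly from the definition of $\varphi$. Since $\Proj$ is an open quotient map with $\Rbb_{>0}$ fibers, the images of open dense cones are open and dense. So the set $\Lambda^{\rm hor}_G=\Proj(\Phi^{-1}(0)\setminus\{0\})$ is a countable intersection of open dense subsets of $\PMF(S_{g,n+1})$. Finally, item (5) supplies a dense set of $x$-generic minimal $\Fcal$ with $\Phi(\Fcal)\ne0$. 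Their projections lie in the complement of $\Lambda^{\rm hor}_G$, and they are dense in $\PMF(S_{g,n+1})$ because $\Proj$ is continuous and surjective.
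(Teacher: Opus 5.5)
Your forward inclusion is the paper's argument. The group $G$ preserves the fiber $\pi^{-1}(\pi(X))$, so Theorem \ref{thm:InequalityEL} gives $\EL(\Phi(\Fcal);\pi(X))\le\inf_{g\in G}\EL(\Fcal;g\cdot X)$, and this forces $\Phi(\Fcal)=0$ on $\Lambda^{\rm hor}_G$.

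\textbf{The gap is in the converse.} Knowing that the fiber infimum vanishes says nothing by itself about the orbit. The cusp case you flag is not a technicality that a properness or logarithmic-growth estimate can remove; it is exactly where the passage from fiber infimum to orbit infimum breaks down.

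Take $n\ge1$ and let $c$ bound a disk containing $x$ and one other puncture $y$.
\begin{itemize}
\item Every essential curve of $S_{g,n}$ can be isotoped off this disk, so $\Phi(c)=0$; this is also Theorem \ref{thm:non-generic_quasimin_horo}(1).
\item The fiber infimum $0$ is approached only by driving $x$ into the cusp at $y$.
\item On the other hand, $\EL(c;g\cdot X)=\EL(g^{-1}\cdot c;X)$, and only finitely many curves have bounded extremal length on $X$. Hence $\inf_{g\in G}\EL(c;g\cdot X)>0$, so $\Proj(c)\notin\Lambda^{\rm hor}_G$.
\end{itemize}
So no argument that starts only from $\inf_{\pi^{-1}(\pi(X))}\EL(\Fcal;\cdot)=0$ can succeed. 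The displayed equality itself has to exclude these countably many classes; the paper's converse likewise covers only $x$-generic $\Fcal$. The ``moreover'' clause survives, because deleting countably many points from a dense $G_\delta$ subset of $\PMF(S_{g,n+1})$ leaves a dense $G_\delta$.

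Your proposed lower bound via the intersection number of $\Fcal$ with the pinched curve also fails, even for minimal $x$-generic $\Fcal$. After translating into a fundamental domain, the relevant quantity is $\pair{\Fcal}{g_k\cdot c}$. Following $L_x$ until it passes close to $y$ shows that $\inf_{g\in G}\pair{\Fcal}{g\cdot c}=0$.

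\textbf{The missing idea for $x$-generic $\Fcal$} is to skip the fiber geometry and build the orbit directly, as the paper does.
\begin{itemize}
\item Proposition \ref{prop:LimitMFfromIntReduction} provides point-pushes $[\psi_l]$ and a multicurve $\Gamma'$ filling $S_{g,n+1}$ with $[\psi_l]\cdot\Fcal\to\Gcal$.
\item For every $\gamma\in\Gamma'$ it gives $\pair{\Gcal}{[\gamma]}=\varphi(\Fcal)(i_*[\gamma])=\pair{\Phi(\Fcal)}{i_*[\gamma]}=0$.
\item Lemma \ref{lem:FillMF nonzero intersection} then gives $\Gcal=0$, so $\EL(\Fcal;[\psi_l]^{-1}\cdot X)=\EL([\psi_l]\cdot\Fcal;X)\to0$.
\end{itemize}

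\textbf{A smaller point in your topological step.} The open sets $\{\varphi(\Fcal)([\zeta])<1/m\}$ behind item (2) of Theorem \ref{theorem:MF} are not cones, so they cannot be projected directly. Use instead the open cones $\{\Fcal\neq0:\varphi(\Fcal)([\zeta])<\tfrac1m\pair{\Fcal}{[\Gamma_0]}\}$ with $\Gamma_0$ filling. Alternatively, work on the cross-section $\{\pair{\Fcal}{[\Gamma_0]}=1\}\cong\PMF(S_{g,n+1})$.
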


\subsection*{Cube complexes of multicurves}
To show the existence of a measured foliation $\Phi(\Fcal)\in \MF(S_{g,n})$ in Theorem \ref{theorem:MF}, we investigate the complex of pre-homotopic multicurves $X_Z$, which is defined as follows.

Let $Z$ be a multicurve on $S_{g,n}$. (Here by ``multicurve'' we mean a finite set of simple closed curves, which are allowed to intersect.) Define the vertex set of $X_Z$ as the set of homotopy classes of multicurves $[\Gamma]$ on $S_{g,n+1}$ such that every component $\gamma$ of $\Gamma$ is essential in $S_{g,n}$ (pre-essential), no two components of $\Gamma$ are homotopic to each other in $S_{g,n}$ (pre-non-parallel), $\pair{[\Gamma]}{[\Gamma]}_{S_{g,n+1}}=\pair{[Z]}{[Z]}_{S_{g,n}}$ (pre-minimal), and $i_*[\Gamma]=[Z]$. Edges are assigned to simple pulls (Figure \ref{fig:SimplePull}), which defines $X_Z^{(1)}$. Then, for $d>1$, we inductively attach a $d$-cube to every $(d-1)$-dimensional subcomplex that is isomorphic to the $(d-1)$-skeleton of a $d$-cube. Below is a short list of our findings about $X_Z$, which we call the complex of pre-homotopic multicurves.
\begin{theorem}\label{theorem:CubeCplx} For the complex of pre-homotopic multicurves $X_Z$, the following hold.
\begin{enumerate}
    \item (Theorem \ref{thm:CubeCplxIso}) $X_Z$ is isomorphic as cube complexes to the dual cube complex $\mathscr{X}_Z$ of the wall space $(\widetilde{S_{g,n}},p^{-1}(Z))$, where $p\from \widetilde{S_{g,n}} \to S_{g,n}$ is the universal covering.
    \item (Theorem \ref{thm:HypCubeCplx}) $\mathscr{X}_Z$ is Gromov-hyperbolic, and thus so is $X_Z$.
\end{enumerate}
For every $x$-generic minimal foliation $\Fcal$, we orient edges of $X_Z$ toward the direction that the intersection numbers with $\Fcal$ decrease. That is, orient an edge from $[\Gamma_1]$ to $[\Gamma_2]$ if $\pair{\Fcal}{[\Gamma_1]}>\pair{\Fcal}{[\Gamma_2]}$. With this orientation, we have the following.
\begin{enumerate}[resume]
    \item (Theorems \ref{thm:Roller-to-Gromov}) Infinite directed edge-paths are asymptotic to each other.
\end{enumerate}
Let $([\Gamma(l)])_{l\ge0}$ be the sequence of vertices along an infinite directed edge-path. We can write $\Gamma(l)=\{\gamma_1(l),\gamma_2(l),\dots,\gamma_k(l)\}$ such that $\gamma_i(l)$ and $\gamma_i(l')$ are pre-homotopic to each other for any $l,l'\ge0$. Let $i_*[\gamma_i(l)]=[\zeta_i]\in [Z]$.
\begin{enumerate}[resume]
    \item (Theorems \ref{thm:CurveReduction}, \ref{thm:MulticurveReduction}) For each $1\le i\le k$, we have $\pair{\Fcal}{[\gamma_i(l)]}\to \varphi(\Fcal)([\zeta_i])$ as $l\to \infty$.
    \item (Theorems \ref{thm:CurveReduction}, \ref{thm:MulticurveReduction}) For each $1\le i\le k$, we have $[\gamma_i(l)]\to \underline{\Fcal}\in \partial_\infty CC(S_{g,n+1})=\mathcal{EL}(S_{g,n+1})$ as $l\to \infty$, where $\underline{\Fcal}$ is the underlying minimal foliation of $\Fcal$ (with the transverse measure forgotten), $CC(S_{g,n+1})$ is the cube complex of $S_{g,n}$, and $\mathcal{EL}(S_{g,n+1})$ is the space of ending laminations of $S_{g,n+1}$.
\end{enumerate}
\end{theorem}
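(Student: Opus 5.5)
The five statements rest on one construction: identify $X_Z$ with a dual cube complex, and then read the behaviour of $\Fcal$-decreasing paths off the geometry of that complex. So I would first prove (1), then (2), and finally (3)--(5) using the orientation induced by $\Fcal$.

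\emph{Statement (1).} I would build an explicit map $X_Z^{(0)}\to \mathscr{X}_Z^{(0)}$. Fix a lift $\tilde x\in\widetilde{S_{g,n}}$ of the forgotten point $x$. A pre-minimal multicurve $\Gamma$ on $S_{g,n+1}$ with $i_*[\Gamma]=[Z]$ is homotopic in $S_{g,n}$ to a minimal-position representative of $Z$, and this homotopy sweeps across $x$ some number of times. Lifting to $\widetilde{S_{g,n}}$ and recording, for each wall $\tilde\zeta\in p^{-1}(Z)$, which side of $\tilde\zeta$ the point $\tilde x$ lies on relative to $\Gamma$ gives an orientation of every wall, that is, a choice of halfspace. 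I would check three things. First, this choice is consistent: two nested halfspaces are never chosen in the wrong order. Second, it differs from the principal ultrafilter at $\tilde x$ on only finitely many walls, because the homotopy is compact; so it is a DCC ultrafilter and hence a vertex of $\mathscr{X}_Z$. Third, it is well defined and injective on homotopy classes. For injectivity I would use that pre-minimality forces $\Gamma$ to be determined by which lifts of the components of $Z$ separate $\tilde x$ from the corresponding lift of $\Gamma$. For surjectivity, I would realize any DCC ultrafilter by pushing $x$ across the finitely many walls on which it differs from the principal one. A simple pull corresponds exactly to flipping one wall, so the map sends edges to edges. Since both complexes are the cubical completions of their $1$-skeleta (flag in the appropriate sense), an isomorphism of $1$-skeleta extends to an isomorphism of cube complexes.

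\emph{Statement (2).} For hyperbolicity of $\mathscr{X}_Z$, I would use the criterion that a CAT(0) cube complex is hyperbolic iff it has no arbitrarily large flat grids, i.e. no two families of $N$ pairwise-crossing walls with every wall of one family crossing every wall of the other. Walls in $p^{-1}(Z)$ are quasi-geodesic lines in $\widetilde{S_{g,n}}\simeq\Hbb^2$. Two of them cross at most once. A family of $N$ walls all crossing another family of $N$ walls would therefore force large grids of crossing geodesics, which is impossible in $\Hbb^2$: the crossing number per fundamental domain is bounded by $\pair{Z}{Z}$, and a thin-quadrilateral argument then bounds $N$. Hence $X_Z$ is hyperbolic by (1).

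\emph{Statements (3)--(5).} Fix an $x$-generic minimal $\Fcal$. The leaf from $x$ is a ray $r$; lift it to a ray $\tilde r$ from $\tilde x$. Each edge, i.e. each wall flip, changes $\pair{\Fcal}{\cdot}$ by a signed amount equal to the $\Fcal$-measure of the relevant arc, and this gives the orientation. I would show that a directed edge-path always flips walls so as to move the vertex toward $\tilde r$. Then any two infinite directed paths converge in the Roller boundary to the ultrafilter determined by the end of $\tilde r$, which I would then push to the Gromov boundary to obtain (3). For (4), $\pair{\Fcal}{[\gamma_i(l)]}$ is decreasing and bounded below by $\varphi(\Fcal)([\zeta_i])$. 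Equality in the limit follows because any $\gamma$ with $i_*[\gamma]=[\zeta_i]$ is a vertex from which a directed path reaches the same limiting behaviour, and by (3) it is eventually comparable. For (5), $\gamma_i(l)$ winds ever longer along $r$, so its projective class converges in $\PMF$ to a measure supported on $\underline{\Fcal}$. The continuity of the Klarreich map to $\mathcal{EL}$ at minimal filling foliations then finishes the argument.

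The main obstacle is (4): proving the infimum is actually attained in the limit and not undercut by some pre-homotopic curve that is not on the path. I would first try to show that the function $\pair{\Fcal}{\cdot}$ on vertices admits no local minima off the limit point. This amounts to showing that every vertex has an outgoing edge, using minimality of $\Fcal$ to find a flippable wall crossed by $\tilde r$.
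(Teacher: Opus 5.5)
Your proposal has genuine gaps in (1) and, more seriously, in (3)--(5); (2) is salvageable with a correction.

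\textbf{Statement (1).} Your map is the paper's $\Omega$, and extending an isomorphism of $1$-skeleta to the cube complexes is fine. The two steps that carry the content, however, are only asserted.

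\emph{Surjectivity.} ``Pushing $x$ across the walls where the ultrafilter differs'' does not work as stated. Moving $x$ while the curves stay put only produces principal ultrafilters. But $\mathscr{X}_Z$ typically has non-principal vertices: three pairwise crossing walls cut out $7$ regions but carry $8$ consistent sign patterns. So a wall dual to an edge at $\Omega([\Gamma])$ need not bound the region of $\widetilde{S_{g,n}}\setminus p^{-1}(\Gamma)$ containing $\tilde x$. What you need is local surjectivity: for every wall $w$ not separated from $\tilde x$ by a wall disjoint from $w$, isotope $\Gamma$ within its class in $S_{g,n+1}$ until $w$ is exposed to $\tilde x$, and check that the resulting pull preserves pre-minimality. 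This is Lemma~\ref{lem:LocSurf.Omega}: the annulus between $\gamma_1$ and its pull, the proof that every arc of another component crosses it, and the repositioning inside the annulus. Injectivity, local surjectivity and connectedness of $\mathscr{X}_Z$ then give the isomorphism.

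\emph{Injectivity.} This also needs a real input. Vertices are tuples of component classes, so it reduces to a single curve. There it is the Kent--Leininger--Schleimer identification of the tree of curves pre-homotopic to $\zeta$ with the dual tree of $p^{-1}(\zeta)$; pre-minimality plays no role.

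\textbf{Statement (2).} You misquote the grid criterion: each family must be nested, i.e.\ pairwise disjoint, not pairwise crossing. With that correction, a Gauss--Bonnet count is a valid elementary substitute for the paper's citation about cocompact actions with quasiconvex hyperplane stabilizers. The geodesic quadrilateral spanned by the outer walls has area $<2\pi$, yet contains $(N-1)^2$ disjoint complementary regions of area bounded below.

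\textbf{Statements (3)--(5).} This is the larger gap.

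First, you never state the single-curve fact everything rests on. By the pulley computation (Lemma~\ref{lem:IntNumbCanonPull}), the pull along the leaf segment from $x$ to its first intersection with $\gamma$ is the \emph{unique} simple pull lowering $\pair{\Fcal}{\cdot}$. Hence every vertex of the tree $X_\zeta$ has exactly one outgoing edge and directed rays coalesce; this is what proves (4) for one curve.

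Second, ``directed paths move toward $\tilde r$'' is not an argument. The lifted prong is not a quasi-geodesic in $\widetilde{S_{g,n}}$: near a lift of $x$ it runs beside that lift's prong, turns around it and comes back, so it can cross a wall twice. You also never show that the $\Fcal$-orientation of walls is consistent, which is what places it in $\partial_R X_Z$ so that Theorem~\ref{thm:Roller-to-Gromov} applies.

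Third, and decisive: knowing that every vertex has an outgoing edge does not stop an infinite directed path from pulling only $\gamma_1$ forever. Then $\gamma_2(l)$ is eventually constant, $\pair{\Fcal}{[\gamma_2(l)]}$ stays strictly above $\varphi(\Fcal)([\zeta_2])$, and $[\gamma_2(l)]$ does not converge to $\underline{\Fcal}$. So (4) and (5) fail. The paper excludes this in three steps.
\begin{enumerate}
\item From any vertex it constructs a directed path by iterating canonical pulls on representatives with the non-blocking first-intersection property. Minimality of $\Fcal$ then forces every component to be pulled infinitely often (Lemma~\ref{lem:Seq of Cannon Simple Pulls}).
\item With the oriented tree coordinates, each tree having unique outgoing edges, these paths give mono-outgoing cubes (Lemma~\ref{lem:OutgoingCube}). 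Hence the orientation is consistent, and by hyperbolicity all directed paths are asymptotic. The $L^1$ splitting of Proposition~\ref{prop:TreeCoordiSystem} then transfers unbounded projections to \emph{every} directed path.
\item The single-curve results are applied to each projected path.
\end{enumerate}

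Your route to (5) is the paper's once made precise: $\pair{\Fcal}{\cdot}$ stays bounded while hyperbolic length tends to infinity, so every projective limit has zero intersection with $\Fcal$; then apply Rees and Klarreich/Hamenst\"adt. But it too needs the $[\gamma_i(l)]$ to be infinitely many distinct classes, which is exactly step 2 above.
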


In Section \ref{subsec:TreeCoordinateSystem}, we introduce the notion of {\it tree coordinate systems} for cube complexes, which is useful in proving properties of $X_Z$ with purely combinatorial methods related to cube complexes.

In our proof of Theorem \ref{theorem:MF}, we use a multicurve $\Gamma$ that fills $S_{g,n+1}$ and determines measured foliations in $S_{g,n}$. We then decrease the intersection number with $\Fcal$ by changing $\Gamma$ along a directed edge-path in $X_\Gamma$ (Theorem \ref{theorem:CubeCplx}-(4)). Since $\pi_1(S_{g,n})$ acts cocompactly on $X_\Gamma$, we can approximate this edge-path by the orbit of a sequence of point-pushing mapping classes, $(\Push([\psi_k]))_{k\in \mathbb{N}}$. It follows that the limit $\Gcal:=\lim_{k\to \infty}[\psi_k]^{-1}\cdot \Fcal$ exists such that $\Gcal$ is $x$-non-generic (Proposition \ref{prop:LimitMFfromIntReduction}). Hence, $\Phi(\Gcal)$ is defined in an obvious way (Proposition \ref{prop:PForget NonGeneric}). We then define $\Phi(\Fcal):=\Phi(\Gcal)$ and show it satisfies the desired properties. 

\subsection*{Horospheres, puncture forgetting maps, and the dynamics on the universal curves over Teichm\"uller spaces}

Fix $r>0$. For every $\Fcal\in \MF(S_{g,n})$, its {\it open} (resp.\@ {\it closed}\,) {\it $r$-horoball} $B_h(\Fcal,r)$ (resp.\@ $\overline{B_h(\Fcal,r)}$) and {\it $r$-horosphere $S_h(\Fcal,r)$} in $\Tcal_{g,n}$ are defined by
\begin{align}
    \begin{split}\label{eqn:horo_ball_sphere}
        B_h(\Fcal,r)&=\left\{X\in \Tcal_{g,n}:\EL(\Fcal;X)< r\right\}\\
        \overline{B_h(\Fcal,r)}&=\left\{X\in \Tcal_{g,n}:\EL(\Fcal;X)\le r\right\}\\
        S_h(\Fcal,r)&=\left\{X\in \Tcal_{g,n}:\EL(\Fcal;X)= r\right\}.
    \end{split}
\end{align}

Recall that $\pi \colon \Tcal_{g,n+1} \to \Tcal_{g,n}$ denotes the puncture-forgetting map between Teichm\"uller spaces. Theorem \ref{theorem:MF} can be used to understand the behavior of $\pi$ near the (Thurston) boundary. For example, we obtain the following classification of $\pi$-images of horospheres as a corollary of Theorem \ref{theorem:MF}.
\begin{theorem}[=Theorem \ref{thm:Im_horospheres_}]
       For all $r>0$ and non-zero $\Fcal\in \MF(S_{g,n+1})$, the following hold.
       \begin{enumerate}
            \item If $\Phi(\Fcal)= 0$, then
                \[
                    \pi\left(S_h(\Fcal,r)\right)=\Tcal_{g,n}.
                \]
            \item If $\Phi(\Fcal)\neq 0$ and $\Fcal$ is $x$-generic or $x$-non-generic of annular type, then
                \[
                    \pi\left(S_h(\Fcal,r)\right)=B_h\left(\Phi(\Fcal),r\right).
                \]
            \item If $\Phi(\Fcal)\neq0$ and $\Fcal$ is $x$-non-generic of non-annular type, then 
                \[
                    \pi\left(S_h(\Fcal,r)\right)=\overline{B_h\left(\Phi(\Fcal),r\right)}.
                \]
       \end{enumerate}
\end{theorem}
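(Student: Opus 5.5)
The plan is to deduce all three cases from Theorem \ref{theorem:MF}(3), which says
\[
    \EL(\Phi(\Fcal);Y) = \inf_{X'\in \pi^{-1}(Y)}\EL(\Fcal;X') \qquad \text{for every } Y\in \Tcal_{g,n},
\]
and which tells us exactly when this infimum is attained. What remains is to understand the set of values of $X'\mapsto \EL(\Fcal;X')$ on each fiber $\pi^{-1}(Y)$. The fiber is a copy of the universal cover $\Hbb$ of the punctured surface $Y$, so it is connected. Extremal length is continuous and positive on $\Tcal_{g,n+1}$ for $\Fcal\ne 0$. Hence the image of each fiber is an interval $I_Y\subset(0,\infty)$ whose infimum is $m_Y:=\EL(\Phi(\Fcal);Y)$. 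Since $Y\in \pi(S_h(\Fcal,r))$ exactly when $r\in I_Y$, the three cases reduce to two facts: $\sup I_Y=\infty$, and whether or not $m_Y\in I_Y$.

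\emph{Unboundedness along the fiber.} For $\sup I_Y=\infty$, I move the forgotten point $x$ toward a puncture of $Y$. This produces $X'_k\in\pi^{-1}(Y)$ on which the curve $\beta$ bounding a twice-punctured disk around $x$ and that puncture becomes short: $\EL(\beta;X'_k)\to 0$. If $\pair{\Fcal}{\beta}>0$, the inequality $\pair{\Fcal}{\beta}^2\le \EL(\Fcal;X'_k)\EL(\beta;X'_k)$ forces $\EL(\Fcal;X'_k)\to\infty$.

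If $\pair{\Fcal}{\beta}=0$ for every choice of puncture and every path of approach, then $\Fcal$ has zero intersection with all curves surrounding $x$ and one puncture. I expect this to force $\Fcal=0$, since such curves, and their images under point-pushing (which preserves the fiber), fill. Alternatively, I can point-push along a pseudo-Anosov class $\psi$ and use $\EL(\Fcal;\psi^k X')=\EL(\psi^{-k}\Fcal;X')$. The iterates $\psi^{-k}\Fcal$ converge projectively to the repelling lamination with growing scale, unless $\Fcal$ is that lamination itself, and then the other pseudo-Anosov direction works. Making this step rigorous is the main obstacle I expect. I will use the filling argument: pick two non-commuting pushes that give curves $\beta,\beta'$ which together fill $S_{g,n+1}$. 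Then at least one of them meets $\Fcal$.

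\emph{The three cases.}
\begin{enumerate}
    \item If $\Phi(\Fcal)=0$, then $m_Y=0$. Because $\EL(\Fcal;\cdot)>0$ everywhere, $0\notin I_Y$, so $I_Y=(0,\infty)$ and $r\in I_Y$ for every $Y$. This gives $\pi(S_h(\Fcal,r))=\Tcal_{g,n}$.
    \item If $\Phi(\Fcal)\ne 0$, then $m_Y>0$. If $\Fcal$ is $x$-generic or $x$-non-generic of annular type, Theorem \ref{theorem:MF}(3) says the infimum is not attained, so $I_Y=(m_Y,\infty)$. Then $r\in I_Y$ exactly when $\EL(\Phi(\Fcal);Y)<r$, which gives $B_h(\Phi(\Fcal),r)$.
    \item In the non-annular non-generic case the infimum is attained, so $I_Y=[m_Y,\infty)$. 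This gives $\overline{B_h(\Phi(\Fcal),r)}$.
\end{enumerate}

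Besides the unboundedness step, the only remaining checks are routine: continuity of $\EL$ and connectedness of fibers.
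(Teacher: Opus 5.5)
Your reduction is the same as the paper's. The paper asserts that $\sup_{X\in\pi^{-1}(Y)}\EL(\Fcal;X)=\infty$ is straightforward, deduces that $Y\in\pi(S_h(\Fcal,r))$ if and only if some $X\in\pi^{-1}(Y)$ has $\EL(\Fcal;X)\le r$, and reads off the three cases from Theorem \ref{thm:InequalityEL}. Your connectedness/interval remark makes explicit the intermediate-value step the paper leaves implicit. Your final case analysis is correct.

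The gap is in the unboundedness step as you commit to it. The standing assumption is only $\chi(S_{g,n})<0$, so $n=0$ is allowed ($Y$ closed of genus $g\ge 2$). Then $Y$ has no punctures, so there is no curve $\beta$ around $x$ and a puncture to pinch, and no filling pair $\beta,\beta'$ exists. In fact, as $x$ ranges over the compact surface $Y$, the surfaces $X'\in\pi^{-1}(Y)$ stay in a compact part of $\Mcal_{g,1}$. So divergence can only come from the point-pushing action on $\Fcal$.

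For $n\ge 1$ your argument works, but you must still justify that two such curves fill. One way is to take $\beta$ and $\psi^k\cdot\beta$ for a point-pushing pseudo-Anosov $\psi$ and large $k$, then apply Lemma \ref{lem:FillMF nonzero intersection}.

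The alternative you set aside is the right fix, and it works uniformly in $n$. Let $\psi$ be a point-pushing pseudo-Anosov with $\psi\cdot\Fcal^u=\lambda\Fcal^u$, $\lambda>1$. Point-pushes preserve the fiber, so Lemma \ref{lem:IneqBtwELInt} gives
\[
\EL(\Fcal;\psi^k\cdot X')=\EL(\psi^{-k}\cdot\Fcal;X')\ge\frac{\pair{\psi^{-k}\cdot\Fcal}{\Fcal^u}^2}{\EL(\Fcal^u;X')}=\lambda^{2k}\,\frac{\pair{\Fcal}{\Fcal^u}^2}{\EL(\Fcal^u;X')}.
\]
This tends to $\infty$ if $\pair{\Fcal}{\Fcal^u}>0$. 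Otherwise $\Fcal$ is a positive multiple of $\Fcal^u$, so it meets the stable foliation $\Fcal^s$ positively, and the same computation with $\psi^{-k}$ and $\Fcal^s$ gives $\sup I_Y=\infty$.
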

See Section \ref{subsec:Non-generic} for the definitions of the annular and non-annular types.

Denote by $\Ccal_{g,n}$ the {\it universal curve over $\Tcal_{g,n}$}, which is the tautological Riemann surface bundle over $\Tcal_{g,n}$ whose fibers are Riemann surfaces with genus $g$ and $n$ punctures. The puncture-forgetting map $\pi\from \Tcal_{g,n+1}\to \Tcal_{g,n}$ factors as the composition of two projections, $\pi^1\from \Tcal_{g,n+1}\to \Ccal_{g,n}$ and $\pi^2\from \Ccal_{g,n}\to \Tcal_{g,n}$. That is,
\[
    \begin{tikzcd}
		\Tcal_{g,n+1} \arrow{r}{\pi^1}
		& \mathcal{C}_{g,n} \arrow{r}{\pi^2}
        & \Tcal_{g,n}
	\end{tikzcd}.
\]

The universal curve $\Ccal_{g,n}$ can also be obtained as $\Tcal_{g,n+1}/\pi_1(S_{g,n},x)$, where $\pi_1(S_{g,n},x)$ acts as point-pushing mapping classes. Since $\pi_1(S_{g,n})\triangleleft\MCG^+(S_{g,n+1})$, the quotient group $\MCG^+(S_{g,n})=\MCG^+(S_{g,n+1})/\pi_1(S_{g,n})$ acts on $\Ccal_{g,n}$ such that $\Ccal_{g,n}/\MCG^+(S_{g,n})$ coincides with the moduli space $\Mcal_{g,n+1}=\Tcal_{g,n+1}/\MCG^+(S_{g,n+1})$.

It is natural to ask about the classification of Teichm\"uller geodesics and horospheres in $\Ccal_{g,n}$. In this context, inspired by Eberlein’s work \cite[Theorem 5.5]{Eberlein_GeodFLowII}, we consider Question~\ref{question:Dyn_Ccal}.

Consider a Teichm\"uller geodesic $(X_t)$ generated by $\Fcal \in \MF(S_{g,n+1})$. We say that $\pi^1(X_t)$ is {\it quasi-minimizing} in $\Ccal_{g,n}$ if there exists a constant $C>0$ such that
\[
    d_{\mathrm{Teich}}(\pi^1(X_0), \pi^1(X_t)) \ge t - C
    \quad \text{for all } t \ge 0.
\]
We say that $(X_t)$ is {\it pre-quasi-minimizing} if there exists a constant $C>0$ such that
\[
    d_{\mathrm{Teich}}(\pi(X_0), \pi(X_t)) \ge t - C
    \quad \text{for all } t \ge 0.
\]

\begin{question}\label{question:Dyn_Ccal}
In the above setting, what are the relations among the following properties?
\begin{enumerate}
    \item $\Phi(\mathcal{F}) \neq 0$.
    \item $(X_t)$ is pre-quasi-minimizing.
    \item The projected Teichm\"uller geodesic $\pi^1(X_t)$ in $\Ccal_{g,n}$ is quasi-minimizing.
    \item $\pi^1\!\big(S_h(\mathcal{F}, r)\big)$ is not dense in $\Ccal_{g,n}$.
\end{enumerate}
\end{question}
(1)$\Rightarrow$(2) follows from Theorem~\ref{theorem:MF}-(4), and (2)$\Rightarrow$(3) follows from the fact that Teichm\"uller metrics contract under holomorphic maps. In Section \ref{subsec:QuasiMini_Horosph}, we discuss Question \ref{question:Dyn_Ccal} for $x$-non-generic measured foliations and projectively invariant measured foliations of point-pushing pseudo-Anosov mapping classes.

Question \ref{question:Dyn_Ccal} can also be viewed as an inhomogeneous analogue of recent works on geodesic and horocyclic flows on geometrically infinite hyperbolic surfaces \cite{FLM_Horo_Zcover,DFLM_New_HoroDyn}. See also \cite{SSWY_Horosphere_Inv_subvar} for the closures of horospheres in the space of translation surfaces. 

\subsection*{Pullback maps of post-critically finite rational maps}
Consider a branched self-covering $f\from S^2\to S^2$ of the $2$-sphere. We call every branch point a {\it critical point}. Suppose that $f$ is {\it post-critically finite}, i.e., every critical point is periodic or pre-periodic. Denote by ${\rm Crit}(f)$ the set of critical points. Define the post-critical set $P_f$ by $P_f:=\bigcup_{c\in {\rm Crit(f)},n\ge1}f^{n}(c)$, which is finite. Then $f(P_f)\subset P_f\subset f^{-1}(P_f)$.

For a post-critically finite branched self-covering $f\colon (S^2,P_f)\to (S^2,P_f)$, W.\ Thurston constructed a holomorphic map $\sigma_f\colon \Tcal(S^2,P_f)\to \Tcal(S^2,P_f)$, called the (Thurston) {\it pullback map} of $f$ \cite{DH_ThurstonChar}. The existence of a (unique) fixed point of $\sigma_f$ in characterizes when $f$ is conjugate, up to isotopy, to a post-critically finite rational map.

It has been an important theme to study the relationship between the dynamical behavior of $\sigma_f$ on $\Tcal(S^2,P_f)$ and the dynamics of $f$ on $\widehat{\mathbb{C}}$.

There exists a continuous extension $\overline{\sigma_f}$ of $\sigma_f$ to the augmented Teichm\"uller space \cite{Selinger_ThuMapAugTeich}. If $\sigma_f$ has no fixed point in $\mathcal{T}(S^2,P_f)$, then $\overline{\sigma_f}$ admits a fixed point on the boundary of the augmented Teichm\"uller space. The multicurve defining the stratum containing such a fixed point is called the {\it canonical Thurston obstruction}; see also \cite{Pil_Canonical_Obs, Sel_TopChac_CanonicalThu}.

There exists a finite-time algorithm deciding whether ${\rm Fix}(\sigma_f)\neq \emptyset$ for a given $f\colon (S^2,P_f)\to (S^2,P_f)$ \cite{BBY_ThuEq_Decidable}. See also \cite{FPP_Rationality_NET} for a quantitative study of an algorithm for nearly Euclidean Thurston maps. The first-named author and D.\ Thurston are making progress on the quantitative decidability problem in the case $|P_f|=4$. All of these works analyze the dynamical behavior of the pullback map $\sigma_f$.

Even when $f$ is already a post-critically finite rational map, the dynamics of $\sigma_f$ and $f$ are still closely related; see \cite{KochPilgrimSelinger_Pullback, ParryPilgrim_Contract_Hurwitz}. The finite curve attractor conjecture (Conjecture \ref{conjecture:FCA}) concerns the dynamics of simple closed curves in $(\widehat{\mathbb{C}},P_f)$ under $f$. Since simple multicurves correspond to boundary strata of the augmented Teichm\"uller space, this conjecture can be interpreted in terms of the boundary behavior of $\sigma_f$. Further details are given below.

\subsection*{Finite curve attractor conjecture}
One motivation for studying puncture-forgetting maps for measured foliations is Pilgrim's finite curve attractor conjecture; see \cite{Pil_PullbackCurve_survey} for a survey. Suppose that $f\from \hat{\Cbb}\to \hat{\Cbb}$ is a post-critically finite rational map and consider the dynamics defined by its iterations $(f^{\circ n})_{n\ge 0}$. 


\begin{conjecture}[Finite curve attractor]\label{conjecture:FCA}
Suppose that $f$ is a post-critically finite rational map that is not a flexible Latt\`{e}s map. Then there exists a finite set $X$ of homotopy classes of essential simple closed curves on $(\hat{\Cbb},P_f)$ such that for any simple closed curve $\gamma$ on $(\hat{\Cbb},P_f)$, every essential component of $f^{-n}(\gamma)$ is contained in $X$ for any sufficiently large $n>0$.
\end{conjecture}

Here is a list of post-critically finite rational maps for which the finite curve attractor is proved.
\begin{enumerate}
    \item The tetrahedral map $f(z)=\frac{3z^2}{2z^3+1}$ \cite{Lodge_BdryThuPullBack}.
    \item Quadratic rational maps $f$ with $|P_f|=4$ \cite{KL_QuadPf4}.
    \item Critically fixed rational maps \cite{Hlu_CritFixed}.
    \item Rational maps $f$ whose the virtual endomorphisms of $\MCG^+(S^2,P_f)$ induced from the moduli correspondences are contracting \cite{KPS_PullbackInv}. By \cite[Theorem 7.2]{Nek_CombModel}, hyperbolic polynomials satisfy this property.
    \item Polynomials \cite{BLMW_LiftingTrees}.
    \item Rational maps $f$ with $|P_f|=4$ that are obtained from the $2\times 2$-Latt\`{e}s map by flappings \cite{BHI_ElimThuObs}.
    \item Rational maps $f$ with $|P_f|=4$ for which the augmented Teichm\"uller space of $(\hat{\Cbb},P_f)$ has a cover satisfying certain properties regarding the pullback map $\sigma_f$ \cite{BHL_Pullback_InvCover}.
    \item Post-critically finite rational maps with $|P_f|=4$ \cite{BDP_CorrRieSurf}.
\end{enumerate}

If there is no finite attractor $X$, then either (a) there exists an infinite sequence of distinct homotopy classes of simple closed curves $([\gamma_n])_{n\ge0}$ such that $\gamma_{n+1}$ is homotopic to a component of $f^{-1}(\gamma_n)$ or (b) there exists an infinite sequence of completely $f^{p_n}$-invariant simple multicurves $([\Gamma_n])_{n\ge0}$. In both cases, we obtain an infinite sequence in the space of projective measured foliations $\PMF(\hat{\Cbb},P_f)$. In Section \ref{sec:CplxDyn}, we define sequences of measures $(\mu_n)$ on $\PMF(\hat{\Cbb},P_f)$ by Equations \eqref{eqn:ProbMsrWandering} and \eqref{eqn:ProbMsrIntPer}, by using $\gamma_n$ and $\Gamma_n$ described above. Then, we show that any weak$^*$-limit $\mu$ of $(\mu_n)$ has measure zero on the set of geodesically recurrent measured foliations, which has full measure with respect to the Thurston measure. A measured foliation $\Fcal$ is {\it geodesically recurrent} if the Teichm\"uller geodesic in the moduli space $\Mcal(\hat{\Cbb},P_f)$ generated by $\Fcal$ at any initial point returns to a compact subset infinitely often. 

\begin{theorem}[Theorem \ref{thm:LimitMsr nonRecurrent}]\label{theorem:CplxDyn}
    Let $f$ be a post-critically finite rational map with $|P_f|>3$. Suppose that $f$ does not have global finite curve attractor. Let $(\mu_n)$ be a sequence of probability measures on $\PMF(\hat{\Cbb},P_f)$ constructed in Equation \eqref{eqn:ProbMsrWandering} or \eqref{eqn:ProbMsrIntPer}, obstructing the existence of the finite curve attractor. Then, for every weak$^*$-limit $\mu$ of $(\mu_n)_{n\ge0}$, we have $(\sigma_f)_*\mu=\mu$ and 
    \[
        \mu(\{\Proj(\Fcal)\in \PMF(\hat{\Cbb},P_f) ~|~ \Fcal~{\rm is~geodesically~recurrent}\})=0.
    \]
\end{theorem}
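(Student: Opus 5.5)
We plan to prove both assertions about a weak$^*$-limit $\mu=\lim_j \mu_{n_j}$ by exploiting the structure built into the measures from \eqref{eqn:ProbMsrWandering} and \eqref{eqn:ProbMsrIntPer}. These are averages of Dirac masses (or normalized counting measures) at points $\Proj([\gamma_n])$, respectively $\Proj([\Gamma_n])$, that are related by pullback under $f$.

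\textbf{Invariance.} First we check that the pullback action of $f$ on curves extends to a map on $\PMF(\hat{\Cbb},P_f)$, which we also call $\sigma_f$. On $\MF$ it sends $\Fcal$ to the essential part of $f^{-1}(\Fcal)$, with its measure pulled back. This map is continuous, or at least continuous off a set that receives no mass. The measures $\mu_n$ are Ces\`aro-type averages along orbits of the pullback. Hence $(\sigma_f)_*\mu_n-\mu_n$ has total variation $O(1/n)$, a telescoping boundary term. Passing to the weak$^*$ limit, and using continuity of $\sigma_f$ on $\PMF$, gives $(\sigma_f)_*\mu=\mu$.

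\textbf{Vanishing on recurrent foliations.} The key tool is the contraction of $\sigma_f$. Because $f$ is rational and not a flexible Latt\`es map, with $|P_f|>3$, some iterate of $\sigma_f$ is uniformly contracting on compact subsets of moduli space. In curve terms, the extremal length of an essential component of $f^{-1}(\gamma)$ at $\sigma_f(X)$ is at most $\deg f$ times the extremal length of $\gamma$ at $X$, with strict contraction in the thick part.

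Now suppose $\mu$ gave positive mass to the geodesically recurrent set $R$. Since $R$ is $\sigma_f$-invariant and $\mu$ is invariant, Poincar\'e recurrence applies. For $\mu$-a.e.\ $\Proj(\Fcal)\in R$, we would find iterates that repeatedly return near $\Fcal$ while the Teichm\"uller geodesics they generate return to a fixed compact set $K\subset\Mcal(\hat{\Cbb},P_f)$. Comparing extremal lengths at a point $X$ lying over $K$, uniform contraction on $K$ forces the normalized measures to decay. Along the approximating sequences $\gamma_n$ (respectively $\Gamma_n$), this produces curves whose lengths tend to zero in a uniformly thick region. That is impossible: curves in the thick part have lengths bounded below. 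The same argument contradicts the assumption that the $\gamma_n$ are distinct, or that the $\Gamma_n$ are infinitely many distinct invariant multicurves.

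\textbf{Main obstacle.} The hardest step is making the contraction argument work on the boundary $\PMF$, since $\sigma_f$ itself acts on Teichm\"uller space. We would use Theorem \ref{theorem:MF}-(3)--(4), after viewing the correspondence $\sigma_f$ through puncture-forgetting and a finite cover. This lets us transfer Teichm\"uller-distance contraction into a strict inequality of extremal lengths for recurrent $\Fcal$. Specifically, we would aim to show
\[
\EL(\sigma_f^{k}(\Fcal);X)\le \lambda^{k}\,\EL(\Fcal;X), \qquad \lambda<1,
\]
uniformly over $\Fcal$ whose geodesics recur to $K$. We would then integrate this against $\mu$ to contradict invariance.
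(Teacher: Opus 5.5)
There is a genuine gap in the vanishing part. Your plan treats the recurrent set $R$ as $\sigma_f$-invariant and aims for a finite contraction $\EL(\sigma_f^{k}(\Fcal);X)\le\lambda^k\EL(\Fcal;X)$ on it. What actually holds is much stronger; this is the unlabeled theorem just before Theorem \ref{thm:LimitMsr nonRecurrent} in Section \ref{sec:CplxDyn}. The Teichm\"uller geodesic generated by a recurrent $\Fcal$ spends infinite time where $\|D\sigma_f\|<1-\epsilon$. Hence the pulled-back geodesic $(Y_t)$ generated by $f^*\Fcal$ satisfies $\int_0^\infty\bigl(1-\|D\pi(\dot Y_t)\|\bigr)\,dt=\infty$, and Theorem \ref{thm:LowBddTeichDist} then forces $\sigma_f(\Fcal)=\Phi(f^*\Fcal)=0$.

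So $\sigma_f$ annihilates $R$ rather than preserving it, and Poincar\'e recurrence has nothing to act on. For the same reason $\sigma_f$ is not even a self-map of $\PMF$: it is $\Phi\circ f^*$, only upper semicontinuous, and it sends $R$ to $0$. The continuity you invoke for invariance is therefore not available as stated.

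More seriously, ``integrate the contraction against $\mu$ to contradict invariance'' cannot close. You would need two things for a log-length ratio $r$:
\begin{enumerate}
\item a lower bound $\int r\,d\mu\ge 0$, which your outline never produces (the paper gets it from the construction of the $\mu_n$, not from invariance);
\item $r=-\infty$ on $R$.
\end{enumerate}
With only $r\le\log\lambda<0$ on $R$, the negative contribution can be offset by positive values of $r$ (up to $\log\deg f$) off $R$. Your final step about short curves in a thick region also has no mechanism tying the support of $\mu$ back to the $\gamma_n$ or $\Gamma_n$.

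The missing idea is a single upper semicontinuous test function whose integral against $\mu_n$ has a sign forced by the obstructing sequences themselves. The paper takes $r(\Proj(\Fcal))=\log\bigl(L(\sigma_f(\Fcal))/L(\Fcal)\bigr)$, with $L$ hyperbolic length on $(\hat{\Cbb},P_f)$. It is bounded above by $\log\deg f$, upper semicontinuous (this is where Theorem \ref{theorem:MF}-(1) enters), and equal to $-\infty$ on $R$ by the vanishing above.
\begin{itemize}
\item For \eqref{eqn:ProbMsrWandering}, the sum telescopes to $\int r\,d\mu_n\ge\frac{1}{n}\log\bigl(L(\Gamma_n)/L(\Gamma_0)\bigr)$. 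This is eventually nonnegative since $L(\gamma_n)\to\infty$ for the distinct curves $\gamma_n\in\Gamma_n$.
\item For \eqref{eqn:ProbMsrIntPer}, one gets exactly $\int r\,d\mu_n=\frac{1}{p_n}\log\lambda_n\ge0$, using the Perron--Frobenius eigenvalue of the irreducible integer matrix $T_n$.
\end{itemize}
Lemma \ref{lem:IneqUpperSemiCont} then gives $\int r\,d\mu\ge\limsup_k\int r\,d\mu_{n_k}\ge 0$, hence $\mu(\{r=-\infty\})=0$. This argument uses neither invariance nor recurrence of $\mu$. It is essential that $r$ is $-\infty$ on $R$, not merely negative.
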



\subsection*{Notations} We summarize some notations we use in this article.

$S_{g,n}$: a closed orientable surface with genus $g$ and $n$ punctures. We assume $\chi(S_{g,n})<0$.

$C(S_{g,n})$: the set of homotopy classes of essential simple closed curves of $S_{g,n}$.

$CC(S_{g,n})$: the curve complex of $S_{g,n}$.

$\mathcal{EL}(S_{g,n})$: the set of ending laminations.

$\MF(S_{g,n})$: the set of measured foliations of $S_{g,n}$, including the zero element $0$.

$\PMF(S_{g,n})$: the set of projective measured foliations of $S_{g,n}$.

$\Proj\from \MF(S_{g,n})\setminus\{0\} \to \PMF(S_{g,n})$: the projectivization map.

$(\hat{\Cbb},P_f)$: the Riemann sphere with punctures at the points in the post-critical set $P_f$.

$\Tcal_{g,n}$ and $\mathcal{M}_{g,n}$: the Teichm\"ller and the moduli spaces of $S_{g,n}$.

$\Ccal_{g,n}$: the universal curve over $\Tcal_{g,n}$.

$\Tcal(\hat{\Cbb},P_f)$ and $\mathcal{M}(\hat{\Cbb},P_f)$: the Teichm\"uller and the moduli spaces of $(\hat{\Cbb},P_f)$.

$\MCG^+(S_{g,n})$: the orientation preserving mapping class group of $S_{g,n}$.

$\Push\from \pi_1(S_{g,n},x)\hookrightarrow \MCG^+(S_{g,n+1})$: the $\Push$ map in the Birman exact sequence
\[
    0\to \pi_1(S_{g,n},x) \hookrightarrow \MCG^+(S_{g,n+1})\to \MCG^+(S_{g,n})\to 0.
\]

\subsection*{Acknowledgments} The authors thank Dylan Thurston, Kevin Pilgrim, Inhyeok Choi, and Dongryul Kim for helpful discussions. I.P. is grateful to the Institute of Mathematical Sciences at Stony Brook University for travel support during this project. J.K. would like to acknowledge the support of a Simons Fellows in Mathematics grants SFI-MPS-SFM-00011896 and 919571 and Simons Travel Grant for Mathematicians MPS-TSM-00002848.

\section{CAT(0) cube complexes}\label{sec:CubeCplx}

Assume that $S_{g,n}$ is an orientable surface with genus $g$ and $n$ punctures such that $\chi(S_{g,n})<0$. In later sections, given a multicurve $Z$, we investigate the cube complex $\mathscr{X}_Z$ determined by the wall space $(\widetilde{S_{g,n}},\mathcal{W}_Z)$. In Section \ref{sec:CubeCplx}, we first review basic notions of CAT(0) cube complexes $X$. Then, we discuss {\it tree coordinate systems} (Sections \ref{subsec:TreeCoordinateSystem} and \ref{subsec:OrientedTreeCoordi}) and orientations of edges and walls of $X$ (Proposition \ref{prop:EdgeOri_WallOri}). We also compare the Roller boundary $\partial_R X$ and the Gromov boundary $\partial_G X$ (Theorem \ref{thm:Roller-to-Gromov}). The content in this section applies to general CAT(0) cube complexes. 

\subsection*{Definitions regarding curves and multicurves} A simple closed curve $\gamma$ on $S_{g,n}$ is {\it trivial} if it bounds a disk, {\it peripheral} if it bounds a once-punctured disk, and {\it essential} otherwise. A {\it multicurve} on $S_{g,n}$ is a collection of essential simple closed curves that are pairwise non-homotopic to each other. We emphasize that our definition allows curves in a multicurve to {\bf intersect} each other.

For a simple closed curve or an arc $\gamma$ joining two punctures, we denote by $[\gamma]$ its homotopy class. For two simple closed curves or arcs joining punctures $\alpha$ and $\beta$, we denote by $\pair{\alpha}{\beta}$ their intersection number  $|\alpha \cap \beta|$. The intersection number of homotopy classes is defined by $\pair{[\alpha]}{[\beta]}=\min |\alpha' \cap \beta'|$ where the minimum is taken over representatives $\alpha'\in [\alpha]$ and $\beta'\in [\beta]$. If $\pair{[\alpha]}{[\beta]}=\pair{\alpha}{\beta}$, we say that $\alpha$ and $\beta$ are in {\it minimal position}. We extend these definitions to multicurves in a natural way. For multicurves $A=\{\alpha_1,\alpha_2,\dots,\alpha_k\}$ and $B=\{\beta_1,\beta_2,\dots,\beta_l\}$, the intersection number $\pair{[A]}{[B]}$ is defined as $\sum_{i,j}\pair{[\alpha_i]}{[\beta_j]}$. A multicurve $\Gamma$ is in minimal position if and only if $\pair{[\Gamma]}{[\Gamma]}=\pair{\Gamma}{\Gamma}$. We implicitly assume that multicurves are in minimal position unless stated otherwise.

For a multicurve $\Gamma$ of $S_{g,n}$, we say that $\Gamma$ {\it fills} $S_{g,n}$, or, equivalently, $\Gamma$ is a {\it filling} multicurve, if $\pair{[\Gamma]}{[\alpha]}>0$ for every essential simple closed curve $\alpha$ in $S_{g,n}$ 

\subsection{Wall spaces, dual cube complexes, and hyperbolicity}
Let us review basic definitions and theorems for CAT(0) cube complexes and wall spaces.

\subsection*{CAT(0) cube complexes} For an integer $n\ge0$, an {\it $n$-cube} is a copy of $[0,1]^n$. Its {\it faces} are subcubes defined by restricting of some coordinates to $0$ or $1$. A {\it cube complex} $X$ is a cell complex obtained by gluing cubes along their faces. We also refer to $0$-cubes and $1$-cubes in $X$ as {\it vertices} and {\it edges}, respectively.

The \emph{link} of $(0, \ldots, 0)$ in $[0, 1]^n$ is the simplicial complex formed by the intersection of $[0, 1]^n$ with $\sum_i x_i =1 $.  The link $\link(v)$ of any other vertex of $[0, 1]^n$ is defined similarly, so that the vertices of $\link(v)$ are the vertices of $[0, 1]^n$ that are adjacent to $v$.
For a vertex $v$ of  a cube complex $X$, we let ${\rm link}(v)$ be the union of the links of $v$ in each cube for which $v$ is a vertex; this is the \emph{disjoint} union of these links, quotiented by the identifications of simplicies induced by the gluing of the cubes that make up $X$. 
Recall that a simplicial complex is a {\it flag complex} if every pairwise adjacent $n+1$ vertices spans an $n$-simplex. A cube complex $X$ is {\it non-positively curved} if the link of every vertex is a flag complex. A {\it CAT(0) cube complex} is a cube complex that is simply connected and non-positively curved. This definition is equivalent to the CAT(0) property as a metric space, where each cube is equipped with the standard Euclidean metric ($L^2$-metric), \cite[Theorem 5.4, p.206]{BH_MetricSpBook}.

We often use $L^1$-metric on $X$ for which each edge has length one. We denote by $d_{(X,L^1)}$ and $d_{(X,L^2)}$, or simply $d_{L^1}$ and $d_{L^2}$ when $X$ doesn't need to be specified, the $L^1$ and $L^2$-metrics on $X$, respectively. If $\dim(X)<\infty$, then the two metrics are bi-Lipschitz equivalent.

\subsection*{Median graphs}
Let $G$ be a graph. Consider the combinatorial distance $d$ on $G$, where each edge has length one. Then for any $x,y\in \Vertex(G)$, $d(x,y)$ is the smallest number of edges in a path joining $x$ and $y$. We say that $G$ is a {\it median graph} if for each triple $x,y,z\in \Vertex(G)$, there exists a unique vertex $m=m(x,y,z)\in \Vertex(G)$, called the {\it median} of $x,y$, and $z$, such that $d(x,y)=d(x,m)+d(m,y)$, $d(y,z)=d(y,m)+d(m,z)$, and $d(z,x)=d(z,m)+d(m,x)$.
\begin{prop}[{\cite{Ger_CubeCplx_Median,Roller_CubeCplx_Median,Chepoi_CubeCplx_Median}}]\label{prop:MedianGraph}
    A graph $G$ is a median graph if and only if $G$ is the $1$-skeleton of a CAT(0) cube complex, which is uniquely determined as the cubical completion.
\end{prop}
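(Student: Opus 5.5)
The plan is to prove the two implications separately, using halfspaces (walls) as the main tool in both directions, and then to deduce uniqueness from the fact that a CAT(0) cube complex is \emph{cubically complete}.

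\textbf{From CAT(0) cube complexes to median graphs.} Let $X$ be a CAT(0) cube complex. I would first invoke Sageev's theory of hyperplanes:
\begin{enumerate}
\item each hyperplane of $X$ is embedded, two-sided, and separates $X$ into exactly two halfspaces;
\item for vertices $u,v$, the combinatorial distance $d(u,v)$ equals the number of hyperplanes separating $u$ from $v$;
\item a vertex $w$ lies on a geodesic from $u$ to $v$ exactly when no hyperplane separates $w$ from both $u$ and $v$.
\end{enumerate}
Given three vertices $x,y,z$, orient each hyperplane toward the halfspace containing at least two of them (majority vote). Any two of the chosen halfspaces intersect, since two of the three points lie in both. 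Moreover, all but finitely many of the chosen halfspaces contain $x$. Hence this collection is a consistent ultrafilter satisfying the descending chain condition, and therefore it is a vertex $m$ of $X$. By (2) and (3), $m$ lies on a geodesic between each pair of $x,y,z$. For uniqueness, a vertex lying on all three geodesics must, by (3), lie on the majority side of every hyperplane, so it equals $m$.

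\textbf{From median graphs to CAT(0) cube complexes.} Let $G$ be a median graph. I would define the Djoković--Winkler relation $\Theta$ on edges: $ab\,\Theta\,cd$ if $d(a,c)+d(b,d)\neq d(a,d)+d(b,c)$. In a median graph $\Theta$ is an equivalence relation. For each class, removing its edges leaves exactly two convex components, and these serve as the halfspaces. Next I would form the cubical completion $X_G$ by attaching a $d$-cube to every subgraph isomorphic to the $1$-skeleton of a $d$-cube, inductively in $d$, exactly as in the construction of $X_Z$ in the introduction. Two properties must then be checked.

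\emph{Simple connectivity.} Using the median property, every cycle in $G$ can be shortened by $4$-cycles. Take the median of a far vertex of the cycle with its two neighbours on the cycle; this produces a square, which is filled by a $2$-cube. So $X_G$ is simply connected.

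\emph{Flag links.} Suppose edges $e_1,\dots,e_k$ at a vertex $v$ pairwise span squares. Using medians, I would produce inductively the remaining vertices of a $k$-cube. The vertex diagonally opposite $v$ in a $3$-cube, for instance, is the median of the three far corners of the three given squares. Since $G$ has no $K_{2,3}$, these cube skeletons are unique. Gromov's link condition then shows that $X_G$ is non-positively curved, and hence CAT(0).

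\textbf{Uniqueness and the main obstacle.} Uniqueness amounts to showing that any CAT(0) cube complex with $1$-skeleton $G$ equals $X_G$. This follows once one knows that in a CAT(0) cube complex every cube-skeleton bounds a unique cube, which is again a consequence of flagness and the hyperplane structure. The step I expect to be the main obstacle is the flag-link argument for $k\ge 3$, that is, constructing higher cubes from pairwise squares by induction on $k$ via medians. I would try the median construction above first, and fall back on the halfspace description if the direct induction becomes unwieldy.
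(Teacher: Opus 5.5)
The paper does not prove this proposition; it quotes it from Gerasimov, Roller and Chepoi. Your outline follows the standard argument from that literature: the Sageev--Roller halfspace/ultrafilter picture for CAT(0) $\Rightarrow$ median, and a direct check of simple connectivity and flag links for the converse. It is sound in outline. Since the paper defines CAT(0) cube complexes combinatorially (simply connected with flag links), you do not need Gromov's link condition at all. Three steps should be tightened.

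First, ``therefore it is a vertex $m$'' does not follow from your facts (1)--(3). It is the additional Sageev--Roller fact that a consistent orientation satisfying the descending chain condition is principal; the paper records it in Proposition~\ref{prop:EdgeOri_WallOri}. For the majority orientation you can supply it directly:
\begin{itemize}
\item The majority orientation differs from the orientation of $x$ exactly on the finite set $D$ of hyperplanes separating $x$ from both $y$ and $z$.
\item By consistency, any hyperplane separating $x$ from a member of $D$ is itself in $D$. Hence some member of $D$ is separated from $x$ by no hyperplane, and is therefore dual to an edge at $x$.
\item Cross that edge and induct on $|D|$.
\end{itemize}

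Second, in the simple-connectivity step, the median of a vertex $w$ and its two neighbours $w_1,w_2$ is just $w$, so as written no square is produced. You need a base point $b$. Choose $w$ on the closed edge-path maximizing $d(b,\cdot)$. If $w_1=w_2$, cancel the backtrack. Otherwise bipartiteness gives $d(b,w_1)=d(b,w_2)=d(b,w)-1$, so $m(b,w_1,w_2)$ is a common neighbour of $w_1,w_2$ at distance $d(b,w)-2$ from $b$. Push the path across the resulting square. The quantity that strictly decreases is $\sum d(b,\cdot)$ over the path, not its length.

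Third, your $3$-cube construction is correct. That the three far corners are distinct, and that their median differs from the three neighbours of $v$, both use $K_{2,3}$-freeness. It also yields the general flag condition, so the obstacle you anticipate is not serious. Write $e_i=va_i$, with $a_S$ already given for $|S|\le 2$. For $|S|\ge 3$, define $a_S$ as the common neighbour of $a_{S\setminus i}$ and $a_{S\setminus j}$ other than $a_{S\setminus\{i,j\}}$:
\begin{itemize}
\item It exists by the $3$-cube lemma applied at $a_{S\setminus\{i,j,l\}}$.
\item It is unique, because two distinct vertices have at most two common neighbours (again $K_{2,3}$-freeness).
\item The same $3$-cube shows it is also adjacent to $a_{S\setminus l}$, so it does not depend on the chosen pair.
\end{itemize}
What remains is to show that $S\mapsto a_S$ is injective, and this is where $\Theta$ is genuinely used. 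All edges in direction $i$ lie in the $\Theta$-class of $e_i$, and these $k$ classes are distinct. Hence $a_S$ lies on the far side of the $i$-th cut exactly when $i\in S$.
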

For a median graph $G$, the {\it cubical completion of $G$} is a cube complex constructed as follows: fill each $(k-1)$-skeleton of a $k$-cube embedded in $X^{(k-1)}$ with a $k$-cube, inductively with $k\ge 1$ and $X^{(1)}:=G$.

\subsection*{Wall spaces and dual cube complexes}
Suppose that $S$ is a connected topological space. A {\it wall} $w$ is a connected subspace of $S$ such that $S\setminus w$ consists of two connected components. Let $\mathcal{W}$ be a collection of walls such that (a) (locally finite) for any $x,y\in S$, there exist only finitely many walls separating $x$ and $y$, and (b) (finite width) there exists $N>0$ such that for any collection of pairwise intersecting walls $\{W_i\}_{i\in I}$, we have $|I|<N$. We remark that the conditions (a) and (b) correspond to the local finiteness and finite width of pocets \cite{Sageev_CAT(0)CubeCplx}. Then, we refer to $(S,\mathcal{W})$ as a {\it wall space}. For each wall $w$, the closure of  a connected component of $\widetilde{S_{g,n}}\setminus w$ is a {\it half-space} of $w$. Every wall $w$ has two half-spaces, denoted by $\overleftarrow{w}$ and $\overrightarrow{w}$, such that $\overleftarrow{w}\cup\overrightarrow{w}=\widetilde{S_{g,n}}$ and $\overleftarrow{w}\cap\overrightarrow{w}=w$. Denote by $\overline{\mathcal{W}}$ the set of half-spaces of walls.

Consider a multicurve $Z$ in a surface $S_{g,n}$. Suppose that $p\from \widetilde{S_{g,n}}\to S_{g,n}$ is the universal covering. Define $\mathcal{W}_Z$ as the collection of connected components of $p^{-1}(Z)$. Each element of $\mathcal{W}_Z$ is a wall of $\widetilde{S_{g,n}}$. It is easy to show that $(\widetilde{S_{g,n}},\mathcal{W}_Z)$ satisfies conditions (a) and (b) in the previous paragraph, and thus it is a wall space. By the assumption $Z$ is in minimal position, the walls do not form bigons; that is, any two distinct walls intersect in at most one point \cite[Lemma 1.8]{FM_PrimerMCG}.

\begin{defn}[Wall orientation]\label{defn:Orientation_Walls}
    Consider a wall space $(S,\mathcal{W})$. An {\it orientation} of walls, or a {\it wall orientation}, is the choice of one half-space for each wall. We express an orientation as a function $v\from \mathcal{W}\to \overline{\mathcal{W}}$ such that $v(w)\in\{\overleftarrow{w}, \overrightarrow{w}\}$. An orientation of walls $v$ is {\it admissible} if
    \begin{enumerate}
        \item (Consistency) for every pair of walls $w_1$ and $w_2$, we have $v(w_1) \cap v(w_2)\neq \emptyset$, and
        \item (DCC) for any $x\in \widetilde{S_{g,n}}$, all but finitely many walls $w$ satisfy $x\in v(w)$.
    \end{enumerate}
    If $v$ satisfies (1) but not (2), then we say that $v$ is {\it consistently oriented toward infinity}.
\end{defn}

The condition (2) in Definition \ref{defn:Orientation_Walls} is called the {\it descending chain condition (DCC)} because it is equivalent to that every strictly nested sequence of half-spaces, $\overrightarrow{w_1}\supsetneq\overrightarrow{w_2}\supsetneq\dots$, is a finite sequence.

For a wall space $(S,\mathcal{W})$, we define a dual cube complex $\mathscr{X}$ as follows. First, the vertex set $\Vertex(\mathscr{X})$ is the set of admissible orientations of walls. We define an edge between two vertices $v_1,v_2$ if $v_1(w)=v_2(w)$ for all but one $w\in \mathcal{W}$. Then, for $k\ge2$, we inductively attach a $k$-cube to every $(k-1)$-dimensional subcomplex that is isomorphic to the $(k-1)$-skeleton of a $k$-cube. It is well-known that $\mathscr{X}$ is a CAT(0) cube complex \cite[Section 6.3]{Wise_Book}.

For a multicurve $Z$ in a surface $S_{g,n}$, we denote by $\mathscr{X}_Z$ the dual cube complex of $(\widetilde{S_{g,n}},\mathcal{W}_Z)$.

Suppose that $Z=\{\zeta\}$. Then $\mathcal{W}_Z$ is a disjoint union of walls. Hence, $\mathscr{X}_Z$ is isomorphic to the dual tree of $(\widetilde{S_{g,n}},\mathcal{W}_Z)$, whose vertices are connected components of $\widetilde{S_{g,n}} \setminus \mathcal{W}_Z$ and edges are defined for pairs of adjacent components.

Although we only focus on wall spaces for hyperbolic surfaces, the construction of dual cube complexes from wall spaces applies in more general settings; see \cite{Sageev_CAT(0)CubeCplx,Wise_Book} for introductory accounts.

\subsection*{Hyperplanes} Consider a $k$-cube $[0,1]^k=\{(x_1,x_2,\dots,x_k)~|~x_i\in [0,1]\}$. A {\it mid-cube} of $[0,1]^k$ is a $(k-1)$-cube embedded in $[0,1]^k$ defined by $x_i=1/2$ for some $i\in\{1,2,\dots,k\}$.

Suppose $X$ is a CAT(0) cube complex. For any $k$-cube $C_k$ in $X$, we define mid-cubes of $C_k$ as the pullback of the mid-cubes of $[0,1]^k$ via a homeomorphism $C_k \to [0,1]^k$. A {\it hyperplane} $H$ in $X$ is a connected union of mid-cubes such that for any cube $C\subset X$, $C\cap H$ is either  empty or a mid-cube of $C$. Hyperplanes in the dual cube complex $\mathscr{X}_Z$ of the wallspace $(\widetilde{S_{g,n}},\mathcal{W}_Z)$ are in one-to-one correspondence with walls. For any edge $e$ of $X$, there exists a unique hyperplane $H_e$ that passes through the midpoint of $e$. We say that $H_e$ is the {\it dual hyperplane} of $e$, and $e$ is a {\it dual edge} of $H_e$. The dual hyperplane is unique for each edge, whereas the dual edges of a hyperplane may not be unique.

Let $H$ be a hyperplane of $X$. The {\it carrier} of $H$, denoted by $N(H)$, is the subcomplex of $X$ consisting of cubes intersecting $H$. Then $N(H)$ is a convex subcomplex of $X$ \cite[Corollary 3.13]{Wise_Book} and isomorphic to $H\times [0,1]$ \cite[Corollary 3.4]{Wise_Book}, with both results relying on the CAT(0) property of $X$. Here, we see $H$ as a cube complex where each mid-cube of $X$ contained in $H$ is considered as a cube in $H$.

We say that a sequence, possibly infinite, of walls $w_1,w_2,\dots,w_k$ is {\it nested} if we can choose a half-space $\overrightarrow{w_i}$ for each $i\in \{1,2,\dots,k\}$ such that $\overrightarrow{w_i}\supset \overrightarrow{w_j}$ for every $i<j$. A sequence of hyperplanes in $\mathscr{X}_Z$ is nested if its corresponding sequence of walls is nested.

It follows from the local finiteness of walls---for any $x\neq y\in X$, there exist at most finitely many walls separating $x$ and $y$---that for any infinite nested sequence of walls $w_i$'s, we have $\bigcap_{i\ge1} \overrightarrow {w_i}=\emptyset$.

The next lemma follows from elementary properties of hyperbolic spaces.

\begin{lem}\label{lem:Limit_Seg_Geod}
    Fix a multicurve $Z$ on a surface $S_{g,n}$ with $\chi(S_{g,n})<0$. Suppose that $w_1,w_2,\dots$ is an infinite nested sequence of walls from $Z$, such that there exist half-spaces $\overrightarrow{w_i}$'s of $w_i$'s  with $\overrightarrow{w_i}\supset \overrightarrow{w}_j$ for any $i<j$. Then, there exists a unique point $\theta \in \partial \widetilde{S_{g,n}}$ such that $\overrightarrow{w_i}\to \{\theta\}$ with respect to the Hausdorff topology on $\widetilde{S_{g,n}} \cup \partial \widetilde{S_{g,n}}$.
\end{lem}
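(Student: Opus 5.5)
We identify $\widetilde{S_{g,n}}$ with $\Hbb^2$ via a complete finite-area hyperbolic structure on $S_{g,n}$, and realize each curve of $Z$ as a closed geodesic. This does not change the combinatorics of nestedness: $Z$ is in minimal position, so the lifts to $\Hbb^2$ are isotopic to the geodesic lifts, equivariantly and preserving the intersection pattern. Each wall $w_i$ is then a complete geodesic in $\Hbb^2$ with two distinct endpoints on $\partial\Hbb^2$. The half-space $\overrightarrow{w_i}$ is a closed hyperbolic half-plane, and its closure in $\Hbb^2\cup\partial\Hbb^2$ meets the circle at infinity in a closed arc $I_i$. Nestedness $\overrightarrow{w_i}\supset\overrightarrow{w_j}$ gives $I_i\supset I_j$.

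The plan is to show that the Euclidean diameter of $\overline{\overrightarrow{w_i}}$ in the disk model tends to $0$. Since the sets are nested, compact and nonempty, their intersection is then a single point $\theta$. Because the $\overrightarrow{w_i}$ have empty intersection in $\Hbb^2$ (as remarked just before the lemma), $\theta$ lies in $\partial\Hbb^2$. Hausdorff convergence $\overrightarrow{w_i}\to\{\theta\}$ follows at once from nestedness plus shrinking diameter, and uniqueness is immediate.

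For the diameter claim, fix a basepoint $o\in\Hbb^2$. In the disk model centered at $o$, the closed half-plane not containing $o$ and bounded by a geodesic at hyperbolic distance $d$ from $o$ has Euclidean diameter $\to0$ as $d\to\infty$. It therefore suffices to show $d(o,w_i)\to\infty$ and that eventually $o\notin\overrightarrow{w_i}$. The second holds since the intersection is empty: some $\overrightarrow{w_N}$ misses $o$, and then so do all later ones. For the first, use local finiteness: the walls form a locally finite family, because $p^{-1}(Z)$ is a locally finite union of geodesics, being the preimage of a compact set in $S_{g,n}$ under the covering. Hence only finitely many walls meet any ball $B(o,R)$. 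The $w_i$ are distinct, since strict nesting of half-spaces forces distinct boundary walls; if two consecutive ones coincide we may discard repeats, as infinitely many distinct ones must occur, otherwise the intersection would be a nonempty half-space. So $d(o,w_i)\to\infty$.

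The main obstacle is only a bookkeeping point. The lemma is stated for the topological walls $p^{-1}(Z)$ in $\widetilde{S_{g,n}}$, whereas the argument uses geodesic representatives. One must check that straightening by an equivariant homotopy, which stays within bounded distance since $S_{g,n}$ has a compact core and $Z$ is compact, moves each wall and half-space a uniformly bounded Hausdorff distance. Given that, the original half-spaces converge to the same $\theta$, because a set within bounded hyperbolic distance of a half-plane far from $o$ is Euclidean-close to it in the disk compactification.
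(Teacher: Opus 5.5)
Your proposal is correct and is essentially the argument the paper intends. The paper gives no written proof beyond saying the lemma ``follows from elementary properties of hyperbolic spaces,'' and your steps make exactly those properties explicit: straightening $p^{-1}(Z)$ to geodesics at bounded Hausdorff cost, using local finiteness to get $d(o,w_i)\to\infty$ with $o\notin\overrightarrow{w_i}$ eventually, and noting that half-planes far from $o$ have small Euclidean diameter in the disk model (the only minor overstatement is calling the straightening an isotopy preserving the full intersection pattern, which can fail at triple points, but you only need pairwise nesting, and your bounded-distance paragraph already covers that).
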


\subsection*{Hyperbolicity of CAT(0) cube complex}

\begin{thm}\label{thm:HypCubeCplx}
    For any multicurve $Z$ on $S_{g,n}$, the cube complex $\mathscr{X}_Z$ is Gromov hyperbolic.  
\end{thm}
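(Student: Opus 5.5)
The plan is to use the standard criterion for hyperbolicity of a finite-dimensional CAT(0) cube complex: $\mathscr{X}_Z$ is Gromov hyperbolic if and only if there is a uniform bound on the size of \emph{flat grids}. Concretely, it suffices to rule out arbitrarily large $N\times N$ grids of hyperplanes, i.e.\ two families $\{H_1,\dots,H_N\}$ and $\{K_1,\dots,K_N\}$ where the $H_i$ are pairwise disjoint, the $K_j$ are pairwise disjoint, and every $H_i$ crosses every $K_j$. This uses the Rips-type characterisation (e.g.\ Genevois), or equivalently the absence of isometrically embedded large square flats in the $L^1$-metric. Since $Z$ has finitely many components in minimal position, the finite width condition (b) gives $\dim\mathscr{X}_Z<\infty$, so $L^1$ and $L^2$ metrics are bi-Lipschitz and it does not matter which one we use.

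Hyperplanes of $\mathscr{X}_Z$ correspond to walls, i.e.\ lifts of components of $Z$ to $\widetilde{S_{g,n}}\cong\Hbb^2$. Two hyperplanes cross exactly when the corresponding walls intersect, because walls form no bigons and disjoint walls are nested or separated. So a large grid in $\mathscr{X}_Z$ gives walls $w_1,\dots,w_N$ that are pairwise disjoint and walls $u_1,\dots,u_N$ that are pairwise disjoint, with each $w_i$ meeting each $u_j$. I will realize each wall by the geodesic in its homotopy class, using a hyperbolic metric on $S_{g,n}$ in which $Z$ is geodesic; this keeps the intersection pattern, again because there are no bigons. The walls are then complete geodesics in $\Hbb^2$, and the lifts of the finitely many closed geodesics in $Z$ form a locally finite, $\pi_1$-invariant family.

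The key step is a geometric bound on such grids. Choose $w_1$ and $w_N$ among the pairwise disjoint family so that the others separate them; disjointness of the $w_i$ together with each $u_j$ crossing all of them forces a linear ordering like this. Each $u_j$ crosses $w_1$ and $w_N$, and the $u_j$ are disjoint, so the $u_j$ cross $w_1$ at $N$ distinct points. Intersection angles between components of $Z$ form a finite set of positive angles, so they are bounded below by some $\theta_0>0$. A geodesic quadrilateral in $\Hbb^2$ bounded by $w_1,w_N,u_1,u_N$ has area less than $2\pi$ by Gauss--Bonnet, since its angles are at least $\theta_0$. Meanwhile the grid divides it into $(N-1)^2$ cells. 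Each cell is a geodesic quadrilateral with angles bounded below, and in fact a union of complementary regions of $p^{-1}(Z)$. The complementary regions of $Z$ have area bounded below uniformly: there are finitely many types up to deck transformations, and any punctured regions are avoided because a cell is compact. Each cell therefore has area at least some $a_0>0$, which gives $(N-1)^2a_0<2\pi$ and bounds $N$.

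I expect the main obstacle to be this area bookkeeping. I need to justify that the outer quadrilateral is genuinely compact with interior angles at least $\theta_0$, i.e.\ that the intersection points are the actual corners and that the cells are honest unions of complementary polygons of $p^{-1}(Z)$. A cleaner fallback is to use that cells contain at least one complementary polygon, whose area is at least a fixed positive constant because there are finitely many $\pi_1$-orbits of compact polygons. The lower bound on angles then only needs to cover the four corners, for which Gauss--Bonnet gives area $=2\pi-\sum(\pi-\text{angles})<2\pi$ regardless. With grids bounded, the criterion yields hyperbolicity of $\mathscr{X}_Z$.
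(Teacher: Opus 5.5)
Your argument is correct, but it takes a different route from the paper.

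\textbf{The paper's route.} The proof there is a three-line appeal to Groves--Manning \cite{GM_Hyp_Gp_Act_Improper}. The group $\pi_1(S_{g,n})$ is hyperbolic and acts cocompactly on $\mathscr{X}_Z$. The hyperplane stabilizers are conjugates of the cyclic groups $\langle\zeta\rangle$, $\zeta\in Z$, so they are quasiconvex. Their theorem then forces $\mathscr{X}_Z$ to be hyperbolic.

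\textbf{Your route.} You verify Genevois's thin-grid criterion (Lemma \ref{lem:UnifThinGrids}) directly, in two steps.
\begin{enumerate}
\item Finite dimensionality: lifts of a single simple curve are disjoint, so a pairwise crossing family contains at most one lift of each component, and $\dim\mathscr{X}_Z\le|Z|$.
\item A Gauss--Bonnet count: an $M\times N$ grid of geodesic walls cuts a compact geodesic quadrilateral of area $<2\pi$ into $(M-1)(N-1)$ cells. Each cell contains a bounded component of $\mathbb{H}^2\setminus p^{-1}(Z)$. Such a component has trivial stabilizer, so it maps isometrically onto one of the finitely many disk components of $S_{g,n}\setminus Z$. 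This gives $(M-1)(N-1)\,a_0<2\pi$, where $a_0$ is the least area of such a disk.
\end{enumerate}
As you noticed, the angle bound $\theta_0$ is superfluous.

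\textbf{Comparison.} Your approach is elementary and needs neither cocompactness nor quasiconvexity. It also yields an explicit thinness constant. The paper's approach is shorter, and it applies verbatim to any hyperbolic group acting cocompactly on a cube complex with quasiconvex hyperplane stabilizers, where no planar area argument is available.

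\textbf{One step to tighten.} State more carefully why replacing $Z$ by geodesic representatives is harmless. The dual cube complex depends only on the pocset of half-spaces. That pocset is determined by the cyclic order of the pairwise distinct endpoints of the walls on $\partial\mathbb{H}^2$, and the homotopy to geodesics does not change these endpoints.
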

\begin{proof}
    The fundamental group $\pi_1(S_{g,n})$ is Gromov hyperbolic, and its action on $\mathscr{X}_Z$ is cocompact. The stabilizer of each hyperplane is conjugate to the infinite cyclic subgroup generated by an element $\zeta\in Z$, and hence is a quasiconvex subgroup of $\pi_1(S_{g,n})$. Therefore, by \cite[Theorem~A and Corollary~B]{GM_Hyp_Gp_Act_Improper}, the cube complex $\mathscr{X}_Z$ is Gromov hyperbolic.
\end{proof}

The Gromov hyperbolicity of a CAT(0) cube complex can be characterized in terms of the intersection patterns of its hyperplanes. Consider a CAT(0) cube complex $X$. A {\it grid} $\mathcal{G}$ of hyperplanes is a pair $(\mathcal{H},\mathcal{V})$ of nested sequences, possibly infinite, of hyperplanes, $\mathcal{H}=(H_{h,1},H_{h,2},\dots,H_{h,k_h})$ and $\mathcal{V}=(H_{v,1},H_{h,2},\dots,H_{h,k_v})$, such that each hyperplane in $\mathcal{H}$ intersects every hyperplane in $\mathcal{V}$, and each hyperplane in $\mathcal{V}$ intersects every $\mathcal{H}$. Under the bijection between walls and hyperplanes, we similarly define grids of walls. We say that $X$ has {\it uniformly thin} grids of hyperplanes if there exists $C>0$ such that for every grid $(\mathcal{H},\mathcal{V})$, we have $\min(|\mathcal{H}|,|\mathcal{V}|)<C$, where $|S|$ denotes the cardinality of a set $S$.
\begin{lem}{\cite[Theorem 3.1]{Genev_HypCubCplx}}\label{lem:UnifThinGrids}
    Suppose that $X$ is a CAT(0) cube complex. Then, $X$ has uniformly thin grids of hyperplanes if and only if $X$ is Gromov hyperbolic. 
\end{lem}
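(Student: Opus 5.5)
The statement is \cite[Theorem 3.1]{Genev_HypCubCplx}, and I would prove it directly, with two simplifications. First, I work with the $L^1$ metric, i.e.\ with the median graph $X^{(1)}$ (Proposition \ref{prop:MedianGraph}). Second, I assume $\dim X<\infty$, which holds for the complexes $\mathscr{X}_Z$ by the finite width of $\mathcal{W}_Z$. This makes the $L^1$ and $L^2$ metrics bi-Lipschitz equivalent, so hyperbolicity can be checked on the graph. The basic tool is that $d_{L^1}(u,v)$ equals the number of hyperplanes separating $u$ and $v$. Also, a vertex lies on a geodesic from $u$ to $v$ (the interval $I(u,v)$) exactly when it lies on the same side as $u$ or $v$ of every hyperplane that does not separate $u$ from $v$.

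($\Leftarrow$, hyperbolic $\Rightarrow$ thin grids.) Suppose a grid $(\mathcal{H},\mathcal{V})$ has $|\mathcal{H}|,|\mathcal{V}|\ge N$. I would build an isometrically embedded $N\times N$ square grid graph. For each $0\le a,b\le N$, choose the half-spaces of the first $a$ horizontal and the first $b$ vertical hyperplanes on one side and the rest on the other. Since every horizontal hyperplane crosses every vertical one, these choices are pairwise consistent, and the Helly property for convex subcomplexes realizes each choice by a vertex $v_{a,b}$. Projecting to the interval between two corner vertices and fixing all other hyperplanes consistently, I expect $d(v_{a,b},v_{a',b'})=|a-a'|+|b-b'|$. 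A combinatorial $N\times N$ square in the $\ell^1$ metric contains geodesic bigons (go right then up, versus up then right) whose midpoints are about $N/2$ apart. So $\delta$-hyperbolicity forces $N\lesssim 2\delta$, which bounds grid sizes uniformly.

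($\Rightarrow$, thin grids $\Rightarrow$ hyperbolic.) By Papasoglu's thin-bigon criterion, it suffices to show that geodesic bigons in $X^{(1)}$ are uniformly thin. Stronger, I would show that every vertex $p\in I(x,y)$ lies within bounded distance of every geodesic $\gamma$ from $x$ to $y$. Let $q$ be the vertex of $\gamma$ closest to $p$. Let $\mathcal{A}$ be the set of hyperplanes separating $p$ from $q$, so $|\mathcal{A}|=D$. Each $A\in\mathcal{A}$ separates $x$ from $y$, because $p,q\in I(x,y)$. Thus $\gamma$ crosses $A$ either before or after $q$, and I may assume half of $\mathcal{A}$ is crossed after $q$. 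I then compare these with the hyperplanes crossed by $\gamma$ between $q$ and the crossing points. Minimality of $d(p,q)$ should force each such $A$ to cross many hyperplanes of $\gamma$: otherwise moving $q$ along $\gamma$ would decrease the distance to $p$. This produces a bipartite family of $\gtrsim D$ hyperplanes against $\gtrsim D$ hyperplanes with all cross-pairs intersecting.

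The main obstacle is extracting nested chains from these families, since a grid requires both sides to be nested. The hyperplanes crossed by $\gamma$ in order, or separating two points, pairwise either cross or are nested. I would apply Dilworth's theorem. A pairwise crossing subfamily has size at most $\dim X$, so any such family of size $M$ contains a nested chain of length at least $M/\dim X$. Applying this on both sides gives a grid with both sides $\gtrsim D/\dim X$, so $D\le C\dim X$ and bigons are uniformly thin. The delicate point is making the crossing relations survive the passage to subchains; I would handle this by choosing the $\gamma$-side hyperplanes adjacent to $q$ and checking crossing through the convexity of hyperplane carriers.
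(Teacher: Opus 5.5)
\paragraph{Overall.} The paper gives no proof of this lemma; it only quotes Genevois. Your architecture is sound: a large grid gives a fat bigon, and conversely a fat bigon gives a large grid via hyperplane counting and Dilworth, after which Papasoglu's criterion applies.

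\paragraph{The gap.} The genuine gap is in the core of thin grids $\Rightarrow$ hyperbolic. You only sketch that step, and you put the difficulty in the wrong place: passing to subchains is harmless, because crossing is a pairwise condition. Three things are actually needed.

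\emph{First, the $\gamma$-side family must exclude $\mathcal{A}$.} In $\mathbb{Z}^2$ take $x=(0,0)$, $y=(n,2n)$, $\gamma$ running right and then up, $p=(0,n)$ and $q=x$. Then $A_1$ is crossed between $q$ and $A_2$ but is disjoint from $A_2$.

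\emph{Second, crossing comes from the four quarter-spaces, not from carrier convexity.} Suppose $B\notin\mathcal{A}$ is crossed by $\gamma$ after $q$ but before $A\in\mathcal{A}^+$. Then $q$, a vertex of $\gamma$ between the two crossings, $p$ and $y$ lie in the four distinct quarter-spaces of $(A,B)$, so $A$ and $B$ cross. By contrast, a hyperplane of $\gamma$ crossed after $A$ can be disjoint from $A$; the horizontals above height $n$ in the example are such hyperplanes.

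\emph{Third, minimality does not give that each $A$ crosses many hyperplanes of $\gamma$.} Order $\mathcal{A}^+=\{A_1,\dots,A_m\}$ by crossing time, and likewise the hyperplanes $B_1,B_2,\dots\notin\mathcal{A}$ crossed after $q$. Along $\gamma$ after $q$, $d(p,\cdot)$ drops by one at each $A_i$ and rises by one at each $B_k$. So minimality gives only that at least $j$ of the $B_k$ precede $A_j$.

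Consequently the complete bipartite crossing family is $\{A_j: j\ge\lceil m/2\rceil\}$ against $\{B_1,\dots,B_{\lceil m/2\rceil}\}$. Each has size at least $m/2\ge D/4$, and Dilworth then yields $D<4C\dim X$. With medians you can even skip Papasoglu: each side of a triangle $xyz$ has the same endpoints as the geodesic concatenation through $m(x,y,z)$, so your interval estimate gives slim triangles directly.

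\paragraph{Smaller points.} Your assumption $\dim X<\infty$ is necessary, not a convenience. In the infinite-dimensional cube $\bigcup_n[0,1]^n$ no two hyperplanes are nested, so all grids are trivially thin. Yet the subcubes $[0,1]^{2n}$ contain geodesic bigons of width $n$. So the lemma as quoted needs finite dimension, which $\mathscr{X}_Z$ has.

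In the easy direction, the formula $d(v_{a,b},v_{a',b'})=|a-a'|+|b-b'|$ can fail: a hyperplane separating $H_i$ from $H_{i+1}$ also separates $v_{i-1,b}$ from $v_{i+1,b}$. But two corners suffice. Take $x\in\overleftarrow{H_1}\cap\overleftarrow{V_1}$, $y\in\overrightarrow{H_N}\cap\overrightarrow{V_N}$, $u\in\overrightarrow{H_N}\cap\overleftarrow{V_1}$ and $u'\in\overleftarrow{H_1}\cap\overrightarrow{V_N}$. Geodesics from $x$ to $y$ through $m(x,y,u)$ and through $m(x,y,u')$ then form a bigon of width at least $N$.
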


\subsection{Tree coordinate system}\label{subsec:TreeCoordinateSystem} Recall that a continuous map $f\from X \to Y$ is {\it monotone} if $f^{-1}(y)$ is connected for any $y \in Y$. For cube complexes $X$ and $Y$, a continuous map $f\from X \to Y$ is a {\it cubical map} if each cube $C_X$ of $X$ is mapped onto a cube $C_Y$ of $Y$. For convenience, we also assume that it maps $C_X$ to $C_Y$ via a coordinate projection $p\from [0,1]^k\to [0,1]^l$ with $k\ge l$.

\begin{defn}[Tree coordinate system]\label{defn:TreeCoordiate}
    Let $X$ be a cube complex. A finite family of surjective cubical maps $\{\pi_i\from X \to X_i\}_{i=1,2,\dots,n}$ is called a {\it tree coordinate system of $X$} if the following properties hold.
    \begin{enumerate}
        \item $X_i$'s are trees.
        \item (Midpoint-monotonicity) For each $i\in\{1,2,\dots,n\}$, the fiber of any midpoint of edge in $X_i$ is connected.
        \item (Efficiency) For every edge $e$ of $X$, there exists a unique $i \in \{1,2,\dots,n\}$ such that $\pi_i(e)$ is an edge; for any $j\neq i$, $\pi_j(e)$ is a vertex of $X_j$.
    \end{enumerate}
When satisfying (2) (resp.\@ (3)), the family $\{\pi_i\from X \to X_i\}_{i=1,2,\dots,n}$ is called {\it midpoint-monotone} (resp.\@ {\it efficient}).
\end{defn}
\begin{prop}\label{prop:Partition_TreeSystem}
    For a multicurve $Z$ of $S_{g,n}$ in minimal position, consider the wallspace $(\widetilde{S_{g,n}},\mathcal{W}_Z)$. Suppose that $\mathcal{W}_Z=\mathcal{W}_1\sqcup\mathcal{W}_2\sqcup\dots\sqcup \mathcal{W}_k$ is a partition  $\mathcal{W}_Z$ such that for each $i\in \{1,2,\dots, k\}$, the walls in $\mathcal{W}_i$ are pairwise disjoint. Let $X$ be the dual cube complex of $(\widetilde{S_{g,n}},\mathcal{W}_Z)$, and let $X_i$ be the dual cube complex of $(\widetilde{S_{g,n}},\mathcal{W}_i)$ for each $i\in \{1,2,\dots,k\}$. Then there exists a naturally defined tree coordinate system $\{\pi_i\from X \to X_i\}$.
\end{prop}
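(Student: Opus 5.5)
The map $\pi_i$ will be defined as restriction of orientations. A vertex of $X$ is an admissible orientation $v\from \mathcal{W}_Z\to\overline{\mathcal{W}_Z}$. Its restriction $v|_{\mathcal{W}_i}$ is consistent, since consistency is a pairwise condition. It also satisfies the DCC, since the DCC only becomes easier on a subfamily. Hence $v|_{\mathcal{W}_i}$ is a vertex of $X_i$, and we set $\pi_i(v):=v|_{\mathcal{W}_i}$ on vertices.

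Because the walls of $\mathcal{W}_i$ are pairwise disjoint, $X_i$ is a tree (as in the one-curve case discussed above). Its vertices are the complementary regions of $\bigcup\mathcal{W}_i$ in $\widetilde{S_{g,n}}$, and its edges correspond to walls of $\mathcal{W}_i$. One checks this directly: a consistent DCC orientation of pairwise disjoint walls picks out a unique region, namely the intersection of the chosen half-spaces. This intersection is nonempty by the DCC applied at a basepoint together with consistency.

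To extend $\pi_i$ over cubes, take an edge of $X$ between $v_1$ and $v_2$, which differ only at one wall $w$. If $w\in\mathcal{W}_i$, then $\pi_i(v_1)$ and $\pi_i(v_2)$ differ in exactly one wall of $\mathcal{W}_i$, so they span an edge of $X_i$, and we map the edge onto it linearly. For $j\ne i$ the restrictions to $\mathcal{W}_j$ coincide, so the edge collapses to a vertex. This is precisely the efficiency condition (3), with uniqueness coming from the fact that the $\mathcal{W}_i$ are disjoint.

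A $k$-cube of $X$ corresponds to a vertex $v$ together with $k$ pairwise crossing walls whose orientations are free. No two of these crossing walls lie in the same $\mathcal{W}_i$, since walls within one $\mathcal{W}_i$ are disjoint. So at most one coordinate of the cube is sent to an edge of $X_i$, and $\pi_i$ on the cube is a coordinate projection onto an edge or a vertex. These pieces agree on shared faces, so $\pi_i$ is a well-defined cubical map. It is surjective because every region of $\widetilde{S_{g,n}}\setminus\bigcup\mathcal{W}_i$ contains a point $x$ off all walls of $\mathcal{W}_Z$, and the orientation of every wall toward $x$ is an admissible vertex of $X$ mapping to that region.

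The main step is midpoint-monotonicity (2). Let $m$ be the midpoint of the edge of $X_i$ dual to $w\in\mathcal{W}_i$. The fiber $\pi_i^{-1}(m)$ is the union of the mid-cubes of cubes crossing $w$, that is, the part of the hyperplane $H_w$ lying in those cubes. It is not immediately clear that this union is all of $H_w$, but the check is direct. Every cube meeting $H_w$ has $w$ as one of its coordinate walls, so on that cube $\pi_i$ is the coordinate projection onto the $w$-coordinate, and the mid-cube maps exactly to $m$. Conversely, a point over $m$ must have $w$-coordinate $1/2$ in its cube, because vertices and the other edges of $X_i$ do not contain $m$. Hence $\pi_i^{-1}(m)=H_w$, which is connected by the definition of a hyperplane. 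I expect the real care to lie in confirming, via the correspondence between walls and hyperplanes, that the edges dual to $w$ form a single hyperplane $H_w$. This is standard for dual cube complexes: hyperplanes are in bijection with walls.
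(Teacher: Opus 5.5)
Your proposal is correct and follows the same route as the paper. Both define $\pi_i$ by restricting admissible orientations to $\mathcal{W}_i$, get efficiency from the partition, and get midpoint-monotonicity by identifying $\pi_i^{-1}(m)$ with the single hyperplane dual to $w$; you simply supply the details the paper leaves implicit (why $X_i$ is a tree, the cube-by-cube coordinate projection, the computation of the fiber).

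One small point: you verify surjectivity only on vertices. Surjectivity onto edges follows immediately, since $X$ is connected and a connected subset of the tree $X_i$ containing every vertex is all of $X_i$. Alternatively, pick two nearby points on either side of $w$ away from the other walls.
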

\begin{proof}
    For each admissible orientation for $\mathcal{W}_Z$, its restriction to each $\mathcal{W}_i$ is also admissible. This gives rises to a family of surjective cubical maps $\{\pi_i\from X \to X_i\}_{i=1,2,\dots.k}$. This family is efficient because $\mathcal{W}_1\sqcup\mathcal{W}_2\sqcup\dots\sqcup \mathcal{W}_k$ is a partition of $\mathcal{W}$. The midpoint-monotonicity follows from the connectedness of each wall.
\end{proof}
\begin{defn}[Tree coordinate system of $\mathscr{X}_Z$]\label{defn:TreeCrdSysXscr}
Let $Z=\{\zeta_1,\zeta_2,\dots,\zeta_k\}$ be a multicurve of $S_{g,n}$ in minimal position. We define the {\it tree coordinate system of $\mathscr{X}_Z$} as the tree coordinate system $\{\pi_i\from \mathscr{X}_Z \to \mathscr{X}_{\zeta_i}\}_{i=1,2,\dots,k}$ defined by Proposition \ref{prop:Partition_TreeSystem} for the partition $\mathcal{W}_Z=p^{-1}(\zeta_1)\sqcup p^{-1}(\zeta_2)\sqcup \dots \sqcup p^{-1}(\zeta_k)$.
\end{defn}

For an edge-path $p\from I \to G$ of a graph $G$, we say that $p$ is {\it backtracking} if its two consecutive edges are the same edge. If $G$ is a tree, then $p$ is backtracking if and only if $p$ passes through an edge more than once.

\begin{lem}\label{lem:CubicalMaptoTreeMonotone}
    Suppose that $f\from X \to Y$ is a surjective cubical map where $X$ is a CAT(0) cube complex and $Y$ is a tree. Then, the following are equivalent.
    \begin{enumerate}
        \item (Midpoint-monotone) The map $f$ is monotone over midpoints of edges in $Y$.
        \item (Monotone) The map $f$ is monotone.
        \item (No backtracking) For any edge-path geodesic $\gamma$ in $X$, $f\circ \gamma$ is not backtracking.
    \end{enumerate}
    Moreover, if $f$ is monotone, then $f^{-1}(v)$ is a convex subcomplex for any $v\in \Vertex(Y)$ and $f^{-1}(y)$ is a hyperplane for any midpoint $y$ of edge in $Y$.
\end{lem}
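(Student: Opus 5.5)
The plan is to prove the cycle (2)$\Rightarrow$(1)$\Rightarrow$(3)$\Rightarrow$(2), and then derive the ``moreover'' part from the structure theory of hyperplanes in CAT(0) cube complexes: carriers $N(H)\cong H\times[0,1]$ are convex, and hyperplanes separate $X$ into two convex halfspaces. Throughout I use that $f$ is cubical and each cube maps by a coordinate projection. Consequently, for an edge $e$ of $Y$ with midpoint $y$, the preimage $f^{-1}(y)$ is a union of mid-cubes of cubes of $X$, namely those whose projection hits $e$ in its interior.

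\textbf{(2)$\Rightarrow$(1)} is trivial.

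\textbf{(1)$\Rightarrow$(3).} Assume $f^{-1}(y)$ is connected for each edge-midpoint $y$. First I show that $f^{-1}(y)$ is a single hyperplane $H$. The set is a union of mid-cubes. If a cube $C$ meets it, then $C\cap f^{-1}(y)$ is exactly one mid-cube, because $C$ maps to a cube of the tree $Y$, so at most one coordinate direction survives. Being connected, $f^{-1}(y)$ is contained in a single hyperplane. Conversely, if an edge $e'$ is dual to $H$ and adjacent to the mid-cube, then $f(e')$ is an edge through $y$, since parallel edges across a square map to the same edge of $Y$; this can be checked square by square. Hence $f^{-1}(y)=H$.

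Now suppose an edge-geodesic $\gamma$ in $X$ maps to a backtracking path in $Y$. Then it crosses $f^{-1}(y)$ twice for some $y$, i.e. it crosses the hyperplane $H$ twice. This contradicts the standard fact that geodesic edge-paths in a CAT(0) cube complex cross each hyperplane at most once. I would write out the edge-to-square propagation carefully, since it is the one place where cubicality and coordinate projections are really used.

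\textbf{(3)$\Rightarrow$(2).} This is the step I expect to be the main obstacle. I show that every fiber $f^{-1}(p)$ is connected, for $p$ either a vertex or an interior point of an edge $e$.

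Take $a,b\in f^{-1}(v)$ vertices, and join them by a geodesic edge-path $\gamma$. Its image $f\circ\gamma$ is a path in the tree $Y$ from $v$ to $v$ (after discarding degenerate steps where an edge maps to a vertex). If it were nonconstant it would have to backtrack, contradicting (3). So every edge of $\gamma$ maps to $v$, and $\gamma\subset f^{-1}(v)$.

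For a general point $a\in f^{-1}(v)$ lying in a cube $C$, the fiber $C\cap f^{-1}(v)$ is a face of $C$ (a coordinate projection has face fibers over vertices), so it contains a vertex of $X$ in the fiber. Thus $f^{-1}(v)$ is connected. The same argument shows it is combinatorially geodesically closed, i.e. it contains every geodesic between its vertices, and hence is convex in $X$, since combinatorial convexity implies CAT(0) convexity for subcomplexes.

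For $p$ interior to $e=[v,w]$, the fiber $f^{-1}(p)$ is the set of mid-cube-level slices $\{x_i=t\}$ of cubes over $e$. Each slice deforms within its cube to the corresponding mid-cube slice, so $f^{-1}(p)\cong f^{-1}(\text{midpoint})$. It therefore suffices to show that any two dual edges $e_1,e_2$ over $e$ are joined inside $f^{-1}(e)$ through squares.

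To do this, orient $e_1,e_2$ from the $v$-side to the $w$-side. Their $v$-endpoints are joined by a geodesic inside $f^{-1}(v)$, by convexity, and likewise their $w$-endpoints inside $f^{-1}(w)$. This gives a cycle crossing $f^{-1}(e)$ twice. I then use that $H_{e_1}$ separates $X$: if $H_{e_1}\neq H_{e_2}$, some geodesic from the $v$-endpoint of $e_1$ to the $w$-endpoint of $e_2$ crosses both, and its image backtracks over $e$, contradicting (3). Hence $e_1,e_2$ are dual to the same hyperplane, and the hyperplane is connected, which gives connectivity of the fiber. The argument also shows $f^{-1}(y)=H$ is a hyperplane, yielding the ``moreover'' statement together with the convexity of vertex fibers established above.
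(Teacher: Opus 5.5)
Your proposal is correct, but it is organized differently from the paper's proof.

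The paper uses (1) as the hub.
\begin{itemize}
\item For (1)$\Rightarrow$(2), it writes a vertex fiber $f^{-1}(y)$ as the intersection, over the midpoints $m$ of edges at $y$, of the largest subcomplexes contained in the appropriate half-spaces of the hyperplanes $f^{-1}(m)$. Convexity then follows from the convexity of carriers.
\item For (3)$\Rightarrow$(1), it argues directly. Suppose a midpoint fiber contained two hyperplanes $H\neq H'$ with dual edges $e,e'$. Since $e'$ lies on one side of $H$, take the endpoint of $e$ on the other side and the endpoint of $e'$ lying over the same vertex $w$ of $Y$. A geodesic between them crosses $H$, so its image is a closed path at $w$ through the edge, hence it backtracks. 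This step needs no information about vertex fibers.
\end{itemize}

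You instead close the cycle with (3)$\Rightarrow$(2) and treat vertex fibers first. A geodesic between two vertices of $f^{-1}(v)$ maps to a closed path in a tree, so it is constant. This gives connectivity and combinatorial (hence CAT(0)) convexity of $f^{-1}(v)$ directly from no-backtracking. You never need the half-space subcomplexes or their convexity, and the convexity in the ``moreover'' clause comes out of the equivalence proof itself. The price is that your midpoint-fiber step depends on this vertex-fiber convexity, so it is longer than the paper's one-geodesic argument, which you could substitute verbatim.

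When you write your version up, state explicitly the fact that drives it. Every edge dual to a hyperplane over $e$ maps onto $e$, so a geodesic inside $f^{-1}(v)$ crosses no such hyperplane. Hence no hyperplane over $e$ separates two vertices of the same vertex fiber. This fact is what guarantees that your geodesic from the $v$-endpoint of $e_1$ to the $w$-endpoint of $e_2$ crosses both $H_{e_1}$ and $H_{e_2}$.

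With that fact in hand, parity already finishes the argument. Consider your closed path: $e_1$, then a geodesic in $f^{-1}(w)$, then $e_2$ backwards, then a geodesic in $f^{-1}(v)$. It must cross $H_{e_1}$ an even number of times, and only $e_1$ and $e_2$ can cross it, so $e_2$ is dual to $H_{e_1}$. The second appeal to (3) is therefore unnecessary, though it is also valid: a tree path crossing $e$ twice in the same direction must cross it a third time in between. Finally, passing from combinatorial to CAT(0) convexity uses that $f^{-1}(v)$ is a full subcomplex. This holds because a cube all of whose vertices map to $v$ maps to $v$.
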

\begin{proof}
    (Monotone)$\Rightarrow$(Midpoint-monotone): Trivial.
    
    (Midpoint-monotone)$\Rightarrow$(Monotone): Suppose that $f$ is monotone over midpoints of edges in $Y$. Let $y$ be a vertex of $Y$. Let $m$ be the midpoint of an edge in $Y$ incident to $y$. The preimage $H_m:=f^{-1}(m)$ is a hyperplane, such that $X \setminus H_m$ has two components $\overleftarrow{H_m}, \overrightarrow{H_m}$. Without loss of generality, suppose that $\overrightarrow{H_m}$ contains $f^{-1}(y)$. Denote by $M(\overrightarrow{H_m})$ the largest cube subcomplex of $X$ contained in $\overrightarrow{H_m}$. By using the fact that the carrier $N(H_m)$ is a convex subcomplex of $X$, one can show that $M(\overrightarrow{H_m})$ is a convex subcomplex of $X$ as well. Then $f^{-1}(y)$ is equal to the intersection of $M(\overrightarrow{H_m})$'s for the midpoints of all edges incident to $Y$, and hence it is a convex subcomplex of $X$. In particular, $f^{-1}(y)$ is connected.

    (Midpoint-monotone)$\Rightarrow$(No backtracking): The midpoint-monotonicity of $f$ implies that for any midpoint $y$ of an edge of $Y$, $f^{-1}(y)$ is a hyperplane. Hence, if $f\circ p$ is backtracking, where $p$ is an edge-path geodesic in $X$, then $p$ passes through the same hyperplane more than once. This contradicts \cite[Lemma 3.9]{Wise_Book}.

    (No backtracking)$\Rightarrow$(Midpoint-monotone): Suppose that there exists a midpoint $y$ of an edge $e_y$ of $Y$ such that $f^{-1}(y)$ contains two distinct hyperplanes $H$ and $H'$. Denote by $w_1$ and $w_2$ the endpoints of $e_y$ in $Y$. There exist edges $e$ and $e'$ of $X$ such that $f(e)=f(e')=e_y$, $e\cap H\neq \emptyset$, and $e'\cap H'\neq \emptyset$. Denote by $v_1$ and $v_2$ (resp.\@ $v_1'$ and $v_2'$) the endpoints of $e$ (resp.\@ $e'$) such that $f(v_1)=w_1$ and $f(v_2)=w_2$ (resp.\@ $f(v_1')=w_1$ and $f(v_2')=w_2$). 
    
    Consider the half-spaces $\overleftarrow{H},\overrightarrow{H}$ of $H$ such that $v_1\in \overleftarrow{H}$ and $v_2\in \overrightarrow{H}$. Since $e'$ does not intersects $H$, either $\overleftarrow{H}$ or $\overrightarrow{H}$ contains both $v_1'$ and $v_2'$. First, suppose $v_1',v_2'\in \overleftarrow{H}$. Consider an edge-path geodesic $p$ from $v_2'$ to $v_2$. Then $p$ has to cross $H$. Every edge of $X$ crossing $H$ is mapped by $f$ onto $e$. This implies that $f\circ p$ passes through $e$ while starting and ending at $w_2$. Hence $f\circ p$ is backtracking. If $v_1',v_2'\in \overrightarrow{H}$, then we consider an edge-path geodesic $p$ from $v_1'$ to $v_1$ and apply the similar argument.
\end{proof}

\begin{lem}\label{lem:ProdMapTrees}
Suppose that $\{\pi_i\from X \to X_i\}_{i=1}^n$ is a family of surjective cubical maps from a cube complex $X$ to trees $X_i$'s. Define a product map $\times_{i=1}^n \pi_i\from X \to \prod_{i=1}^n X_i$ by
    \begin{equation}\label{eqn:ProdMap}
        x \mapsto (\pi_1(x),\pi_2(x),\dots,\pi_n(x)).
    \end{equation}
Consider $\prod_{i=1}^n X_i$ as a cube complex. Then the following hold.
\begin{enumerate}
    \item The product map $\times_{i=1}^n \pi_i\from X \to \prod_{i=1}^n X_i$ is cubical if and only if for each edge $e$ of $X$, there exists at most one $i\in \{1,2,\dots,n\}$ such that $\pi_i(e)$ is an edge.
    \item The product map $\times_{i=1}^n \pi_i\from X \to \prod_{i=1}^n X_i$ is cubical and homeomorphic on each cube if and only if $\{\pi_i\from X \to X_i\}_{i=1,2,\dots,n}$ is efficient.
    \item Suppose that $\{\pi_i\from X \to X_i\}_{i=1,2,\dots,n}$ is efficient. The product map $\times_{i=1}^n \pi_i\from X \to \prod_{i=1}^n X_i$ is an embedding if and only if $\times_{i=1}^n\pi_i$ is injective on $\Vertex(X)$. In particular, if $X$ is CAT(0) and $\{\pi_i\from x\to X_i\}$ is efficient and monotone, then $\times_{i=1}^n\pi_i$ is an embedding.
\end{enumerate}
\end{lem}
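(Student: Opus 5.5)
We prove the three items in turn, working one cube at a time. Throughout we use the convention that each $\pi_i$ maps a cube $C\cong[0,1]^k$ of $X$ onto a cube of $X_i$ by a coordinate projection. Since $X_i$ is a tree, its cubes have dimension $0$ or $1$. So on $C$ the map $\pi_i$ is either constant at a vertex, or is the projection to a single coordinate $x_{j(i)}$ onto an edge of $X_i$. The coordinate $j(i)$ is detected by the edges of $C$: $\pi_i$ sends an edge of $C$ parallel to the $j$-th axis to an edge exactly when $j=j(i)$.

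\emph{Item (1).} Suppose first that for every edge $e$ of $X$ at most one $\pi_i(e)$ is an edge. Fix a cube $C$ and look at the indices $i$ for which $\pi_i|_C$ is non-constant. If two such indices had $j(i)=j(i')$, then an edge of $C$ in direction $j(i)$ would be sent to edges by both $\pi_i$ and $\pi_{i'}$, contradicting the hypothesis. Hence these indices use distinct coordinates of $C$. It follows that $\times\pi_i|_C$ is the coordinate projection of $[0,1]^k$ onto the product of the corresponding edges and vertices, which is a cube of $\prod X_i$; so the product map is cubical. Conversely, suppose some edge $e$ has $\pi_i(e)$ and $\pi_{i'}(e)$ both edges for $i\neq i'$. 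Then the image of $e$ is the diagonal of the square $\pi_i(e)\times\pi_{i'}(e)$. This is not a cube, nor a coordinate projection of $e$, so the product map is not cubical.

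\emph{Item (2).} Assume efficiency. Every direction $j$ of $C$ is then hit by exactly one index $i$, so $i\mapsto j(i)$ gives a bijection between the non-constant indices on $C$ and the coordinates of $C$. Hence $\times\pi_i|_C$ is a homeomorphism onto a $k$-cube. Conversely, suppose the product map is cubical and homeomorphic on each cube. Item (1) already gives at most one index per edge. If some edge were sent to a vertex by every $\pi_i$, the map would fail to be injective on that edge, so exactly one index works.

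\emph{Item (3).} One direction is trivial. For the other, assume efficiency and injectivity on $\Vertex(X)$. An edge of $\prod X_i$ lifts to at most one edge of $X$, because an edge is determined by its endpoints and the endpoints are distinct vertices with distinct images. Similarly, each cube of $X$ is determined by its vertex set, which maps injectively. So distinct cubes have distinct images. Two distinct cubes could still have images that overlap, but the intersection of two cubes of the product is a common face. Using injectivity on vertices, this face pulls back to faces of both cubes with the same vertex set, hence the same face. So the map is injective, and continuity plus cube-by-cube homeomorphism gives an embedding (onto a subcomplex, with the weak topology).

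For the "in particular" clause we must show injectivity on vertices when $X$ is CAT(0) and the system is monotone and efficient. This is the main step. Take distinct vertices $u\neq v$ and a geodesic edge-path between them; it crosses some hyperplane $H$, and crosses it only once. Let $e$ be the edge of the path dual to $H$. By efficiency there is a unique $i$ with $\pi_i(e)$ an edge. By Lemma \ref{lem:CubicalMaptoTreeMonotone}, $\pi_i^{-1}$ of the midpoint of $\pi_i(e)$ is a hyperplane, and it must be $H$. By the same lemma, $\pi_i\circ\gamma$ is a non-backtracking path in the tree $X_i$, since $\gamma$ is a geodesic. It contains at least the edge $\pi_i(e)$, so its endpoints are distinct. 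Thus $\pi_i(u)\neq\pi_i(v)$, which proves injectivity on vertices.
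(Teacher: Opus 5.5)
Your treatment of (1), (2) and the first half of (3) follows the paper, with the cube-by-cube bookkeeping (the coordinate $j(i)$ picked up by each non-constant $\pi_i$ on a cube) written out where the paper just says ``follows similarly'' and ``straightforward from (2)''. The genuine difference is in the ``in particular'' clause, and your route there is correct.

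The paper fixes a vertex ${\bf v}=(v_1,\dots,v_n)$ of $\prod_i X_i$ and looks at the fiber $\bigcap_i\pi_i^{-1}(v_i)$. By the ``moreover'' part of Lemma~\ref{lem:CubicalMaptoTreeMonotone}, each $\pi_i^{-1}(v_i)$ is a convex subcomplex, so the fiber is empty or convex, hence connected. If it contained two vertices it would contain an edge collapsed by every $\pi_i$, contradicting efficiency.

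You instead separate two given vertices. Along a geodesic edge-path from $u$ to $v$, the unique tree that sees a chosen edge receives a non-backtracking image path of positive length, so $\pi_i(u)\neq\pi_i(v)$. This needs only the implication (Midpoint-monotone)$\Rightarrow$(No backtracking) of Lemma~\ref{lem:CubicalMaptoTreeMonotone}. It bypasses the convexity of vertex fibers, whose proof in the paper is only sketched. Run over every edge of the geodesic, it even yields the identity $d_{L^1}(u,v)=\sum_i d_{L^1}(\pi_i(u),\pi_i(v))$ of Proposition~\ref{prop:TreeCoordiSystem}(4).

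The paper's route instead describes the fibers of the product map directly; it is the vertex case of the systematic monotonicity in Proposition~\ref{prop:TreeCoordiSystem}(3). Identifying the hyperplane over the midpoint with $H$ is harmless but unnecessary.

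One qualification on the first half of (3). The facts ``an edge is determined by its endpoints'' and ``a cube is determined by its vertex set'' are not automatic for a cell complex glued from cubes along faces. Without them the stated equivalence fails: take two edges joining the same vertices $u,v$, both mapped onto one edge of a tree. This gives an efficient family ($n=1$) that is injective on vertices but not an embedding. The paper's ``straightforward from (2)'' passes over the same point.

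Both facts do hold whenever any two cubes meet in a single common face, in particular when $X$ is CAT(0). So your proof of the ``in particular'' clause is complete once you say this explicitly.
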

\begin{proof}
    (1) If $\pi_1(e)$ and $\pi_2(e)$ are edges, then $\pi_1\times \pi_2\from X\to \prod_{i=1}^2 X_i$ maps $e$ to the diagonal of the cube $\pi_1(e)\times \pi_2(e)$. Thus $\times_{i=1}^n \pi_i$ is not cubical if there exists an edge $e$ such that $\pi_i(e)$ is an edge for more than one $i$. The converse follows similarly.

    (2) Suppose that there exists an edge $e$ such that $\pi_i(e)$ is a vertex for each $i\in\{1,2,\dots,n\}$. Then $\times_{i=1}^n\pi_i(e)$ is a vertex, and thus $\times_{i=1}^n\pi_i$ is not injective. The converse follows similarly.
    
    (3) The equivalence statement is straightforward from (2). Let us prove the other statement. For any ${\bf v}:=(v_1,v_2,\dots,v_n)\in \Vertex( \prod_{i=1}^n X_i)$, we have
    \[
        \left(\times_{i=1}^n \pi_i\right)^{-1}( {\bf v})=\bigcap_{i=1}^n \pi_i^{-1}(v_i).
    \]
    By Lemma \ref{lem:CubicalMaptoTreeMonotone}, $\bigcap_{i=1}^n \pi_i^{-1}(v_i)$ is either the empty set or a convex subcomplex. Suppose $\bigcap_{i=1}^n \pi_i^{-1}(v_i)\neq \emptyset$. If $\bigcap_{i=1}^n \pi_i^{-1}(v_i)$ contains an edge $e$, then $\pi_i(e)=v_i$ for each $i\in\{1,2,\dots,n\}$. It contradicts the efficiency of the tree coordinate system. Hence, $\times_{i=1}^n \pi_i( {\bf v})$ is a singleton.
\end{proof}
\begin{prop}\label{prop:TreeCoordiSystem}
    Let $X$ be a CAT(0) cube complex. Suppose that $\{\pi_i\from X \to X_i\}_{i=1,2, \dots, n}$ is a tree coordinate system for $X$. Then the following hold.
    \begin{enumerate}
        \item For every $k$-cube $C_k$ in $X$, there exists a subset $\{i_1,i_2,\dots,i_k\} \subset \{1,2,\dots,n\}$ of $k$ distinct elements such that for every $j\in \{1,2,\dots,k\}$, $\pi_{i_j}(C_k)=e_{i_j}$ for some edge $e_{i_j}$ in $X_{i_j}$. Thus, we have $\dim(X) \le n$.
        \item Every hyperplane is of the form $\pi_i^{-1}(x)$, where $x$ is the midpoint of an edge in $X_i$.
        \item (Systematic monotonicity) For any $\{i_1,i_2,\dots,i_k\}\subset\{1,2,\dots,n\}$ and any choice $x_{i_j}\in X_{i_j}$ for $1\le j\le k$, the intersection of fibers $ \bigcap_{j=1}^k \pi_{i_j}^{-1}(x_{i_j})$ is either connected or empty.
        \item For any $v,w\in \Vertex(X)$,
        \begin{equation}\label{eqn:L1distance equality}
            d_{(X,L^1)}(v,w) = \sum_{i=1}^n d_{(X_i,L^1)}(\pi_i(v),\pi_i(w)).
        \end{equation}
        Equivalently, an edge-path $\gamma$ in $X$ is a geodesic edge-path if and only if the projected edge-path $\pi_i\circ \gamma$ is not backtracking for any $i\in \{1,2,\dots,n\}$.
    \end{enumerate}    
\end{prop}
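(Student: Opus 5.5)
The plan is to reduce everything to the two lemmas already proved: Lemma \ref{lem:CubicalMaptoTreeMonotone}, which turns midpoint-monotonicity into full monotonicity, convexity of vertex fibers, and the fact that midpoint fibers are hyperplanes; and Lemma \ref{lem:ProdMapTrees}(3), which shows that $\Pi:=\times_{i=1}^n\pi_i$ embeds $X$ as a subcomplex of $\prod_i X_i$. We first record that each $\pi_i$ is monotone. Each $\pi_i$ is midpoint-monotone by definition, so Lemma \ref{lem:CubicalMaptoTreeMonotone} applies.

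\emph{Items (1) and (2).} For (1), take a $k$-cube $C_k$ and its $k$ edge directions at a vertex. By efficiency, each edge $e$ of $C_k$ is sent to an edge by exactly one index $i(e)$. Parallel edges of $C_k$ lie in one hyperplane. Since $\pi_i$ is cubical and equals a coordinate projection on each cube, parallel edges share the same index. Since $\Pi$ is a homeomorphism on cubes (Lemma \ref{lem:ProdMapTrees}(2)), its image $\Pi(C_k)$ is a $k$-cube in $\prod X_i$. A cube in a product of trees is a product of at most one edge from each factor. Hence the $k$ directions of $C_k$ use $k$ distinct indices, which gives $\dim X\le n$. For (2), take a hyperplane $H$ and a dual edge $e$, and let $i$ be its efficient index, so that $\pi_i(e)=e_i$. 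By Lemma \ref{lem:CubicalMaptoTreeMonotone}, $\pi_i^{-1}(m_i)$ is a hyperplane, where $m_i$ is the midpoint of $e_i$. This hyperplane contains the midpoint of $e$, so it equals $H$, because each edge has a unique dual hyperplane.

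\emph{Item (4).} I prove this next, since (3) uses it. For an edge-path geodesic $\gamma$, Lemma \ref{lem:CubicalMaptoTreeMonotone} shows that no $\pi_i\circ\gamma$ backtracks. In a tree, a non-backtracking edge-path is a geodesic. Each edge of $\gamma$ projects to an edge in exactly one factor and to a vertex in all the others. Therefore $d_{L^1}(v,w)=|\gamma|=\sum_i d_{X_i}(\pi_i v,\pi_i w)$. Conversely, suppose $\gamma$ is an edge-path whose projections never backtrack. Then its length is $\sum_i d_{X_i}$, which equals $d_{L^1}(v,w)$, so $\gamma$ is a geodesic. (Backtracking here means backtracking after deleting the constant steps.)

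\emph{Item (3).} Write $F=\bigcap_j \pi_{i_j}^{-1}(x_{i_j})$. First, if some $x_{i_j}$ lies in the interior of an edge but is not its midpoint, then by the cubical coordinate-projection structure the fiber is a parallel copy of the hyperplane fiber. So we may replace each $x_{i_j}$ either by a vertex or by a midpoint. Then each fiber $\pi_{i_j}^{-1}(x_{i_j})$ is convex: it is a convex subcomplex if $x_{i_j}$ is a vertex, and a hyperplane if it is a midpoint. Hyperplanes are convex in the CAT(0) $L^2$ metric because $N(H)\cong H\times[0,1]$ is convex. An intersection of convex subsets of a CAT(0) space is convex, hence connected or empty.

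\emph{Main obstacle.} The hardest step is the non-midpoint case in (3), where we need the fiber over an interior point of an edge to be homeomorphic to the hyperplane fiber. I would handle it using the product structure $N(H)\cong H\times[0,1]$, on which $\pi_i$ is the projection to $[0,1]$ followed by the identification with the edge. Intersecting several such fibers then amounts to taking a slice of an intersection of carriers. Carriers are convex, and a slice $H\times\{t\}$ is isometric to $H$ inside the convex set $N(H)$. So the same convexity argument applies after restricting to $\bigcap_j N(H_j)$, with care taken that products of slices stay convex there.
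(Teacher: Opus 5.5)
Your proposal is correct and follows essentially the same route as the paper. Items (1), (2), and (4) come from efficiency together with Lemma~\ref{lem:CubicalMaptoTreeMonotone}, and item (3) comes from convexity of vertex fibers and hyperplanes after reducing to vertices and midpoints. Your extra treatment of non-midpoint interior points, which the paper simply takes as ``without loss of generality'', is sound; it is cleanest to note that each slice $H\times\{t\}$ is convex in $N(H)\cong H\times[0,1]$ and hence in $X$, so no reduction is actually needed.
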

\begin{proof}
    (1) and (2) easily follow from the monotonicity and the efficiency of tree coordinate system (Definition \ref{defn:TreeCoordiate}), and (4) follows from Lemma \ref{lem:CubicalMaptoTreeMonotone}.

    Let us prove (3). Without loss of generality, we assume that $i_j=j$ and $x_{j}\in X_j$ is either a vertex or a midpoint of edge for $i \in \{1,2,\dots ,k\}$. If $x_i\in \Vertex(X_i)$ then $\pi_i^{-1}(x_i)$ is a convex subcomplex of $X$ by Lemma \ref{lem:CubicalMaptoTreeMonotone}. If $x_i$ is midpoint, then $\pi_i^{-1}(x_i)$ is a hyperplane, which is also convex subset because its carrier is a convex subcomplex. Hence the intersection $\bigcap_{i=1}^k \pi_i^{-1}(x_i)$ is either empty or convex.
\end{proof}

\subsection{Oriented cube complexes and the Roller boundary} Later, we will see that any $x$-generic measured foliation $\Fcal\in \MF(S_{g,n+1})$ induces orientations of edges in the cube complex $\mathscr{X}_Z$. Under the identification between edges of $\mathscr{X}_Z$ and walls in $\mathcal{W}_Z$, it gives rises to a consistent orientation of walls toward infinity (Definition \ref{defn:Orientation_Walls}). That is, $\Fcal$ corresponds to a unique element in the Roller boundary $\partial_R X$. In this section, we investigate general properties of directed edge-paths in oriented cube complexes and their relations with the Roller boundary.

Consider a $k$-cube $C=\{(x_1,x_2,\dots,x_k)~|~ x_i\in [0,1], 1\le i \le k\}$. Edges of $C$ that are of the form $(\epsilon_1,\epsilon_2,\dots,\epsilon_{i-1},[0,1],\epsilon_{i+1},\dots,\epsilon_k)$, where $\epsilon_j\in\{0,1\}$ for $j \in \{1,2,\dots,k\} \setminus \{i\}$, are called $i^{th}$-coordinate edges. We say that two edges of $C$ are {\it parallel} if both are $i^{th}$-coordinate edges for the same $i\in\{1,2,\dots,k\}$.

\begin{defn}[Edge orientations, mono-outgoing cubes]\label{defn:OrientedCubeCplx}
    Let $X$ be a cube complex. An {\it edge orientation} of $X$, is an orientation of edges of $X$ for which parallel edges of each cube $C=[0,1]^k$ are oriented parallelly, i.e., on each cube identified with $[0,1]^k$, the push-forwarded orientations from parallel edges via coordinate projection maps $[0,1]^k\to [0,1]$ coincide.
    
    We say that an edge orientation of $X$ has {\it mono-outgoing cubes} if, for each vertex $v$, all the outgoing edges of $v$, if exist, are contained in a single cube.
\end{defn}

\begin{defn}[Directed edge-paths]
    Let $X$ be a cube complex with an edge orientation. A {\it directed edge-path} of $X$ is a continuous map $p\from [0,n] \to X$ such that
    \begin{enumerate}
        \item $p$ maps integers to vertices of $X$,
        \item $p|_{[i,i+1]}$ is a homeomorphism onto an edge for any integer $0 \le i<n$, and
        \item the map $p$ is orientation-preserving where each edge $[i,i+1]$ is oriented from $i$ to $i+1$.
    \end{enumerate}
    An {\it infinite directed edge-path} is a continuous map $p\from [0,\infty) \to X$ with the same properties. A {\it reversely directed edge-path} is similarly defined with the orientation-preserving condition being replaced with orientation-reversing. We also refer to the sequence of the adjacent vertices $(p(i))_{i\ge0}$ as a directed edge-path. We define {\it directed edge-paths with layovers} by modifying (1) to
    \begin{enumerate}[resume]
        \item[(1')] $p|_{[i,i+1]}$ is either a homeomorphism onto an edge or a constant map to a vertex.
    \end{enumerate}
    A directed edge-path with layovers is {\it bounded} (resp.\@ {\it unbounded}) if its image consists of finitely (resp.\@ infinitely) many distinct edges.
\end{defn}

\begin{prop}\label{prop:DirectedPathGeodesic}
    Consider a CAT(0) cube complex $X$ with an edge orientation. Then, every directed edge-path $p\from [0,l]\to X$ is an edge-path geodesic.
\end{prop}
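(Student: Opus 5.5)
The plan is to reduce the statement to the standard characterization of edge-path geodesics in a CAT(0) cube complex: an edge-path is a geodesic if and only if it crosses each hyperplane at most once. This is \cite[Lemma 3.9]{Wise_Book}, the same fact already used in the proof of Lemma \ref{lem:CubicalMaptoTreeMonotone}. So it suffices to show that a directed edge-path $p\from[0,l]\to X$ never crosses the same hyperplane twice.

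The first step is to show that an edge orientation in the sense of Definition \ref{defn:OrientedCubeCplx} induces a coherent orientation on each hyperplane $H$. Concretely, I will show that all dual edges of $H$ point from one fixed half-space of $X\setminus H$ into the other. Two dual edges of $H$ lying in a common cube are parallel in that cube, so by definition they are oriented parallelly; hence both point from the same side of $H$ to the same side. The dual edges of $H$ are connected to each other through chains of cubes meeting $H$: the carrier $N(H)\cong H\times[0,1]$ is connected, and the dual edges are exactly the fibres $\{h\}\times[0,1]$ over vertices $h$ of $H$. So any two dual edges are linked by a sequence of dual edges in which consecutive ones are parallel in a common square. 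Since $X\setminus H$ has exactly two components $\overleftarrow{H},\overrightarrow{H}$, and the identification $N(H)\cong H\times[0,1]$ matches the $0$-side and the $1$-side consistently with these components, the orientation propagates along the chain. The conclusion is that every dual edge of $H$ is oriented from $\overleftarrow{H}$ to $\overrightarrow{H}$, after relabeling.

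The second step then concludes immediately. Suppose the directed path $p$ crosses $H$ at two different times. The first crossing moves $p$ from $\overleftarrow{H}$ into $\overrightarrow{H}$. To cross $H$ again, $p$ would have to traverse a dual edge of $H$ from $\overrightarrow{H}$ to $\overleftarrow{H}$, which goes against that edge's orientation. This contradicts $p$ being directed. Hence $p$ crosses each hyperplane at most once, and so $p$ is an edge-path geodesic.

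The only real obstacle is the propagation argument in the first step. It needs care only in checking that the product structure of $N(H)$ identifies the two sides globally, so that no "twist" can occur. That is guaranteed by the cited facts that $N(H)\cong H\times[0,1]$ \cite[Corollary 3.4]{Wise_Book} and that $H$ separates $X$ into exactly two components, both of which rely on $X$ being CAT(0).
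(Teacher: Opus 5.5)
Your proof is correct and follows the paper's argument. The paper also invokes \cite[Lemma 3.9]{Wise_Book} to obtain a hyperplane $H$ crossed twice, then observes that the dual edges of $H$ are oriented in a parallel way along $H$, contradicting directedness. Your propagation argument through the carrier $N(H)\cong H\times[0,1]$ supplies the detail behind that one-line remark.
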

\begin{proof}
    Suppose that there exists a directed edge-path $p\from [0,l]\to X$ that is not an edge-path geodesic. By \cite[Lemma 3.9]{Wise_Book}, $p$ passes through a hyperplane $H$ at least twice. Since dual edges of $H$ are oriented in a parallel way along $H$, it contradicts the assumption that $p$ is a directed edge-path.
\end{proof}

\subsection*{Edge orientations vs. Wall orientations} Suppose that $X$ is a CAT(0) cube complex and $\Hcal_X$ is the set of hyperplanes. We can consider $(X,\Hcal_X)$ as a wall space. By a {\it wall orientation} of $X$, we mean the wall orientation of $(X,\Hcal_X)$. In this context, we use the terms walls and hyperplanes interchangeably.

Recall that the {\it Roller boundary $\partial_R X$} is defined as the set of orientations of walls $\zeta\from \mathcal{H}_X \to \overline{\mathcal{H}_X}$ that are consistently oriented toward infinity (Definition \ref{defn:Orientation_Walls}). Then, $\overline{X}^R:=\Vertex(X)\cup \partial_R X$ is embedded in $\{0,1\}^{\Hcal_X}$ as a totally disconnected compact Hausdorff space such that $\Vertex(X)$ is dense in $\overline{X}^{R}$ \cite[Proposition 6.11]{Roller_CubeCplx_Median}.

Note that every hyperplane intersects each cube along parallel edges. Hence, an edge orientation of $X$ determines a wall orientation, and vice versa.

\begin{prop}\label{prop:EdgeOri_WallOri}
    Suppose $X$ is a CAT(0) cube complex and $\Hcal_X$ is the set of hyperplanes. Consider $(X,\mathcal{H}_X)$ as a wall space. Then, there is a bijection
    \[
        \left\{\textnormal{edge orientations of }X\right\}
        \leftrightarrow
        \left\{\textnormal{wall orientations of }(X,\mathcal{H}_X)\right\},
    \]
    which restricts to the bijection
    \[
        \left\{\begin{array}{c} \textnormal{edge orientations of }X \\ \textnormal{with mono-outgoing cubes}\end{array}\right\}
        \leftrightarrow
        \left\{\begin{array}{c} \textnormal{wall orientations of }(X,\mathcal{H}_X) \\ \textnormal{that are consistently oriented}\end{array}\right\}.
    \]
    Moreover, the vertex set $\Vertex(X)$ is in bijection with
    \[
        \left\{\begin{array}{c} \textnormal{edge orientations of }X \\ \textnormal{with mono-outgoing cubes}\\ \textnormal{with no infinite directed edge-paths}\end{array}\right\}
        \leftrightarrow
        \left\{\begin{array}{c} \textnormal{wall orientations of }(X,\mathcal{H}_X) \\ \textnormal{that are consistently oriented}\\ \textnormal{with the descending chain condition}\end{array}\right\},
    \]
    and the Roller boundary $\partial_R X$ is in bijection with
    \[
        \left\{\begin{array}{c} \textnormal{edge orientations of }X \\ \textnormal{with mono-outgoing cubes}\\ \textnormal{and infinite directed edge-paths}\end{array}\right\}
        \leftrightarrow
        \left\{\begin{array}{c} \textnormal{wall orientations of }(X,\mathcal{H}_X) \\ \textnormal{that are consistently oriented}\\ \textnormal{toward infinity}\end{array}\right\}.
    \]
\end{prop}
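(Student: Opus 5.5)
The plan is to build the first bijection directly from the correspondence between hyperplanes and parallelism classes of edges, and then check that each of the three restricted bijections follows by matching the defining conditions one at a time.

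\textbf{Step 1: the basic bijection.} Given an edge orientation of $X$, each hyperplane $H$ has dual edges that are equivalent under the relation generated by parallelism in cubes. By definition, an edge orientation orients parallel edges of each cube parallelly, so all dual edges of $H$ cross $H$ from the same side. We let $v(H)$ be the half-space containing their terminal endpoints. To make "same side" well defined, we use $N(H)\cong H\times[0,1]$ \cite[Corollary 3.4]{Wise_Book}. Conversely, a wall orientation orients each edge $e$ toward $v(H_e)$, and parallel edges in a cube share a dual hyperplane. These two assignments are mutually inverse, which gives the first bijection.

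\textbf{Step 2: mono-outgoing cubes versus consistency.} Suppose $v$ is consistent. Take outgoing edges $e_1,e_2$ at a vertex $x$, with dual hyperplanes $H_1,H_2$. Then $x\in \overleftarrow{H_1}\cap\overleftarrow{H_2}$, while the orientation points to $\overrightarrow{H_1}$ and $\overrightarrow{H_2}$, and consistency gives $\overrightarrow{H_1}\cap\overrightarrow{H_2}\neq\emptyset$. If $H_1,H_2$ were disjoint, then together with $x\notin\overrightarrow{H_i}$ this would force the inward half-spaces to be nested with the outward ones disjoint. That cannot happen, because the outward sides contain the far endpoints of $e_1$ and $e_2$ and so lie on opposite sides of each other. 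Hence $H_1,H_2$ cross, and since $H_1,H_2$ are adjacent to $x$, $e_1,e_2$ span a square \cite[Section 6]{Wise_Book}. Since links are flag, all outgoing edges at $x$ then span a single cube.

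Conversely, suppose consistency fails, so that $v(H_1)\cap v(H_2)=\emptyset$ for some pair. Then $H_1,H_2$ are disjoint and oriented away from each other. I would then pick a vertex $x$ in the region between them adjacent to $H_1$ and minimizing distance to $H_2$. I expect to get two outgoing edges at $x$, one toward $H_1$ and one toward $H_2$, that do not lie in a common cube. I anticipate this step is the main obstacle. It needs a careful choice of $x$, for instance a pair realizing the minimal distance between $N(H_1)$ and $N(H_2)$, plus an argument that every hyperplane separating them is oriented outward. The outward-orientation argument would go by induction on the number of separating hyperplanes: replace $H_2$ by the separating hyperplane closest to $H_1$ that is oriented away from $H_1$. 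The base case is hyperplanes whose carriers touch, where a vertex in both carriers has both duals outgoing and non-crossing.

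\textbf{Step 3: DCC and the Roller boundary.} For consistent orientations, the DCC fails exactly when there is an infinite strictly descending chain $v(H_1)\supsetneq v(H_2)\supsetneq\cdots$. Given such a chain, starting from any vertex we walk along outgoing edges. By the nesting and local finiteness, we can always continue crossing the $H_i$ in order, which produces an infinite directed path. Conversely, an infinite directed path crosses infinitely many distinct hyperplanes by Proposition \ref{prop:DirectedPathGeodesic}. All of these are oriented away from the start vertex $x_0$, so $x_0\notin v(H)$ for infinitely many $H$, and the DCC fails. Finally, consistent orientations with the DCC are exactly the vertices of $X$, via the standard Sageev/Roller identification, and those without the DCC form $\partial_R X$ by definition. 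This yields the last two bijections.
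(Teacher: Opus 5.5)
Your proposal is correct and follows essentially the same route as the paper: the first bijection comes from parallel orientation of dual edges; consistency implies mono-outgoing cubes because outgoing edges at a vertex must have crossing dual hyperplanes; an inconsistent pair is reduced to one with no separating hyperplane, which yields two outgoing edges not in a common cube; and failure of the DCC is matched with infinite directed edge-paths. In the reduction step you replace $H_2$ or $H_1$ by a separating hyperplane $K$ according to which side $v(K)$ is (you never need every separating hyperplane to point outward), and your requirement that the final vertex lie in $N(H_1)\cap N(H_2)$ is more careful than the paper's choice of an arbitrary vertex between $H_1$ and $H_2$.
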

\begin{proof}
    The first bijection is immediate. The existence of infinite directed edge-paths is equivalent not to satisfy the descending chain condition (Definition \ref{defn:Orientation_Walls}). Hence, the third and fourth bijections are obtained, assuming the second bijection. Below, we show the equivalence between having mono-outgoing cubes and being consistently oriented. 

    Consider a wall orientation $\zeta\from \Hcal_X\to \overline{\Hcal_X}$ and its corresponding edge orientation of $X$. Suppose that there is a vertex $v\in X$ such that it has two outgoing edges $e_1$ and $e_2$ that are not contained in the same cube. Then, their dual hyperplanes $H_1$ and $H_2$ are disjoint, and both $\zeta(H_1)$ and $\zeta(H_2)$ do not contain $v$. It follows that $\zeta(H_1) \cap \zeta(H_2)=\emptyset$, and thus $\zeta$ violates the consistency.

    Conversely, consider a wall orientation $\zeta\from \mathcal{H}_X \to \overline{\mathcal{H}_X}$ violating the consistency. Then, there exist disjoint hyperplanes $H_1, H_2\in \mathcal{H}_X$ with $\overleftarrow{H_1}:=\zeta(H_1)$ and $\overrightarrow{H_2}:=\zeta(H_2)$ such that $\overleftarrow{H_1} \cap \overrightarrow{H_2}=\emptyset$. By replacing $H_2$ with some hyperplane separating $H_1$ and $H_2$, we can assume that no hyperplanes separate $H_1$ and $H_2$. Choose a vertex $v$ in ${\rm int}(\overrightarrow{H_1} \cap \overleftarrow{H_2})$. Then, $v$ has two outgoing edges $e_1$ and $e_2$, which are dual to $H_1$ and $H_2$. Since $H_1$ and $H_2$ are disjoint, $e_1$ and $e_2$ are not contained in the same cube.
\end{proof}

\begin{defn}[Oriented CAT(0) cube complex]
    A CAT(0) cube complex $X$ is said to be {\it oriented} if it is endowed with either an edge orientation or a wall orientation---or with both, in such a way that they correspond via the bijection given in Proposition \ref{prop:EdgeOri_WallOri}. For any $\zeta \in \Vertex(X)\cup \partial_R X$, we say that $x$ is {\it oriented toward $\zeta$} if the orientation corresponds to $\zeta$ in Proposition \ref{prop:EdgeOri_WallOri}.
\end{defn}

\begin{lem}\label{lem:Inf_directed_edgepaths}
    Suppose that $X$ is a CAT(0) cube complex oriented toward $\zeta\in \partial_R X$. Then, for each $v\in \Vertex(X)$, there exists an infinite directed edge-path starting from $v$.
\end{lem}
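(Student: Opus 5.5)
It suffices to show that every vertex $v$ has at least one outgoing edge. Iterating from $v$ then produces a directed edge-path $v=v_0\to v_1\to v_2\to\cdots$ that never stops. It is infinite and not eventually periodic, because by Proposition \ref{prop:DirectedPathGeodesic} every finite initial segment of it is an edge-path geodesic, so no vertex repeats.

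To see that an outgoing edge exists, I work with wall orientations via Proposition \ref{prop:EdgeOri_WallOri}. Regard $v$ itself as a consistent wall orientation with the DCC, and let $\zeta$ be the given orientation, consistent toward infinity. Since $\zeta\in\partial_R X$ and $v\in \Vertex(X)$, they are distinct orientations. Let $\mathcal{H}(v,\zeta)$ be the set of hyperplanes $H$ with $\zeta(H)\neq v(H)$, that is, the half-space chosen by $\zeta$ does not contain $v$. This set is nonempty. Indeed, it is infinite: if it were finite, then $\zeta$ would differ from the DCC orientation $v$ in only finitely many walls, so $\zeta$ would satisfy the DCC and be a vertex.

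The key claim is that some $H\in\mathcal{H}(v,\zeta)$ is adjacent to $v$, i.e.\ $v$ lies in the carrier $N(H)$. The edge of $N(H)$ at $v$ dual to $H$ then points away from $v$, because the orientation chooses the side of $H$ not containing $v$, and this is the required outgoing edge.

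To prove the claim, pick any $H_1\in\mathcal{H}(v,\zeta)$. Let $u$ be the vertex of $N(H_1)$ on the $v$-side that is the gate (nearest-point projection) of $v$ onto the convex subcomplex $N(H_1)\cap \overleftarrow{H_1}$. The hyperplanes separating $v$ from $u$ form a finite set, and each of them separates $v$ from $H_1$. Take the hyperplane $H\in\mathcal{H}(v,\zeta)\cup\{H_1\}$, among those separating $v$ from $H_1$ together with $H_1$ itself, that is closest to $v$. Since only finitely many candidates exist, an innermost one can be chosen, namely one with no other candidate separating it from $v$. Every such candidate $H'$ separating $v$ from $H_1$ is forced into $\mathcal{H}(v,\zeta)$. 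To see this, note that $\zeta(H_1)$ is the far side of $H_1$, and by consistency $\zeta(H')\cap\zeta(H_1)\neq\emptyset$. Because $H'$ and $H_1$ are disjoint and nested, this forces $\zeta(H')$ to be the side of $H'$ away from $v$.

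Now let $H$ be the hyperplane separating $v$ from $H_1$ (or $H_1$ itself) that is nearest to $v$ along a geodesic from $v$ to $u$, namely the one crossed by the first edge of that geodesic. This first edge starts at $v$, is dual to $H$, and $H\in\mathcal{H}(v,\zeta)$ by the preceding argument. So this edge is outgoing at $v$. If $v=u$, then $v\in N(H_1)$ directly and the edge dual to $H_1$ at $v$ works.

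The main point to verify carefully is the consistency step showing that every hyperplane between $v$ and $H_1$ is oriented away from $v$. This uses only the consistency of $\zeta$ and the nesting of disjoint hyperplanes, and I expect it to be routine. With it in hand, the inductive construction of the infinite path completes the proof.
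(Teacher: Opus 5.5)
Your proof is correct and is essentially the paper's argument run directly rather than by contradiction: the same consistency step forces every hyperplane lying between $v$ and a hyperplane $H_1$ with $v\notin\zeta(H_1)$ to also be oriented away from $v$, and the innermost such hyperplane then gives an outgoing edge at $v$. Two small differences: you get the first such $H_1$ from the failure of the DCC, by comparing $\zeta$ with the DCC orientation of $v$, whereas the paper uses a nested chain of $\zeta$-half-spaces together with local finiteness; and your gate-projection step actually justifies why the innermost hyperplane is adjacent to $v$, which the paper simply asserts.
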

\begin{proof}
      If every vertex has at least one outgoing edge, then, we can iteratively generate an infinite directed edge-path starting from any vertex. Thus, it suffices to show that every vertex has at least one outgoing edge.

      Suppose that there exists a vertex $w$ with no outgoing edges. Consider $\zeta$ as a function $\zeta \from \mathcal{H}_X \to \overline{\mathcal{H}_X}$.
      We first show that for any hyperplane $H\in \mathcal{H}_X$, we have $w\in\overrightarrow{H}:=\zeta(H)$. Suppose $w\notin\overrightarrow{H}$ for some $H$. If there exists a hyperplane $H'$ separating $w$ and $H$, then, by the consistency of $\zeta$, we also have $w \notin \overrightarrow{H'}:=\zeta(H')$. Hence, by replacing $H$ with some $H'$, we may assume that there is no hyperplane separating $H$ and $w$. It follows that there exists an edge $e$ incident to $w$ whose dual hyperplane is $H$. By our assumption on $w$, $e$ is an incoming edge of $w$. Hence, $w\in \overrightarrow{H}$. It is a contradiction. Hence, $w\in \zeta(H)$ for any hyperplane $H$.

    Next, note that $\zeta\in \partial_R X$ implies that there exists a sequence of hyperplanes $(H_i)_{i\ge1}$ such that $\overrightarrow{H_1}\supset\overrightarrow{H_2},\dots$ for $\overrightarrow{H_i}:=\zeta(H_i)$. By the local finiteness of wall space, for all but finitely many $H_i$'s, we have $w\notin \overrightarrow{H_i}$. This contradicts the conclusion of the previous paragraph.
\end{proof}

\subsection*{Roller boundary vs. Gromov boundary}
Recall that for a CAT(0) space $X$, the {\it visual boundary}, or the {\it boundary at infinity}, $\partial_\infty X$ is defined as the set of asymptotic equivalence classes of infinite geodesics. When $X$ is Gromov hyperbolic, the set of asymptotic equivalence classes of infinite geodesics is called the {\it Gromov boundary} $\partial_G X$. We note that Gromov hyperbolicity is invariant under quasi-isometries. Thus, Gromov hyperbolicity of a CAT(0) cube complex $X$ is a property independent of the choice of metric between $d_{L^1}$ and $d_{L^2}$, provided that $\dim(X)<\infty$. Therefore, for a Gromov hyperbolic CAT(0) cube complex $X$, the Gromov boundary $\partial_G X$, which is also equal to the boundary at infinity $\partial_\infty X$, can be identified with the asymptotic equivalence classes of infinite edge-path geodesics.

We will need the following lemma in the proof of Theorem \ref{thm:Roller-to-Gromov}. 
\begin{lem} \label{lem:consistent extension}
Suppose $X$ is a cube complex, and let $\Hcal$ be the set of hyperplanes of $X$. Suppose $\Hcal' \subset \Hcal$ and $\zeta\from \Hcal' \to \overline{\Hcal'}$ is a consistent orientation of this subset of hyperplanes. Then $\zeta$ can be extended to a consistent orientation of $\Hcal$.
\end{lem}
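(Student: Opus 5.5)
We extend $\zeta$ by Zorn's lemma, so the real content is a one-hyperplane extension step. Consider the set of pairs $(\Hcal'',\zeta'')$ with $\Hcal'\subset\Hcal''\subset\Hcal$ and $\zeta''$ a consistent orientation of $\Hcal''$ extending $\zeta$. Order them by extension. Consistency is a condition on pairs of hyperplanes, so the union of a chain is again consistent, and every chain has an upper bound. Zorn's lemma then gives a maximal element $(\Hcal'',\zeta'')$. It remains to show $\Hcal''=\Hcal$: given $H\in\Hcal\setminus\Hcal''$, we will orient $H$ so that consistency still holds.

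For the extension step, note that a choice of half-space $h$ for $H$ fails only if some $H_1\in\Hcal''$ has $\zeta''(H_1)\cap h=\emptyset$. Such an $H_1$ is necessarily disjoint from $H$, since transverse hyperplanes have all four quadrants nonempty. Moreover $\zeta''(H_1)$ must then lie inside the complementary half-space $h^c$, because the half-spaces of disjoint hyperplanes are nested or disjoint.

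So suppose both choices fail. Then there are $H_1,H_2\in\Hcal''$ with $\zeta''(H_1)\cap\overleftarrow{H}=\emptyset$ and $\zeta''(H_2)\cap\overrightarrow{H}=\emptyset$. This gives $\zeta''(H_1)\subset\overrightarrow{H}$ (interior side) and $\zeta''(H_2)\subset\overleftarrow{H}$. Hence $\zeta''(H_1)\cap\zeta''(H_2)\subset\overrightarrow{H}\cap\overleftarrow{H}=H$. Since $H_1$ and $H_2$ are disjoint from $H$, these half-spaces avoid $H$ in the relevant sense, and we should get $\zeta''(H_1)\cap\zeta''(H_2)=\emptyset$. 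That contradicts the consistency of $\zeta''$. Therefore at least one orientation of $H$ preserves consistency, which contradicts maximality.

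The main obstacle is the half-space bookkeeping, because half-spaces are closed and contain their hyperplane. I would phrase everything with open half-spaces, or equivalently vertex sets: a half-space is the set of vertices on one side. I would verify in this language the trichotomy for disjoint hyperplanes $H,H_1$, namely that the vertex half-spaces are nested or disjoint, and the claim that transverse hyperplanes meet in all four quadrants. Both are standard facts for CAT(0) cube complexes and were implicitly used in the proof of Proposition \ref{prop:EdgeOri_WallOri}. The lemma states only that $X$ is a cube complex, but since it is applied to CAT(0) complexes I would assume CAT(0) so that hyperplanes separate $X$ into two half-spaces. If the interplay between closed and open sides causes trouble at the intersection $\zeta''(H_1)\cap\zeta''(H_2)$, I would switch to defining consistency through vertex sets from the outset.
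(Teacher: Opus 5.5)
Your proposal is correct and matches the paper's proof: Zorn's lemma gives a maximal consistent extension, and a missing hyperplane $H$ can always be oriented. The reason is that a hyperplane $M$ whose chosen half-space misses one side of $H$ (the paper says $M$ \emph{forces} $H$) must be disjoint from $H$, and two such hyperplanes forcing opposite sides would have disjoint chosen half-spaces, contradicting consistency. Your closed/open worry is unnecessary, because $\zeta''(H_1)\cap\overleftarrow{H}=\emptyset$ already excludes $H\subset\overleftarrow{H}$; like the paper, you implicitly use the CAT(0) half-space structure even though the statement only says ``cube complex''.
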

\begin{proof}
First, suppose that $\zeta$ is maximal, in the sense that it has no consistent extension to any $\Hcal'' \supsetneq \Hcal$. We will show that $\Hcal' = \Hcal$. If not, choose $H \in \Hcal \setminus \Hcal'$. For any $M \in \Hcal'$, if $H$ and $M$ are disjoint, and $H \not\subset \zeta(M)$, then we say $M$ \emph{forces} the orientation of $H$: any extension of $\zeta$ to $H$ must be the half-space that contains $M$.
If $M_1, M_2 \in \Hcal'$ both force the orientation of $H$, and they lie on opposite sides of $H$, then their orientations are inconsistent. Therefore they lie on the same side of $H$, and hence force the same orientation. We conclude that there exists a orientation of $H$ that is consistent with $\zeta$ (which is unique if and only if there is an $M \in \Hcal$ that forces $H$). Hence any maximal $\zeta$ has domain equal to $\Hcal$. 

By Zorn's Lemma, for any given $\zeta\from \Hcal' \to \overline{\Hcal'}$, there is a maximal consistent extension. By the previous paragraph, it is an extension $\Hcal$. 
\end{proof}

\begin{thm}\label{thm:Roller-to-Gromov}
    Let $X$ be a Gromov hyperbolic CAT(0) cube complex.
    \begin{enumerate}
        \item Fix $\zeta \in \partial_R X$ and consider the corresponding edge orientation of $X$. Then, every two infinite directed edge-paths are asymptotic to each other.
    \end{enumerate}
    As a result, a map $\partial_{RG}\from \partial_R X\to \partial_G X$ is well-defined that sends $\zeta\in \partial_R X$ to the asymptotic equivalence class of edge-path geodesics containing the infinite directed edge-paths with respect to the edge orientation of $X$ induced by $\zeta$.
    \begin{enumerate}[resume]
        \item The map $\partial_{RG}\from \partial_R X\to \partial_G X$ is a continuous surjection.
        \item The map $\partial_{RG}\from \partial_R X\to \partial_G X$ is a homeomorphism if and only if there exist no hyperplanes that intersect a nested sequence of hyperplanes $(H_n)_{n\ge1}$.
    \end{enumerate}
\end{thm}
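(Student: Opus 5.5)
For (1), I would use the thin-grid characterization of hyperbolicity from Lemma \ref{lem:UnifThinGrids}. Let $p$ and $q$ be infinite directed edge-paths for the orientation induced by $\zeta$. By Proposition \ref{prop:DirectedPathGeodesic}, every finite subpath of either is an edge-path geodesic, so both are geodesic rays. The hyperplanes $H$ crossed by $p$ satisfy $\zeta(H)\ni p(t)$ for all large $t$. Consistency of $\zeta$, combined with the DCC-type argument from Lemma \ref{lem:Inf_directed_edgepaths}, should show that a hyperplane crossed by $p$ is crossed by $q$ or separates the tail of $q$ from its start, and symmetrically.

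Suppose $p$ and $q$ were not asymptotic. Since $d_{L^1}(p(t),q)\to\infty$, one can find a long sequence of hyperplanes $H_1,\dots,H_N$ crossed by $p$ after time $t$ that do not meet $q$. These are nested, because hyperplanes crossed by a geodesic in order are nested when pairwise disjoint, and otherwise one passes to a nested subsequence using bounded dimension. Each $H_i$ is oriented away from $q(0)$, so $q(s)\in\zeta(H_i)$ for large $s$, and $q$ must eventually cross $H_i$. Then $q$ crosses all $H_i$ beyond some time, and the hyperplanes dual to $p$ near $p(t)$ and to $q$ near the crossing, together with a geodesic from $p(t)$ to $q$, produce a grid of size about $N$ on both sides. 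This contradicts uniform thinness. I expect arranging this grid precisely to be the main technical step.

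Once (1) holds, $\partial_{RG}$ is well defined, since each vertex has an infinite directed path by Lemma \ref{lem:Inf_directed_edgepaths} and all such paths are asymptotic.

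For surjectivity in (2), take a geodesic ray $\gamma$ from a vertex. Orient each hyperplane crossed by $\gamma$ toward its tail; this is consistent on that subset. Extend to all of $\mathcal H_X$ by Lemma \ref{lem:consistent extension}. The result is not DCC, since the crossed hyperplanes contain an infinite nested chain, so it lies in $\partial_R X$. Then $\gamma$ itself is a directed path, so its class is the image. For continuity, convergence $\zeta_k\to\zeta$ in $\{0,1\}^{\mathcal H_X}$ means they agree on any finite set of hyperplanes. Hence the directed paths from a fixed base vertex can be chosen to agree for a long initial segment, which gives convergence in $\partial_G X$ by hyperbolicity.

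For (3), suppose a hyperplane $K$ meets a nested sequence $(H_n)$. Orient the $H_n$ toward infinity, and extend twice using Lemma \ref{lem:consistent extension}, once with each orientation of $K$; both choices are consistent because $K$ crosses every $H_n$. This gives two distinct Roller points sharing a directed path, so $\partial_{RG}$ is not injective. Conversely, assume no such $K$ exists and $\partial_{RG}(\zeta)=\partial_{RG}(\zeta')$. Take $H$ with $\zeta(H)\ne\zeta'(H)$. Directed paths for $\zeta$ and for $\zeta'$ eventually lie on opposite sides of $H$ while staying at bounded distance, so infinitely many nested hyperplanes crossed by them must intersect $H$, a contradiction. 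Thus $\partial_{RG}$ is a continuous bijection from a compact space to a Hausdorff space, hence a homeomorphism.
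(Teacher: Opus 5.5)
There is a genuine gap in (1), and (1) is the core of the theorem. Your contradiction argument depends on the claim that a hyperplane crossed by $p$ far out must eventually be crossed by $q$ (``each $H_i$ is oriented away from $q(0)$, so $q(s)\in\zeta(H_i)$ for large $s$''). This also conflicts with your choice of the $H_i$ as hyperplanes not meeting $q$.

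The claim is false in general. In the hyperbolic strip $[0,1]\times\mathbb{R}$, orient every horizontal hyperplane upward and $V=\{1/2\}\times\mathbb{R}$ to the right. Then $(0,0)\to(1,0)\to(1,1)\to\cdots$ and $(0,0)\to(0,1)\to(0,2)\to\cdots$ are both directed, and the second path never enters $\zeta(V)$.

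Consistency never pushes a directed path into a prescribed half-space. All it gives is $\zeta(H)\cap\zeta(H')\neq\emptyset$. So to force two hyperplanes to cross, you must exhibit vertices in the other three quadrants. Producing two large families with this property is exactly the step you defer (``arranging this grid precisely''), and it is the whole content of (1).

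The paper's device is the median. Fix a vertex $x$ and set $N=\max\{d(x,p(0)),d(x,q(0))\}$. Put $y_n=p(n)$ and $z_n=q(n)$, and let $w_n$ be the median of $x,y_n,z_n$. Let $\Hcal_n$ (resp.\ $\Vcal_n$) be the hyperplanes separating $w_n$ from $y_n$ (resp.\ $z_n$) that do not cross a fixed geodesic from $x$ to $p(0)$ (resp.\ $q(0)$).

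Take $H\in\Hcal_n$ and $H'\in\Vcal_n$. The vertex $w_n$ lies in neither $\zeta(H)$ nor $\zeta(H')$. The vertex $y_n$ lies in $\zeta(H)$ but not $\zeta(H')$, and $z_n$ lies in $\zeta(H')$ but not $\zeta(H)$. This holds because $p$ must cross $H$ in the directed sense, and $w_n$ lies on a geodesic from $y_n$ to $z_n$. If $H$ and $H'$ were disjoint, this would force $\zeta(H)\cap\zeta(H')=\emptyset$. Hence every hyperplane of $\Hcal_n$ crosses every hyperplane of $\Vcal_n$.

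The identity $d(w_n,y_n)=\tfrac12\bigl(d(x,y_n)+d(y_n,z_n)-d(x,z_n)\bigr)\ge\tfrac12 d(y_n,z_n)-2N$ gives both families size at least $\tfrac12 d(y_n,z_n)-3N$. Uniform thinness of grids (Lemma~\ref{lem:UnifThinGrids}) then bounds $d(p(n),q(n))$ directly. No argument by contradiction or divergence of non-asymptotic rays is needed.

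Your (2) is sound and in places cleaner than the paper's. You do not need the hyperplanes crossed by $\gamma$ to be pairwise disjoint, you observe that $\gamma$ itself is directed, and your continuity argument via long common initial segments plus Lemma~\ref{lem:Inf_directed_edgepaths} works. In (3), however, three steps need repair.

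First, Lemma~\ref{lem:consistent extension} gives no control over the two extensions away from $\{H_n\}\cup\{K\}$. So ``sharing a directed path'' needs an argument when $(H_n)$ is an arbitrary nested sequence. The paper sidesteps this by taking the $H_n$ dual to a geodesic ray $\gamma$, which is then directed for both extensions.

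Second, in the converse, ``directed paths for $\zeta$ eventually lie in $\zeta(H)$'' fails in general (the strip again), so it must be derived from the hypothesis. Suppose a $\zeta$-directed path stayed outside $\zeta(H)$, and let $(H'_j)$ be a nested sequence of hyperplanes it crosses. Consistency forces $\zeta(H)\subset\zeta(H'_j)$ for every $H'_j$ disjoint from $H$. Since $\bigcap_j\zeta(H'_j)=\emptyset$, $H$ must then cross all but finitely many $H'_j$, contradicting the hypothesis.

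Third, $\partial_R X$ need not be compact when $X$ is not locally finite. Take a tree with a vertex of infinite valence and a ray attached at each neighbor; its ends converge in $\{0,1\}^{\mathcal{H}_X}$ to the central vertex. So you cannot conclude from a continuous bijection, and continuity of the inverse must be checked directly. The paper's own proof of (3) also leaves this unaddressed.
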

\begin{proof}
    In this proof, we use the $L^1$-metric $d:=d_{L^1}$.
    
    (1) Suppose that $p_1,p_2\from [0,\infty)\to X$ are directed edge-paths. Let $x \in X$ be a vertex; the reader may want to imagine that $x$ is close to the midpoint of $p_1(0)$ and $p_2(0)$. We let $N = \max_{i = 1, 2} d(x, p_i(0))$.  For $i = 1, 2$, we let $q_i$  be a geodesic edge-path from $x$ to $p_i(0)$.

    For each $n \in \mathbb{N}$, define $y_n:=p_1(n)$ and $z_n:=p_2(n)$. Recall that, as a graph, $X^{(1)}$ is a median graph (Proposition \ref{prop:MedianGraph}). Denote by $w_n\in \Vertex(X)$ the median of $x, y_n$, and $z_n$. Choose edge-path geodesics $[w_n,y_n]$ between $w_n$ and $y_n$, $[w_n,z_n]$ between $w_n$ and $z_n$, and $[x,w_n]$ between $x$ and $w_n$. We note that the concatenation $[x,w_n]*[w_n,y_n]$ (resp.\@ $[x,w_n]*[w_n,z_n]$) is an edge-path geodesic between $x$ and $y_n$ (resp.\@ $x$ and $z_n$). See Figure \ref{fig:AsympRays}.

   We let $\Hcal_n$ be the set of the dual hyperplanes $H_e$ of edges $e$ in $[w_n, y_n]$ for which $H_e$ does not cross $q_1$. Since every $H \in \Hcal_n$ separates $x$ and $y_n$, $\Hcal_n$ is a nested sequence of hyperplanes, under a suitable order of its elements.
   It also follows that $q_1 * p_1([0, n])$ crosses each $H\in \Hcal_n$. Since $q_1$ does not cross $H$, by the definition of $\Hcal_n$, $p_1([0, n])$ does, exactly once. Hence, we obtain $|\Hcal_n| \ge d(w_n, y_n) - N$. We likewise let $\Vcal_n$ be the set of dual hyperplanes $H_e$ of edges $e$ in $[w_n, z_n]$ for which $H_e$ does not cross $q_2$, and observe the analogous properties.    

    \begin{figure}[h!]
	\centering
        \def\svgwidth{0.4\textwidth}
        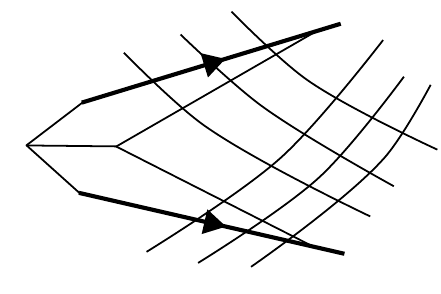
	\caption{}
    \label{fig:AsympRays}
    \end{figure}

    Next, we show that for any hyperplanes $H_1 \in \Hcal_n$ and $H_2 \in \Vcal_n$, $H_1$ and $H_2$ intersect. For the wall orientation $\zeta\from \mathcal{H}_X\to \overline{\mathcal{H}_X}$, the half spaces $\zeta(H_1)$ and $\zeta(H_2)$ do not contain the vertex $x$. Therefore, if $H_1$ and $H_2$ do not intersect, then it violates the consistency of wall orientation for $\zeta$.

    Since $[w_n,y_n]$ (resp.\@ $[w_n,z_n]$) is a geodesic edge-path, the hyperplanes dual to its edges are pairwise disjoint. Therefore $\mathcal{H}_n$ and $\mathcal{V}_n$ form a grid of hyperplanes. By Gromov hyperbolicity of $X$ (and Lemma \ref{lem:UnifThinGrids}), we conclude that $\min(|\Hcal_n|, |\Vcal_n|$) is uniformly bounded.

    We have  $d(x, y_n) \ge  n - N$ and $d(x, z_n) \le  n  + N$, and therefore
    \begin{align*} 
        d(w_n, y_n) &= \frac12(d(x, y_n) + d(y_n, z_n) - d(x, z_n)) \\
        &\ge \frac12 d(y_n, z_n) - 2N. 
    \end{align*}
    Hence $|\Hcal_n| \ge d(y_n, z_n)/2 - 3N$, and likewise for $\Vcal_n$. We conclude that $d(y_n, z_n)$ is bounded, and therefore $p_1$ and $p_2$ are asymptotic.
    
    (2) Let us prove the continuity first. The sets $P(\overrightarrow{H})$'s of asymptotic equivalence classes of infinite edge-path geodesics (eventually) contained in half-spaces $\overrightarrow{H}$'s form a subbasis of the topology of $\partial_G X$. For a half-space $\overrightarrow{H}$, the set $\partial_{RG}^{-1}(P(\overrightarrow{H}))$ is open (and closed), in the subspace topology of $2^{\mathcal{H}_X}$. Hence, $\partial_{RG}$ is continuous.

    Now we show the surjectivity of $\partial_{RG}$. Let $\gamma\from [0,\infty)\to X$ be an infinite edge-path geodesic. Let $H_1,H_2,\dots$ be the sequence of hyperplanes dual to the edges $\gamma([0,1]), \gamma([1,2]), \dots$. By \cite[Lemma 3.9]{Wise_Book}, the $H_i$'s are pairwise disjoint. Hence, the sequence $(H_n)_{n\ge1}$ is nested, i.e., we can choose a nested sequence of half-spaces $\overrightarrow{H_1}\supset\overrightarrow{H_2}\supset \dots$. Recall that by the local finiteness of walls, we have $\bigcap_{n\ge1}\overrightarrow{H_n}=\emptyset$. By Lemma \ref{lem:consistent extension}, there exists an consistent extension of this orientation of the $H_n$'s to all the hyperplanes of $X$; we have then obtained an element of $\partial_{RG}^{-1}(\{\gamma\})$.

       (3) For any edge-path geodesic $\gamma$ representing a point in $\partial_G X$, its fiber by $\partial_{RG}$ is a singleton if and only if there is a unique consistent extension of the orientation of the $H_n$ above. From the proof of Lemma \ref{lem:consistent extension} (and $\bigcap H_n = \emptyset)$, this occurs if and only if there is no hyperplane $H$ that intersects almost all the $H_n$. We therefore obtain (3).
\end{proof}

\begin{rem}
    A similar statement to Theorem \ref{thm:Roller-to-Gromov} is proven in \cite[Theorem D]{BF_CrossRatio_Cube_Hyp_Gp}, assuming that $X$ is uniformly locally finite.
    
    In \cite[Theorem D]{BF_CrossRatio_Cube_Hyp_Gp}, it is shown that $\partial_{RG}$ is finite-to-one, provided that $X$ is uniformly locally finite. In general, the map $\partial_{RG}$ may have an infinite fiber. Consider a starlike tree $S$, where infinitely many edges $\{E_i\}_{i \in I}$ are incident to the central vertex. Consider $(-\infty, \infty)$ as a $1$-dimensional cube complex with vertex set $\mathbb{Z}$. Then $X := S \times (-\infty, \infty)$ is a locally infinite Gromov hyperbolic CAT(0) cube complex. In this case, $\partial_R X = \Vertex(S) \times \{-\infty, \infty\}$, with the discrete topology on $\Vertex(S)$, and $\partial_G X = \{-\infty, \infty\}$, so that $\partial_{RG} \from \Vertex(S) \times \{-\infty, \infty\} \to \{-\infty, \infty\}$ is the projection map.
\end{rem}



\subsection{Oriented tree coordinate systems}\label{subsec:OrientedTreeCoordi}
\begin{defn}[Oriented tree coordinate system]
    For an oriented cube complex $X$, we say a tree coordinate system $\{\pi_i\from X\to X_i\}_{i=1,2,\dots,k}$ is {\it oriented} if each $X_i$ is oriented as a $1$-dimensional cube complex and $\pi_i$ is orientation preserving.
\end{defn}

The next lemma will be used when we show that an edge orientation defined by a measured foliation has mono-outgoing cubes.

\begin{lem}[Mono-outgoing cubes]\label{lem:OutgoingCube}
    Suppose that $X$ is an oriented {\rm CAT(0)} cube complex and $\{\pi_i\from X \to X_i\}_{i=1,2,\dots k}$ is an oriented tree coordinate system. Suppose that there exists an infinite directed edge-path $p\from [0,\infty)\to X$ such that
    \begin{enumerate}
        \item for any $i\in\{1,2,\dots,k\}$, $\pi_i \circ p$ is an unbounded infinite directed edge-path with layovers.
    \end{enumerate}
    Then, the property (1) holds for every infinite directed edge-path $p$.
    Moreover, in addition to the property (1), suppose that
    \begin{enumerate}[resume]
        \item each $X_i$ has mono-outgoing cubes, which are edges.
    \end{enumerate}
    Then, $X$ has mono-outgoing cubes.
\end{lem}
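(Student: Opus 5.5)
The plan is to prove the two assertions of Lemma \ref{lem:OutgoingCube} separately. The first uses Theorem \ref{thm:Roller-to-Gromov}-style asymptotics, but more precisely a hyperplane-counting argument. The second uses the equivalence in Proposition \ref{prop:EdgeOri_WallOri} between mono-outgoing cubes and consistency.

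For the first assertion, let $q$ be another infinite directed edge-path. Since $\pi_i$ is orientation preserving and cubical, $\pi_i\circ q$ is automatically a directed edge-path with layovers in the oriented tree $X_i$. It remains to show it is unbounded. By Proposition \ref{prop:TreeCoordiSystem}(2), every hyperplane of $X$ has the form $\pi_i^{-1}(m)$ with $m$ the midpoint of an edge of $X_i$. Hence $\pi_i\circ p$ being unbounded means $p$ crosses infinitely many hyperplanes of the $i$-th family $\mathcal{H}_i$, and these are nested, with half-spaces chosen by the orientation. Suppose $\pi_i\circ q$ were bounded, so that $q$ crosses only finitely many hyperplanes of $\mathcal{H}_i$ and $\pi_i\circ q$ is eventually constant at a vertex $u\in X_i$. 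Choose a hyperplane $H=\pi_i^{-1}(m)$ crossed by $p$ far along, so that $m$ lies on the ray $\pi_i\circ p$ beyond the finite subtree spanned by $\pi_i(q(0))$, $u$ and $\pi_i(p(0))$. Orientation consistency says the positive half-space $\zeta(H)$ contains the tail of $p$. On the other hand, $q$ starts on the negative side and never crosses $H$, since $\pi_i\circ q$ never reaches $m$. Now $\pi_i\circ q$ is a directed path in the tree that stays in the component of $X_i\setminus\{m\}$ not containing the tail of $\pi_i\circ p$.

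The plan is to derive a contradiction by applying consistency in the tree. The edge of $X_i$ containing $m$ is oriented toward the tail of $p$, so $X_i$ inherits an orientation in which the ray $\pi_i\circ p$ points toward an end. I will argue that $q$, being infinite and directed, must cross infinitely many hyperplanes of some family $\mathcal{H}_j$. Let $H'$ be one of those crossed very far along $q$. The key step is to show $H'$ must intersect all hyperplanes in $\mathcal{H}_i$ far along $p$. If it did not, then $\zeta(H')$ and $\zeta(H)$ would be disjoint, since $H$ lies entirely on the side of $H'$ containing $q(0)$ and $p(0)$. That would violate consistency. I expect this step to be the main obstacle: controlling which side of $H'$ the early part of $p$ lies on. I would handle it by taking $H'$ beyond all (finitely many) hyperplanes separating $p(0)$ from $q(0)$, so that $p(0)$ lies on the negative side of $H'$.

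Iterating this produces a grid: infinitely many pairwise disjoint, nested hyperplanes of $\mathcal{H}_i$ crossed by $p$, each meeting infinitely many nested hyperplanes crossed by $q$. If $X$ is hyperbolic, Lemma \ref{lem:UnifThinGrids} forbids such a grid. Without hyperbolicity, I would instead use efficiency and finite dimension (Proposition \ref{prop:TreeCoordiSystem}(1)). The hyperplanes of $\mathcal{H}_i$ are pairwise disjoint fibres, so $q$ crossing $H'$ with $H'\cap H\ne\emptyset$ forces $\pi_i(q)$ to pass $m$ via the systematic monotonicity of Proposition \ref{prop:TreeCoordiSystem}(3). Concretely, $q$ must eventually enter $\zeta(H)$, because $q$ passes through $\zeta(H')$ and the convex sets $\zeta(H')\cap \pi_i^{-1}(\text{tail side})$ are connected. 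This contradicts boundedness.

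For the second assertion, by Proposition \ref{prop:EdgeOri_WallOri} it suffices that any two outgoing edges $e_1,e_2$ at a vertex $v$ have intersecting dual hyperplanes. If both lie in the same family $i$, they project to two distinct outgoing edges at $\pi_i(v)$ in $X_i$, contradicting hypothesis (2). So they lie in different families $i\neq j$, with hyperplanes $\pi_i^{-1}(m_1)$ and $\pi_j^{-1}(m_2)$. Extend $e_1$ by an infinite directed path $p$, using Lemma \ref{lem:Inf_directed_edgepaths} or the mono-outgoing property of $X_j$. By part one, $\pi_j\circ p$ is unbounded and directed. Since the unique outgoing edge at $\pi_j(v)$ is $\pi_j(e_2)$, the path $\pi_j\circ p$ must traverse $\pi_j(e_2)$. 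So $p$ crosses $H_2$ after having crossed $H_1$ at its first step. Then $v$, $e_1$ and the crossing point give a path from the positive side of $H_1$, and $v$ is on the negative side of both. By Proposition \ref{prop:TreeCoordiSystem}(3), $\pi_i^{-1}(m_1)\cap\pi_j^{-1}(m_2)\neq\emptyset$. Hence the hyperplanes intersect, and $e_1,e_2$ span a square. Flagness of links then assembles all outgoing edges into one cube.
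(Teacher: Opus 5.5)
Part one is false as stated. It needs hyperbolicity and a consistent orientation; the latter is supplied by (2), see below.

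\emph{Without hyperbolicity.} Tile $\Rbb^2$ by unit squares, let $\pi_1,\pi_2$ be the coordinate projections to $\Rbb$ (vertex set $\mathbb{Z}$), and orient all edges in the increasing direction. This orientation is consistent and satisfies (2), and the staircase $(0,0),(1,0),(1,1),(2,1),\dots$ satisfies (1). But the directed ray $q=((n,0))_{n\ge0}$ has $\pi_2\circ q$ constant. Here $H=\{y=1/2\}$ meets every hyperplane crossed by $q$, yet $q$ never enters $\zeta(H)$. So the step ``$q$ passes through $\zeta(H')$, hence eventually enters $\zeta(H)$'' in your non-hyperbolic branch is a non sequitur; Proposition \ref{prop:TreeCoordiSystem}-(3) only says that nonempty intersections of fibres are connected.

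\emph{Without consistency.} Even a tree fails. On the line with vertex set $\mathbb{Z}$, put the edges of $[0,\infty)$ alternately in families $1$ and $2$ and all edges of $(-\infty,0]$ in family $1$. Let $X_i$ be obtained by collapsing the other family, and orient both rays away from $0$. The ray to $+\infty$ satisfies (1), while the ray to $-\infty$ is directed with $\pi_2$ constant.

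Your hyperbolic branch does use consistency (``that would violate consistency''). So does the paper's proof, through Theorem \ref{thm:Roller-to-Gromov}, which is stated for $\zeta\in\partial_R X$. Granting both hypotheses, the paper's argument is two lines: Theorem \ref{thm:Roller-to-Gromov}-(1) gives $d_{L^1}(p(t),q(t))<C$, and Proposition \ref{prop:TreeCoordiSystem}-(4) transfers this bound to every $X_i$.

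Your grid argument re-derives this, but its key step is unjustified as written. Putting $p(0)$ on the negative side of $H'$ does not put $H$ there. If $p$ crosses $H'$ before $H$, then $\zeta(H)\subset\zeta(H')$ and nothing is violated. The fix is to choose $H_1,\dots,H_K\in\Hcal_i$ first. Then take $H'\in\Hcal_j$ along $q$ beyond the finitely many hyperplanes that $p$ crosses before $H_K$, and beyond those separating $p(0)$ from $q(0)$. This gives $K\times K$ grids for every $K$, contradicting Lemma \ref{lem:UnifThinGrids}.

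Part two, as you set it up, is circular. It rests on part one and on Lemma \ref{lem:Inf_directed_edgepaths}, both of which presuppose a consistent wall orientation. By Proposition \ref{prop:EdgeOri_WallOri}, consistency is equivalent to the mono-outgoing property being proved. The paper's proof has the same dependence: it asserts via Proposition \ref{prop:EdgeOri_WallOri} that $X$ is oriented toward a point of $\partial_R X$. Appealing to the mono-outgoing property of $X_j$ does not help, since it only gives directed paths in $X_j$, not in $X$.

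The missing observation is that (2) and the single path $p$ already force consistency.
\begin{itemize}
\item A directed path cannot backtrack, so $\pi_i\circ p$ traces a geodesic ray in $X_i$ with some end $\xi_i$.
\item Each vertex of $X_i$ has at most one outgoing edge. Take an arbitrary vertex $a$ and the ray from $a$ to $\xi_i$; induct backward from the vertex where this ray joins $\pi_i\circ p$. This shows every edge of $X_i$ points toward $\xi_i$.
\item Every hyperplane is some $\pi_i^{-1}(m)$ (Proposition \ref{prop:TreeCoordiSystem}-(2)), and $\pi_i$ preserves orientation. Hence any two positive half-spaces contain $p(n)$ for $n\gg0$.
\end{itemize}
So the orientation is consistent, and Proposition \ref{prop:EdgeOri_WallOri} yields mono-outgoing cubes directly, without part one or hyperbolicity.

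If you keep your crossing argument instead, note that $H_1\cap H_2\neq\emptyset$ does not follow from Proposition \ref{prop:TreeCoordiSystem}-(3). It holds for a different reason. Your path, whose first edge is $e_1$, never recrosses $H_1$ (Proposition \ref{prop:DirectedPathGeodesic}). So its edge dual to $H_2$ lies on the head side of $H_1$, while $e_2$ lies on the tail side, and $H_2$ is connected.
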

\begin{proof}
    Suppose (1) is satisfied for an infinite directed edge-path $p$. Consider an infinite directed edge-path $q\from [0,\infty) \to X$. By Theorem \ref{thm:Roller-to-Gromov}, there exists $C>0$ such that $d_{(X,L^1)}(p(t),q(t))<C$ for any $t>0$. Then, it follows from Equation \eqref{eqn:L1distance equality} that $\pi_i\circ q$ is unbounded for every $i\in \{1,2,\dots,k\}$.
    
    Next, let us suppose that $X$ also satisfies (2). Recall that since $X$ is a CAT(0) cube complex, it is a flag complex. Therefore, it suffices to show that for any vertex $v$, every pair of two outgoing edges $e_1$ and $e_2$ from $v$ lies in a common cube. 

    Suppose $e_1$ and $e_2$ are distinct outgoing edges from $v$ in $X$. We may reorder indices such that, for $i\in\{1,2\}$, $\pi_i$ projects $e_i$ onto an edge in $X_i$. Let $w$ be the endpoint of $e_1$ with $v\neq w$. Since an infinite directed edge-path exists, by Proposition \ref{prop:EdgeOri_WallOri}, $X$ is oriented toward a point in the Roller boundary. Then, by Lemma \ref{lem:Inf_directed_edgepaths} and the first part of this lemma, there exists an infinite directed edge-path $p$ starting from $w$ such that $\pi_1\circ p$ and $\pi_2 \circ p$ are unbounded.
    
    Denote by $H_1$ and $H_2$ the hyperplanes dual to $e_1$ and $e_2$, respectively. It suffices to show $H_1$ and $H_2$ intersect. Suppose that $H_1$ and $H_2$ are disjoint. By the assumption (1), $\pi_2\circ p$ is an unbounded directed edge-path in $X_2$. Thus, $\pi_2\circ p$ passes through $\pi_2(e_2)$, which is the unique outgoing edge from $\pi_2(v)$ in $X_2$. Hence, $p$ passes through $H_2$. Note that $v$, and thus $e_2$, is in different half-space of $H_1$ than $w$. It follows that if we denote by $\overrightarrow{H_1}$ the half-space containing $w$, then $H_2\subset \overleftarrow{H_1}$. Hence, $p$, which starts from $w$, must pass through $H_1$ before $H_2$, this implies that the directed edge-path $\pi_1\circ (e_1 * p)$ is backtracking, where $e_1*p$ is the concatenation of $e_1$ and $p$. This contradicts Lemma \ref{lem:CubicalMaptoTreeMonotone} and Proposition \ref{prop:DirectedPathGeodesic}.
\end{proof}

\section{Cube complexes of pre-homotopic curves/multicurves}\label{Sec:CubeCplxofCurves}

Suppose that $Z$ is a multicurve of $S_{g,n}$. In Section \ref{sec:CubeCplx}, we discussed the CAT(0) Gromov-hyperbolic cube complex $\mathscr{X}_Z$ constructed from the wall space $(\widetilde{S_{g,n}}, \mathcal{W}_Z)$. In this section, we construct a new cube complex $X_Z$ whose vertices are multicurves $\Gamma$ of $S_{g,n+1}$ that are homotopic to $Z$ in the puncture-filled surface $S_{g,n}$ with some additional properties; see Definition \ref{defn:CplxofPreHmpticMultiCurv}. Then, in Theorem \ref{thm:CubeCplxIso}, we show that there is an cubical isomorphism $\Omega\from X_Z \to \mathscr{X}_Z$.

Consider $S_{g,n+1}$ and its puncture-filled (or markedpoint-forgotten) surface $S_{g,n}$. Throughout, $x$ denotes the extra puncture we forget, by filling it. Let $i\from S_{g,n+1}\to S_{g,n}$ be the embedding whose images only misses $x$. It induces a map $i_*\from C(S_{g,n+1})\to C(S_{g,n})\cup\{o\}$, where $i_*([\gamma])=o$ for essential simple closed curves $\gamma$ in $S_{g,n+1}$ that are peripheral in $S_{g,n}$.

\begin{defn}[Pre-(adjective) curves]
Suppose that $\alpha$ and $\beta$ are essential simple closed curves in $S_{g,n+1}$. 
\begin{enumerate}
    \item $\alpha$ is {\em pre-peripheral} if $\alpha$ is peripheral in $S_{g,n}$.
    \item $\alpha$ is {\em pre-essential} if $\alpha$ is not pre-peripheral.
    \item $\alpha$ and $\beta$ are {\em pre-homotopic}, written as $\alpha \sim_{\pre} \beta$, if $i_*[\alpha]=i_*[\beta]$ in $S_{g,n}$.
\end{enumerate}

\end{defn}

\subsection{Curve complexes and trees of subsurface groups}

\subsubsection{Trees of groups for simple closed curves}\label{subsec:Trees of Groups}
We review some constructions of trees in \cite{KLS_TreesMCG}. The key idea is that the puncture $x$ to be forgotten from $S_{g,n+1}$ can be considered as the base point of the fundamental group $\pi_1(S_{g,n},x)$ of the $x$-filled surface $S_{g,n}$.

Let $CC(S_{g,n+1})$ denote the {\it curve complex} of $S_{g,n+1}$, whose vertex set is $C(S_{g,n+1})$ and $k$-simplices are defined for $(k+1)$-tuple of pairwise disjoint homotopy classes of simple closed curves \cite{MasurMinsky_CurveComplex1}. Since we assume $\chi(S_{g,n})<0$, the surface $S_{g,n+1}$ is not one of the exceptional cases $S_{1,1}$ or $S_{0,4}$, for which the adjacency relation in the curve complex is defined by intersection numbers $1$ and $2$, respectively, rather than by pairwise disjointness.

Suppose that $\gamma$ is a pre-essential simple closed curve in $S_{g,n+1}$ so that $i_*[\gamma]=[\zeta]$ for an essential simple closed curve $\zeta \subset S_{g,n}$.
Define $\mathscr{F}_{\zeta}$ as the subcomplex of $CC(S_{g,n+1})$ generated by $i_*^{-1}([\zeta])$. It is easy to show $\mathscr{F}_{\zeta}$ is one-dimensional. Assign $\pi_1(S_{g,n}\setminus \alpha,x)$ to every vertex $[\alpha]$ and $\pi_1(S_{g,n}\setminus (\alpha \cup \beta),x)$ to every edge between $[\alpha]$ and $[\beta]$, where $\alpha \cap \beta=\emptyset$. These are fundamental groups of the components of complements that contain $x$. In particular, the component of $S_{g,n}\setminus (\alpha \cup \beta)$ that contains $x$ is an annulus bounded by $\alpha$ and $\beta$. We denote the graph of groups by $(\mathscr{F}_{\zeta}, \pi_1(S_{g,n}\setminus -,x))$.

Let us define another graph of groups as follows. Let $\mathscr{D}_{\zeta}$ denote the decomposition of $\widetilde{S_{g,n}}$ by $p^{-1}(\zeta)$ where $p \from \widetilde{S_{g,n}} \to S_{g,n}$. That is, an (open) $2$-cell of $\mathscr{D}_{\zeta}$ means a connected component of  $\widetilde{S_{g,n}} \setminus p^{-1}(\zeta)$ and an (open) $1$-cell of $\mathscr{D}_{\zeta}$ means a component of $p^{-1}(\zeta)$. Define the dual graph $\mathscr{D}^*_{\zeta}$ in such a way that its vertices and edges correspond to $2$-cells and $1$-cells of $\mathscr{D}_{\zeta}$, respectively. To each vertex (resp.\@ edge) we assign the stabilizer of the dual $2$-cell (resp.\@ $1$-cell) of the $\pi_1(S_{g,n},x)$-action on $\widetilde{S_{g,n}}$ as deck transformations. We denote the graph of groups by $(\mathscr{D}^*_{\zeta}, \rm{Stab}(-))$. It is immediate from the construction that the underlying graph $\mathscr{D}^*_{\zeta}$ is a tree.

There is an isomorphism $(\mathscr{F}_{\zeta}, \pi_1(S_{g,n}\setminus -,x)) \to (\mathscr{D}^*_{\zeta}, \rm{Stab}(-))$ as graphs of groups sending $[\alpha]$ to $U^*$ where $U^*$ is the vertex dual to a $2$-cell $U$ so that $\pi_1(S_{g,n}\setminus \alpha,x)={\rm Stab}(U)$ \cite[Sections 3 and 7]{KLS_TreesMCG}. As a corollary, we see that $\mathscr{F}_{\zeta}$ is a tree.

The above summary is slightly different from how it is addressed in \cite{KLS_TreesMCG}, though they are essentially equivalent. The key step is to relate the fundamental group of subsurfaces $\pi_1(S_{g,n}\setminus-,x)$ to the stabilizers ${\rm Stab}(-)$, which is discussed in \cite[Section 3.2]{KLS_TreesMCG}

\subsection*{Pulls of simple closed curves along simple cords}

\begin{defn}[Simple cord]\label{defn:SimpleCord to Curve}
    Let $\gamma$ be a simple closed curve on $S_{g,n+1}$. A {\it simple cord} from $x$ to $\gamma$ is an embedding $\delta\from [0,1]\to S_{g,n}$ such that $\delta(0)=x$, $\delta(1)\in \gamma$, and $\delta([0,1)) \cap \gamma = \emptyset$. Here, we consider $S_{g,n}$ as a punctured surface, such that $\delta$ avoids the $n$ punctures.

    We say that two simple cords $\delta_0$ and $\delta_1$ are {\it homotopic relative to $x,\gamma$} if there exists a homotopy $(\delta_t)_{t\in[0,1]}$ in $S_{g,n}$ so that $\delta_t(0)=x$ and $\delta_t(1)\in \gamma$ for all $t\in [0,1]$.
\end{defn}

\begin{defn}[Simple pull]\label{defn:SimplePull of Curve}
    Let $\gamma$ be a simple closed curve on $S_{g,n+1}$ and $\delta$ be a simple cord from $x$ to $\gamma$. Then a {\it simple pull of $\gamma$ along $\delta$}, is a simple closed curve $\Pull_\delta(\gamma)$ that is obtained by pulling $\gamma$ along $\delta$. More precisely, let $U$ be a small neighborhood of $\delta$ so that $U \cap \gamma$ consists of only one connected component, which contains $\delta(1)$. See Figure \ref{fig:SimplePull}. The set $U$ and its boundary $\partial U$ are  decomposed by $\gamma$ into $\{U_1,\,U_2\}$ and $\{\partial_1 U,\,\partial_2 U\}$, where $\partial U_i =\partial_i U \cup (\gamma \cap U)$ for $i\in\{1,2\}$. Without loss of generality, suppose $x \in U_1$. Then $\Pull_\delta(\gamma)$ is defined from $\gamma$ by replacing $\gamma \cap U$ with $\partial_1 U$.

    \begin{figure}[h!]
	\centering
        \def\svgwidth{0.3\textwidth}
        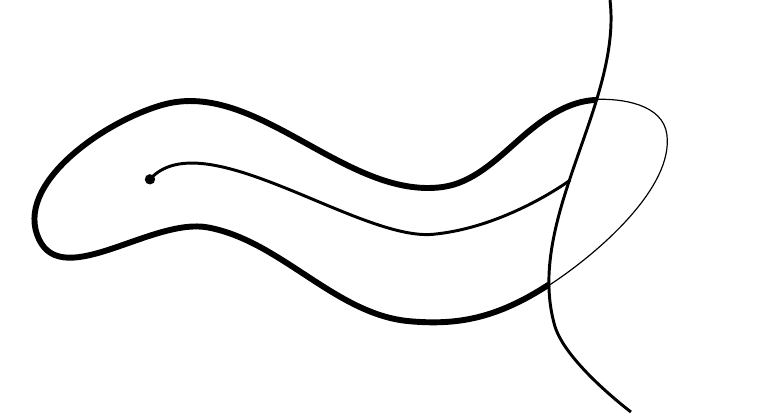
	\caption{Simple pull}
    \label{fig:SimplePull}
    \end{figure}

    Although the above construction of $\Pull_\delta(\gamma)$ depends on the choice of the neighborhood $U$, its homotopy class $[\Pull_\delta(\gamma)]$ is independent of $U$. Moreover, if two simple cords $\delta_1$ and $\delta_2$ are homotopic relative to $x,\gamma$, then  $[\Pull_{\delta_1}(\gamma)]=[\Pull_{\delta_2}(\gamma)]$.
    
    For homotopy classes, we say that $[\gamma_1]$ is a {\it simple pull} of $[\gamma_2]$ if there exist representatives $\gamma'_1$ and $\gamma'_2$ and a simple cord $\delta$ from $x$ to $\gamma'_2$ so that $\gamma'_1$ is a simple pull of $\gamma'_2$ along $\delta$. The existence of such a simple cord is independent of the choice of representatives of simple closed curves, which does not hold for multicurves; see Figure \ref{fig:DependenceRep}.
\end{defn}

\begin{prop}\label{prop:SPullCurvElemProp} Suppose that $\gamma_1,\gamma_2$ are pre-essential simple closed curves in $S_{g,n+1}$.
    \begin{enumerate}
        \item If $[\gamma_1]$ is a simple pull of $[\gamma_2]$, then they are pre-homotopic, i.e., $i_*[\gamma_1]=i_*[\gamma_2]$.
        \item If $[\gamma_2]$ is a simple pull of $[\gamma_1]$, then $[\gamma_1]$ is a simple pull of $[\gamma_2]$.
        \item Suppose $i_*[\gamma_2]=i_*[\gamma_1]=[\zeta]$ for a simple closed curve $\zeta$ on $S_{g,n}$. Then $[\gamma_1]$ and $[\gamma_2]$ are joined by an edge in $\mathscr{F}_{\zeta}$ if and only if $[\gamma_1]$ is a simple pull of $[\gamma_2]$.
    \end{enumerate}
\end{prop}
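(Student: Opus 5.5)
We prove the three items directly from the local picture of a simple pull, using the neighborhood $U$ of the cord $\delta$ from Definition \ref{defn:SimplePull of Curve}.

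\emph{Item (1).} Work in $S_{g,n}$, where $x$ is an ordinary point. The cord $\delta$ and its regular neighborhood $U$ avoid the $n$ punctures. There $\gamma\cap U$ and $\partial_1 U$ are arcs with the same endpoints, and together they bound the disk $U_1$. This disk contains no puncture of $S_{g,n}$; the only special point in it is $x$, which is filled. Hence $\Pull_\delta(\gamma)$ is isotopic to $\gamma$ in $S_{g,n}$ across $U_1$, so $i_*[\Pull_\delta(\gamma)]=i_*[\gamma]$.

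\emph{Item (2).} Let $\gamma_1=\Pull_\delta(\gamma_2)$. The new arc $\partial_1U$ runs around $x$ closely, so there is a short cord $\delta'$ from $x$ to $\gamma_1$ that crosses $U_1$ and hits $\partial_1 U$. Choose a small neighborhood $U'$ of $\delta'$ as in the definition. Pulling $\gamma_1$ along $\delta'$ pushes the arc $\partial_1U$ back across $x$. The result is isotopic in $S_{g,n+1}$ to $\gamma_2$, since the region between them contains no puncture, $x$ included.

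\emph{Item (3).} First suppose $[\gamma_1]$ is a simple pull of $[\gamma_2]$. The construction shows that $\gamma_1=\Pull_\delta(\gamma_2)$ can be isotoped off $\gamma_2$: push $\partial_1U$ slightly further out, so the two curves cobound an annulus in $S_{g,n}$ containing $x$. By (1) both curves lie in $i_*^{-1}[\zeta]$. They are distinct in $S_{g,n+1}$, because the annulus between them contains the puncture $x$. Since $\gamma_2$ is pre-essential and $S_{g,n+1}$ is not $S_{1,1}$ or $S_{0,4}$, neither curve is peripheral in $S_{g,n+1}$. So they span an edge of $CC(S_{g,n+1})$, and hence an edge of $\mathscr F_\zeta$.

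Conversely, suppose $[\gamma_1],[\gamma_2]$ are disjoint, distinct, and pre-homotopic. Take disjoint representatives. In $S_{g,n}$ they are disjoint homotopic essential curves, so they cobound an annulus $A\subset S_{g,n}$, as recalled in Section \ref{subsec:Trees of Groups}. The annulus $A$ contains no puncture of $S_{g,n}$. If $x\notin A$, then $\gamma_1$ and $\gamma_2$ are homotopic in $S_{g,n+1}$, a contradiction; so $x\in A$. Choose an embedded arc $\delta$ in $A$ from $x$ to $\gamma_2$ that is disjoint from $\gamma_1$. Cutting $A$ along $\delta$ leaves a disk, and $\Pull_\delta(\gamma_2)$ is isotopic to $\gamma_1$ through the complement of $x$ in that disk. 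Thus $[\gamma_1]$ is a simple pull of $[\gamma_2]$.

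\emph{Main obstacle.} The delicate step is the converse in (3). One needs the fact that disjoint homotopic essential curves in $S_{g,n}$ cobound an annulus, and the check that the pulled curve is isotopic to $\gamma_1$ rather than differing by a twist around $x$. The twist issue is settled by noting that $A$ minus $x$, cut along $\delta$, is a disk, so there is no room for a twist.
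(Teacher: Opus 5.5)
Your route is the natural one, which the paper leaves as ``straightforward''. Items (1) and (2), and the forward direction of (3), are essentially fine. The converse of (3), however, rests on a false claim, and it is exactly the step you single out as the main obstacle.

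\textbf{The false claim.} Cutting the annulus $A$ along an arc $\delta$ that runs from the interior point $x$ to the boundary component $\gamma_2$ does \emph{not} produce a disk. Take coordinates $A=S^1\times[0,1]$, $\gamma_2=S^1\times\{0\}$, $\delta=\{\theta_0\}\times[0,\tfrac12]$. Then $A\setminus\delta$ still contains the essential loop $S^1\times\{\tfrac34\}$, so the cut-open surface is again an annulus. Consequently, ``there is no room for a twist because it is a disk'' does not justify that $\Pull_\delta(\gamma_2)$ is isotopic to $\gamma_1$.

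\textbf{The fix.} The conclusion is true and the repair is short. The annulus obtained by cutting $A$ along $\delta$ has two boundary components: $\gamma_1$, and $\gamma_2$ with the two sides of the slit spliced in, with $x$ at the tip. Pushing the latter slightly into the interior of the cut annulus gives a curve isotopic to $\Pull_\delta(\gamma_2)$. That curve cobounds with $\gamma_1$ an annulus contained in $A\setminus\delta$, which contains neither $x$ nor any puncture, so it is isotopic to $\gamma_1$ in $S_{g,n+1}$.

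Equivalently: $A\setminus\{x\}$ is a pair of pants, and every simple closed curve in a pair of pants is parallel to a boundary component. Since $\Pull_\delta(\gamma_2)$ separates $\gamma_1$ from $\gamma_2\cup\{x\}$, it is parallel to $\gamma_1$. This, not simple connectivity, is the real reason no twist can occur.

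\textbf{Two smaller points in the forward direction of (3).}
\begin{itemize}
\item To make $\Pull_\delta(\gamma_2)$ disjoint from $\gamma_2$, you must push their common part off $\gamma_2$ toward the side from which $\delta$ arrives. Pushing to the other side makes the walls of the finger cross $\gamma_2$ twice.
\item ``Distinct because the annulus between them contains $x$'' needs one more sentence. If the two curves were also homotopic in $S_{g,n+1}$, they would cobound an unpunctured annulus on the other side. Then $S_{g,n+1}=S_{1,1}$, which is excluded by $\chi(S_{g,n})<0$.
\end{itemize}
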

\begin{proof}
    The proofs are straightforward.
\end{proof}

\begin{defn}
    Suppose that $[\zeta]\in C(S_{g,n})$. Define a {\it complex of pre-homotopic curves} $X_{\zeta}$ as a graph such that
    \begin{enumerate}
        \item $\Vertex(X_{\zeta})=i_*^{-1}([\zeta])\subset C(S_{g,n+1})$, and
        \item two vertices $[\gamma_1]$ and $[\gamma_2]$ are joined by an edge iff $[\gamma_1]$ is a simple pull of $[\gamma_2]$.
    \end{enumerate}
\end{defn}

By Proposition \ref{prop:SPullCurvElemProp}, it is straightforward that $X_{\zeta}$ is isomorphic to $\mathscr{F}_{\zeta}$ as graphs. Hence, $X_\zeta$ is a tree. However, the construction using simple pulls can be extended to multicurves $Z$, assigning cube complexes $X_Z$.

\subsection{Complexes of pre-homotopic multicurves}

\begin{defn}[Pre-({\it adjective}) multicurves]
Suppose $\Gamma$ is a multicurve of $S_{g,n+1}$.
\begin{enumerate}
    \item $\Gamma$ is {\em pre-essential} if its every component is pre-essential.
    \item $\Gamma$ is {\em pre-non-parallel} if $i_*[\gamma_1]\neq i_*[\gamma_2]$ for any $\gamma_1 \neq \gamma_2\in \Gamma$.
    \item $\Gamma$ is {\em pre-minimally-self-intersecting} or, briefly, {\em pre-minimal} if
    \[
        \pair{[\Gamma]}{[\Gamma]}_{S_{g,n+1}}=\pair{i_*[\Gamma]}{i_*[\Gamma]}_{S_{g,n}}.
    \]
    \item A pre-non-parallel multicurve $\Gamma$ is {\em pre-filling} if $i_*[\Gamma]$ fills $S_{g,n}$.
    \item Two pre-essential and pre-non-parallel multicurves $\Gamma_1$ and $\Gamma_2$ are {\em pre-homotopic} if there is a bijection between components of $\Gamma_1$ and $\Gamma_2$ that sends each curve $\gamma_1 \in \Gamma_1$ to a pre-homotopic curve $\gamma_2 \in \Gamma_2$, i.e., $\gamma_1 \sim_{\pre} \gamma_2$. We write $\Gamma_1 \sim_{\pre} \Gamma_2$ if $\Gamma_1$ and $\Gamma_2$ are pre-homotopic.
\end{enumerate}
\end{defn}

\begin{defn}[Simple cords and simple pulls for multicurves]\label{defn:SimplePull_Multicurves}
    Let $\Gamma$ be a multicurve of $S_{g,n+1}$. Suppose that $\Gamma$ is in minimal position. We use Definitions \ref{defn:SimpleCord to Curve} and \ref{defn:SimplePull of Curve} with $\gamma$'s replaced by $\Gamma$'s to define {\it simple cords} from $x$ to $\Gamma$ and {\it simple pulls} of $\Gamma$ along simple cords.
    For a simple cord $\delta$ from $x$ to $\Gamma=\{\gamma_1,\gamma_2,\dots,\gamma_k\}$, suppose that $\delta(1) \in \gamma_1$. Then $\Pull_\delta(\Gamma)=\{\gamma_1',\gamma_2,\gamma_3,\dots,\gamma_k\}$ such that $\gamma_1'=\Pull_\delta(\gamma_1)$, where $\Pull_\delta(\gamma_1)$ is the simple pull of the simple closed curve $\gamma_1$ along $\delta$. We call $\gamma_1$ the {\it to-be-pulled component of $\Gamma$} and $\gamma_1 '$ the {\it pulled component of $\Pull_\delta(\Gamma)$} under the simple pull of $\Gamma$ along $\delta$.
\end{defn}

We use the term ``simple pull'' ambiguously to refer to both the operation and its result. For example, $\Pull_\delta(\Gamma)$ is a simple pull of $\Gamma$ obtained by the simple pull of $\Gamma$ along $\delta$.


\subsection*{Symmetry and non-symmetry of simple pulls for multicurves}
\begin{lem}\label{lem:Asymm of simplepull}
    Suppose that $[\Gamma']$ is a simple pull of $[\Gamma]$. Then, we have $\pair{[\Gamma']}{[\Gamma']}_{S_{g,n+1}}\le \pair{[\Gamma]}{[\Gamma]}_{S_{g,n+1}}$. The equality holds if $[\Gamma]$ is pre-minimal, i.e., the pre-minimality is preserved by simple pulls.
\end{lem}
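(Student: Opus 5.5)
The plan is to compare $\Gamma'=\Pull_\delta(\Gamma)$ with $\Gamma$ through explicit representatives. We then bound the self-intersection of $\Gamma'$ from above by counting the intersections of that representative. For the equality case we bound it from below using the forgetful map $i_*$.

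\textbf{Setup.} Take $\Gamma$ in minimal position and a simple cord $\delta$ from $x$ to $\Gamma$, with $\delta(1)\in\gamma_1$. Since $\delta([0,1))\cap\Gamma=\emptyset$, the cord meets no component of $\Gamma$ except at its endpoint. Choose the neighbourhood $U$ of $\delta$ in Definition~\ref{defn:SimplePull of Curve} thin enough that $U\cap\Gamma$ is a single short arc of $\gamma_1$. This arc contains $\delta(1)$ and no double point of $\Gamma$.

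\textbf{Upper bound.} The curve $\gamma_1'$ is obtained by replacing $\gamma_1\cap U$ with $\partial_1U$. Since $\partial_1 U$ runs parallel to $\delta$, it is disjoint from $\Gamma$. Hence the explicit representative $\Gamma'=\{\gamma_1',\gamma_2,\dots,\gamma_k\}$ has exactly the same transverse double points as $\Gamma$, with none created or destroyed. This gives $\pair{\Gamma'}{\Gamma'}=\pair{\Gamma}{\Gamma}=\pair{[\Gamma]}{[\Gamma]}$. Taking the minimum over representatives yields $\pair{[\Gamma']}{[\Gamma']}\le\pair{[\Gamma]}{[\Gamma]}$.

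\textbf{Equality under pre-minimality.} Suppose $[\Gamma]$ is pre-minimal. By Proposition~\ref{prop:SPullCurvElemProp}(1), the simple pull does not change the pre-homotopy class of the pulled component: the pull is an isotopy across $x$ in $S_{g,n}$. Hence $i_*[\gamma_1']=i_*[\gamma_1]$, and therefore $i_*[\Gamma']=i_*[\Gamma]$.

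Next we need a monotonicity inequality under forgetting: for any multicurve $A$ on $S_{g,n+1}$,
\[
\pair{[A]}{[A]}_{S_{g,n+1}}\ge\pair{i_*[A]}{i_*[A]}_{S_{g,n}}.
\]
This holds because a minimal representative in $S_{g,n+1}$ is also a representative in $S_{g,n}$ under $i$. The same argument applies pairwise, including to self-intersections, so it holds for the sum. Combining this with the upper bound gives
\[
\pair{i_*[\Gamma]}{i_*[\Gamma]}=\pair{i_*[\Gamma']}{i_*[\Gamma']}\le\pair{[\Gamma']}{[\Gamma']}\le\pair{[\Gamma]}{[\Gamma]}=\pair{i_*[\Gamma]}{i_*[\Gamma]}.
\]
So every inequality is an equality, and $\Gamma'$ is pre-minimal.

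\textbf{Main obstacle.} The delicate point is the upper bound. The pulled representative must be an honest multicurve whose crossings correspond bijectively to those of $\Gamma$. Two things need care. First, $\partial_1U$ must avoid all other components, which is where the hypothesis $\delta([0,1))\cap\Gamma=\emptyset$ is used. Second, $\gamma_1'$ must remain simple, so that no new self-crossings of $\gamma_1'$ appear. Both follow from shrinking $U$. A secondary point is checking that $i_*$ sends $[\gamma_1']$ to $[\gamma_1]$ even though the pull crosses $x$. I would justify this by the explicit isotopy sweeping $\gamma_1\cap U$ across $U$ in $S_{g,n}$, where $x$ is no longer a puncture.
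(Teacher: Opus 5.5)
Your proposal is correct and is essentially the paper's proof. The paper also takes $\Gamma$ in minimal position, notes that the pulled representative satisfies $\pair{\Gamma'}{\Gamma'}=\pair{\Gamma}{\Gamma}$, and minimizes over representatives. It then closes the equality case with the same chain $\pair{i_*[\Gamma]}{i_*[\Gamma]}_{S_{g,n}}=\pair{[\Gamma]}{[\Gamma]}\ge\pair{[\Gamma']}{[\Gamma']}\ge\pair{i_*[\Gamma']}{i_*[\Gamma']}_{S_{g,n}}$, using $i_*[\Gamma]=i_*[\Gamma']$. You additionally spell out two points the paper leaves implicit: why the pull creates no new crossings, and why forgetting $x$ cannot increase intersection numbers.
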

\begin{proof}
    Suppose that a representative $\Gamma$ of $[\Gamma]$ is in minimal position such that $\Gamma'$ is a simple pull of $\Gamma$. Since simple pulls for representatives do not increase the intersection number, we have
    \[
        \pair{[\Gamma]}{[\Gamma]}_{S_{g,n+1}}=\pair{\Gamma}{\Gamma}_{S_{g,n+1}}=\pair{\Gamma'}{\Gamma'}_{S_{g,n+1}}\ge \pair{[\Gamma']}{[\Gamma']}_{S_{g,n+1}}.
    \]
    The inequality is strict if $\Gamma'$ is not in minimal position.
    If $[\Gamma]$ is pre-minimal, and $[\Gamma']$ is a simple pull of $[\Gamma']$, then
    \[
        \pair{i_*[\Gamma]}{i_*[\Gamma]}_{S_{g,n}}=\pair{[\Gamma]}{[\Gamma]}_{S_{g,n+1}} \ge \pair{[\Gamma']}{[\Gamma']}_{S_{g,n+1}}\ge \pair{i_*[\Gamma']}{i_*[\Gamma']}_{S_{g,n}}.
    \]
    Since $i_*[\Gamma]=i_*[\Gamma']$, the multicurve $[\Gamma']$ is also pre-minimal.
\end{proof}

Suppose that $[\Gamma']$ is a simple pull of $[\Gamma]$. If $[\Gamma]$ is pre-minimal and $\Gamma$ is in minimal position, then any simple pull $\Gamma'$ of $\Gamma$ is also in minimal position. Hence, $[\Gamma]$ is also a simple pull of $[\Gamma']$. 

If $[\Gamma]$ is not pre-minimal, the inequality in Lemma \ref{lem:Asymm of simplepull} may be strict, in which case $[\Gamma]$ is not a simple pull of $[\Gamma']$. A representative in $[\Gamma]$ may be obtained from a simple pull of a representative $\Gamma'$ of $[\Gamma']$ that is not in minimal position, but this is excluded from our definition of simple pulls between homotopy classes of multicurves.

\subsection*{(In)dependence of representatives of simple pulls}
If $[\alpha]$ is a simple pull of $[\gamma]$, then for any representative $\gamma_1$ of $[\gamma]$, there exists a simple cord $\delta$ from $x$ to $\gamma_1$ such that $[\Pull_\delta(\gamma_1)]=[\alpha]$. This may be false for multicurves. There may be two representatives of $\Gamma_1$ and $\Gamma_2$ of the same homotopy class $[\Gamma_1]=[\Gamma_2]$ such that another homotopy class of multicurve $[\Gamma]$ can be obtained a simple pull of $\Gamma_2$ but not of $\Gamma_1$ (Figure \ref{fig:DependenceRep}).

    \begin{figure}[h!]
	\centering
        \def\svgwidth{0.7\textwidth}
        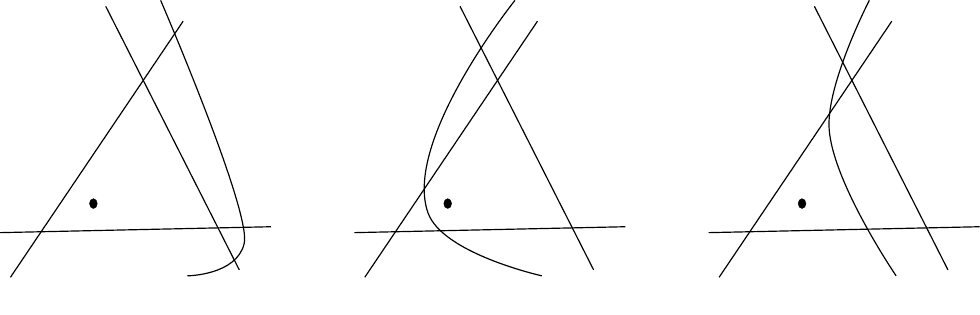
	\caption{Dependence of representatives of simple pulls}
    \label{fig:DependenceRep}
    \end{figure}





\subsection*{Commuting simple pulls and cube complexes of multicurves}
\begin{defn}[Commuting simple pulls]\label{defn:CommSimplePull}
    Let $\Gamma=\{\gamma_1,\gamma_2,\dots,\gamma_k\}$ be a multicurve of $S_{g,n+1}$. Fix $I\subset \{1,2,\dots,k\}$ with $|I|=m$. Suppose that $[\gamma_i']$ is a simple pull of $[\gamma_i]$ for each $i\in I$. For any $J\subset I$, we define a homotopy class of multicurve $[\Gamma_J]$ by
    \[
        [\Gamma_J]=\{[\gamma_i']~|~i\in J\}\cup \{[\gamma_i]~|~ i\in I\setminus J\}.
    \]
    We say that these $m$ simple pulls, changing $[\gamma_i]$'s into $[\gamma_i']$'s for $i\in I$, are an {\it $m$-tuple of commuting simple pulls} if $[\Gamma_{J\cup\{j\}}]$ is a simple pull of $[\Gamma_J]$, changing $[\gamma_{j}]$ to $[\gamma_{j}']$, for every $J\subsetneq I$ and $j\in I\setminus J$.  
\end{defn}

\begin{defn}[Complex of pre-homotopic multicurves]\label{defn:CplxofPreHmpticMultiCurv}
Let $Z$ be a multicurve of $S_{g,n}$. We define a cube complex $X_Z$ as follows:
\begin{enumerate}
    \item $\Vertex(X_Z)$ consists of the homotopy classes of multicurves $\Gamma$ of $S_{g,n+1}$ so that $\Gamma$ is pre-essential, pre-non-parallel, pre-minimal, and $i_*[\Gamma]=[Z]$, and
    \item $\Edge(X_Z)$ consists of simple pulls.
\end{enumerate}
The $1$-skeleton $X_Z^{(1)}$ has been defined. Then, 
\begin{enumerate}[resume]
    \item an $m$-cube for $m \ge 2$ is attached to each $1$-skeleton of an $m$-cube embedded in $X_Z^{(1)}$. By Lemma \ref{lem:cubes}, $m$-cubes correspond to $m$-tuples of commuting simple pulls.
\end{enumerate}
\end{defn}

\begin{lem}\label{lem:cubes}
    For a multicurve $Z$ of $S_{g,n}$, let $\Gamma=\{\gamma_1,\gamma_2,\dots,\gamma_k\}$ be a multicurve of $S_{g,n+1}$ with $[\Gamma] \in \Vertex(X_Z)$. Suppose that $[\Gamma]$ is a vertex of an $m$-cube $C_m$ in $X_Z$. Then, the following hold.
    \begin{enumerate}
        \item $m\le k$.
        \item The vertices of $C_m$ are obtained by an $m$-tuple of commuting simple pulls of $[\Gamma]$.
    \end{enumerate}
\end{lem}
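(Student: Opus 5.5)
The plan is to analyze the edges of $C_m$ at the vertex $[\Gamma]$ and then use the square structure to show that they commute. Let $e_1,\dots,e_m$ be the $m$ edges of $C_m$ incident to $[\Gamma]$. Each $e_j$ is a simple pull, so it changes exactly one component of $[\Gamma]$, say $[\gamma_{\iota(j)}]$, into a pre-homotopic curve $[\gamma'_{\iota(j)}]$. This uses Definition \ref{defn:SimplePull_Multicurves}, and the pre-homotopy follows from Proposition \ref{prop:SPullCurvElemProp}(1).

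The first step is to show that $\iota$ is injective. This gives $m\le k$ and part (1) at once. Suppose $\iota(j)=\iota(j')=1$ for $j\neq j'$. Consider the square face of $C_m$ spanned by $e_j$ and $e_{j'}$; it has a fourth vertex $[\Gamma'']$ adjacent to both $[\Gamma_j]$ and $[\Gamma_{j'}]$. Track the first-component projection, using the tree $X_{\zeta_1}\cong\mathscr{F}_{\zeta_1}$ (Proposition \ref{prop:SPullCurvElemProp}(3)).
\begin{itemize}
\item The edges from $[\Gamma]$ move the first component from $[\gamma_1]$ to the two vertices $[\gamma_1^{(j)}]\ne[\gamma_1^{(j')}]$ of the tree.
\item The second edge from each of $[\Gamma_j]$ and $[\Gamma_{j'}]$ to $[\Gamma'']$ either fixes the first component or moves it one step in the tree.
\end{itemize}
Distinct vertices of $X_Z$ are distinct multicurves, and $\Gamma_j$ and $\Gamma_{j'}$ agree on all other components. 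So the closed loop of length 4 projects to a closed walk in the tree $X_{\zeta_1}$. That walk must backtrack, which forces $[\Gamma'']=[\Gamma]$ or a similar degeneracy, contradicting that $C_m$ is a genuine embedded cube. The projected walk is
\[
[\gamma_1]\to[\gamma_1^{(j)}]\to\ast\to[\gamma_1^{(j')}]\to[\gamma_1].
\]
In a tree, $\ast$ can only be $[\gamma_1]$. Then $[\Gamma'']$ agrees with $[\Gamma]$ on component 1, and on the other components it agrees with both neighbors, so $[\Gamma'']=[\Gamma]$. This is a contradiction.

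For (2), relabel so that $\iota(j)=j$ for $j\le m$. For each $J\subset\{1,\dots,m\}$, let $v_J$ be the vertex of $C_m$ reached from $[\Gamma]$ along the edges indexed by $J$. I would show by induction on $|J|$ that $v_J=[\Gamma_J]$ in the notation of Definition \ref{defn:CommSimplePull}. The step is the square argument again. In the square with vertices $v_J$, $v_{J\cup\{a\}}$, $v_{J\cup\{b\}}$, $v_{J\cup\{a,b\}}$, the two edges out of $v_J$ change components $a$ and $b$. Parallel edges of the square must change the same component: running the tree-backtracking argument on each coordinate shows that the opposite edge cannot move a different component without creating a non-backtracking closed walk in some $X_{\zeta_i}$. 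They must also change it to the same target curve. Hence $v_{J\cup\{a,b\}}=[\Gamma_{J\cup\{a,b\}}]$.

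Once every vertex is identified, each edge $v_J\to v_{J\cup\{j\}}$ is an edge of $X_Z$, i.e.\ a simple pull changing $[\gamma_j]$ to $[\gamma'_j]$. That is exactly the commuting condition.

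The main obstacle is the square lemma. We need that a $4$-cycle in $X_Z^{(1)}$ through $[\Gamma]$ with edges changing components $a\ne b$ closes up as "change $a$, change $b$", with opposite edges performing the same pulls. The projection-to-trees bookkeeping above handles it, because each edge of $X_Z$ changes exactly one component and each component lives in a tree. The care needed is to check that a length-4 closed walk in a tree with all four steps nontrivial must revisit the starting vertex at step 2. One also has to treat the case where some step in a coordinate is stationary.
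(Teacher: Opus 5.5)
Your proof is correct and follows essentially the paper's argument. In both, the key input is that two edges at a vertex pulling the same component would produce, on the $2$-face they span, a $4$-cycle in the tree $X_{\zeta_i}$; and a $2$-face whose two edges at a vertex pull different components must close up as the commuting square. The differences are organizational: the paper proves the $m=2$ case first and then inducts on $m$ through an $(m-1)$-subcube, whereas you prove injectivity of $\iota$ first and induct on $|J|$ inside $C_m$. (For the different-component squares, simply counting which components differ already suffices, without using the tree.)
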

\begin{proof}
    We first show (2) when $m=2$. Denote by $[\Gamma_1]$ and $[\Gamma_2]$ the vertices of $C_2$ adjacent to $[\Gamma]$, such that they are simple pulls of $[\Gamma]$. Denote by $[\Gamma_3]$ the other vertex of $C_2$.
    
    Suppose that $[\Gamma_1]$ and $[\Gamma_2]$ are obtained by simple pulls of different components of $[\Gamma]$. Without loss of generality, suppose that $[\gamma_i]$ of $[\Gamma]$ is simply pulled to $[\gamma_i']$ of $[\Gamma_i]$ for $i\in\{1,2\}$. Since $[\Gamma_3]$ is a common simple pull of $[\Gamma_1]$ and $[\Gamma_2]$, $[\Gamma_3]$ must be $\{[\gamma_1'],[\gamma_2'],[\gamma_3],\dots,[\gamma_m]\}$, which is the desired conclusion for $m=2$.
    
    Suppose that $[\Gamma_1]$ and $[\Gamma_2]$ are obtained by simple pulls of the same component $[\gamma_1]$ of $[\Gamma]$. Denote by $[\gamma_1']$ and $[\gamma_1'']$ the elements of $[\Gamma_1]$ and $[\Gamma_2]$ with $i_*[\gamma_1]=i_*[\gamma_1']=i_*[\gamma_1'']$. Since $[\Gamma_3]$ is a common simple pull of $[\Gamma_1]$ and $[\Gamma_2]$, there exists $[\gamma_1''']\in [\Gamma_3]$ that is a common simple pull of $[\gamma_1']$ and $[\gamma_1'']$. Then $[\gamma_1],[\gamma_1'],[\gamma_1''],$ and $[\gamma_1''']$ form a $4$-cycle in $X_{\zeta_1}$, which contradicts the fact that $X_{\zeta_1}$ is a tree.

    (1) Suppose $m>k$. There exist two vertices $[\Gamma_1]$ and $[\Gamma_2]$ that are adjacent to $[\Gamma]$ in $X_Z$ such that $[\Gamma_1]$ and $[\Gamma_2]$ are obtained by simple pulls of the same component of $[\Gamma]$. It contradicts (2) of the case $m=2$ because commuting simple pulls are obtained by simple pulls of different components.    

    (2) We use induction on $m$. The case $m=1$ is trivial and we have shown it when $m=2$. Suppose that the statement in (2) holds with $m$ replaced by $m-1$. Let $C_{m-1}$ be a $(m-1)$-cube such that $[\Gamma] \in  C_{m-1}\subset C_m$. By the induction hypothesis, after reordering indices, the vertices of $C_{m-1}$ can be obtained by simple pulls of $\gamma_1, \gamma_2, \dots \gamma_{m-1}$. For any subset $J$ of $\{1,2,\dots,m-1\}$, denote by $[\Gamma_J]$ the vertex in $C_{m-1}$ that is obtained by simple pulls of $[\gamma_j]\in [\Gamma]$ for all $j\in J$.
    
    Denote by $[\Gamma']$ a unique vertex in $C_m \setminus C_{m-1}$ that is adjacent to $[\Gamma]$. For each $j\in \{1,2,\dots,m-1\}$, consider the $2$-cube in $C_m$ containing $[\Gamma], [\Gamma'],[\Gamma_{j}]$. Applying (2) with $m=2$ to these $2$-cubes, we can show that $[\Gamma']$ is obtained by a simple pull of a component of $[\Gamma]$ that is not $[\gamma_j]$ with $1\le j\le m-1$. Without loss of generality, let $[\Gamma']$ be obtained from $[\Gamma]$ via a simple pull of $[\gamma_m]\in [\Gamma]$ to $[\gamma_k']\in [\Gamma']$. Define $[\Gamma_{\{m\}}]:=[\Gamma']$.

    \begin{figure}[h!]
	\centering
        \def\svgwidth{0.3\textwidth}
        \input{cube.pdf_tex}
	\caption{An $m$-cube $C_m$}
    \label{fig:cube}
    \end{figure}
    
    Let us label vertices of $C_m$ as follows. We already labeled vertices in $C_{m-1}(\subset C_m)$ as $[\Gamma_J]$ for $J\subset \{1,2,\dots,m-1\}$. For any vertex $v\in C_m\setminus C_{m-1}$, there exists a unique $[\Gamma_J]\in C_{m-1}$ such that there is an edge between $v$ and $[\Gamma_J]$. Label $v$ as $[\Gamma_{J\cup \{m\}}]$. There is a bijection between vertices of $C_{m}$ and subsets of $\{1,2,\dots,m\}$. Recall that for any $J\subset \{1,2,\dots,m-1\}$, $[\Gamma_J]$ is obtained from $[\Gamma]$ via simple pulls of $\{[\gamma_j]\}_{j\in J}$ to $\{[\gamma_j']\}_{j'\in J}$; see Figure \ref{fig:cube}.

    Now, it suffices to show that for every $J\subset \{1,2,\dots,m-1\}$, (a) $[\Gamma_{J\cup \{m\}}]$ is a simple pull of $[\Gamma_J]$ changing $[\gamma_m]$ into $[\gamma_m']$ and (b) for any $j'\in \{1,2,\dots,m-1\}\setminus J$, $[\Gamma_{J \cup \{j',m\}}]$ is the simple pull of $[\Gamma_{J\cup \{m\}}]$ changing $[\gamma_{j'}]$ to $[\gamma_{j'}]$. We remark that (a) is about vertical edges and (b) is about horizontal edges in the upper $(m-1)$-cube in Figure \ref{fig:cube}.
    
    Let us first show (a). Note that (a) holds when $J=\emptyset$. Hence, it suffices to show that if the property is true for $J$, then it also holds for $J'$ with $J'=J \cup \{j'\}$ for any $j'\in \{1,2,\dots,m-1\}\setminus J$. Fix $J\subsetneq \{1,2,\dots,m-1\}$. Suppose that $[\Gamma_{J\cup\{m\}}]$ is the simple pull of $[\Gamma_J]$ changing $[\gamma_m]$ to $[\gamma_m']$. Fix $j'\in \{1,2,\dots,m-1\}\setminus J$, and define $J'=J\cup \{j'\}$. Applying (2) of $m=2$ for the $2$-cube containing $[\Gamma_J], [\Gamma_{J'}],[\Gamma_{J\cup\{m\}}]$, and $[\Gamma_{J'\cup \{m\}}]$, we have that $[\Gamma_{J'\cup \{m\}}]$ is the simple pull of $[\Gamma_{J'}]$ changing $[\gamma_m]$ to $[\gamma_m']$. It completes the proof of (a).

    Then, the proof of (b) also follows from the application of (2) when $m=2$, using the $2$-cube containing $[\Gamma_J], [\Gamma_{J'}],[\Gamma_{J\cup\{m\}}]$, and $[\Gamma_{J'\cup \{m\}}]$. This completes the proof of (2).
\end{proof}

\subsection*{Projection maps $p_i\from X_Z\to X_{\zeta_i}$}
Let $Z=\{\zeta_1,\zeta_2,\dots,\zeta_k\}$. For each $i\in\{1,2,\dots,k\}$, there exists a cubical map $p_i\from X_Z\to X_{\zeta_i}$ as follows: Suppose $[\Gamma]=\{[\gamma_1],[\gamma_2],\dots,[\gamma_k]\}$ is a vertex of $X_Z$ with $i_*[\gamma_i]=[\zeta_i]$ for each $i$. Define $p_i([\Gamma])$ as $[\gamma_i]$. Suppose $[\Gamma']$ is a vertex adjacent to $[\Gamma]$. Then there is a simple pull from $[\Gamma]$ to $[\Gamma']$, changing a component $[\gamma_j]$ to $[\gamma_j']$. They are mapped by $p_i$ to either adjacent vertices if $i=j$, or to the same vertices if $i\neq j$. Then $p_i$ extends to a cubical map $p_i\from X_Z\to X_{\zeta_i}$.

\begin{lem}\label{lem:p_i Surj, Efficent}
    The cubical map $p_i\from X_Z\to X_{\zeta_i}$ is surjective. Moreover, the family $\{p_i\from X_Z \to X_{\zeta_i}\}_{i=1,2,\dots,k}$ is efficient, in the sense of Definition \ref{defn:TreeCoordiate}.
\end{lem}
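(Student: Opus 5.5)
Efficiency is essentially built into the definitions, so we dispose of it first. Surjectivity carries the real content.

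\emph{Efficiency.} Take any edge of $X_Z$. By Definition~\ref{defn:CplxofPreHmpticMultiCurv} it is a simple pull $[\Gamma]\to[\Gamma']$, and by Definition~\ref{defn:SimplePull_Multicurves} such a pull changes exactly one component, say $[\gamma_j]\mapsto[\gamma_j']$, leaving the others fixed. Since $\Gamma$ is pre-non-parallel, the index $j$ is determined by $i_*[\gamma_j]=[\zeta_j]$. Hence $p_i$ collapses the edge to a vertex for $i\neq j$.

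It remains to see that $p_j$ sends the edge to an edge. We need $[\gamma_j']\ne[\gamma_j]$, and then the definition of $X_{\zeta_j}$ gives adjacency. The vertices $[\Gamma]\neq[\Gamma']$ differ only in the $j$-th component, so $[\gamma_j']\neq[\gamma_j]$. Moreover $[\gamma_j']$ is a simple pull of $[\gamma_j]$, which we can realize on a minimal-position representative of $\Gamma$ restricted to $\gamma_j$. This gives existence and uniqueness of the index $i$ in Definition~\ref{defn:TreeCoordiate}(3).

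\emph{Surjectivity.} Fix $i$ and a vertex $[\alpha]\in X_{\zeta_i}$. Choose any vertex $[\Gamma]$ of $X_Z$. The set $\Vertex(X_Z)$ is nonempty: realize $Z$ in minimal position on $S_{g,n}$ and place $x$ off $Z$. The resulting lift is pre-essential, pre-non-parallel and pre-minimal, and intersection numbers in $S_{g,n+1}$ are bounded below by those in $S_{g,n}$.

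Since $X_{\zeta_i}\cong\mathscr{F}_{\zeta_i}$ is a connected tree, there is an edge path $[\gamma_i]=[\alpha_0],[\alpha_1],\dots,[\alpha_r]=[\alpha]$ in which each term is a simple pull of the previous one. We lift it step by step. Suppose $[\Gamma_s]\in\Vertex(X_Z)$ has $i$-th component $[\alpha_s]$. We want a simple cord $\delta$ from $x$ to a minimal-position representative $\Gamma_s$ landing on the $i$-th component, with $\Pull_\delta(\alpha_s)\simeq\alpha_{s+1}$. Then $\Pull_\delta(\Gamma_s)$ is again a vertex by Lemma~\ref{lem:Asymm of simplepull}, since pre-minimality is preserved, and its $i$-th component is $[\alpha_{s+1}]$.

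\emph{Main obstacle.} The difficulty is the phenomenon of Figure~\ref{fig:DependenceRep}: a cord $\delta$ realizing the pull on the single curve $\alpha_s$ may cross the other components of $\Gamma_s$, so it is not a simple cord to the multicurve. Three tools are available.

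First, cords to $\alpha_s$ are only defined up to homotopy relative to $x,\alpha_s$, and the pull depends only on that class. Second, we may change the representative $\Gamma_s$ inside its homotopy class in $S_{g,n+1}$. Third, we may move $x$ within a complementary region of $i(\Gamma_s)$ in $S_{g,n}$, which corresponds to the wall picture.

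To handle the crossings, pass to the universal cover $\widetilde{S_{g,n}}$. The vertex $[\alpha_s]$ corresponds to a complementary region of $p^{-1}(\zeta_i)$ containing the lift $\tilde x$, and the pull to $[\alpha_{s+1}]$ corresponds to crossing one wall $w$ of $p^{-1}(\zeta_i)$ adjacent to that region. If walls of other components separate $\tilde x$ from $w$, first perform simple pulls on those other components to move $x$ across them. These auxiliary pulls do not change the $i$-th component, since they alter other indices, and they preserve membership in $\Vertex(X_Z)$.

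After finitely many such pulls, justified by local finiteness of walls, $x$ lies in a region of $\widetilde{S_{g,n}}\setminus p^{-1}(Z)$ adjacent to $w$. A short cord crossing no other curve then realizes the desired pull. This cover-level argument is the step I expect to need the most care. It is where the forthcoming identification with $\mathscr{X}_Z$ is foreshadowed, and I would present it via this wall-crossing induction.
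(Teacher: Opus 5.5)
Your efficiency argument is fine. Lifting an edge path of the tree $X_{\zeta_i}$ would also give surjectivity on vertices and on edges at once. The problem is the step that is supposed to realize each lift.

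\textbf{The gap.} You flip only walls of other components that \emph{separate} $\tilde x$ from $w$, and then conclude that $\tilde x$ lies in a region of $\widetilde{S_{g,n}}\setminus p^{-1}(Z)$ adjacent to $w$. That does not follow, because walls that \emph{cross} $w$ can still shield it. Here is a concrete configuration.
\begin{itemize}
\item Let arcs of two other components $a,b$ and of $\alpha_s$ bound a small triangle $\Delta$ in $S_{g,n}$, with no other strands nearby.
\item Suppose the desired pull is across the lift $w$ of the $\alpha_s$-side of $\Delta$.
\item Put $x$ in the corner at $a\cap b$ opposite to $\Delta$.
\end{itemize}
Then $w$ bounds the region of $p^{-1}(\alpha_s)$ containing $\tilde x$. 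No wall separates $\tilde x$ from $w$, since the lifts $\tilde a,\tilde b$ through that corner both cross $w$. Yet $w$ misses the closed quadrant of $\tilde a\cup\tilde b$ containing $\tilde x$, because walls cross at most once. So every cord realizing this pull crosses $a$ or $b$, and your ``short cord crossing no other curve'' does not exist for this representative.

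This is exactly the difference between walls \emph{incident} to $\tilde x$ and walls \emph{exposed} to $\tilde x$. The paper needs Steps 1--2 in the proof of Lemma~\ref{lem:LocSurf.Omega} to overcome it. Your auxiliary pulls have the same defect: a wall of some $\gamma_j$ separating $\tilde x$ from $w$ need not be exposed either. Then there may be no simple cord to the multicurve realizing the pull across it, and the recursion never closes.

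\textbf{The missing idea.} Only the $i$-th class has to be controlled, so you may change the other components freely, including across walls that cross $w$.

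The paper uses this directly and never lifts paths. It replaces $\gamma_i$ by the target curve $\gamma_i'$. It then removes bigons by homotoping only the $\gamma_j$ with $j\neq i$, in $S_{g,n}$, letting them pass over $x$. The result is in minimal position in $S_{g,n}$, hence pre-minimal and pre-homotopic to $\Gamma$, with $i$-th component $[\gamma_i']$. For edges, it realizes the adjacent pair $\gamma_i,\gamma_i'$ as the boundary of a thin annulus containing $x$. It chooses the other components the same way so that both multicurves are pre-minimal, and pulls along a cord inside the annulus.

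Within your own framework, the repair is to move $x$ along an honest path in $S_{g,n}\setminus\alpha_s$ whose lift from $\tilde x$ ends next to $w$. The path must cross \emph{every} strand of the other components it meets, whether separating or not. Each crossing is a simple pull along an arbitrarily short cord. Pre-minimality is then automatic, because the curves themselves never move in $S_{g,n}$, and pushing $x$ back afterward does not change the intersection pattern.
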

\begin{proof}
    We first show the surjectivity on vertices.
    Fix a simple closed curve $\gamma_i'$ on $S_{g,n+1}$ with $\gamma_i\sim_{\pre}\gamma_i'$. Define a multicurve $\Gamma'$ of $S_{g,n+1}$ by replacing $\gamma_i$ in $\Gamma$ with $\gamma_i'$. If $\Gamma'$ is in minimal position on $S_{g,n}$, then $[\Gamma']\in \Vertex(X_Z)$ and $p_i([\Gamma'])=[\gamma_i']$. Suppose $\Gamma'$ is not in minimal position on $S_{g,n}$. By using a standard argument of removing bigons, we can obtain $\Gamma''$ that is in minimal position and homotopic to $\Gamma'$ in $S_{g,n}$. When removing bigons, we can fix $\gamma_i'$, changing other components $\gamma_j$ with $j\neq i$ to $\gamma_j''$. We may assume that $\gamma_j''$ does not pass through $x$, though the homotopy from $\gamma_j$ to $\gamma_j''$ can pass over $x$. Then, we can consider $\Gamma''$ as a multicurve on $S_{g,n+1}$ that is pre-minimal and pre-homotopic to $\Gamma$ such that $p_i([\Gamma''])=[\gamma_i']$.

    The surjectivity on edges follows similarly. Consider two adjacent vertices $[\gamma_i]$ and $[\gamma_i']$ of $X_{\zeta_i}$. We can assume that $\gamma_i$ and $\gamma_i'$ bound a very thin annulus $A$ punctured at $x$. By applying the argument in the previous paragraph, we can find $\gamma_j$'s with $j\neq i$ so that $\Gamma=\{\gamma_j : j\neq i\} \cup \{\gamma_i\}$ and $\Gamma'=\{\gamma_j : j\neq i\} \cup \{\gamma_i'\}$ are pre-minimal. It follows that the connected component of $A\setminus \bigcup_{j\neq i} \gamma_j$ containing $x$ intersects both $\gamma_i$ and $\gamma_i'$. Hence, one can obtain $[\Gamma']$ as a simple pull of $[\Gamma]$. 

    The efficiency is straightforward by definition.
\end{proof}

Now, we have a family of surjective cubical maps onto trees, $\{p_i\from X_Z \to X_{\zeta_i}\}$. We will eventually show that it is a tree coordinate system of $X_Z$, as a corollary of Theorem \ref{thm:CubeCplxIso}. Note that, at this point, we have not even shown that $X_Z$ is connected.

\subsection{Isomorphisms between two cube complexes}
Suppose that $Z$ is a multicurve of $S_{g,n}$. In previous sections, we have constructed two cube complexes: the dual cube complex $\mathscr{X}_Z$ of the wall space $(\widetilde{S_{g,n}},\mathcal{W}_Z)$ and the cube complex $X_Z$ of pre-homotopic multicurves. In this section, we show that there is a naturally defined isomorphism between them.

\subsection*{Cubical map $\Omega\from X_Z \to \mathscr{X}_Z$} Consider $S_{g,n+1}$ as $S_{g,n}$ with a base point $x$. Fix a lift $\widetilde{x}\in \widetilde{S_{g,n}}$ of $x$. Suppose $[\Gamma] \in \Vertex(X_Z)$ such that $\Gamma$ is in minimal position on $S_{g,n+1}$.

Since $\Gamma$ is pre-minimal, $\Gamma$ is also in minimal position on $S_{g,n}$. Then we can construct a wall space $(\widetilde{S_{g.n}},\mathcal{W}_\Gamma)$ and its dual cube complex $\mathscr{X}_\Gamma$. Since $i_*[\Gamma]=[Z]$ on $S_{g,n}$, there is a natural bijection between the sets of walls $\mathcal{W}_\Gamma$ and $\mathcal{W}_Z$ under which walls with the same endpoints on the boundary $\partial \widetilde{S_{g,n}}$ are matched with each other. This yields a cubical isomorphism between $\mathscr{X}_\Gamma$ and $\mathscr{X}_Z$. 

Recall that an orientation of a wall $w\in \mathcal{W}$ is defined as a choice of one out of its two half-spaces $\overleftarrow{w},\overrightarrow{w}$. For each wall, we choose its half-space that contains $\tilde{x}$. This defines a vertex in $\mathscr{X}_\Gamma$. Since $\mathscr{X}_\Gamma$ is isomorphic to $\mathscr{X}_Z$, we obtain a vertex in $\mathscr{X}_Z$, which we denote by $\Omega([\Gamma])$. We emphasize that $\Omega([\Gamma])$ relies on the choice of the lift $\tilde{x}$ of $x$.

It is clear that if $[\Gamma']$ is a simple pull of $[\Gamma]$, then the orientations of all walls but one for $\Omega([\Gamma])$ and $\Omega([\Gamma'])$ coincide, and thus they are adjacent in $\mathscr{X}_Z$. Hence, $\Omega$ can be continuously extended to the $1$-skeletons, $\Omega^{(1)}\from X_Z^{(1)}\to \mathscr{X}_Z^{(1)}$.

By applying the above construction for each $\zeta_i \in Z$, we also obtain a cubical map $\Omega_i \from X_{\zeta_i}\to \mathscr{X}_{\zeta_i}$ between trees $X_{\zeta_i}$ and $\mathscr{X}_{\zeta_i}$. Note that as graphs, $X_{\zeta_i}$ is isomorphic to $\mathscr{F}_{\zeta_i}$ and $\mathscr{X}_{\zeta_i}$ is isomorphic to $\mathscr{D}^*_{\zeta_i}$, where $\mathscr{F}_{\zeta_i}$ and $\mathscr{D}^*_{\zeta_i}$ are trees defined in Section \ref{subsec:Trees of Groups}. Under the identification  $X_{\zeta_i}=\mathscr{F}_{\zeta_i}$ and $\mathscr{X}_{\zeta_i}=\mathscr{D}^*_{\zeta_i}$,  the map $\Omega_i$ coincides with the isomorphism between $\mathscr{F}_{\zeta_i}\to \mathscr{D}^*_{\zeta_i}$. Hence $\Omega_i \from X_{\zeta_i}\to \mathscr{X}_{\zeta_i}$ is an isomorphism of trees. The commutativity of the diagram \eqref{eqn:CommDiagTreeSystem} for $1$-skeletons is straightforward.

Recall that $\{\mathscr{p}_i\from \mathscr{X}_Z\to \mathscr{X}_{\zeta_i}\}_{i=1,2,\dots,k}$ is a tree coordinate system of $\mathscr{X}_Z$; see Definition \ref{defn:TreeCrdSysXscr}. 

\begin{thm}\label{thm:CubeCplxIso}
     The map $\Omega^{(1)}\from X_Z^{(1)} \to \mathscr{X}^{(1)}_Z$ extends to an isomorphism of cube complexes $\Omega\from X_Z \to \mathscr{X}_Z$ such that the following diagram commutes.
    \begin{equation}\label{eqn:CommDiagTreeSystem}
        \begin{tikzcd}
			X_Z \arrow{r}{\Omega} \arrow{d}{p_i}
			& \mathscr{X}_Z  \arrow{d}{\mathscr{p}_i}\\
			X_{\zeta_i} \arrow{r}{\Omega_i}
			& \mathscr{X}_{\zeta_i}
		\end{tikzcd}
    \end{equation}
\end{thm}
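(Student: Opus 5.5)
The plan is to show that $\Omega^{(1)}$ is a graph isomorphism between $X_Z^{(1)}$ and $\mathscr{X}_Z^{(1)}$. Since $\mathscr{X}_Z^{(1)}$ is a median graph (Proposition \ref{prop:MedianGraph}), and both complexes are obtained from their $1$-skeleta by the same cubical completion, the extension $\Omega$ is then automatically a cubical isomorphism. The diagram \eqref{eqn:CommDiagTreeSystem} commutes on vertices by construction: the orientation of walls in $p^{-1}(\zeta_i)$ determined by $\tilde x$ and $\gamma_i$ is exactly $\Omega_i([\gamma_i])$. Commutativity then extends cube by cube, because both $p_i$ and $\mathscr{p}_i$ are coordinate projections on cubes.

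\textbf{Injectivity on vertices.} By commutativity, $\mathscr{p}_i\circ\Omega = \Omega_i\circ p_i$, and each $\Omega_i$ is an isomorphism. So if $\Omega([\Gamma])=\Omega([\Gamma'])$, then $p_i([\Gamma])=p_i([\Gamma'])$ for all $i$. In other words, each component $[\gamma_i]$ agrees with $[\gamma_i']$ as homotopy classes in $S_{g,n+1}$. Since a vertex of $X_Z$ is a homotopy class of multicurve, I still need that the componentwise homotopy classes determine the class of the multicurve. For multicurves in minimal position this is standard: homotopic minimal-position multicurves are isotopic up to moves across triangles, so componentwise equal classes give the same $[\Gamma]$. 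Adjacency in $X_Z$ maps to adjacency in $\mathscr{X}_Z$, as already observed.

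\textbf{Surjectivity on vertices, the main obstacle.} Given an admissible orientation $v$ of $\mathcal{W}_Z$, I must realize it by a pre-minimal multicurve $\Gamma$ relative to $\tilde x$. The first attempt is to realize $v$ geometrically. By DCC, only finitely many walls $w$ have $\tilde x\notin v(w)$. Using the consistency of $v$, pairwise intersecting walls can be straightened. Concretely, I take a minimal-position representative $Z$ and, for each of the finitely many "wrong" walls, isotope its image curve in $S_{g,n}$ across $x$. I would do this by induction on the number $N(v)$ of walls not containing $\tilde x$ on their chosen side. If $N(v)>0$, consistency together with DCC gives a wrong wall $w$ that is innermost, meaning no other wrong wall separates $w$ from $\tilde x$. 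The orientation $v'$ obtained by flipping $w$ is then still consistent, since any conflict would produce a wrong wall disjoint from $w$ and nested between $w$ and $\tilde x$. So $v'$ is admissible, adjacent to $v$, and has $N(v')=N(v)-1$.

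By induction, $v'=\Omega([\Gamma'])$. Flipping $w$ corresponds to an edge of the tree $\mathscr{X}_{\zeta_j}$ at $\mathscr{p}_j(v')$. Via $\Omega_j$, this edge gives a simple pull $[\gamma_j'']$ of $[\gamma_j']$. The base case $N=0$ is realized by $Z$ itself with $x$ placed off $Z$ at the point corresponding to $\tilde x$. The remaining difficulty is to show that this single-component pull can be performed inside the multicurve $\Gamma'$. That requires a simple cord from $x$ to $\gamma_j'$ avoiding the other components. I expect to obtain it from the innermost property: since no wall of $\mathcal{W}_Z$ separates $\tilde x$ from the relevant edge of $w$, the complementary region of $p^{-1}(\Gamma')$ containing $\tilde x$ is adjacent to $w$. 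A path inside that region projects to the needed simple cord. Lemma \ref{lem:Asymm of simplepull} then gives pre-minimality of the result.

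\textbf{Edges and cubes.} The same cord argument gives surjectivity on edges: two adjacent admissible orientations differ in one wall $w$, and $w$ bounds the region containing $\tilde x$ for one of them. Hence $\Omega^{(1)}$ is a graph isomorphism. Cubes in each complex are the fillings of embedded cube $1$-skeleta (Lemma \ref{lem:cubes} for $X_Z$), so the cubical completion of $\Omega^{(1)}$ is the desired isomorphism $\Omega$.
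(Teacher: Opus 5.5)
Your reduction to showing that $\Omega^{(1)}$ is a graph isomorphism matches the paper, as do the commutativity of the diagram and the injectivity argument through the $p_i$ and the tree isomorphisms $\Omega_i$. The gap is in surjectivity, at exactly the point you flag as remaining. You claim that, since no wall separates $w$ from $\tilde x$, the region of $\widetilde{S_{g,n}}\setminus p^{-1}(\Gamma')$ containing $\tilde x$ is adjacent to $w$. Consistency of the flip only says that $w$ is \emph{incident} to $\tilde x$: no wall \emph{disjoint} from $w$ separates them. Walls \emph{crossing} $w$ can still jointly cut all of $w$ off from $\tilde x$.

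For example, suppose lifts $a,b$ of two other components both cross $w$ and cross each other at a point $c$ on the $\tilde x$-side of $w$. Put $\tilde x$ in the angle at $c$ opposite the triangle bounded by $w,a,b$. Then every point of $w$ is separated from $\tilde x$ by $a$ or by $b$, yet flipping $w$ is still consistent. For this representative there is no simple cord from $x$ to $\gamma_j'$, avoiding the other components, whose lift ends on $w$. Whether such a cord exists depends on the representative of $[\Gamma']$, not only on the vertex $\Omega([\Gamma'])$ (compare Figure \ref{fig:DependenceRep}).

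What is missing is an argument that $\Gamma'$ can be moved within its homotopy class in $S_{g,n+1}$, keeping minimal position, until $w$ is exposed to $\tilde x$. This is the real content of Lemma \ref{lem:LocSurf.Omega}:
\begin{itemize}
\item Take a path from $x$ whose lift ends on $w$, and make it an arc meeting $\gamma_j'$ only at its endpoint (Lemma \ref{lem:Path-Arc} plus bigon removal).
\item Form the annulus $A$ between $\gamma_j'$ and its pull.
\item Prove that every arc of $\Gamma'\cap A$ runs from one boundary circle to the other. An arc with both ends on $\gamma_j'$ would give a bigon in $S_{g,n}$, contradicting pre-minimality. An arc with both ends on the pulled curve is ruled out by restricting to two components, where incident does imply exposed because lifts of one simple curve are pairwise disjoint.
\item Straighten the crossing arcs inside $A$ using the foliation of $A$ by core circles, so that a core circle through the region of $x$ realizes the pull.
\end{itemize}
Your surjectivity on edges rests on the same unproved claim.

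There is also a smaller error in how you choose the wall to flip. Suppose $u$ and $w$ are disjoint wrong walls with $w$ separating $\tilde x$ from $u$. Then $w$ is innermost in your stated sense. But after flipping it, the half-spaces $v'(w)$ (the $\tilde x$-side of $w$) and $v(u)$ (the far side of $u$) are disjoint, so $v'$ is not consistent. A conflict comes from a wrong wall lying \emph{beyond} $w$, not between $w$ and $\tilde x$.

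The fix is to flip a wrong wall $w$ such that no wall $u\neq w$ has $v(u)\subset v(w)$. Equivalently, cross the first hyperplane of an edge-path geodesic from $v$ to your base vertex. The paper avoids this bookkeeping entirely: it proves local surjectivity and applies Lemma \ref{lem:Inj.LocSurj_Imply_Bij}, which needs only the connectedness of $\mathscr{X}_Z$.
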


\begin{proof}[Proof of Theorem \ref{thm:CubeCplxIso}]
    Since the $1$-skeleton determines the higher-dimensional cubes of $X_Z$ and $\mathscr{X}_Z$, it suffices to show that $\Omega^{(1)}\colon X_Z^{(1)} \to \mathscr{X}_Z^{(1)}$ is a graph isomorphism. This follows from Lemmas \ref{lem:Inj.LocSurj_Imply_Bij}, \ref{lem:Inj.Omega}, and \ref{lem:LocSurf.Omega}, proved below.
\end{proof}

We say that a graph homomorphism $f\from G \to H$ is {\it locally surjective at $v\in \Vertex(G)$} if each edge in $H$ incident to $f(v)$ is the image under $f$ of an edge in $G$ incident to $G$. We say that $f$ is {\it locally surjective} if it is locally surjective at  each vertex of $G$. 

\begin{lem}\label{lem:Inj.LocSurj_Imply_Bij}
    Suppose that $f\from G \to H$ is a graph homomorphism such that $H$ is connected. If $f$ is injective and locally surjective, then $f$ is a graph isomorphism.
\end{lem}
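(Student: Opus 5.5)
The plan is to show that $f$ is surjective on vertices and on edges. An injective, surjective graph homomorphism is then a bijection on vertices and on edges, and such a map sends the edge relation onto the edge relation, so it is a graph isomorphism.

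\emph{Vertices.} Since $G$ is nonempty (implicitly, since $f$ is defined at some $v$), fix $v_0\in\Vertex(G)$. Let $w\in\Vertex(H)$. Because $H$ is connected, there is an edge-path $f(v_0)=w_0,w_1,\dots,w_m=w$ in $H$. I will lift this path inductively.

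Suppose $u_j\in\Vertex(G)$ has been chosen with $f(u_j)=w_j$. The edge $[w_j,w_{j+1}]$ is incident to $f(u_j)$. By local surjectivity at $u_j$, it is the image of some edge $[u_j,u_{j+1}]$ of $G$ incident to $u_j$. Then $f(u_{j+1})=w_{j+1}$, since $f$ maps the endpoints of an edge to the endpoints of its image and $f(u_j)=w_j$. After $m$ steps, $f(u_m)=w$, so $f$ is surjective on vertices. Combined with injectivity, $f$ is a bijection on $\Vertex$.

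\emph{Edges.} Let $e$ be an edge of $H$ with an endpoint $w$. Write $w=f(v)$ using vertex surjectivity. Local surjectivity at $v$ gives an edge $e'$ incident to $v$ with $f(e')=e$. So $f$ is surjective on edges. I read injectivity of $f$ as injectivity on the full graph, i.e.\ on vertices and edges; for simple graphs, injectivity on edges already follows from injectivity on vertices. Hence $f$ is a bijection on edges.

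It remains to check that $f^{-1}$ is a graph homomorphism. Suppose $f(a)$ and $f(b)$ are joined by an edge $e$ of $H$. Then $e=f(e')$ for an edge $e'$ of $G$, and the endpoints of $e'$ are mapped to $f(a)$ and $f(b)$. By injectivity on vertices, the endpoints of $e'$ are exactly $a$ and $b$. Hence $f$ is a graph isomorphism.

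The only subtle point is the induction step. It uses local surjectivity at the lifted vertex $u_j$, and that vertex exists precisely because of the previous step. No other obstacle is expected.
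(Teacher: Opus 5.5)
Your proof is correct and is essentially the paper's argument. The paper argues by contradiction, taking a path in $H$ from a vertex outside $f(G)$ to a vertex $w'$ in the image and noting that its last edge violates local surjectivity at $f^{-1}(w')$, which is the contrapositive of your path-lifting induction; you also spell out edge surjectivity and that $f^{-1}$ preserves adjacency, both of which the paper compresses into ``it suffices to show that $f$ is surjective.''
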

\begin{proof}
    It suffices to show that $f$ is surjective.
    Suppose $f$ is not surjective, such that there exists a vertex $w$ in $H \setminus f(G)$. Since $H$ is connected, there exists an edge-path $e_1,e_2,\dots,e_k$ in $H$ starting from $w$ to $w'\in f(\Vertex(G))$ such that $e_i$'s are not in $f(G)$. Then, $f$ is not locally surjective at $v':= f^{-1}(w')$. 
\end{proof}

\begin{lem}\label{lem:Inj.Omega}
    $\Omega\from X_Z\to \mathscr{X}_Z$ is injective.
\end{lem}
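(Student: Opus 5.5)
The plan is to reduce injectivity of $\Omega$ on vertices to injectivity of the tree isomorphisms $\Omega_i$, using the commutative diagram \eqref{eqn:CommDiagTreeSystem} at the level of vertices. Concretely, suppose $[\Gamma],[\Gamma']\in \Vertex(X_Z)$ satisfy $\Omega([\Gamma])=\Omega([\Gamma'])$. Write $[\Gamma]=\{[\gamma_1],\dots,[\gamma_k]\}$ and $[\Gamma']=\{[\gamma_1'],\dots,[\gamma_k']\}$ with $i_*[\gamma_i]=i_*[\gamma_i']=[\zeta_i]$; this indexing is well defined because both multicurves are pre-non-parallel with $i_*[\Gamma]=i_*[\Gamma']=[Z]$. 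Since $\mathscr{p}_i\circ\Omega=\Omega_i\circ p_i$ on vertices (this commutativity for $1$-skeletons was already observed), we get $\Omega_i([\gamma_i])=\Omega_i([\gamma_i'])$ for each $i$. Because $\Omega_i\from X_{\zeta_i}\to\mathscr{X}_{\zeta_i}$ is an isomorphism of trees, identified with the isomorphism $\mathscr{F}_{\zeta_i}\to\mathscr{D}^*_{\zeta_i}$ from \cite{KLS_TreesMCG}, this gives $[\gamma_i]=[\gamma_i']$ in $C(S_{g,n+1})$ for every $i$.

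The remaining step, which I expect to be the real content, is to upgrade componentwise equality of homotopy classes to equality of the homotopy classes of multicurves, $[\Gamma]=[\Gamma']$. The homotopy class of a multicurve in $S_{g,n+1}$ is the tuple of the classes of its components; since components are allowed to intersect, simultaneous homotopies are not needed. Under that convention, the previous paragraph already finishes the proof. To be careful, I would still verify that the multicurve class is determined by the tuple of component classes. Each component can be isotoped individually. Taking geodesic representatives in a hyperbolic metric on $S_{g,n+1}$ puts all components simultaneously in minimal position. Hence the multicurve class, as used in $\Vertex(X_Z)$, is just the unordered set $\{[\gamma_i]\}$, and equality follows.

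If instead a finer notion of ``homotopy class of multicurve'' were intended, such as isotopy of the configuration, I would argue via walls. Fix the lift $\tilde x$. The vertex $\Omega([\Gamma])$ records, for each wall of $p^{-1}(\Gamma)$ (identified with a wall of $\mathcal{W}_Z$ by its endpoints on $\partial\widetilde{S_{g,n}}$), the side containing $\tilde x$. This data determines the $\pi_1(S_{g,n},x)$-equivariant position of $\tilde x$ relative to the geodesic arrangement $p^{-1}(\Gamma)$, that is, the complementary region of $\widetilde{S_{g,n}}\setminus p^{-1}(Z)$ containing $\tilde x$ up to deck transformation. Two geodesic arrangements of $Z$ with the base point in the same region are isotopic rel $x$, so equal $\Omega$-values would force $[\Gamma]=[\Gamma']$ in $S_{g,n+1}$. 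This is the step where I would be careful about the dependence on the choice of $\tilde x$, but that choice is fixed once for both vertices, so it causes no trouble.
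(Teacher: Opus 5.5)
Your argument is correct and is essentially the paper's. The paper packages the same reduction as a commutative square with the product maps $\times_{i=1}^k p_i$ and $\times_{i=1}^k \mathscr{p}_i$ and the isomorphism $\times_{i=1}^k\Omega_i$, and uses exactly your observation that a vertex $[\Gamma]$ is determined by its components $[\gamma_i]$. It then cites Lemmas \ref{lem:ProdMapTrees} and \ref{lem:p_i Surj, Efficent} to promote vertex-injectivity to a cubical embedding; your vertex-level statement already gives the graph injectivity that Theorem \ref{thm:CubeCplxIso} needs.

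You should drop your final paragraph. The paper's $[\Gamma]$ is the coarse class $\{[\gamma_1],\dots,[\gamma_k]\}$ (cf.\ the discussion of Figure \ref{fig:DependenceRep}), so the paragraph is unnecessary. Moreover, under a configuration-level notion its claim would generally fail: a triangle move performed away from $x$ changes the configuration without changing $\Omega$.
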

\begin{proof}

Consider product maps $\times_{i=1}^k p_i\from X_Z \to \prod_{i=1}^k X_{\zeta_i}$ and $\times_{i=1}^k \mathscr{p}_i\from \mathscr{X}_Z \to \prod_{i=1}^k \mathscr{X}_{\zeta_i}$ defined by Equation \eqref{eqn:ProdMap}. Denote by $\times_{i=1}^k\Omega_i\from \prod_{i=1}^k X_{\zeta_i} \to \prod_{i=1}^k \mathscr{X}_{\zeta_i}$ the isomorphism of cube complexes obtained from the product of the isomorphisms $\Omega_i$'s. Then, we obtain the following commuting diagram.
\begin{equation*}
    \begin{tikzcd}
		X_Z \arrow{r}{\Omega} \arrow{d}{\times_{i=1}^k p_i}
		& \mathscr{X}_Z  \arrow{d}{\times_{i=1}^k \mathscr{p}_i}\\
		\prod_{i=1}^k X_{\zeta_i} \arrow{r}{\times_{i=1}^k\Omega_i}[swap]{\cong}
		& \prod_{i=1}^k \mathscr{X}_{\zeta_i}
	\end{tikzcd}
\end{equation*}
To show that $\Omega$ is injective, it suffices to show that $\times_{i=1}^k p_i$ is injective.

The injectivity of $\times_{i=1}^k p_i$ on vertices is straightforward; if $\times_{i=1}^k p_i([\Gamma])=\times_{i=1}^k p_i([\Gamma'])$, then the $i^{th}$-components of $[\Gamma]$ and $[\Gamma']$ coincide for each $i\in\{1,2,\dots,k\}$. It follows from Lemmas \ref{lem:ProdMapTrees} and \ref{lem:p_i Surj, Efficent} that $\times_{i=1}^k p_i$ is a cubical embedding.
\end{proof}

\begin{lem}\label{lem:LocSurf.Omega}
    $\Omega^{(1)}\from X_Z^{(1)}\to \mathscr{X}_Z^{(1)}$ is locally surjective.
\end{lem}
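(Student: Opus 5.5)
Fix a vertex $[\Gamma]\in \Vertex(X_Z)$ with $\Gamma=\{\gamma_1,\dots,\gamma_k\}$ in minimal position on $S_{g,n+1}$, where $i_*[\gamma_i]=[\zeta_i]$. Since $\Gamma$ is pre-minimal, $\Gamma$ is also in minimal position on $S_{g,n}$, and the walls $\mathcal{W}_\Gamma$ are the components of $p^{-1}(\Gamma)$ in $\widetilde{S_{g,n}}$. The vertex $\Omega([\Gamma])$ orients every wall toward $\tilde x$. An edge of $\mathscr{X}_Z$ at $\Omega([\Gamma])$ flips the orientation of exactly one wall $w$, and the result must again be admissible. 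The plan is to show that this happens exactly when $w$ is \emph{adjacent} to $\tilde x$, meaning $w$ bounds the closure of the complementary region $R$ of $\widetilde{S_{g,n}}\setminus p^{-1}(\Gamma)$ containing $\tilde x$. For each such wall, I will then produce a simple pull of $[\Gamma]$ that $\Omega$ sends to that edge.

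\textbf{Step 1: identifying the flippable walls.} Let $v'$ be $\Omega([\Gamma])$ with the orientation of a single wall $w$ reversed. DCC is automatic, since only one wall changed. For consistency, suppose some wall $w'$ separates $\tilde x$ from $w$. Then $w'$ is disjoint from $w$, and its half-space containing $\tilde x$ meets the half-space of $w$ not containing $\tilde x$ in the empty set. So $v'$ is inconsistent. Hence every wall $w$ giving an edge at $\Omega([\Gamma])$ has no wall of $\mathcal{W}_\Gamma$ separating it from $\tilde x$. Such a $w$ contains a segment $\sigma\subset \partial R$ between consecutive intersection points with other walls. This is the standard fact that a wall not separated from a point meets the boundary of that point's region along an edge of the polygon $R$; the case where $w$ is disjoint from all other walls is included.

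\textbf{Step 2: realizing the flip by a simple pull.} Choose an embedded arc $\tilde\delta\subset \overline R$ from $\tilde x$ to an interior point of $\sigma$ that avoids the lifts of punctures. Since $R$ is a disk, it projects injectively near $\tilde\delta$. Its image $\delta=p\circ\tilde\delta$ is then a simple cord from $x$ to $\Gamma$ ending on $\gamma_j=p(w)$, with $\delta([0,1))\cap\Gamma=\emptyset$. The main obstacle is injectivity of the projection: I need $\delta$ to be embedded in $S_{g,n}$. To get it, I would choose $\tilde\delta$ inside a fundamental domain, using the fact that $R$ is the universal cover of its image region. Alternatively, I would take $\delta$ to be a shortest path in the closure of the $x$-component of $S_{g,n}\setminus\Gamma$ in the appropriate homotopy class rel $(x,\Gamma)$; a shortest such path is embedded.

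Now let $\Gamma'=\Pull_\delta(\Gamma)$. Lemma~\ref{lem:Asymm of simplepull} shows that $[\Gamma']$ is pre-minimal. Pre-essentiality, pre-non-parallelism, and the condition $i_*[\Gamma']=[Z]$ are unchanged, because the pull is a homotopy in $S_{g,n}$ across $x$. So $[\Gamma']\in\Vertex(X_Z)$ is adjacent to $[\Gamma]$. In the universal cover, the pull pushes $w$ across $\tilde x$ along $\tilde\delta$ and moves no other lift relative to $\tilde x$, because the thin neighborhood $U$ meets $\Gamma$ only near $\delta(1)$. Therefore $\Omega([\Gamma'])$ differs from $\Omega([\Gamma])$ exactly on the wall matched to $w$. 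This shows that every edge at $\Omega([\Gamma])$ lies in the image of an edge at $[\Gamma]$, which is the required local surjectivity.

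The delicate points are the identification of walls of $\mathcal{W}_{\Gamma'}$ with those of $\mathcal{W}_Z$ via their endpoints, and checking that only the lift $w$ of $\gamma_j$ crosses $\tilde x$. I would verify both by comparing $\Gamma$ and $\Gamma'$ through the small isotopy in $S_{g,n}$ that sweeps the strip $U$ across $x$.
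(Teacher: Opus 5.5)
Your Step~1 rests on a false claim, and that claim is exactly where the substance of the lemma lies. You correctly show that flipping a single wall $w$ at $\Omega([\Gamma])$ is admissible exactly when no wall disjoint from $w$ separates $w$ from $\tilde x$; the paper calls such $w$ \emph{incident} to $\tilde x$. The gap is your ``standard fact'' that such a $w$ must contain a side of the region $R$ (be \emph{exposed}, in the paper's terms). This fails as soon as three components of $\Gamma$ interact.

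Here is a counterexample. Let $w,a,b$ be lifts of three different components that pairwise cross and bound a triangular face $T$; triangles are allowed in minimal position, only bigons are excluded. Let $\tilde x$ lie in the face diagonally opposite $T$ at the vertex $a\cap b$.
\begin{itemize}
\item A path from $\tilde x$ that slips past the vertex $a\cap b$ into $T$ and ends on $w$ crosses only $a$ and $b$. Both of these meet $w$, so no wall disjoint from $w$ separates it from $\tilde x$. Hence flipping $w$ is an edge of $\mathscr{X}_Z$ at $\Omega([\Gamma])$.
\item Yet $w$ meets each of $a$ and $b$ only at a vertex of $T$. So $w$ stays outside the closed quadrant of $a\cup b$ containing $R$, and $w\cap\overline R=\emptyset$.
\end{itemize}
For such a wall there is no simple cord from $x$ to the given representative $\Gamma$ whose lift ends on $w$, so your Step~2 has nothing to act on. The claim does hold for two curves, since lifts of one simple curve are pairwise disjoint and no triangle can form. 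The paper explicitly notes that its use of this fact is restricted to two curves.

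The missing ingredient is this: for each incident $w$, you must change $\Gamma$ within its homotopy class on $S_{g,n+1}$ (a homotopy in $S_{g,n}$ not passing over $x$) so that the corresponding wall becomes exposed. After that, your Step~2 is the easy case and agrees with the paper. The paper builds this representative as follows.
\begin{itemize}
\item Take an arc $\delta$ from $x$ to $\gamma_1$ whose lift ends on $w$, using Lemma~\ref{lem:Path-Arc} and bigon removal against $\gamma_1$.
\item Form the annulus $A$ between $\gamma_1$ and $\gamma_1'$, where $[\gamma_1']=\Pull_\delta[\gamma_1]$.
\item Show that every arc of $\Gamma\cap A$ runs from $\gamma_1$ to $\gamma_1'$. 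Arcs with both ends on $\gamma_1$ would contradict pre-minimality. Arcs with both ends on $\gamma_1'$ would give $\pair{[\gamma_1']}{[\gamma_i]}>\pair{[\gamma_1]}{[\gamma_i]}$. But the two-curve version of incident-implies-exposed, applied to $\{\gamma_1,\gamma_i\}$, forces equality.
\item Straighten these crossing arcs transversally to the circle foliation of $A$. Then replace $\gamma_1$ by a leaf that is isotopic to $\gamma_1$ rel $x$, passes through the $x$-region, and meets each crossing arc once.
\end{itemize}
Your concern about embeddedness of $\delta$ is minor by comparison.
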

\begin{proof}
    Fix $[\Gamma]\in \Vertex(X_Z)$. Recall that we have $\mathscr{X}_Z\cong \mathscr{X}_\Gamma$, by considering $\Gamma$ as a representative of $[Z]$ in $S_{g,n}$. In this proof, we consider $\mathscr{X}_\Gamma$. Edges of $\mathscr{X}_\Gamma$ incident to $\Omega([\Gamma])$ are in a bijective correspondence with walls $w\in \mathcal{W}_\Gamma$ for which there exists no other wall $w'$ disjoint from $w$ that separates $w$ from $\tilde{x}$. We say that such walls $w$ are {\it incident} to $\tilde{x}$. If $w$ intersects the closure of the connected component of $\widetilde{S_{g,n}}\setminus p^{-1}(\Gamma)$ containing $\tilde{x}$, we say that $w$ is {\it exposed to $\tilde{x}$ with respect to $\mathcal{W}_\Gamma$}.
    
    If $w$ is exposed to $\tilde{x}$, then we can find a simple pull $[\Gamma']$ of $[\Gamma]$ such that $\Omega([\Gamma'])$ can be obtained from $\Omega([\Gamma])$ by flipping the orientation of the wall $w$. Therefore, it suffices to show that if $w$ is incident to $\tilde{x}$, then we can deform $\Gamma$ in $S_{g,n}$ so that $w$ will be exposed to $\tilde{x}$. Without loss of generality, suppose that $w\subset p^{-1}(\gamma_1)$.
    
    In (Step 1), we form an arc $\delta$ from $x$ to $\gamma_1$ with ${\rm int}(\delta)\cap \gamma_1=\emptyset$ and an annulus $A$ with $A\supset \delta$ and $\partial A=\gamma_1\cup \gamma_1'$, where $[\gamma_1']=\Pull_\delta[\gamma_1]$. Then, we  prove that each component $\Gamma \cap A$ is an arc joining the two different boundary components of $A$. In (Step 2), we describe how to deform $\Gamma$ within $A$ to be disjoint from the cord.

    {\bf (Step 1)} Let $\gamma_1$ be the component of $\Gamma$ such that $w\subset p^{-1}(\gamma_1)$. Let $\delta$ be a path from $x$ to $\gamma_1$ such that $\tilde{\delta}$, the lift of $\delta$ starting from $\tilde{x}$, ends in $w$. By Lemma \ref{lem:Path-Arc}, we may choose $\delta$ as an arc. In addition, we may assume that $\delta$ meets $\gamma_1$ only at $\partial \delta$; otherwise, $\delta$ and $\gamma_1$ would form a bigon, which can be removed inductively by an isotopy of $\delta$ rel $\partial \delta$.
    
    Choose a simple closed curve $\gamma_1'$ such that there is an annulus $A$ in $S_{g,n}$ with $\partial A=\gamma_1\cup \gamma_1'$ and $\delta \subset A$. Then $[\gamma_1']=\Pull_\delta([\gamma_1])$. As curves in $S_{g,n+1}$, $\gamma_1$ and $\gamma_1'$ are pre-homotopic. Given $\Gamma$, we can add $\gamma_1'$ without changing $\Gamma$ in its homotopy class such that $\Gamma \cup \{\gamma_1'\}$ is in minimal position; first choose any representative $\gamma_1'$ and then remove bigons with $\Gamma$ by deforming $\gamma_1'$ only.

    For each $\gamma_i\in \Gamma$ with $i\neq 1$, if $A \cap \gamma_i\neq \emptyset$, it consists of several arcs. We claim that each arc $\alpha$ joins a point in $\gamma_1$ to a point in $\gamma_1'$. There are two other possibilities: $\alpha$ joins two points in $\gamma_1$ or two points in $\gamma_1'$.
    
    If $\alpha$ joins two points in $\gamma_1$, then the two intersection points may be removed by a homotopy of $\gamma_i$ within $S_{g,n}$. Then it violates the assumption that $\Gamma$ is pre-minimal.
    
    \begin{figure}[h!]
	\centering
        \def\svgwidth{0.7\textwidth}
        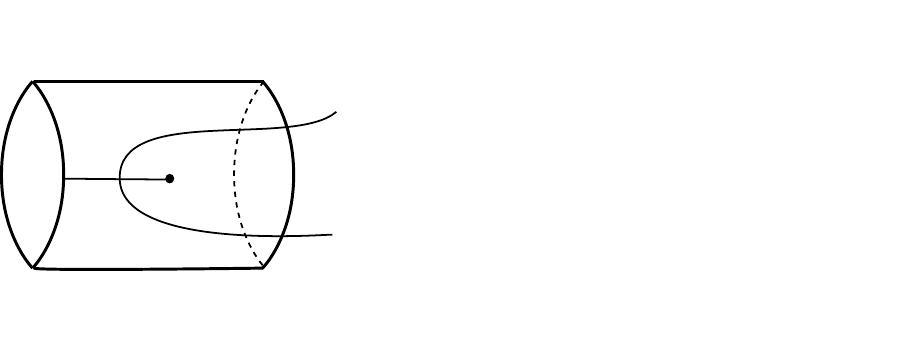
	\caption{In the right figure, the liftings of $\gamma_1$ are shown in bold, those of $\gamma_2$ (containing $\alpha$) in solid lines, and those of the remaining $\gamma_i$ in dotted lines. }
    \label{fig:AnnulusArc}
    \end{figure}
    
    Next, suppose that $\alpha$ joins two points on $\gamma_1'$. Without loss of generality, we may assume that $\alpha \subset \gamma_2$. Then 
    \[
        \pair{[\gamma_1']}{[\gamma_2]}_{S_{g,n+1}} > \pair{[\gamma_1]}{[\gamma_2]}_{S_{g,n+1}}.
    \]
    Let us focus on the walls $\mathcal{W}_{\{\gamma_1,\gamma_2\}} \subset \mathcal{W}_\Gamma$ that are liftings of $\gamma_1$ and $\gamma_2$.
    Since $\gamma_1$ and $\gamma_2$ are simple closed curves in minimal position and no other walls separate $w$ from $\tilde{x}$, the wall $w$ is exposed to $\tilde{x}$ with respect to $\mathcal{W}_{\{\gamma_1,\gamma_2\}}$. (We emphasize that this argument applies only when we restrict to two curves.) This implies that we can find a simple cord disjoint from $\gamma_2$ that is homotopic to $\delta$. Then we obtain
    \[
        \pair{[\gamma_1']}{[\gamma_2]}_{S_{g,n+1}} = \pair{[\gamma_1]}{[\gamma_2]}_{S_{g,n+1}},
    \]
    which is a contradiction. Hence, there exists no arc $\alpha$ joining two points on $\gamma_1'$.


    {\bf (Step 2)} Consider an annulus $A$ with a marked point $x \in {\rm int}(A)$. Let $\partial_1$ and $\partial_2$ denote the two boundary circles of $A$. Let $X$ be a set of arcs each of which joins one point in $\partial_1$ to a point in $\partial_2$ such that arcs in $X$ are in minimal position, i.e., no two of them form a bigon. Let $U$ be the component of $A \setminus X$ containing $x$. We show that there is a simple closed curve $\gamma$ homotopic to $\partial _1$ in $A$ relative to $x$ such that $\gamma$ intersects $U$ and $|\partial_1 \cap X|=|\gamma \cap X|=|X|$.
    \begin{figure}[h!]
	\centering
        \def\svgwidth{0.7\textwidth}
        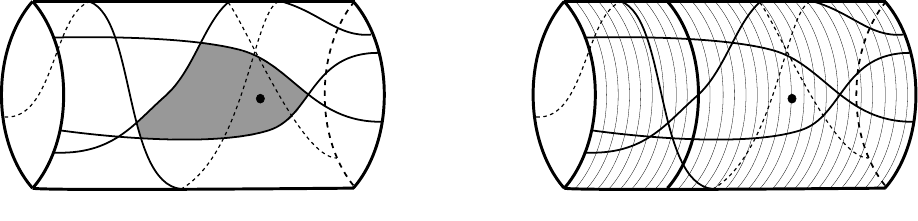
	\caption{Choice of $\gamma$ with $\gamma \sim \partial_1$. The solid dot denotes $x$, and the shaded region denotes $U$.}
    \label{fig:Annulus}
    \end{figure}
    
    See Figure \ref{fig:Annulus}. Let $\Fcal$ be a foliation on $A$ each leaf of which is homotopic to the core circle of $A$. First, we can assume that arcs in $X$ are transverse to $\Fcal$. More precisely, each arc of $X$ can be deformed in the homotopy class in $A$ relative to $\partial A\cup \{x\}$ such that it is transverse to $\Fcal$. Then we can remove bigons between arcs by taking homotopy along leaves of $\Fcal$. Then we choose $\gamma$ as a leaf of $\Fcal$ that is homotopic to $\partial_1$ in $A$ rel $\{x\}$ and passes through $U$.

    By (Step 1) and (Step 2), given $\Gamma$ and a wall $w$ incident to $\tilde{x}$, we can change $\Gamma$ within its homotopy class on $S_{g,n+1}$ for which the  wall $w$ is exposed to $\tilde{x}$.
\end{proof}

\begin{lem}[Path-to-arc transformation]\label{lem:Path-Arc}
    Suppose that $S$ is a surface and $\alpha\from [0,1] \to S$ is a path, with distinct endpoints. Then, there exists a homotopy $\left(\alpha_t\from [0,1]\to S\right)_{t\in[0,1]}$ rel $\{0,1\}$ such that $\alpha_0=\alpha$, $\alpha_1$ is an arc. Moreover, the homotopy $(\alpha_t)$ can be chosen to lie within an arbitrary small neighborhood of the path $\alpha$.
\end{lem}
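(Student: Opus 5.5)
We argue by a general-position perturbation followed by an inductive removal of self-crossings; no structure of $S$ is used beyond its being a surface. First, using a small homotopy rel $\{0,1\}$, we replace $\alpha$ by a smooth immersed path. This path is transverse to itself and has finitely many self-intersections, all transverse double points, none at the endpoints, and it avoids punctures. The homotopy can be taken inside any prescribed neighborhood $N$ of $\alpha([0,1])$: work in local charts covering the compact image, and keep each perturbation below a Lebesgue-number scale, so that straight-line homotopies in charts are available. Since $\alpha(0)\neq\alpha(1)$, we may also require that the endpoints are not interior points of the path.

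Next we induct on the number $k$ of double points. If $k=0$, the path is an arc. Otherwise, take a double point $p=\alpha(s)=\alpha(t)$ with $s<t$ and $t-s$ minimal. Then the subloop $\alpha|_{[s,t]}$ is a simple closed loop based at $p$. We remove it by a shortcut. Choose a small disk chart $D$ around $p$ that meets the path only in two transverse strands. Within $D$, replace the portion of the path near $\alpha(s-\epsilon)$ and the portion near $\alpha(t+\epsilon)$ by a direct arc in $D$ joining $\alpha(s-\epsilon)$ to $\alpha(t+\epsilon)$, deleting $\alpha|_{[s-\epsilon,t+\epsilon]}$. The result is homotopic rel endpoints to the original path only if the loop $\alpha|_{[s,t]}$ is null-homotopic, so the deletion must be handled more carefully.

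The genuine approach is therefore the \emph{resolution of crossings}. At a transverse double point $p$, the two strands passing through $D$ are reconnected in the oriented way: the incoming part of the first strand is joined to the outgoing part of the second, and vice versa. This splits the path into a path $\alpha'$ from $\alpha(0)$ to $\alpha(1)$ together with a closed loop based near $p$. The new path $\alpha'$ is $\alpha|_{[0,s]}*\alpha|_{[t,1]}$ up to a small perturbation. It is not homotopic to $\alpha$ in general, which shows that a pure cut-and-paste argument fails: the homotopy class rel endpoints must be preserved. The plan is therefore instead to use the classical fact that every homotopy class of paths between distinct points on a surface contains an embedded representative. Assuming this, we should build the homotopy explicitly inside a thin neighborhood, which is where the main difficulty lies.

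To carry this out, lift to the universal cover $\widetilde{S}$. Take the lift $\tilde\alpha$ of $\alpha$; its endpoints $\tilde a\neq\tilde b$ are distinct. Let $\widetilde{N}$ be the component of the preimage of $N$ that contains $\tilde\alpha$. In $\widetilde{N}$, choose an embedded arc $\tilde\beta$ from $\tilde a$ to $\tilde b$ whose projection is injective, i.e.\ $\tilde\beta$ is disjoint from all its nontrivial deck translates. Then $\beta=p\circ\tilde\beta$ is an arc in $N$ homotopic to $\alpha$ rel endpoints, since $\widetilde{S}$ is simply connected. The straight-line homotopy in the hyperbolic (or Euclidean) universal cover, performed after first isotoping within $N$, stays in the convex hull of $\tilde\alpha$ and $\tilde\beta$.

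The main obstacle is producing $\tilde\beta$ disjoint from its translates while keeping everything within the small neighborhood. The approach I would try first is to take $\tilde\beta$ to be a geodesic from $\tilde a$ to $\tilde b$ for a hyperbolic metric. A geodesic segment has only transverse self-intersections after projection, but a geodesic may still fail to be simple. When it is not simple, its crossings form innermost monogons or bigons, and these are to be eliminated by the bigon-removal technique cited in \cite{FM_PrimerMCG}, applied to arcs with fixed endpoints.

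Some care about smallness of the neighborhood is needed. Since the homotopy moves the path only through bigon regions bounded by the path itself, and such regions can be arranged to be thin, the homotopy lies in a small neighborhood of $\alpha$.
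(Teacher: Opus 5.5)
There is a genuine gap: the proposal never proves the lemma. The ``classical fact'' you invoke, that every homotopy class rel endpoints of paths between distinct points contains an embedded representative, is exactly the first assertion of the lemma. Asking for $\tilde\beta$ disjoint from its deck translates is the same statement restated in $\widetilde S$, so that step is circular.

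Your proposed way of producing $\tilde\beta$ also fails. By Gauss--Bonnet, a hyperbolic geodesic arc never bounds an embedded monogon or bigon, yet it can be non-simple. Take $b$ near, but not on, a simple closed geodesic $g$, and consider the geodesic loop at $b$ in the class $g^2$. Its lift $[\tilde b,g^2\tilde b]$ and the translate $[g\tilde b,g^3\tilde b]$ are chords of the convex region $\{d(\cdot,\mathrm{axis})\le r\}$ with interleaved boundary endpoints, so the loop crosses itself. This transverse crossing persists for the geodesic from a nearby point $a$ to $b$ in the corresponding class. There is nothing here for the bigon criterion to remove. Such crossings can only be undone by a homotopy that drags a strand across an endpoint of the path, which is what the paper's remark after the lemma points out, and bigon removal never does this.

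The ``small neighborhood'' clause is not obtained either. From $\beta\simeq\alpha$ in $S$ you cannot conclude $\beta\simeq\alpha$ inside $N$, because $\pi_1(N)\to\pi_1(S)$ need not be injective, so $\widetilde N$ need not be simply connected. A straight-line homotopy in the universal cover sweeps the convex hull of $\tilde\alpha\cup\tilde\beta$, whose projection need not lie in $N$. Your final appeal to ``thin bigon regions bounded by the path itself'' concerns regions bounded by $\beta$, not by $\alpha$.

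The missing idea is local, and it is the paper's proof. Keep your general-position step, but choose the double point $p=\alpha(t_0)=\alpha(t_1)$, $t_0<t_1$, with $t_0$ \emph{minimal}, i.e.\ the first crossing met when travelling from $\alpha(0)$, instead of minimizing $t-s$. Then no other strand meets $\alpha([0,t_0))$. So a thin disk $U\subset N$ around the embedded arc $\alpha([0,t_0])$ meets the path only in an initial segment and in the short strand $\alpha([t_1-\epsilon,t_1+\epsilon])$.

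Replace that strand by an arc in $U$ with the same endpoints that goes around the free end $\alpha(0)$; this is the simple pull of the strand along the cord $\alpha|_{[0,t_0]}$. Because $U$ is a disk, this is a homotopy rel $\{0,1\}$ supported in $U$. It passes over the point $\alpha(0)$, which is allowed since only the parameters $0,1$ are fixed. It removes the crossing at $p$ and creates no new one. Induct on the number of double points; every homotopy stays in $N$. Your cut-and-paste attempts failed because they delete or reroute a loop, which changes the homotopy class; sliding a single strand off the free end of the path does not.
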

We remark that the homotopy $(\alpha_t)$ passes over the initial point $\alpha(0)$, i.e., $\alpha_{t_0}(s_0)$ may be equal to $\alpha(0)$ for some $t_0,s_0\in(0,1)$.
\begin{proof}[Proof of Lemma \ref{lem:Path-Arc}]
    First, we may assume that $\alpha$ is in general position, such that it has only finitely many self-intersection points and at every self-intersection point only two segments of $\alpha$ intersect transversely.
    
    \begin{figure}[h!]
	\centering
        \def\svgwidth{0.8\textwidth}
        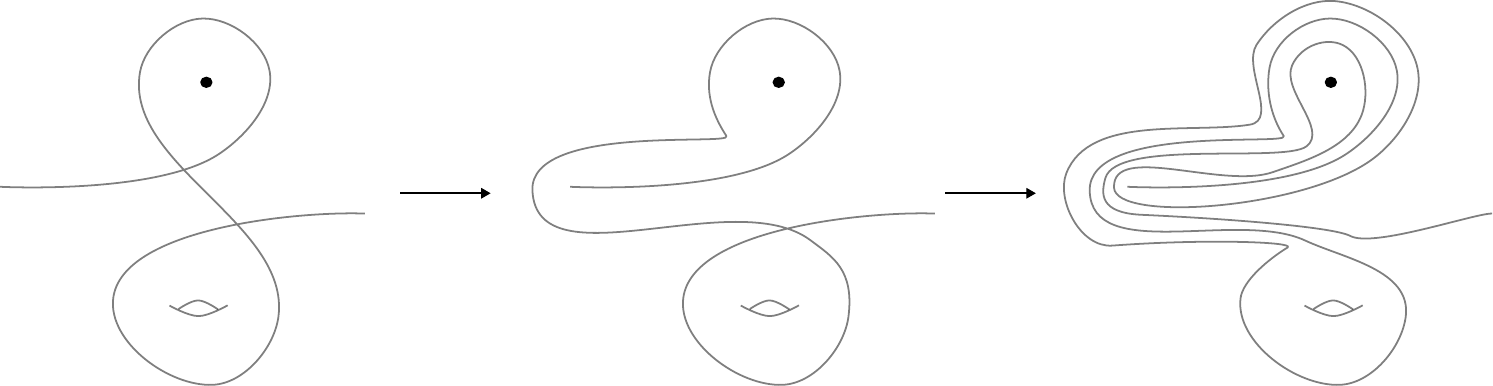
	\caption{Path-to-arc transformation}
    \label{fig:PathtoArc}
    \end{figure}
    
    See Figure \ref{fig:PathtoArc}. Suppose $0<t_0$ is the minimal number for which $\alpha(t_0)$ is a self-intersection point. Let $t_1$ be the number with $t_0<t_1$ and $\alpha(t_0)=\alpha(t_1)$. We perform the simple pull operation by considering $\alpha|_{[0,t_0]}$ as $\delta$ and $\alpha_{[t_1-\epsilon,t_1+\epsilon]}$ as the segment of $\gamma$ in Figure \ref{fig:SimplePull}, where $\epsilon>0$ is a small number. Note the simple pull can be defined as a homotopy within $U_1$ in Figure \ref{fig:SimplePull}. As a result, the first self-intersection point can be removed by applying the homotopy. By iterating this process, we ultimately obtain the desired homotopy $\alpha_t$.
\end{proof}

\section{Canonical simple pulls}\label{Sec:SimplePulls}

Recall that in Equation \eqref{eqn:PForget Functional}, for $\Fcal\in \MF(S_{g,n+1})$, we define a functional $\varphi(\Fcal)\from C(S_{g,n})\to \Rbb$ by
\[
    \varphi(\Fcal)([\zeta])=\inf_{i_*[\gamma]=[\zeta]} \pair{\Fcal}{[\gamma]}.
\]
In Section \ref{subsec:Curve_Reduction}, given $[\zeta]\in C(S_{g,n})$, we find an infinite edge-path $([\gamma_l])_{l\in \mathbb{N}}$ in $X_\zeta$ such that $\pair{\Fcal}{[\gamma_l]}\to \varphi(\Fcal)([\zeta])$ as $l\to \infty$. Then, in Section \ref{subsec:Multicurve_Reduction}, given a multicurve $Z$ of $S_{g,n}$, we find an edge-path $([\Gamma_l])_{l\ge0}$ in $X_Z$ such that $\pair{\Fcal}{[\Gamma_l]}\to \varphi(\Fcal)([Z])$ as $l\to \infty$. These constructions will be used in Section \ref{sec:ForgetMapMF} to show the existence of $\Phi(\Fcal)\in \MF(S_{g,n})$ with $\varphi(\Fcal)([\zeta])=\pair{\Phi(\Fcal)}{[\zeta]}$ for any $[\zeta]\in C(S_{g,n})$, which is a central part of Theorem \ref{theorem:MF}.

\begin{defn}[$x$-generic measured foliations]\label{defn:genericMF}
    Consider $S_{g,n+1}$ and $S_{g,n}$ as the surfaces of genus $g$ with $n+1$ and $n$ marked points, respectively. Let $\Fcal \in \MF(S_{g,n+1})$ and $x$ be the marked point of $S_{g,n+1}$ to be forgotten in $S_{g,n}$. Let $L_x$ be the leaf containing $x$. We say that $\Fcal$ is {\em $x$-generic} if $L_x$ is a half-infinite curve having $x$ as the endpoint.
\end{defn}

\subsection{Case 1: Single curves}\label{subsec:Curve_Reduction}

Recall that in a graph or a cube complex, we allow a sequence of consecutive vertices along an edge-path to represent the edge-path itself. Also, recall that two vertices $[\gamma_1]$ and $[\gamma_2]$ of $X_\zeta$ are connected by an edge if and only if $[\gamma_2]$ is a simple pull of $[\gamma_1]$ (and vice versa).

\begin{thm}\label{thm:CurveReduction}
    Suppose that $\zeta$ is an essential simple closed curve in $S_{g,n}$. Let $X_\zeta$ be the complex of pre-homotopic curves of $\zeta$. Suppose that $\Fcal$ is an $x$-generic minimal measured foliation on $S_{g,n+1}$.
    \begin{enumerate}
        \item Every simple pull strictly increases or decreases the intersection number with $\Fcal$.
        \item For any $[\gamma] \in \Vertex(X_\zeta)$, there exists a unique simple pull, called the canonical simple pull (Definition \ref{defn:CanonJoinPull}), of $[\gamma]$ that decreases the intersection number with $\Fcal$.
    \end{enumerate}
    Let us orient every edge of $X_\zeta$ toward the direction that the intersection number with $\Fcal$ decreases.
    \begin{enumerate}[resume]
        \item Let $(\gamma(l))_{l\ge0}$ be an infinite directed edge-path of $X_\zeta$. Considering it as a sequence in $\Vertex(CC(S_{g,n+1}))$, we have
        \[
            \gamma(l) \to \underline{\Fcal} \in \partial_\infty CC(S_{g,n+1})=\mathcal{EL}(S_{g,n+1})
        \]
        as $l\to \infty$, where $\underline{\Fcal}$ denotes the underlying minimal foliation of $\Fcal$ with the transverse measure forgotten.
        \item For any infinite directed edge-path $([\gamma(l)])_{l\ge0}$ of $X_\zeta$, we have
        \[
            \pair{\Fcal}{[\gamma(l)]} \to \varphi(\Fcal)([\zeta])
        \]
        as $l\to \infty$.
        \item (Convexity) For any triple $[\gamma_1], [\gamma_2], [\gamma_3]$ of consecutive vertices on a directed edge-path in $X_\zeta$, we have
        \[
            0<\pair{\Fcal}{[\gamma_2]}-\pair{\Fcal}{[\gamma_3]} \le \pair{\Fcal}{[\gamma_1]}-\pair{\Fcal}{[\gamma_2]}.
        \]
        \item For any infinite reversely-directed edge-path $([\gamma(l)])_{l=0,-1,-2,\dots}$, we have
        \[
            \pair{\Fcal}{[\gamma(l)]}\to 
            \infty
        \]
        as $l\to -\infty$.
        \end{enumerate}
        Recall that $X_\zeta$ is isomorphic to $\mathscr{D}^*_{\zeta}$ as trees. For $[\beta]\in C(S_{g,n+1})$ with $i_*[\beta]=[\zeta]$, let $D(\beta)$ denote the closure of the $2$-cell of $\mathscr{D}_{\zeta}$ that is dual to $[\beta] \in \Vertex(\mathscr{D}^*_{\zeta})$, where the closure is taken in the closed disk $\widetilde{S_{g,n}} \cup \partial_\infty \widetilde{S_{g,n}}$.
        \begin{enumerate}[resume]        
        \item There exists $\theta_\Fcal \in \partial_\infty \widetilde{S_{g,n}}$ such that for every infinite directed edge-path $(\gamma(l))_{l\ge0}$ of $X_\zeta$, we have
        \[
            D(\gamma(l)) \to \theta_\Fcal
        \]
        as $l\to \infty$, with respect to the Hausdorff topology on $\widetilde{S_{g,n}} \cup \partial_\infty \widetilde{S_{g,n}}$.
        \item The boundary point $\theta_\Fcal$ is independent of the choice of the essential curve $\zeta$.
    \end{enumerate}
\end{thm}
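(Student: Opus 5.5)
The approach is to read intersection numbers through the lift to $\widetilde{S_{g,n}}$, using the identification $X_\zeta\cong\mathscr{F}_\zeta\cong\mathscr{D}^*_\zeta$ (the isomorphism $\Omega_\zeta$). Fix the lift $\tilde x$. A simple pull of $[\gamma]$ along a cord $\delta$ changes $\gamma$ by adding (or removing) a ``finger'' that wraps around $x$. Concretely, the pulled curve is homotopic in $S_{g,n+1}$ to $\gamma$ band-summed with a small loop around $x$. To compare $\pair{\Fcal}{[\gamma]}$ and $\pair{\Fcal}{[\Pull_\delta\gamma]}$, I would realize $\Fcal$ by a quadratic differential whose horizontal foliation is $\Fcal$. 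I would then put both curves in taut (quasi-transverse) position. For $x$-generic $\Fcal$, the point $x$ is a 1-prong singularity with an infinite leaf $L_x$.

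Parts (1) and (2) come from analyzing how the pull crosses $L_x$. The annulus $A$ between $\gamma$ and $\Pull_\delta\gamma$ contains $x$, so the two curves differ by ``going around $x$ on one side or the other''. Since $\deg_\Fcal(x)=1$, the 1-prong has a single outgoing ray $L_x$. The difference of intersection numbers then equals $\pm$ twice the transverse measure of a sub-arc of $L_x$, namely the segment of $L_x$ from $x$ to its first crossing with $\gamma$. The sign depends on which side the cord lies on relative to $L_x$. Because $L_x$ is infinite and $\Fcal$ is minimal, this measure is positive, so there are no ties; this gives (1). The decreasing pull is exactly the one whose cord follows $L_x$ to its first hit on $\gamma$; this is the canonical simple pull. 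Every other edge at $[\gamma]$ moves $\gamma$ across $x$ on the other side and increases the intersection. Uniqueness in (2) follows because the leaf $L_x$ is a single ray.

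For convexity (5), I would observe the following. After the canonical pull, the new first crossing of $L_x$ with the curve lies farther along $L_x$. The successive decrements are the measures of consecutive segments of $L_x$ between successive hitting points. Checking that these segments are nested in the right order gives the monotone decrease of the differences. For (6), a reversely directed path corresponds to pulling in the increasing direction. The added measure is at least the measure of a fixed transverse arc crossed each time, so the sum diverges.

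For (7), I would use the lift $\tilde L_x$ from $\tilde x$. Successive canonical pulls cross successive walls of $p^{-1}(\zeta)$ along $\tilde L_x$. Hence the directed path is the sequence of $2$-cells met by $\tilde L_x$, a nested sequence of walls. Lemma \ref{lem:Limit_Seg_Geod} then gives a limit point $\theta_\Fcal$, which is the endpoint of $\tilde L_x$ (by minimality, $\tilde L_x$ is quasi-geodesic). This point is manifestly independent of $\zeta$, which gives (8). By the uniqueness in (2) every directed path is eventually this one, so all directed paths from any start merge.

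For (3), the curves $\gamma(l)$ increasingly follow $L_x$: they consist of a bounded piece plus a long segment fellow-travelling $L_x$ and returning. Their limits in $\PMF$ are therefore supported on the closure of $L_x$, which is $\underline\Fcal$ by minimality. Klarreich's theorem identifying $\partial CC$ with $\mathcal{EL}$ then gives convergence to $\underline\Fcal$. For (4), the sequence $\pair{\Fcal}{[\gamma(l)]}$ is decreasing with some limit $c\ge\varphi(\Fcal)([\zeta])$. Any $[\gamma']$ in $X_\zeta$ is joined to the path by a finite tree path. From $[\gamma']$, the directed path must eventually merge with ours, and intersection numbers decrease along it, so $c\le\pair{\Fcal}{[\gamma']}$.

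The main obstacle is the precise computation in (1) and (5). One has to show that the intersection difference equals the measure of an initial segment of $L_x$ from $x$ to its first hit. Taut positions may fail to be realized simultaneously for both curves, so this needs care with saddle connections and with choosing the cord along the leaf. I would handle it by first proving it for $\Fcal$ without saddle connections through $x$, and then extending by the $x$-genericity assumption.
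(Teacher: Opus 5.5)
There is a genuine gap at the heart of (1), (2) and (5). You claim that the change in intersection number under a pull is $\pm 2$ times the transverse measure of the segment $[x,y]\subset L_x$. But $[x,y]$ lies in a leaf of $\Fcal$, so its $\Fcal$-transverse measure is $0$. Your formula would therefore give no change at all, contradicting the strictness you want in (1).

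The ``two sides of $L_x$'' picture is also off. $X_\zeta$ is a locally infinite tree, so $[\gamma]$ has infinitely many simple pulls, one for each homotopy class of cord rel $x,\gamma$, with different increments.

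The quantity that actually governs the canonical pull is the width $\pair{\Fcal}{I_{\Fcal,\gamma}}$ of the pulley $W_{\Fcal,\gamma}$. This is the maximal strip of leaf segments that run parallel to $\delta$, turn around the $1$-prong at $x$, and end on $\gamma$; it has positive width, and the canonical pull lowers $\pair{\Fcal}{[\gamma]}$ by exactly this width. For any other cord $\delta$, make $\delta$ quasi-transverse with $\gamma\cup\delta$ bigon-free. Then $\delta*\gamma*\overline{\delta}$ is quasi-transverse, so $\pair{\Fcal}{[\Pull_\delta(\gamma)]}=\pair{\Fcal}{\gamma}+2\pair{\Fcal}{\delta}$. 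Since $\delta$ must leave the pulley across its leaves rather than through $I_{\Fcal,\gamma}$, you get $\pair{\Fcal}{\delta}\ge\frac12\pair{\Fcal}{I_{\Fcal,\gamma}}$.

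This asymmetric estimate (Lemma~\ref{lem:IntNumbCanonPull}: exact decrease by the width, increase by at least the width) is the missing idea. It gives (1) and (2), and it gives (5) at once. The pull from $\gamma_2$ back to $\gamma_1$ is non-canonical, so $\pair{\Fcal}{[\gamma_1]}-\pair{\Fcal}{[\gamma_2]}\ge\pair{\Fcal}{I_{\Fcal,\gamma_2}}=\pair{\Fcal}{[\gamma_2]}-\pair{\Fcal}{[\gamma_3]}$. Your ``consecutive segments of $L_x$'' argument for (5) again measures leaf segments and cannot work. Item (6) then follows from (5) by linear growth, not from an unspecified fixed transverse arc.

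Your proof of (8) also fails. You identify $\theta_\Fcal$ with the endpoint of $\tilde L_x$, asserting that $\tilde L_x$ is quasi-geodesic by minimality. In $\widetilde{S_{g,n}}$ the lifts of $x$ are ordinary points carrying $1$-prongs, and minimality is exactly what breaks quasi-geodesicity. $L_x$ returns arbitrarily close to $x$. A leaf passing at transverse distance $c$ from $x$ runs beside $L_x$ for a length tending to $\infty$ as $c\to 0$, turns around the $1$-prong, and comes back. So $\tilde L_x$ has arbitrarily long hairpins around translates of $\tilde x$.

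Relatedly, the directed path is not simply the sequence of $2$-cells met by $\tilde L_x$, since each pull cancels the crossings of $L_x$ with the forgetting segment. For (7) none of this is needed: a directed path in the tree is a geodesic, so the walls are nested and Lemma~\ref{lem:Limit_Seg_Geod} applies, as in the paper. But (8) is left unproved.

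The paper proves (8) differently. It takes a pre-minimal pair $\{\gamma_1,\gamma_2\}$ with $i_*[\gamma_j]=[\zeta_j]$ and runs canonical pulls in $X_{\{\zeta_1,\zeta_2\}}$. Both tree projections are unbounded, so $p_1^{-1}([r,\infty))\cap p_2^{-1}([s,\infty))\neq\emptyset$ for all $r,s$. If $\theta_{\Fcal,\gamma_1}\ne\theta_{\Fcal,\gamma_2}$, the corresponding wall half-spaces would eventually be disjoint, a contradiction.

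Finally, your fellow-travelling description in (3) is unproved but unnecessary. $\pair{\Fcal}{[\gamma(l)]}$ is bounded while hyperbolic lengths tend to $\infty$, so every limit of the normalized curves has zero intersection with $\Fcal$. Rees's theorem together with Hamenst\"adt/Klarreich then finishes the argument.
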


\begin{defn}[Canonical simple path and pulls]\label{defn:CanonJoinPull}
    Fix an $x$-generic measured foliation $\Fcal\in \MF(S_{g,n+1})$. Suppose that $\gamma$ is a pre-essential simple closed curve on $S_{g,n+1}$. Suppose that $\gamma$ and $\Fcal$ are transverse, $\Fcal \pitchfork \gamma$. We further assume that $\Fcal$ is represented in such a way that every singular point has degree $1$ or $\ge 3$. Consider the leaf $L_x$ containing $x$ as a ray starting from $x$. Let $\delta \subset L_x$ denote the segment $[x,y]$ between $x$ and the first intersection point $y$ of $L_x$ with $\gamma$. We call $\delta$ the {\em canonical simple cord} from $x$ to $\gamma$. The simple pull $\Pull_\delta(\gamma)$ along the canonical simple path is called the {\em canonical simple pull}. The homotopy class $[\Pull_\delta(\gamma)]$ is independent of the choice of representatives of $\gamma$ and $\Fcal$ (Lemma \ref{lem:IntNumbCanonPull}).
\end{defn}

\begin{defn}[Pulley and forgetting segment]\label{defn:PulleyForgetSegments}
Let $\Fcal, \gamma$ and $\delta=[x,y]$ be as defined in Definition \ref{defn:CanonJoinPull}. Let us consider a small closed neighborhood of $\delta$ that is foliated by parallel segments of leaves that do not contain any marked points and whose both endpoints are on $\gamma$. Being parallel means they are homotopic to each other with their endpoints staying within $\gamma$. The set, indexed by $\mathcal{A}$, of such closed neighborhoods $\{W_a\}_{a\in \mathcal{A}}$ is totally ordered by the set-inclusion. For each $a\in \mathcal{A}$, define $I_a:=\gamma \cap W_a$, which also follows the total order on $\mathcal{A}$. Denote by $W_{\Fcal,\gamma}$ and $I_{\Fcal,\gamma}$ the closures of the unions of $W_a$'s and $I_a$'s over $a\in \mathcal{A}$, respectively. By definition, the boundary of $W_{\Fcal,\gamma}$ must contain a singular point of $\Fcal$. We call $W_{\Fcal,\gamma}$ and $I_{\Fcal,\gamma}$ the {\em pulley} and the {\em forgetting segment} of (the canonical simple pull of) $\gamma$, respectively. We call the number $\pair{\Fcal}{I_{\Fcal,\gamma}}$ the {\em width} of the pulley. There are two possible cases; also see Figure \ref{fig:ForgetDomain}.

\begin{figure}[h!]
	\centering
    \hspace{10pt}
    \begin{subfigure}[b]{0.25\textwidth}
    \centering
    \def\svgwidth{\textwidth}
    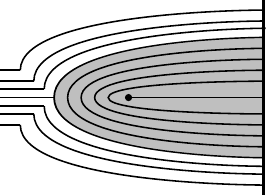
    \caption*{Case A}
    \end{subfigure}
    \hspace{40pt}
    \begin{subfigure}[b]{0.25\textwidth}
    \centering
    \def\svgwidth{\textwidth} 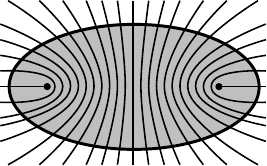
    \caption*{Case B}
    \end{subfigure}
	\caption{Pulleys are shaded.}
	\label{fig:ForgetDomain}
\end{figure}

\noindent {\it Case A: $I_{\Fcal,\gamma}$ is a closed interval without marked endpoints.}

Let $J$ be the closure of $\partial W_{\Fcal,\gamma} \setminus I_{\Fcal,\gamma}$. Then $J$ contains a singular point with valence $\ge 2$. For technical reasons, we choose a representative of $\Fcal$ such that $J$ contains exactly one singular point, whose valence is $3$; see Figure \ref{fig:Localmove}.

\noindent {\it Case B: $I_{\Fcal,\gamma}$ is a simple closed curve.}

In this case, $W_{\Fcal,\gamma}$ is a closed disk with $\partial W_{\Fcal,\gamma}=\gamma=I_{\Fcal,\gamma}$. We rule out this case because $\gamma$ is pre-peripheral.

\end{defn}

\begin{defn}[Quasi-transverse]\label{defn:Quasi-Trans}
    Suppose that $\gamma$ is a path or a closed curve on a surface $S$ and $\Fcal\in \MF(S)$. We say that $\gamma$ is {\it quasi-transverse} to $\Fcal$ if $\gamma$ is a concatenation of segments each of which is either (a) contained in a leaf and containing at least one singular point of $\Fcal$ or (b) transverse to $\Fcal$.
\end{defn}

Consider a measured foliation $\Fcal$ on $S_{g,n}$ and a path $\gamma\from I \to S_{g,n}$. We say that $\gamma$ and $\Fcal$ {\it form a bigon} if there exists a subarc $\alpha$ of $\gamma$ and an arc $\beta$ contained in a leaf of $\Fcal$ such that $\alpha \cup \beta$ bounds an open disk.

\begin{lem}\label{lem:QuasiTransverse}
    For any simple closed curve $\gamma$ and a measured foliation $\Fcal$ of a surface $S$, there is a quasi-transverse representative in $[\gamma]$. If $\gamma$ is quasi-transverse to $\Fcal$, then $\gamma$ realizes the intersection number with $\Fcal$, i.e., $\pair{\Fcal}{[\gamma]}=\pair{\Fcal}{\gamma}$. The same statement also works for paths $\delta$ joining two points $x,y\in S$ and its homotopy class fixing the endpoints $x$ and $y$.
\end{lem}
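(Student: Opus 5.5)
Both claims go through Whitehead moves on $\Fcal$ and bigon removal against the leaves. For existence, represent $\Fcal$ by a partial measured foliation, or a singular foliation with prong singularities, and put $\gamma$ in general position, transverse to $\Fcal$ except at finitely many tangencies. Each tangency is locally a bigon between $\gamma$ and a leaf. I would remove these one at a time. Take an innermost bigon $D$ bounded by a subarc $\alpha\subset\gamma$ and a leaf arc $\beta$. If $D$ contains no singular point and no marked point, then $D$ is foliated by parallel leaf arcs, so an isotopy of $\gamma$ across $D$ removes it. This strictly decreases the number of tangencies and does not increase the number of intersection points counted with measure. If $D$ contains singular points, push $\alpha$ onto $\beta$ until it runs along a leaf segment through the singular point. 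That segment is then of type (a) in Definition \ref{defn:Quasi-Trans}. Marked points are allowed inside $D$, since $D$ lies in $S$ minus the punctures only when the bigon is a genuine disk in $S$. The procedure terminates because the complexity (number of tangencies plus number of non-quasi-transverse pieces) drops at each step. The same argument works for paths with fixed endpoints, since isotopies of interior subarcs never move the endpoints.

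For minimality, I would show that a quasi-transverse $\gamma$ lifts to the universal cover $\widetilde S$ with the pulled-back foliation $\widetilde\Fcal$, and that no lift $\tilde\gamma$ meets any leaf of $\widetilde\Fcal$ twice. The key is the standard fact (Fathi–Laudenbach–Poénaru) that a singular measured foliation has no \emph{generalized bigons}: no disk bounded by a leaf arc and an arc transverse or quasi-transverse to $\Fcal$. This follows from an index/Euler-characteristic count, using that all interior singularities have negative index, i.e. at least $3$ prongs. Marked points with $1$-prong singularities are punctures, so they are not in the disk.

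Assuming that fact, fix a homotopic curve $\gamma'$ and lift both curves with the same endpoints at infinity (for paths, the same endpoints $x,y$). The transverse measure of $\tilde\gamma$ counts the measure of the leaf space it crosses. Because $\tilde\gamma$ crosses each leaf at most once, and leaves separate $\widetilde S$, every leaf crossed by $\tilde\gamma$ separates the two ends. Hence $\tilde\gamma'$ must also cross that leaf, which gives $\pair{\Fcal}{\gamma'}\ge\pair{\Fcal}{\gamma}$ on a fundamental domain (periodic segment). Type (a) segments contribute zero measure and still lie in the leaf-separation picture, because the singular leaf through them separates.

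The main obstacle is the no-bigon step for quasi-transverse arcs. A segment along a singular leaf could in principle create a disk with a leaf arc, and the definition must force each such segment to pass through a singular point on the correct side. I would handle this by Euler–Poincaré: double the disk and sum indices. The boundary corners contribute $+1/2$ each at the two bigon corners. Turning at a singular point along $\gamma$ contributes nonpositively, because the point has at least $3$ prongs. Interior singularities contribute negatively, and so the total cannot reach $\chi(\text{disk})=1$, which is a contradiction.
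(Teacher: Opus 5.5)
\textbf{There is a genuine gap in the no-bigon step.} Your route is the one the paper relies on; it only cites the bigon-removal argument of Fathi--Laudenbach--Po\'enaru, Expos\'e~5. That is: remove bigons to get existence, then lift to $\widetilde S$ and use that a quasi-transverse arc and a leaf never cobound a disk. The gap is in that last step. You correctly flag it as the main obstacle, but the index count you use to settle it does not work.

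\emph{Normalization.} After doubling, the indices must sum to $\chi(S^2)=2$, and the two corners of the bigon give only $\tfrac{1}{2}+\tfrac{1}{2}=1$. If you compare against $\chi(D)=1$ with corners worth $\tfrac{1}{2}$, as you wrote, you do not even exclude a bigon containing no singular points.

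\emph{The contributions along $\gamma$.} This is the more serious problem. In the double, a point of $\partial D$ where $D$ has angle $\theta$ has index $1-\theta/\pi$. Here a full sector counts as $\pi$, and each of the two halves of a sector cut by a transverse arc counts as $\pi/2$. So nonpositivity along $\gamma$ requires $\theta\ge\pi$; the number of prongs plays no role. This holds automatically where $\gamma$ enters and leaves a singular point along two distinct prongs. It fails at junctions between segments of type (a) and (b) in Definition~\ref{defn:Quasi-Trans}. At a regular point such a junction is a corner with $\theta=\pi/2$ or $3\pi/2$, contributing $\pm\tfrac{1}{2}$. A transverse piece leaving a singular point into the sector adjacent to the incoming prong, on the side of $D$, also contributes positively. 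Your accounting omits these, and the definition as written allows the convex ones.

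\emph{A concrete failure.} Let $R$ be a rectangle foliated by parallel leaf segments, with no marked points. Suppose its top side is the union of two prongs of a $3$-prong singular point $p$ whose third prong leaves $R$. Let $\gamma$ run up the left side, along the top through $p$, and down the right side. This $\gamma$ is quasi-transverse in the sense of Definition~\ref{defn:Quasi-Trans}, yet together with the bottom side it bounds $R$. The count is $4\cdot\tfrac{1}{2}+0=2$, so Euler--Poincar\'e gives no contradiction. Indeed, pushing $\gamma$ across $R$ strictly lowers $\pair{\Fcal}{\gamma}$.

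\textbf{How to close it.} You need the extra condition that makes the FLP argument work. The leaf pieces of $\gamma$ must be saddle connections, so every junction occurs at a singular point. At each singular point on $\gamma$, the angle on each side of $\gamma$ must be at least $\pi$, i.e.\ $\gamma$ never turns back inside a sector. With this condition:
\begin{itemize}
\item every point of $\partial D$ other than the two corners contributes at most $0$ in the double;
\item each interior singular point contributes $2(1-k/2)\le -1$, since $k\ge 3$;
\item no marked point lies in $D$, because $D$ is a disk in the punctured surface.
\end{itemize}
The total is then at most $1<2$, which is the contradiction you want.

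\textbf{Consequences for existence.} Your existence argument must output curves with this stronger property. As written, it stops as soon as the pushed arc catches on a singular leaf, which produces exactly the configuration above. You have to keep pushing whenever $\gamma$ turns back within a sector, and supply a termination argument for that. Your remark on marked points is also backwards. A region containing a marked point is not a disk in $S$, so $\gamma$ may not be pushed across it; the marked point must be treated as a singular point at which the pushing stops.
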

\begin{proof}
    The lemma is a consequence of the standard argument of removing bigons; see \cite[Expos\'e 5]{FLP_ThurstonSurface}.
\end{proof}

\begin{rem}
    In \cite{FLP_ThurstonSurface}, each segment of a quasi-transverse curve is additionally required to join singular points. For any quasi-transverse curve in Definition \ref{defn:Quasi-Trans}, we can slightly deform a segment contained in a leaf so as to satisfy the condition in \cite{FLP_ThurstonSurface}. See \cite[Figures 5.9 and 5.10]{FLP_ThurstonSurface} for the relevant deformation, which does not change the intersection numbers.
\end{rem}

Recall the notion of homotopy between simple cords introduced in Definition~\ref{defn:SimpleCord to Curve}.
\begin{lem}[Well-definedness and uniqueness of canonical pulls]\label{lem:IntNumbCanonPull}
    Let $\gamma$ be a pre-essential simple closed curve and $\Fcal\in \MF(S_{g,n+1})$ an $x$-generic measured foliation in $S_{g,n+1}$. Suppose that $\gamma$ and $\Fcal$ are transverse. Let $\delta\from [0,1]\to S_{g,n+1}$ be a simple cord from $x$ to $\gamma$. Then we have
    \[
        \begin{cases}
            \pair{\Fcal}{[\Pull_\delta(\gamma)]} = \pair{\Fcal}{[\gamma]} - \pair{\Fcal}{I_{\Fcal,\gamma}}
            &
            \begin{tabular}[c]{@{}l@{}}
                it $\delta$ is homotopic relative to $x,\gamma$\\
                to the canonical simple cord,
            \end{tabular}\\
            \pair{\Fcal}{[\Pull_\delta(\gamma)]} \ge \pair{\Fcal}{[\gamma]} + \pair{\Fcal}{I_{\Fcal,\gamma}} & \mathit{otherwise.}
        \end{cases}
    \]
    where $I_{\Fcal,\gamma}$ is the forgetting segment. In particular, the homotopy class of the simple pull $[\Pull_\delta(\gamma)]$ along the canonical simple cord $\delta$ is independent of the choices of representatives of $\gamma$ and $\Fcal$, while $\delta$ is dependent on these choices.
\end{lem}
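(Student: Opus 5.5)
We make the canonical pull explicit inside the pulley and compute its intersection with $\Fcal$ directly, and then show that every other pull crosses the pulley's leaf segments twice more. Throughout, intersection numbers of homotopy classes are computed via quasi-transverse representatives (Lemma \ref{lem:QuasiTransverse}).

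\emph{Canonical case.} Take $\delta$ to be the canonical simple cord $[x,y]\subset L_x$. The pull $\Pull_\delta(\gamma)$ can be realized inside the pulley $W_{\Fcal,\gamma}$: replace the forgetting segment $I_{\Fcal,\gamma}$ by the arc $J$ of $\partial W_{\Fcal,\gamma}$ on the side containing $x$. We are in Case A, since Case B is excluded by pre-essentiality. Then $J$ consists of two leaf segments meeting at the single trivalent singular point on $\partial W_{\Fcal,\gamma}$, so it has zero transverse measure.

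Next push $J$ slightly off the singularity toward $x$, keeping $x$ on the correct side. The resulting curve $\gamma'$ is quasi-transverse to $\Fcal$: it is made of the transverse arcs $\gamma\setminus I_{\Fcal,\gamma}$ together with a leaf segment through a singular point. Lemma \ref{lem:QuasiTransverse} then gives
\[
\pair{\Fcal}{[\gamma']}=\pair{\Fcal}{\gamma}-\pair{\Fcal}{I_{\Fcal,\gamma}}.
\]
We also need $\pair{\Fcal}{\gamma}=\pair{\Fcal}{[\gamma]}$. To get this, first put $\gamma$ in quasi-transverse or transverse minimal position; the hypothesis that $\gamma$ is transverse allows this.

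\emph{Non-canonical case.} Let $\delta$ be any simple cord that is not homotopic rel $x,\gamma$ to the canonical one. The pulled curve $\Pull_\delta(\gamma)$ is $\gamma$ with a small interval at $\delta(1)$ replaced by a loop that runs along both sides of $\delta$ and around $x$. Since $\deg_\Fcal(x)=1$ and $x$ is $x$-generic, the leaf $L_x$ starting at $x$ must cross this new loop.

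The key step is to show that in minimal position the new curve still crosses every leaf segment in the pulley twice more than it needs to. Equivalently, write $\Pull_\delta(\gamma)$ as the union of $\gamma$ and the boundary of a thin neighbourhood of $\delta$. Then argue that no bigon between the new curve and $\Fcal$ can cancel the measure $\pair{\Fcal}{I_{\Fcal,\gamma}}$ on both sides. I would do this in the universal cover $\widetilde{S_{g,n}}$ punctured at the lifts of $x$.

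Concretely, lift $\gamma$ to the wall $w$ adjacent to $\tilde x$ that $\delta$ reaches. If $\delta$ is not canonical, then $w$ is a different boundary wall of the $2$-cell containing $\tilde x$, or the same wall approached by a different homotopy class. The leaves of $\widetilde\Fcal$ crossing $\widetilde{I}_{\Fcal,\gamma}$ separate $\tilde x$ from $w$ (these are the leaves parallel to $\tilde\delta$ canonical). Therefore the pulled curve must cross each of them on the way around $\tilde x$ and back. This forces at least $2\pair{\Fcal}{I_{\Fcal,\gamma}}$ of additional intersection, and only $\pair{\Fcal}{I_{\Fcal,\gamma}}$ of it can be removed.

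More robustly, one can use the measure-of-arc formula. We have $\pair{\Fcal}{[\Pull_\delta\gamma]}\ge \pair{\Fcal}{\gamma}+2m(\delta)-(\text{cancellation})$, where $m(\delta)$ is the minimal transverse measure of a cord homotopic to $\delta$. The argument then splits into two steps: show that $m(\delta)\ge \pair{\Fcal}{I_{\Fcal,\gamma}}$ by a width argument, and show that cancellation can only occur with arcs of $\gamma$ adjacent to the endpoint region, which is controlled by the pulley's maximality.

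\emph{Independence.} By the two inequalities, the canonical pull is the unique simple pull that decreases intersection. Its homotopy class is therefore intrinsic, and in particular does not depend on the representatives of $\gamma$ and $\Fcal$.

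The main obstacle is the non-canonical lower bound. We must rule out that a cleverly chosen cord leads to a curve whose bigon removal cancels more measure. I would attack this through the tree $\mathscr{D}^*_\zeta$: non-canonical pulls move the vertex away from the direction determined by $L_x$. I would then compare transverse measures along the corresponding lifted walls using the no-bigon property of quasi-transverse curves.
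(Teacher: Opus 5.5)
Your canonical case is fine and matches what the paper calls straightforward. One slip there: pushing $J$ off the trivalent point \emph{toward} $x$ gives an arc almost parallel to a pulley leaf, not a transverse one. Either push it to the side of the third prong, away from $x$, or keep $J$ through the singular point, which already makes the curve quasi-transverse.

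The genuine gap is the non-canonical case. You do not prove it, and the quantitative heuristics you offer for it are wrong. Each leaf segment of the pulley goes around $x$ and meets $I_{\Fcal,\gamma}$ twice. So crossing the pulley from $x$ on one side of $[x,y]$ costs only $\frac12\pair{\Fcal}{I_{\Fcal,\gamma}}$. Hence your claim $m(\delta)\ge\pair{\Fcal}{I_{\Fcal,\gamma}}$ fails. For example, take the cord that runs transversely from $x$ to the singular point on $J$ and then along its third prong to its first hit with $\gamma$. It is not homotopic to the canonical cord (its endpoint lies on a different lift of $\gamma$), yet it has measure $\frac12\pair{\Fcal}{I_{\Fcal,\gamma}}$. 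Your count ``at least $2\pair{\Fcal}{I_{\Fcal,\gamma}}$ extra, at most $\pair{\Fcal}{I_{\Fcal,\gamma}}$ removed'' is likewise unjustified. The correct bookkeeping is that the extra intersection is $2\pair{\Fcal}{\delta}\ge\pair{\Fcal}{I_{\Fcal,\gamma}}$. In the extreme case above, the lemma therefore leaves no room for \emph{any} cancellation when the pulled curve is put in minimal position. Showing that none occurs is exactly the step you leave open; neither the universal-cover sketch nor the tree $\mathscr{D}^*_\zeta$ idea supplies it.

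The missing idea is to normalize the cord rather than analyze cancellations of the pulled curve.
\begin{itemize}
\item Homotope $\delta$ rel $x,\gamma$ so that it is quasi-transverse and $\gamma\cup\delta$ forms no bigon with $\Fcal$. Neither $\gamma$ nor $\delta$ alone bounds a bigon, and $\delta$ meets $\gamma$ only at $\delta(1)$. So any bigon of the union involves a terminal piece $\delta|_{[t,1]}$. Take the smallest such $t$ and replace $\delta|_{[t,1]}$ by the leaf side of the bigon.
\item Then $\delta*\gamma*\overline{\delta}$ is quasi-transverse without bigons, and it is approximated by representatives of $[\Pull_\delta(\gamma)]$. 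Lemma~\ref{lem:QuasiTransverse} then gives the exact identity $\pair{\Fcal}{[\Pull_\delta(\gamma)]}=\pair{\Fcal}{\gamma}+2\pair{\Fcal}{\delta}$, with no cancellation term at all.
\item For the width bound, look at the first point where $\delta$ meets $\partial W_{\Fcal,\gamma}$. If it lies in $I_{\Fcal,\gamma}\subset\gamma$, it is the endpoint of $\delta$. Then $\delta$ stays in the disk $W_{\Fcal,\gamma}$ and is homotopic to the canonical cord. Otherwise the initial arc leaves through $J$ and crosses every pulley leaf segment on one side of $[x,y]$. This gives $\pair{\Fcal}{\delta}\ge\frac12\pair{\Fcal}{I_{\Fcal,\gamma}}$.
\end{itemize}
Combining the identity with the width bound yields the second case of the lemma. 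After that, your independence argument is correct as stated.
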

\begin{proof}
In this proof, we consider punctures as marked points through which $\gamma$ can pass, but the homotopy is not allowed to pass over these marked points. Parametrize $\gamma\from [0,1] \to S_{g,n+1}$ and $\delta\from [0,1] \to S_{g,n+1}$ such that $\gamma(0)=\gamma(1)=\delta(1)$ and $\delta(0)=x$.

By Lemma~\ref{lem:QuasiTransverse}, we may deform $\delta$, via a homotopy relative to $x,\gamma$ so that it is quasi-transverse to $\Fcal$. Using a standard bigon-removal argument, we can further assume that $\gamma \cup \delta$ does not form a bigon with $\Fcal$.
Here a bigon is a closed embedded disk $D$ whose boundary $\partial D$ is a union of two arcs $\alpha_1$ and $\alpha_2$ such that $\alpha_1\subset \delta\cup \gamma$ and $\alpha_2$ is contained in a leaf of $\Fcal$. 
We briefly sketch the proof; see Figure \ref{fig:quasitransverse}. Since neither $\gamma$ nor $\delta$ forms a bigon $D$ with $\Fcal$, if $\gamma \cup \delta$ does form one, then the boundary $\partial D$ must intersect both $\gamma$ and $\delta$. Because $\delta$ intersects $\gamma$ only at $\gamma(1)$, there exists $t_0 \in [0,1]$ such that $\delta([t_0,1]) \cup \gamma$ forms a bigon with $\Fcal$. Let $t_1$ be the minimal such $t_0$. We then replace $\delta|_{[t_1,1]}$ with an arc contained in a union of leaves of $\Fcal$ that forms a bigon with $\delta|_{[t_1,1]} \cup \gamma$.
\begin{figure}
    \centering
    \def\svgwidth{0.5\textwidth}
    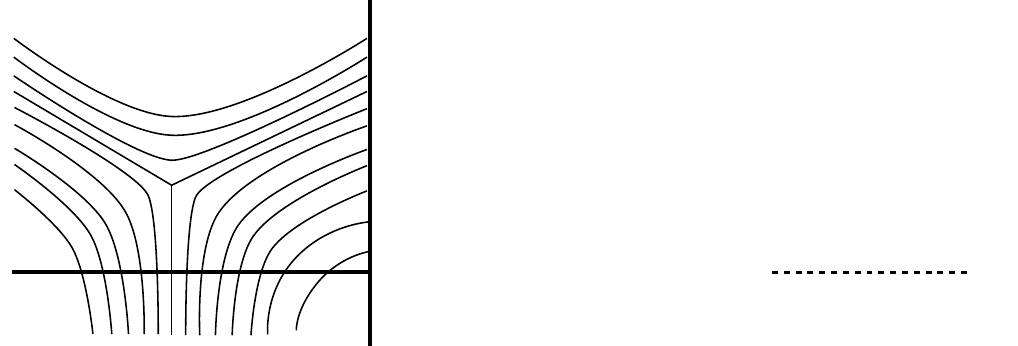
    \caption{}
    \label{fig:quasitransverse}
\end{figure}

If $\delta$ is the canonical simple cord, then it is straightforward that
\[
    \pair{\Fcal}{[\Pull_\delta(\gamma)]}=\pair{\Fcal}{[\gamma]}-\pair{\Fcal}{I_{\Fcal,\gamma}}.
\]

Suppose that $\delta$ is not the canonical simple cord. Consider the pulley $W_{\Fcal,\gamma}$ in Figure \ref{fig:ForgetDomain}-(Case A). Let $\delta'$  be the restriction of $\delta$ that is the segment from $x$ to the first intersection $y$ with $\partial W_{\Fcal,\gamma}$. If $y\in I_{\Fcal,\gamma}$, then $\delta'=\delta$, so $\delta$ is the canonical simple cord, contrary to our assumption. Hence, we have $y\notin I_{\Fcal,\gamma}$. It follows that $\delta'$ is transverse to all the leaves in $W_{\Fcal,\gamma}$, and thus
$\pair{\Fcal}{\delta}\ge \frac{1}{2}\pair{\Fcal}{I_{\Fcal,\gamma}}$. 

The closed curve $\delta * \gamma * \overline{\delta}$ does not form a bigon with $\Fcal$, and it can be approximated by simple closed curves in the homotopy class $[\Pull_\delta(\gamma)]$ within a small neighborhood of $\gamma \cup \delta$. Moreover $\delta * \gamma * \overline{\delta}$ is quasi-transverse to $\Fcal$. Hence, by Lemma \ref{lem:QuasiTransverse}, we have
\begin{equation}\label{eqn:Int_Num_Change}
    \pair{\Fcal}{[\Pull_\delta(\gamma)]}=\pair{\Fcal}{\gamma}+2\pair{\Fcal}{\delta}.
\end{equation}
This completes the proof.
\end{proof}

\begin{proof}[Proof of Theorem \ref{thm:CurveReduction}-(1,2,4,5)]
    The proof of (1),(2), and (5) is straightforward from Lemma \ref{lem:IntNumbCanonPull}.
    Let us consider the orientation on $\Edge(X_\zeta)$. By (1,2), every vertex has a unique outgoing edge. Since $X_\zeta$ is a tree, for any pair of vertices $[\gamma], [\gamma'] \in \Vertex(X_\zeta)$, the infinite directed edge-paths from $[\gamma]$ and $[\gamma']$ eventually coalesce. Then (4) follows.
\end{proof}

Recall that we denote by $\Proj\from \MF(S_{g,n+1})\to \PMF(S_{g,n+1})$ the projection map. By \cite{Kla_BdryCurveCplx}, the boundary $\partial_\infty CC(S_{g,n+1})$ is identified with the set of ending laminations $\mathcal{EL}(S_{g,n+1})$, which consists of equivalence classes of minimal foliations. For every minimal measured foliation $\Fcal$, denote by $\underline{\Fcal}$ its underlying minimal foliation, which is an element of $\mathcal{EL}(S_{g,n+1})=\partial_\infty CC(S_{g,n+1})$. 

We say that a sequence $(\Proj([\gamma_k])) \in \PMF(S)$ is {\it bounded} in $CC(S)$ if there is $C>0$ such that $d([\gamma_1],[\gamma_k])<C$ for any $k>1$, where $d$ denotes the edge-path metric on the $1$-skeleton of $CC(S)$.

\begin{proof}[Proof of Theorem \ref{thm:CurveReduction}-(3)]
Fix a hyperbolic metric $X$ on $S_{g,n+1}$. Since $(\gamma(l))_{l\ge0}$ is a sequence of distinct simple closed curves, we have $L_{\rm hyp}(\gamma(l);X) \to \infty$ as $l\to \infty$, where $L_{\rm hyp}(-;X)$ is the hyperbolic length function. It follows that 
\[
    \pair{\Fcal}{\frac{1}{L_{\rm hyp}(\gamma(l),X)} [\gamma(l)]} \to 0
\]
as $l\to \infty$.

Let $\Gcal$ be a limit point of the sequence $\left(\frac{1}{L_{\rm hyp}(\gamma(l),X)} [\gamma(l)]\right)_{l\ge1}$. It follows that \(L_{\mathrm{\rm hyp}}(\Gcal)=1\) and \(\pair{\Fcal}{\Gcal}=0\). Hence, by
\cite[Theorem~1.12]{Rees_MF}, the underlying foliations $\underline{\Fcal}$ and $\underline{\Gcal}$ of \(\Fcal\) and \(\Gcal\) are Whitehead equivalent. Since every convergent subsequence has this property,
\cite[Theorem~1.1]{Hamen_Bdry_CurveCplx} implies that the sequence
\([\gamma(l)]\) converges to
\(\underline{\Fcal}\in\mathcal{EL}(S_{g,n+1})\).
\end{proof}

\begin{proof}[Proof of Theorem \ref{thm:CurveReduction}-(6)]
    By Theorem \ref{thm:CurveReduction}-(5), if $([\gamma(l)])_{l=0,-1,-2,\dots}$ is an infinite reversely-directed edge-path in $X_\zeta$, then $\pair{\Fcal}{[\gamma(l)]}$ increases at least linearly fast as $l\to \infty$. Hence, we have $\pair{\Fcal}{[\gamma(l)]} \to \infty$ as $l\to \infty$.
\end{proof}


\begin{proof}[Proof of Theorem \ref{thm:CurveReduction}-(7)]
Any consecutive pair $D(\gamma(l))$ and $D(\gamma(l+1))$ shares a connected component of $p^{-1}(\zeta)$, say $\beta_l$. Let $U_l$ be the connected component of $\widetilde{S_{g,n}} \setminus \beta_l$ that contains $\interior(D(\gamma(l+1)))$. Then we have a nested sequence of compact sets $\overline{U_1} \supset \overline{U_2} \supset \dots$ where the closures are taken in the closed disk $\widetilde{S_{g,n}} \cup \partial_\infty \widetilde{S_{g,n}}$. Then, the existence of the limit $\theta_\Fcal$ of $(D(\gamma(l)))_{l\ge1}$ follows from Lemma \ref{lem:Limit_Seg_Geod}.
\end{proof}

We will prove Theorem \ref{thm:CurveReduction}-(8) in Section \ref{subsec:Multicurve_Reduction}.

\subsection{Case 2: Multicurves}\label{subsec:Multicurve_Reduction}

\subsection*{Canonical simple pulls for multicurves}

\begin{figure}[h!]
	\centering
    \begin{subfigure}[b]{0.35\textwidth}
    \centering
    \def\svgwidth{0.9\textwidth}
    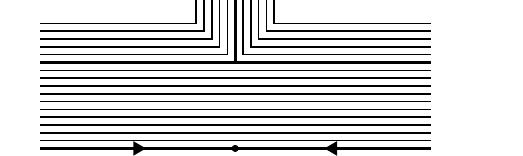
    \caption{}
    \end{subfigure}
    \begin{subfigure}[b]{0.24\textwidth}
    \centering
    \def\svgwidth{0.9\textwidth}
    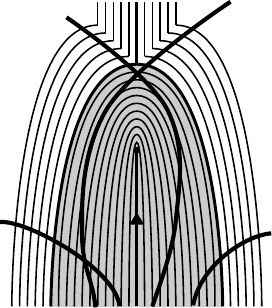
    \caption{}
    \end{subfigure}
    \begin{subfigure}[b]{0.3\textwidth}
    \centering
    \def\svgwidth{0.9\textwidth}
    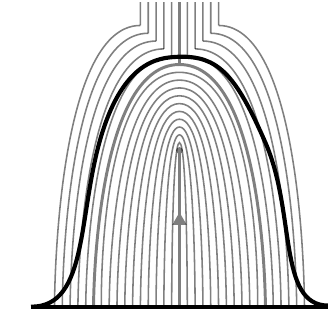
    \caption{}
    \end{subfigure}
	\caption{}
	\label{fig:Localmove}
\end{figure}
Suppose that $\Fcal \in \MF(S_{g,n+1})$ is an $x$-generic and minimal measured foliation. Consider a pre-minimal multicurve $\Gamma=\{\gamma_1, \gamma_2, \dots\gamma_k\}$ of $S_{g,n+1}$ that is transverse to $\Fcal$, denoted by $\Fcal \pitchfork \Gamma$. Let us look at a part of $\Fcal$ near the singular point $x$. Let $L_x$ be the singular trajectory starting from $x$. Let $y$ be the first intersection of $L_x$ with $\Gamma$. Without loss of generality, we assume $y\in \gamma_1$. Denote by $\delta:=[x,y]$ the segment of $L_x$ between $x$ and $y$. Consider the pulley $W_{\Fcal,{\gamma_1}}$ and the forgetting segment $I_{\Fcal,{\gamma_1}}$ as in Case-(A) in Definition \ref{defn:PulleyForgetSegments}. Since we consider a multicurve, the pulley $W_{\Fcal,\gamma_1}$ may intersect other components of $\Gamma$ as in Figure \ref{fig:Localmove}. We locally change $\gamma_1$ to $\gamma_1'$ as in Figure \ref{fig:Localmove}-(C). We note that $\gamma_1'$ is also transverse to $\Fcal$ and $[\gamma_1']$ is the canonical simple pull of $[\gamma_1]$. Define $\Gamma'=\{\gamma_1',\gamma_2,\dots,\gamma_k\}$. We can further assume the following property, which will be useful when proving that each component is simply pulled infinitely often through the iteration of this process:

\medskip
({\it Non-blocking first intersections}) $\gamma'_1$ may be chosen very close to $\partial W_{\Fcal,\gamma_1}$ so that it does not block the first intersection of $L_x$ with each $\gamma_i$ with $i\neq 1$. More precisely, let $y_i$ be the first intersection of $L_x$ with $\gamma_i$. Then $|[x,y_i] \cap \Gamma'|=|[x,y_i] \cap \Gamma|-1$.
\medskip

Let us discuss the change of $\Gamma$ to $\Gamma'$ in more detail. The local picture can be obtained from Figure \ref{fig:Localmove}-(A) by gluing the two sides on the bottom. If a curve $\gamma_2$ intersects the shaded region, there are five possibilities: $\gamma_2$ passes through (1) I and III, (2) I and IV, (3) II and III, (4) II and IV, and (5) III and IV. The type (5) can be excluded by assuming $\Gamma$ to be pre-minimal. One curve for each of types (1--4) is drawn in Figure \ref{fig:Localmove}-(B,C).

Consider how the local move affects the intersection of curves and $L_x$.

\begin{enumerate}
    \item The intersection of $L_x$ with curves other than $\gamma_1$ is unchanged.
    \item An intersection point of $L_x$ and $\gamma_1$ still appears as an intersection point of $L_x$ and $\gamma'$ unless it is on the side III or IV.
    \item All the intersection points between $L_x$ and $\gamma_1$ on the sides III and IV are eliminated.
    \item $\gamma_1'$ still has infinitely many intersections with $L_x$.
\end{enumerate}
We can choose $\gamma_1'$ in such a way that
\begin{enumerate}[resume]
    \item $\Fcal\pitchfork \Gamma'$.
\end{enumerate}
The intersection number with $\Fcal$ decreases
\begin{enumerate}[resume]
    \item
    $\pair{\Fcal}{[\Gamma]}-\pair{\Fcal}{[\Gamma']}=\pair{\Fcal}{\gamma_1}- \pair{\Fcal}{\gamma_1'} = \pair{\Fcal}{III \cup IV}>0$.
\end{enumerate}
\noindent We have $\pair{\Gamma}{\Gamma}=\pair{\Gamma'}{\Gamma'}$. Therefore, since $\Gamma$ is pre-minimal, $\Gamma'$ must also be pre-minimal. Hence, we have
\begin{enumerate}[resume]
    \item $[\Gamma]$ and $[\Gamma']$ are adjacent vertices in $X_Z$.
\end{enumerate}

We call $[\Gamma']$ a {\it canonical simple pull} of $[\Gamma]$. Unlike the case of simple closed curves, there may be more than one canonical simple pull of $[\Gamma]$. By Theorem \ref{thm:CurveReduction}-(2), there are at most $k(=|\Gamma|)$ simple pulls. In Theorem \ref{thm:MulticurveReduction}, we show that there is a unique cube in $X_Z$ containing the vertex $[\Gamma]$ such that the cube includes the edges corresponding to the canonical simple pulls of $[\Gamma]$.

\subsection*{Iteration of canonical simple pulls}
Recall that given representatives $\Fcal$ and $\Gamma$ with $\Fcal \pitchfork \Gamma$, we obtained a canonical simple pull $\Gamma'$. The choice of representatives $\Fcal$ and $\Gamma$ uniquely determines the homotopy class $[\Gamma']$ but not its representative $\Gamma'$. When iterating canonical simple pulls, we choose a representative $\Gamma'$ that satisfies the non-blocking first intersections, and then we apply the canonical simple pull to $\Gamma'$. In other words, we iterate canonical simple pulls for multicurves, not for their homotopy classes.

By iterating canonical simple pulls, we obtain a sequence of multicurves $(\Gamma(l))_{l\ge0}$ with $\Gamma(0)=\Gamma$. We remark that the sequence of even their homotopy classes $([\Gamma(l)])_{l\ge0}$ may depend on the choice of representative $\Gamma$ of $[\Gamma]$ and also on the choice of the canonical simple pull $\Gamma'$ of $\Gamma$. However, we will show in Theorem \ref{thm:MulticurveReduction} that all these sequences are (consecutive vertices on) geodesic edge-paths in $X_Z$ that are asymptotic to each other.

\begin{lem}\label{lem:Seq of Cannon Simple Pulls}
Suppose that $\Gamma=\{\gamma_1,\gamma_2,\dots,\gamma_k\}$ is a pre-minimal, pre-essential, and pre-non-parallel multicurve in $S_{g,n+1}$. Let $\Fcal$ be an $x$-generic minimal measured foliation on $S_{g,n+1}$. Let $\Gamma(l)=\{\gamma_1(l),\gamma_2(l),\dots,\gamma_k(l)\}$ for $k=0,1,2,\dots$ be a sequence of multicurves on $S_{g,n+1}$ obtained by iteration of canonical simple pulls from $\Gamma$ such that $\gamma_i(l)$ is pre-homotopic to $\gamma_i(0)$ for any $l\ge0$ and $i\in\{1,2,\dots,k\}$. Then the sequence $([\Gamma(l)])_{l\ge0}$ is a sequence of consecutive vertices in an unbounded directed edge-path $p$ in $X_Z$ such that for each $i\in \{1,2,\dots,k\}$, $\pi_i\circ p$ is an infinite directed edge-path with layovers in $X_{\zeta_i}$.
\end{lem}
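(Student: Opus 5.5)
Properties (5)--(7) of the local move show that each step $\Gamma(l)\to\Gamma(l+1)$ replaces exactly one component $\gamma_j(l)$ by its canonical simple pull $\gamma_j(l+1)$. The other components are unchanged, transversality to $\Fcal$ is preserved, and $[\Gamma(l+1)]$ is again pre-minimal, pre-essential and pre-non-parallel with $i_*[\Gamma(l+1)]=[Z]$. Hence consecutive homotopy classes are adjacent vertices of $X_Z$ joined by a simple pull that strictly decreases $\pair{\Fcal}{[\Gamma]}$. Orienting edges of $X_Z$ by decrease of intersection with $\Fcal$, as in Theorem~\ref{theorem:CubeCplx}, the sequence $([\Gamma(l)])$ is therefore a directed edge-path $p$. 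Under $p_i$, each edge projects either to a vertex (when $i\ne j$) or to the edge $[\gamma_i(l)]\to[\gamma_i(l+1)]$ of $X_{\zeta_i}$. The latter is the canonical simple pull, hence the unique outgoing edge by Theorem~\ref{thm:CurveReduction}-(2). So $p_i\circ p$ is a directed edge-path with layovers in the tree $X_{\zeta_i}$. By Proposition~\ref{prop:DirectedPathGeodesic}, together with Theorem~\ref{thm:CubeCplxIso} which makes $X_Z$ CAT(0), $p$ is an edge-path geodesic. Its vertices are distinct, so $p$ is unbounded once it is infinite.

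The remaining and main point is that for every $i$ the projection $p_i\circ p$ is infinite. Equivalently, every component is pulled infinitely often. I will argue by contradiction. Suppose some index set $I\ne\emptyset$ of components is pulled only finitely often. Then after some time $l_0$ the curves $\gamma_i(l)$, $i\in I$, are fixed as actual representatives. This is the place to use the \emph{non-blocking first intersections} property. Let $y_i(l)$ be the first intersection of $L_x$ with $\gamma_i(l)$. Each pull of a component $\gamma_j$ with $j\ne i$ removes that component's intersections on $[x,y_i(l)]$ and creates no new intersection points on $[x,y_i(l)]$, by properties (1)--(3) and the non-blocking choice. So the count $N_i(l):=|[x,y_i(l)]\cap\Gamma(l)|$ drops by exactly one at each pull not involving $\gamma_i$, while $y_i(l)$ stays fixed for $l\ge l_0$. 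Since $N_i(l)\ge 1$ is a nonnegative integer, $\gamma_i$ must itself be the pulled component after at most $N_i(l_0)$ further steps. This contradicts $i\in I$.

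For the argument to go through, I must check two things. First, the process never terminates, since each $\gamma_i(l)$ is pre-essential and minimality of $\Fcal$ guarantees that $L_x$ meets it (Case B of Definition~\ref{defn:PulleyForgetSegments} is excluded). Second, I must make precise that pulling $\gamma_j$ only deletes points of $\gamma_j$ on the initial segment of $L_x$ before $y_i$, which is properties (1)--(3) plus the non-blocking assumption. I expect the main obstacle to be this bookkeeping of intersection counts along $[x,y_i]$. With it in hand, each $p_i\circ p$ is an infinite directed path with layovers, which completes the proof.
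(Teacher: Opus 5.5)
Your proposal is correct and follows the paper's proof. The key step is the same: by the non-blocking first intersections property, the number of points of $\Gamma(l)$ on $[x,y_i]$ drops by one at each pull, so $\gamma_i$ is pulled within $N$ steps. Minimality of $\Fcal$ ensures $L_x$ meets every component, so this repeats forever. The paper argues directly rather than by contradiction and treats the adjacency and orientation statements as immediate, but the argument is the same.
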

\begin{proof}
    The only non-trivial statement is that $\pi_i\circ p$ is unbounded in $X_{\zeta_i}$. The canonical simple pull of a multicurve $\Gamma'$ only changes the component of $\Gamma'$ that contains the first intersection of $\Gamma'$ with $L_x$. By the minimality of $\Fcal$, $L_x$ intersects every simple closed curve that is transverse to $\Fcal$ infinitely often. By the (Non-blocking first intersection) property, if the first intersection $z$ of $\gamma_i(l)$ with $L_x$ is the $N$-th intersection of $\Gamma(l)$ with $L_x$, then $z$ is the first intersection of $\Gamma(l+N-1)$ with $L_x$. Hence $\pi_i \circ p$ is unbounded in $X_{\zeta_i}$.
\end{proof}

Two infinite edge-paths $p_1$ and $p_2$ in $X_Z$ are {\it asymptotic} each other if they are uniformly distant, i.e., there exists $D>0$ such that for any integer $n\ge 0$, we have
\[
    d_{(X_Z,L^1)}(p_1(n),p_2(n))\le D.
\]

\begin{thm}\label{thm:MulticurveReduction}
    Let $\Gamma=\{\gamma_1,\gamma_2,\dots,\gamma_k\}$ be a multicurve of $S_{g,n+1}$ that is pre-essential, pre-non-parallel, and pre-minimal. Let $i_*[\Gamma]=[Z]$ and $X_Z$ be the complex of pre-homotopic multicurves. Suppose that $\Fcal$ is an $x$-generic minimal measured foliation on $S_{g,n+1}$.
    \begin{enumerate}
        \item Every simple pull of $[\Gamma]$ strictly increases or decreases the intersection number of some $[\gamma]\in [\Gamma]$ with $\Fcal$. For any representatives $\Fcal$ and $\Gamma$ with $\Fcal \pitchfork \Gamma$, the canonical simple pull of $\Gamma$ strictly decreases the intersection number with $\Fcal$.
        \item For any $[\Gamma]\in \Vertex(X_Z)$ and any component $\gamma\in \Gamma$, there is at most one simple pull that decreases the intersection number with $\Fcal$ so that $\gamma$ is the to-be-pulled component.
    \end{enumerate}
    Let us orient every edge of $X_Z$ toward the direction that the intersection number with $\Fcal$ decreases.
    \begin{enumerate}[resume]
        \item $\{p_i\from X_Z\to X_{\zeta_i}\}_{i=1,2,\dots,k}$ is an oriented tree coordinate system of $X_Z$.
        \item The orientation on $X_Z$ defines a point $\zeta$ in the Roller boundary $\partial_R X_Z$, and thus $X_Z$ has mono-outgoing cubes.
        \item Every directed edge-path in $X_Z$ is a geodesic edge-path. Moreover, any two infinite directed edge-paths are asymptotic to each other.
    \end{enumerate}
    Let $([\Gamma(l)])_{l\ge0}$ be an infinite directed edge-path of $X_Z$, where $\Gamma(l)=\{\gamma_1(l),\gamma_2(l),\dots,\gamma_k(l)\}$, with $\gamma_i(l)$ pre-homotopic to $\gamma_i\in \Gamma$.
    \begin{enumerate}[resume]
        \item For any $i\in \{1,2,\dots,k\}$, the sequence $([\gamma_i(l)])_{l\ge0}$ defines an unbounded directed edge-path with layovers in $X_{\zeta_i}$. Hence, we have
        \[
            \pair{\Fcal}{[\gamma_i(l)]}\to \varphi(\Fcal)([\zeta_i]),
        \]
        as $l \to \infty$.
        \item There exist $[\Gamma'] \in \Vertex(X_Z)$ and a sequence of point-pushing mapping classes $([\psi_l])_{l\ge1}$ such that for any $\gamma' \in \Gamma'$
        \[
            \pair{\Fcal}{[\psi_l] \cdot [\gamma']} \to \varphi(\Fcal)(i_*[\gamma'])
        \]
        as $l\to \infty$.
    \end{enumerate}
\end{thm}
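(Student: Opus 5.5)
The plan is to reduce everything to the single-curve results of Theorem \ref{thm:CurveReduction} through the projections $p_i\from X_Z\to X_{\zeta_i}$, and then transfer structure via the isomorphism $\Omega$ of Theorem \ref{thm:CubeCplxIso}.

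\emph{Statements (1) and (2).} An edge of $X_Z$ changes exactly one component $[\gamma_j]$ to a simple pull $[\gamma_j']$, and $p_j$ sends this edge to an edge of the tree $X_{\zeta_j}$. Hence (1) follows from Theorem \ref{thm:CurveReduction}-(1): the total intersection number changes by $\pair{\Fcal}{[\gamma_j']}-\pair{\Fcal}{[\gamma_j]}\neq 0$. The decrease under the canonical simple pull of a representative $\Gamma$ is item (6) of the local-move list. For (2), a decreasing simple pull of $[\Gamma]$ with to-be-pulled component $\gamma$ must change $[\gamma]$ to its unique decreasing neighbour in $X_{\zeta}$ (Theorem \ref{thm:CurveReduction}-(2)). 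The other components are unchanged, so the resulting vertex is determined.

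\emph{Statement (3).} By Theorem \ref{thm:CubeCplxIso}, $\{p_i\}$ corresponds under $\Omega$ to $\{\mathscr{p}_i\}$, which is a tree coordinate system by Proposition \ref{prop:Partition_TreeSystem}. Orient each $X_{\zeta_i}$ by decreasing $\pair{\Fcal}{-}$. Since the change along an edge of $X_Z$ equals the change of the single moved component, each $p_i$ is orientation-preserving. Moreover, this edge orientation of $X_Z$ is compatible with cubes: along parallel edges of a cube the same component undergoes the same pull (Lemma \ref{lem:cubes}), so parallel edges map to the same oriented edge of $X_{\zeta_i}$. It is therefore an edge orientation.

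\emph{Statement (4).} This is the step I expect to be the main obstacle. First, Lemma \ref{lem:Seq of Cannon Simple Pulls} produces an infinite directed edge-path $p$ whose projections $p_i\circ p$ are all unbounded. Each $X_{\zeta_i}$ has a unique outgoing edge at every vertex (Theorem \ref{thm:CurveReduction}-(2)), so each $X_{\zeta_i}$ has mono-outgoing cubes which are edges. The difficulty is that Lemma \ref{lem:OutgoingCube} presupposes a CAT(0) oriented complex and uses Theorem \ref{thm:Roller-to-Gromov}, which needs the orientation to come from a Roller boundary point; the argument must therefore be ordered carefully. I would proceed as follows. First, show directly that the wall orientation is consistent: if two disjoint hyperplanes $H_1,H_2$ had disjoint chosen half-spaces, take a vertex $v$ between them with no hyperplane separating. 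The dual edges at $v$ are both outgoing, so they lie in different trees or in the same tree. If they lie in the same tree, this contradicts uniqueness of outgoing edges in $X_{\zeta_i}$, since $p_i$ is monotone and orientation-preserving. If they lie in different trees $i\neq j$, run the argument of Lemma \ref{lem:OutgoingCube} using $p$ (unboundedness of $p_i\circ p$ and $p_j\circ p$) together with no-backtracking (Lemma \ref{lem:CubicalMaptoTreeMonotone}), with asymptoticity replaced by the fact that in a tree, directed paths from any vertex coalesce with $p_i\circ p$. Having established consistency, Proposition \ref{prop:EdgeOri_WallOri} gives a Roller-type orientation. The infinite path $p$ violates the descending chain condition (DCC), so the orientation lies in $\partial_R X_Z$. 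Mono-outgoing cubes then follow from Proposition \ref{prop:EdgeOri_WallOri}.

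\emph{Statements (5), (6) and (7).} For (5), $X_Z\cong\mathscr{X}_Z$ is hyperbolic (Theorem \ref{thm:HypCubeCplx}). Proposition \ref{prop:DirectedPathGeodesic} and Theorem \ref{thm:Roller-to-Gromov}-(1) then give geodesicity and asymptoticity. For (6), Lemma \ref{lem:OutgoingCube} (now applicable) shows every infinite directed path has unbounded projections. These projections are directed paths with layovers in $X_{\zeta_i}$, so Theorem \ref{thm:CurveReduction}-(4) gives convergence to $\varphi(\Fcal)([\zeta_i])$. For (7), $\pi_1(S_{g,n},x)$ acts on $X_Z$ cocompactly, with point-pushing acting by the deck action via $\Omega$. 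Hence infinitely many vertices $[\Gamma(l_m)]$ of a directed path lie in one orbit $[\psi_m]\cdot[\Gamma']$. Applying (6) along the subsequence gives $\pair{\Fcal}{[\psi_m]\cdot[\gamma']}\to\varphi(\Fcal)(i_*[\gamma'])$.
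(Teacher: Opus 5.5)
Your outline matches the paper's proof for (1)--(3) and (5)--(7). The only differences there are minor: the paper gets geodesicity from Proposition \ref{prop:TreeCoordiSystem}-(4) rather than Proposition \ref{prop:DirectedPathGeodesic}, and it leaves implicit your check that the orientation is parallel on cubes.

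The genuine difference is in (4). The paper just cites Lemma \ref{lem:OutgoingCube}, Lemma \ref{lem:Seq of Cannon Simple Pulls} and Proposition \ref{prop:EdgeOri_WallOri}. As you observed, the proof of Lemma \ref{lem:OutgoingCube} invokes Theorem \ref{thm:Roller-to-Gromov}-(1) and the Roller-boundary correspondence of Proposition \ref{prop:EdgeOri_WallOri}. Both of these presuppose a consistent orientation, i.e.\ exactly what (4) asserts. Your ordering removes that circularity: consistency first, then failure of DCC along $p$, then mono-outgoing cubes, and Lemma \ref{lem:OutgoingCube} only afterwards for (6).

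Your different-trees case should be tightened, though. The argument of Lemma \ref{lem:OutgoingCube} needs a directed path \emph{in $X_Z$} starting at the head $w$ of $e_1$ whose $p_j$-projection moves. Coalescence of rays inside $X_{\zeta_j}$ does not supply such a path. There are two easy fixes.

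\begin{itemize}
\item Apply Lemma \ref{lem:Seq of Cannon Simple Pulls} at $w$ itself; it holds at every vertex of $X_Z$.
\item Make your coalescence remark precise. Let $H=p_i^{-1}(m)$, with $m$ the midpoint of an edge $a\to b$ of $X_{\zeta_i}$. Its chosen half-space is $p_i^{-1}(T_b)$, where $T_b$ is the component of $X_{\zeta_i}\setminus\{m\}$ containing $b$. The directed ray from $a$ enters $T_b$ and never leaves. The ray from $p_i(p(0))$ coalesces with it, because every vertex of $X_{\zeta_i}$ has exactly one outgoing edge. Since $p_i\circ p$ is unbounded, it runs along that entire ray.
\end{itemize}

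With the second fix, a tail of $p$ lies in every chosen half-space, so any two chosen half-spaces intersect. This gives consistency at once, with no need to reduce to a vertex $v$ or to split into cases.
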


\begin{proof}[Proof of Theorem \ref{thm:MulticurveReduction}]
The statements (1) and (2) follow from the definition and Theorem \ref{thm:CurveReduction}.

For the oriented cube complex $X_Z$, the statement (3) is a corollary of Theorem \ref{thm:CubeCplxIso}. Then, (4) follows from Lemma \ref{lem:OutgoingCube}, Lemma \ref{lem:Seq of Cannon Simple Pulls}, and Proposition \ref{prop:EdgeOri_WallOri}.
Then, the first statement of (5) follows from
Proposition \ref{prop:TreeCoordiSystem}-(4). The second statement of (5) follows from Theorem \ref{thm:HypCubeCplx} and
Theorem \ref{thm:Roller-to-Gromov}.

The statement (6) follows from Lemma \ref{lem:Seq of Cannon Simple Pulls}, Lemma \ref{lem:OutgoingCube}, and Theorem \ref{thm:CurveReduction}-(4).

Since $X_Z/\pi_1(S_{g,n},x)$ is compact, the sequence $([\Gamma(l)])_{l\ge0}$ has an infinite intersection with an $\pi_1(S_{g,n},x)$-orbit. By choosing one $[\Gamma']$ from the orbit, we obtain (7).
\end{proof}

\begin{proof}[Proof of Theorem \ref{thm:CurveReduction}-(8)] Fix $[\zeta_1],[\zeta_2]\in C(S_{g,n})$ with $[\zeta_1]\neq[\zeta_2]$. Suppose $\gamma_1$ and $\gamma_2$ are simple closed curves in $S_{g,n+1}$ such that $\Gamma=\{\gamma_1,\gamma_2\}$ is pre-minimal (Lemma~\ref{lem:Make_pre-minimal}), $i_*[\gamma_1]=[\zeta_1]$, and $i_*[\gamma_2]=[\zeta_2]$. Denote by $\theta_{\Fcal,\gamma_1}$ and $\theta_{\Fcal,\gamma_2}$ the points on the boundary of $\mathbb{H}$ defined in Theorem \ref{thm:CurveReduction}-(7), by using the sequences of canonical simple pulls in $X_{\zeta_1}$ and $X_{\zeta_2}$. Let $Z=\{\zeta_1,\zeta_2\}$.

Suppose $\theta_{\Fcal,\gamma_1} \neq \theta_{\Fcal,\gamma_2}$. Denote by $p$ the directed edge-path in $X_Z$ along the sequence $([\Gamma(l)])_{l\ge0}$ with $\Gamma(0)=\Gamma$ in $X_Z$ defined in Lemma \ref{lem:Seq of Cannon Simple Pulls}. By Theorem \ref{thm:MulticurveReduction}-(6), for each $i\in \{1,2\}$, $p_i \circ p$ is an unbounded edge-path with layover. Denote by $L_i$ the image of $p_i \circ p$, which is homeomorphic to $[0,\infty)$.

Denote by $H_l$ the dual hyperplane of the edge between $[\Gamma(l)]$ and $[\Gamma(l+1)]$. Recall that the hyperplanes of $X_Z$ are in a bijective correspondence with the walls in $\mathcal{W}_Z$. Denote by $w_l$ the wall corresponding to $H_l$. The correspondence also preserves the intersection relationships for hyperplanes and walls, and for their half-spaces: $H_i \cap H_j \neq \emptyset$ if and only if $w_i \cap w_j\neq \emptyset$, and $\overrightarrow{H_i} \cap \overrightarrow{H_j} \neq \emptyset$ if and only if $\overrightarrow{w_i} \cap \overrightarrow{w_j}\neq \emptyset$. Let $\overrightarrow{H_l}$ be the half-space of $H_l$ that contains $[\Gamma(l+1)]$.

Define $I^1=\{l^1_1,l^1_2,\dots~|~l^1_i<l^1_{i+1}\}$ (resp.\@ $I^2=\{l^2_1,l^2_2,\dots~|~l^2_i<l^2_{i+1}\}$) as the set of non-negative integers $l^1_i$ for which the simple pull from $[\Gamma(l^1_i)]$ to $[\Gamma(l^1_i+1)]$ changes the curve pre-homotopic to $\gamma_1$ (resp.\@ $\gamma_2$). By the last sentence of the statement of Lemma \ref{lem:OutgoingCube}, both $I^1$ and $I^2$ are infinite sets.

Denote by $\overrightarrow{w_{l^1_i}}$ (resp.\@ $\overrightarrow{w_{l^2_i}}$) the half-space of $w_{l^1_i}$ (resp.\@ $w_{l^2_i}$) whose closure in $\widetilde{S_{g,n}}\cup \partial \widetilde{S_{g,n}}$ contains $\theta_{\Fcal,1}$ (resp.\@ $\theta_{\Fcal,2}$). By Theorem \ref{thm:CurveReduction}-(7) and $\theta_{\Fcal,\gamma_1} \neq \theta_{\Fcal,\gamma_2}$, $\overrightarrow{w_{l^1_i}} \cap \overrightarrow{w_{l^2_j}}=\emptyset$ for any sufficiently large $i,j>0$.

For $H_l=$ $w_{l^1_i}$ or $w_{l^2_i}$, let $\overrightarrow{H_l}$ be the half-space of $H_l$ that corresponds to $\overrightarrow{w_{l^1_i}}$ or $\overrightarrow{w_{l^2_i}}$. We claim that for any $l\neq l'\ge 0$, we have $\overrightarrow{H_l}\cap \overrightarrow{H_{l'}}\neq \emptyset$. Consider the oriented tree coordinate system $\{p_i\from X_Z \to X_{\zeta_i}\}_{i=1,2}$. For $v:=[\Gamma(0)]\in \Vertex(X_Z)$ and $i\in \{1,2\}$, let $L_i=\langle p_i(v)\rangle$ be the sub-directed graph of $X_{\zeta_i}$ generated by $p_i(v)$. By Theorem \ref{thm:CurveReduction}-(2), we can identify $L_i$ with $[0,\infty)$ such that integers are vertices. Then, for any $i\in\{1,2\}$ and $j\ge1$, we have $p_i(\Gamma(l^i_{j}))=j$ and $p_i(\Gamma(l^i_{j}+1))=j+1$. The half-spaces $\overrightarrow{H_l}$ are of the form $p_i^{-1}([n+0.5,\infty))$ for $i\in \{1,2\}$ and $n\in \mathbb{Z}_{\ge 0}$. The fact that $|I^1|=|I^2|=\infty$ implies $p_1^{-1}([r,\infty)) \cap p_2^{-1}([s,\infty))\neq\emptyset$ for any positive real numbers $r,s>0$.

Therefore, we obtain a contradiction because the half-spaces of walls are disjoint, whereas the half-spaces of hyperplanes intersect.
\end{proof}

\section{Construction of forgetting maps for measured foliations}\label{sec:ForgetMapMF}
In this section, we use theorems in Section \ref{Sec:SimplePulls} to show that for an $x$-generic measured foliation $\Fcal\in \MF(S_{g,n+1})$, there exists $\Phi(\Fcal)\in \MF(S_{g,n})$ such that $\pair{\Phi(\Fcal)}{-}=\varphi(\Fcal)$ as functionals $C(S_{g,n})\to \mathbb{R}$.

\subsection{Non-minimal measured foliations}\label{subsec:NonMinimal} In this subsection, we briefly review some properties of measured foliations.

\subsection*{Measured foliations on surface with boundary} Let us first discuss measured foliations on surfaces with non-empty boundary. Let $S_{g,n,b}$ denote surfaces with $n$ punctures and $b$ boundary circles. We allow marked points to be on boundary components. In this article, we mainly care about measured foliations $\Fcal$ on $S_{g,n,b}$ satisfying the following boundary conditions \cite[Chapter 11.2.2]{FM_PrimerMCG}:
\begin{enumerate}
    \item Every boundary component of $S_{g,n,b}$ contains at least one singular point.
    \item Every boundary component is a union of leaves joining singular points. 
\end{enumerate}
With this boundary condition, by collapsing each boundary component to a puncture, we obtain a measured foliation $\overline{\Fcal}$ on a closed surface $S_{g,n+b}$ with $n+b$ punctures.

\subsection*{Decomposition of measured foliations} Suppose $\Fcal$ is a (possibly non-minimal) measured foliation on a surface $S$. We can think of $\Fcal$ as a union $\Fcal_1\cup \Fcal_2\cup \dots \cup \Fcal_k$ of partial measured foliations $\Fcal_i$ of $S$ such that each $\Fcal_i$ is supported in an immersed subsurface $S_i$ of $S$. More precisely, there exists a compact surface $S_i$ and a continuous map $\iota_i\from S_i\to S$ such that $\iota_i$ is an embedding in ${\rm int}(S_i)$. The measured foliation $\Fcal_i$ lifts to a measured foliation, which we also denote by $\Fcal_i$ by abusing notation, on $S_i$. The interiors ${\rm int}(S_i)$'s are disjointly embedded in $S$, but
\[
    \bigcup_{i} \iota_i(\partial S_i)= \bigcup\{{\rm leaves~joining~singular~points~of~}\Fcal\}.
\]
Moreover, each $\Fcal_i$ satisfies one of the following: (a) $\Fcal_i$ is a minimal measured foliation of $S_i$, or (b) $(S_i, \Fcal_i)$ is an annulus foliated by circles.
We refer to $\Fcal_i$ in case (a) as a {\it minimal component} and in case (b) as an {\it annulus component} of $\Fcal$. We call $S_i$ the support of $\Fcal_i$, denoted also by $\supp(\Fcal_i)$. It is worth noting that it may happen that $k=1$ but ${\rm int}(S_1) \subsetneq S$---this occurs, for instance, when $\Fcal$ is not minimal but has a dense leaf.

\begin{lem}\label{lem:minimal component of x}
    Suppose that $\Fcal$ is an $x$-generic measured foliation on $S_{g,n+1}$. Let $\Fcal=\bigcup_{i=1}^k \Fcal_i$ be the decomposition of $\Fcal$ into minimal and annulus components. Then, there exists a minimal component $\Fcal_j$ such that $x$ lies in ${\rm int}(S_j)$, where $S_j$ is the support of $\Fcal_j$.
\end{lem}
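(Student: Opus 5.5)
The plan is to locate the infinite leaf $L_x$ emanating from $x$ and show that it must lie, apart from its endpoint, in the interior of a minimal component. By $x$-genericity, $\deg_\Fcal(x)=1$ and $L_x$ is an immersion of $[0,\infty)$ starting at $x$. In particular $L_x$ is not a saddle connection: it never reaches another singular point. I will exploit this in two ways.

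First, I rule out the possibility that $x$ lies on some $\iota_i(\partial S_i)$. The set $\bigcup_i \iota_i(\partial S_i)$ is exactly the union of leaves joining singular points, i.e.\ the union of saddle connections. Since $x$ is a $1$-prong singularity, the only leaf having $x$ as an endpoint is $L_x$, and $L_x$ is not compact. So $x$ is not an endpoint of any saddle connection, and hence $x\notin \bigcup_i \iota_i(\partial S_i)$. Because the interiors ${\rm int}(S_i)$ together with these boundary leaves cover $S_{g,n+1}$ (the decomposition exhausts $\Fcal$ up to measure-zero leaves), $x$ must lie in ${\rm int}(S_j)$ for some $j$. The one point needing care is that, in this decomposition, every point not on a saddle connection lies in the interior of a support. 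I would justify this from the standard construction (cf.\ \cite[Expos\'e 5]{FLP_ThurstonSurface}): one cuts along all saddle connections, and each complementary region is either a cylinder of closed leaves or a minimal component.

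Second, I show that $S_j$ is not an annulus component. If $(S_j,\Fcal_j)$ were an annulus foliated by circles, then every leaf through an interior point would be a closed circle. The leaf through $x$ is instead the half-infinite $L_x$ with a $1$-prong at $x$, and an annulus foliated by circles contains no singular points in its interior. This is a contradiction, so $\Fcal_j$ is a minimal component.

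The main obstacle is the covering claim in the first step: the decomposition as stated in the paper is somewhat informal, and the remark that $k=1$ may occur with ${\rm int}(S_1)\subsetneq S$ suggests the interiors need not cover everything off the critical graph. I would handle this by noting that any point outside $\bigcup_i{\rm int}(S_i)$ lies on the closure of the union of compact singular leaves, and that the leaf through such a point is therefore itself one of these saddle connections. For $x$, the leaf in question is $L_x$, which is non-compact, so this case cannot occur.
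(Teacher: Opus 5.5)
Your proposal is correct and follows essentially the same route as the paper, whose proof is the one-line observation that if $x$ lay on some $\iota_i(\partial S_i)$, then it would lie on a leaf joining singular points, contradicting $x$-genericity. Your additional checks---that the interiors of the supports together with the saddle connections cover the surface, and that $x$, being a $1$-prong singularity, cannot lie in the interior of an annulus component foliated by circles---are points the paper leaves implicit, and you handle them correctly.
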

\begin{proof}
    If $x$ is in $\iota_i(\partial S_i)$, then $x$ is in a leaf joining singular points. This contradicts the $x$-genericity of $\Fcal$.
\end{proof}

\subsection{Non-generic measured foliations}\label{subsec:Non-generic}

Suppose that $\Fcal$ is an $x$-non-generic measured foliation on $S_{g,n+1}$; see Definition~\ref{defn:genericMF}. Then we define $\Phi(\Fcal)\in \MF(S_{g,n})$ in a natural way. There are three cases as illustrated in Figure \ref{fig:Forget_nondeg}.

\begin{figure}[h!]
    \centering
    \begin{subfigure}[t]{0.20\textwidth}
        \centering
        \def\svgwidth{\textwidth}
        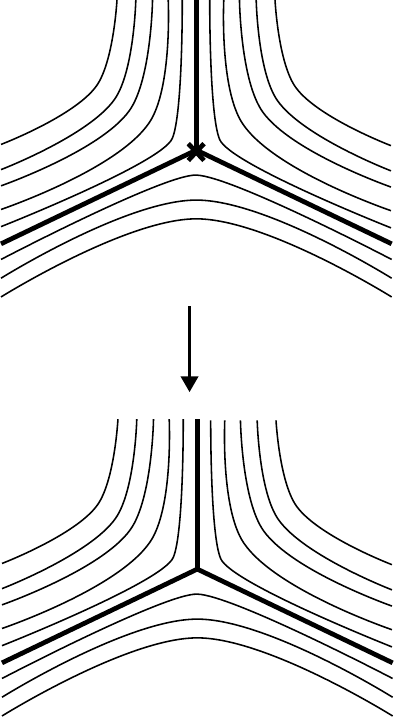
        \caption{}
    \end{subfigure}\hspace{20pt}
    \begin{subfigure}[t]{0.20\textwidth}
        \centering
        \def\svgwidth{\textwidth}
        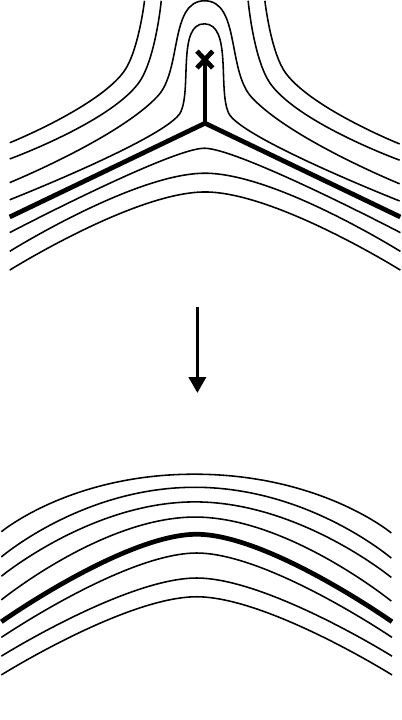
        \caption{}
    \end{subfigure}\hspace{20pt}
    \begin{subfigure}[t]{0.20\textwidth}
        \centering
        \def\svgwidth{\textwidth}
        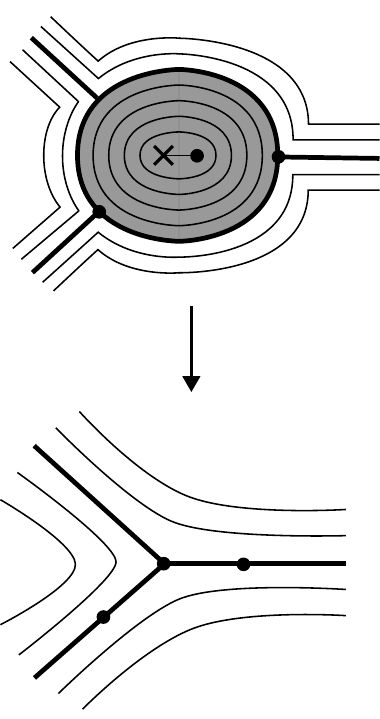
        \caption{}
    \end{subfigure}
    \caption{(A) and (B) are of non-annular type, and (C) is of annular type.}
    \label{fig:Forget_nondeg}
\end{figure}
\begin{enumerate}
    \item[(A)] $\deg_\Fcal(x)>1$.\indent  We define $\Phi(\Fcal)$ by literally removing $x$ from the set of marked points of $\Fcal$.
\end{enumerate}
Suppose that $\deg_\Fcal(x)=1$. Since $\Fcal$ is $x$-non-generic, the leaf $L_x$ is a saddle connection $[x,y]$, where $y$ is other singular point of $\Fcal$. 
\begin{enumerate}
    \item[(B)] $\deg_\Fcal(x)=1<\deg_\Fcal(y)$.\indent We define $\Phi(\Fcal)$ by collapsing $[x,y]$ to a point $y$ and forget $x$ from $\Fcal$.
    \item[(C)] $\deg_\Fcal(x)=\deg_\Fcal(y)=1$.\indent The leaf $[x,y]$ is surrounded by parallel circular leaves. Let $U$ be the largest saturated neighborhood of $[x,y]$ foliated by these circular leaves together with $[x,y]$. Then $\partial U$ must contain at least one singular point of $\Fcal$. Note that a singular point of $\Fcal$ is not necessarily a marked point. Let $k$ denote the number of singular points on $\partial U$. We then forget $x$ and collapse $\overline{U}$ to a starlike tree with $k$ ends, whose central vertex corresponds to $y$ and whose endpoints correspond to the singular points of $U$. Figure~\ref{fig:Forget_nondeg}~(C) illustrates the case where $\partial U$ contains three singular points, two of which are marked.
\end{enumerate}
We refer to Cases (A) and (B) as {\it non-annular type}, and to Case (C) as {\it annular type}.

\subsection*{Properties of $x$-non-generic measured foliations relative to simple closed curves}
We suppose that $\Fcal\in \MF(S_{g,n+1})$ is $x$-non-generic and $\pair{\Fcal}{[\gamma]} > 0$, where $[\gamma] \in C(S_{g,n+1})$.

We say that $\Fcal$ is \emph{immediately} $x$-non-generic \emph{with respect to (wrt)} $\gamma$ if $\Fcal$ and $\gamma$ can be represented transversely so that (i) $\deg_\Fcal(x) \ge 2$ if $\Fcal$ is of non-annular type, or (ii) $[x,y]\cap \gamma=\emptyset$ if $\Fcal$ is of annular type and $[x,y]$ is the saddle connection. (Here, being transverse should imply that $\gamma$ does not pass through any singular points of $\Fcal$.) Otherwise, we say that $\Fcal$ is \emph{eventually} $x$-non-generic \emph{wrt} $\gamma$.

Suppose that $\Fcal$ is eventually $x$-non-generic wrt $\gamma$. By following the same argument as in the $x$-generic case, one can show that there exists a unique simple pull $[\gamma']$ of $[\gamma]$ such that $\pair{\Fcal}{[\gamma]}>\pair{\Fcal}{[\gamma']}$. We call $[\gamma']$ the {\it canonical simple pull} of $[\gamma]$.

Recall the pulley $W_{\Fcal,\gamma}$ and the forgetting interval $I_{\Fcal,\gamma}$ of $\Fcal$ wrt $\gamma$ (Definition \ref{defn:PulleyForgetSegments}).

\begin{defn}[Good/bad normalized representative]
    Suppose that $\Fcal$ is an eventually $x$-non-generic measured foliation type wrt $[\gamma]\in C(S_{g,n+1})$. A representative $\Fcal$ is said to be {\it normalized} wrt $\gamma$ if $\Fcal\pitchfork \gamma$, $\deg_\Fcal(z)=1$ for all marked points $z$, $\deg_\Fcal(y)=3$ for all other singular points $y$ of $\Fcal$, and $\partial W_{\Fcal,\gamma}$ contains only one singular point.
    
    When $\Fcal$ is normalized, the leaf $L_x$ of $x$ is a saddle connection $[x,y]$ for some singular point $y$. We say that $\Fcal$ is a {\it bad} normalized representative if $y\in\partial W_{\Fcal,\gamma}$ so that near $y$, $L_x$ approaches $y$ along the prong not contained in $\partial W_{\Fcal,\gamma}$. Otherwise, $\Fcal$ is a {\it good} normalized representative. If $\Fcal$ is of annular type, then every normalized representative $\Fcal$ is good.
\end{defn}

If $\Fcal$ is not normalized, then we can normalize it via Whitehead moves along $\partial W_{\Fcal,\gamma}$. This normalizing process preserves the number $|\gamma \cap L_x|$.

\begin{defn}[Clean long pulley]\label{defn:CleanLongPulley}
    Suppose that $\Fcal$ is a normalized representative wrt $\gamma$. We say that $\Fcal$ has a {\it clean long pulley} with respect to $\gamma$ if the pulley $W_{\Fcal,\gamma}$ can be extended along the initial segment $L'_x$ of $L_x$ that contains $\gamma \cap L_x$. In other words, the forgetting segment $I_{\Fcal,\gamma}$ can be slid along $L_x$ over the intersection $\gamma \cap L_x$. See Figure~\ref{fig:CleanLongPulleys}.
\end{defn}
\begin{figure}[h!]
	\centering
    \begin{subfigure}[b]{0.4\textwidth}
    \centering
    \def\svgwidth{\textwidth}
    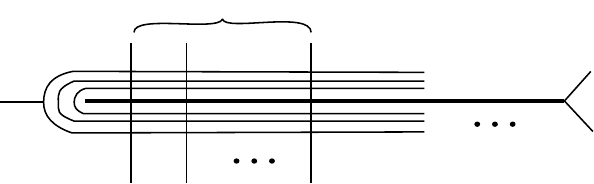
    \caption{$L_x\neq [x,y]$}
    \end{subfigure}
    \hspace{20pt}
    \begin{subfigure}[b]{0.4\textwidth}
    \centering
    \def\svgwidth{\textwidth} 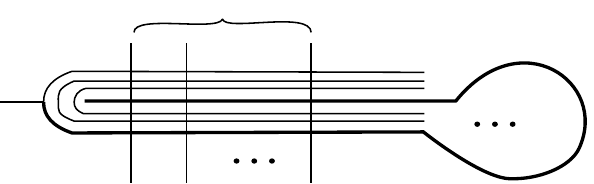
    \caption{$L_x=[x,y]$}
    \end{subfigure}
	\caption{Clean long pulleys}
	\label{fig:CleanLongPulleys}
\end{figure}

Denote by $n(\mathcal{F}, [\gamma])$ the number of canonical simple pulls of $[\gamma]$ that can be performed wrt $\mathcal{F}$. That is, $\mathcal{F}$ is immediately $x$-non-generic wrt the $n(\mathcal{F}, [\gamma])^{th}$-canonical simple pull $\gamma_{n(\mathcal{F}, [\gamma])}$ of $[\gamma]$ wrt $\mathcal{F}$. We define $n(\mathcal{F}, [\gamma])=0$ if $\Fcal$ is immediately $x$-non-generic wrt $\gamma$ and $n(\mathcal{F}, [\gamma])=\infty$ if $\Fcal$ is $x$-generic. In Lemma \ref{lem:non-gen_FiniteCanSimPulls}, we prove that $n(\mathcal{F}, [\gamma])<\infty$ if $\Fcal$ is $x$-non-generic.

\begin{lem}\label{lem:non-gen_FiniteCanSimPulls}
    Suppose that $\Fcal$ is eventually  $x$-non-generic wrt $\gamma$ and $\Fcal$ is a normalized representative wrt $\gamma$. Denote by $L_x$ the leaf of $x$. Then
    \[
        1\le |L_x \cap \gamma|\le 2\, n(\Fcal,\gamma)<\infty.
    \]
    Moreover, suppose that $\Fcal$ is a good normalized representative that has a clean long pulley wrt $\gamma$. If $L_x\neq[x,y]$, then
    \[
        n(\Fcal,\gamma)=|L_x \cap \gamma|,
    \]
    whereas if $L_x=[x,y]$, then
    \[
        n(\Fcal,\gamma)=\frac{1}{2}|L_x \cap \gamma|.
    \] 
\end{lem}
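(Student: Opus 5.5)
We track how the finite segment $L_x$ (from $x$ to the singular point where it terminates) meets the successive canonical pulls $\gamma=\gamma_0,\gamma_1,\dots$. The one quantity that must go down is $|L_x\cap\gamma_j|$. The key local computation is the same as in the $x$-generic case, namely the local move of Figure~\ref{fig:Localmove} with items (1)--(6) listed there. A canonical simple pull replaces the arc $I_{\Fcal,\gamma_j}$ by the other side of the pulley $W_{\Fcal,\gamma_j}$. Its effect on $L_x\cap\gamma_j$ is as follows. The first intersection point $y_j$ (the endpoint of the canonical cord) is eliminated. No new intersection points with $L_x$ are created, because the new arc runs close to $\partial W_{\Fcal,\gamma_j}$, which consists of leaf segments and one singular point. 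Points of $L_x\cap\gamma_j$ lying on the sides III, IV of the pulley are also removed.

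\emph{Step 1: the upper bound.} Using a normalized representative, I will show that each canonical pull decreases $|L_x\cap\gamma_j|$ by at least $1$ and by at most $2$. The loss is at least $1$ because $y_j$ disappears. It is at most $2$ because, in a normalized representative, $L_x$ can cross the forgetting segment region only through the pulley's two sides. Concretely, $L_x$ enters $W_{\Fcal,\gamma_j}$ once from $x$. It can re-enter at most once more, passing through the single trivalent singular point region on $\partial W$. This is exactly where the bad/good dichotomy matters.

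By definition, $n(\Fcal,\gamma)$ counts the pulls performed until $\Fcal$ becomes immediately $x$-non-generic. For a normalized representative, in the non-annular case this means $\deg(x)\ge 2$ cannot occur, so it means $L_x\cap\gamma_j=\emptyset$. In the annular case it means $[x,y]\cap\gamma_j=\emptyset$. Hence the process stops exactly when the intersection count hits $0$. Summing the per-step decrease gives $|L_x\cap\gamma|\le 2n(\Fcal,\gamma)$. Since $|L_x\cap\gamma|$ is finite (a compact saddle connection meets a transverse curve finitely often) and each step decreases it by at least $1$, we get $n(\Fcal,\gamma)\le|L_x\cap\gamma|<\infty$. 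Finally, $1\le|L_x\cap\gamma|$ follows because eventual non-genericity means the cord from $x$ hits $\gamma$ before reaching the other endpoint.

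\emph{Step 2: the exact counts.} Assume a good normalized representative with a clean long pulley. Then the pulley slides along $L'_x$ across every intersection point, so each successive canonical pull's forgetting segment is a translate of the previous one along $L_x$.

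If $L_x\neq[x,y]$, only one side of $L_x$ passes through the pulley. Each pull removes exactly the first intersection, giving $n=|L_x\cap\gamma|$.

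If $L_x=[x,y]$, I will argue via Figure~\ref{fig:CleanLongPulleys}(B). The pulley wraps around the saddle connection, and intersection points come in pairs symmetric about the cord, one on each of sides III and IV. Each pull removes one such pair, giving $n=\frac12|L_x\cap\gamma|$.

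\emph{Main obstacle.} The delicate part is the claim that at most two points are removed per step. This needs a careful case check of how $L_x$ can traverse the pulley relative to the unique trivalent singular point on $\partial W_{\Fcal,\gamma_j}$, including the bad normalized case. I would handle it by tracking the prongs at $y$ and showing that $L_x$ can meet III $\cup$ IV in at most one point beyond $y_j$. I would also need to verify that the normalizing Whitehead moves, which preserve $|\gamma\cap L_x|$, keep the representative normalized after each pull. For this I would re-normalize at each step, using the remark that normalization preserves the count.
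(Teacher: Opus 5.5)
\textbf{Gap in Step~1.} The step that fails is your claim that a canonical pull creates no new point of $L_x\cap\gamma_j$, so that $|L_x\cap\gamma_j|$ drops by at least one at every step. This is exactly where the bad case matters.

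The replacement arc for $I_{\Fcal,\gamma_j}$ must be transverse to $\Fcal$, so it cannot run along $\overline{\partial W_{\Fcal,\gamma_j}\setminus I_{\Fcal,\gamma_j}}$. It has to go around the trivalent point $y$ on the outside of the pulley and cross the prong of $y$ that is not contained in $\partial W_{\Fcal,\gamma_j}$. When $\Fcal$ is bad, that prong is the terminal segment of $L_x=[x,y]$. So the pull loses the first crossing but gains a new one near $y$, and the count does not change.

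The paper's proof records exactly this trichotomy for the change in $|L_x\cap\gamma_j|$:
\begin{itemize}
\item it drops by $0$ in the bad case;
\item it drops by $1$ in the good case with $L_x\neq[x,y]$;
\item it drops by $2$ in the good case with $L_x=[x,y]$.
\end{itemize}
In the last case $L_x$ reaches $y$ along a prong in $\partial W_{\Fcal,\gamma_j}$, so it crosses $\gamma_j$ at an endpoint of the forgetting segment, and that crossing is lost as well. This, not a symmetric pair on sides III and IV, is the second point in your Step~2.

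So you have the bad/good dichotomy controlling the wrong bound. The inequality $|L_x\cap\gamma|\le 2n(\Fcal,\gamma)$ survives, since the count never increases and vanishes when the process stops. Your proof of $n(\Fcal,\gamma)<\infty$ does not survive. The stronger bound $n(\Fcal,\gamma)\le|L_x\cap\gamma|$ that you derive along the way is also unjustified, because bad pulls add to $n(\Fcal,\gamma)$ without lowering the count.

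\textbf{Missing idea.} You need an independent reason why bad pulls cannot continue forever. The paper argues by contradiction. Suppose $\Fcal$ were bad with respect to $\gamma_i$ for all large $i$. Then the endpoint $y$ of $L_x$ would remain the only singular point on $\partial W_{\Fcal,\gamma_i}$, so all these pulleys would have the same width $w>0$. By Lemma~\ref{lem:IntNumbCanonPull}, each canonical pull lowers $\pair{\Fcal}{[\gamma_i]}$ by exactly $w$, so $\pair{\Fcal}{[\gamma_i]}$ would eventually become negative, which is impossible.

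Once finiteness is established this way, your Step~2 agrees with the paper. The clean long pulley keeps $\Fcal$ good, with $y$ the singular point on each pulley boundary, for all $i\le n(\Fcal,\gamma)-1$. Hence every pull lowers the count by exactly $1$ (when $L_x\neq[x,y]$) or exactly $2$ (when $L_x=[x,y]$).
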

\begin{proof}
    We define a representative $\gamma_1$ of the canonical simple pull of $[\gamma]$ so that $\Fcal \pitchfork \gamma_1$, with the modification from $\gamma$ to $\gamma_1$ occurring only in a small neighborhood of the pulley (see Figure~\ref{fig:Localmove}). Let $y$ denote the singular point on the boundary of the pulley.
    If $\Fcal$ is bad wrt $\gamma_1$, then
    \[
        |\gamma_1 \cap L_x| = |\gamma \cap L_x|.
    \]
    If $\Fcal$ is good wrt $\gamma_1$ and $L_x\neq[x,y]$, then
    \[
        |\gamma_1 \cap L_x| = |\gamma \cap L_x| - 1.
    \]
    If $\Fcal$ is good wrt $\gamma_1$ and $L_x=[x,y]$, then
    \[
        |\gamma_1 \cap L_x| = |\gamma \cap L_x| - 2.
    \]

    If $\Fcal$ is not normalized wrt $\gamma_1$, then we normalize it via Whitehead moves without changing $|\gamma_1 \cap L_x|$. We then iterate the canonical simple pull and let $\gamma_i$ denote the $i^{th}$-canonical simple pull of $\gamma$. It follows that $1\le |L_x \cap \gamma|\le 2n(\Fcal,\gamma)$.
    
    To show $n(\Fcal,\gamma)<\infty$, it suffices to prove that $\Fcal$ cannot be bad wrt $\gamma_i$ for all sufficiently large $i$.
    For contradiction, without loss of generality, suppose that $L_x$ is bad wrt $\gamma_i$ for all $i\ge1$.
    Since $\Fcal$ is bad with respect to $\gamma_2$, the endpoint $y$ of $L_x$ remains the only singular point on $\partial W_{\Fcal,\gamma_2}$. This implies that the widths of $\partial W_{\Fcal,\gamma_1}$ and $\partial W_{\Fcal,\gamma_2}$ are equal. By induction, it follows that the width of the pulley $W_{\Fcal,\gamma_i}$ is independent of $i$. In other words, $\langle \Fcal, \gamma_i \rangle - \langle \Fcal, \gamma_{i+1} \rangle > 0$ is a constant independent of $i$. Consequently, $\langle \Fcal, \gamma_i \rangle < 0$ for all sufficiently large $i$, which is a contradiction.

    Suppose $\mathcal{F}$ is a good normalized representative with a clean long pulley. See Figure~\ref{fig:CleanLongPulleys}. Then, for every $i \le n(\mathcal{F},\gamma)-1$, the foliation $\mathcal{F}$ is good wrt $\gamma_i$, and $y$ is the singular point on the boundary of the pulley wrt $\gamma_i$. Hence, if $L_x \neq [x,y]$, we have
    \[
        |\gamma_{i}\cap L_x| - |\gamma_{i+1} \cap L_x| = 1 \quad \text{for all}~ i \le n(\mathcal{F},\gamma)-1,
    \]
    which implies that $n(\mathcal{F},\gamma) = |L_x \cap \gamma|$. The case $L_x = [x,y]$ follows similarly.
\end{proof}

Suppose that $\Fcal$ is immediately $x$-non-generic wrt $\gamma$. Then $\Fcal$ and $\gamma$ can be transversely represented such that $\deg_{\Fcal}(x)\ge 2$.

In this case, we defined $\Phi(\Fcal)$ as the measured foliation obtained from $\Fcal$ by simply removing $x$ from the set of marked points. Hence, we have
\begin{equation}\label{eqn:non-gen}
    \pair{\Fcal}{[\gamma]}_{S_{g,n+1}}
    =\pair{\Fcal}{\gamma}_{S_{g,n+1}}
    =\pair{\Fcal}{\gamma}_{S_{g,n}}
    =\pair{\Phi(\Fcal)}{i_*[\gamma]}_{S_{g,n}},
\end{equation}
where $\pair{\Fcal}{\gamma}_{S_{g,n}}$ is computed on the puncture-filled surface $S_{g,n}$ using the same representatives of $\Fcal$ and $\gamma$, which remain transverse in $S_{g,n}$.
    
In this case, we can still define simple pulls along leaves emanating from $x$; there are $\deg_{\Fcal}(x)$ such leaves.  As in the $x$-generic case, these simple pulls are precisely the ones that preserve the intersection number with $\Fcal$, whereas all other simple pulls strictly increase the intersection number with $\Fcal$.

\begin{prop}\label{prop:PForget NonGeneric}
    Let $\Fcal$ be an $x$-non-generic measured foliation on $S_{g,n+1}$. Then, there exists $\Phi(\Fcal)\in \MF(S_{g,n})$ realizing the functional $\varphi(\Fcal)$, i.e., 
    $\pair{\Phi(\Fcal)}{[\zeta]}=\varphi(\Fcal)([\zeta])$ for all $[\zeta]\in C(S_{g,n})$. Moreover, for each pre-essential simple closed curve $\gamma$ on $S_{g,n+1}$, there exists $\gamma'\sim_{\pre}\gamma$ such that $\pair{\Fcal}{[\gamma']}_{S_{g,n+1}}=\pair{\Phi(\Fcal)}{i_*[\gamma]}_{S_{g,n}}$.
\end{prop}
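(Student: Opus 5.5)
We take $\Phi(\Fcal)$ to be the foliation built in Cases (A), (B), (C) above. We must show two inequalities: $\pair{\Phi(\Fcal)}{[\zeta]}\le \pair{\Fcal}{[\gamma]}$ for every $\gamma$ with $i_*[\gamma]=[\zeta]$, and that equality is attained by some $\gamma'\sim_{\pre}\gamma$. The second also gives the ``moreover'' statement.

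\emph{Lower bound.} Fix $\gamma$ with $i_*[\gamma]=[\zeta]$. By Lemma \ref{lem:QuasiTransverse}, choose a representative quasi-transverse to $\Fcal$, so that $\pair{\Fcal}{\gamma}=\pair{\Fcal}{[\gamma]}$. In Case (A), $\Phi(\Fcal)$ is the same foliation with $x$ unmarked, and we claim $\pair{\Phi(\Fcal)}{[\zeta]}_{S_{g,n}}\le \pair{\Fcal}{\gamma}_{S_{g,n}}=\pair{\Fcal}{[\gamma]}_{S_{g,n+1}}$. This holds because the geometric intersection number on $S_{g,n}$ is an infimum over a larger homotopy class. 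In Cases (B) and (C), $\Phi(\Fcal)$ arises from $\Fcal$ by a collapse map $c\colon S_{g,n+1}\to S_{g,n}$. In (B), $c$ collapses the saddle connection $[x,y]$; in (C), it collapses the annulus $\overline U$ to a star tree. The map $c$ is homotopic to the identity of $S_{g,n}$ after forgetting $x$. It does not increase transverse measure of transverse arcs, because leaves map to leaves and the collapsed region carries either zero measure or is crossed only along circular leaves. So $c(\gamma)$ is a curve in class $[\zeta]$ with $\Phi(\Fcal)$-measure at most $\pair{\Fcal}{\gamma}$, which gives $\pair{\Phi(\Fcal)}{[\zeta]}\le \pair{\Fcal}{[\gamma]}$.

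\emph{Attaining the infimum.} Start from any pre-essential $\gamma$ with $i_*[\gamma]=[\zeta]$. If $\Fcal$ is eventually $x$-non-generic wrt $\gamma$, apply canonical simple pulls. By Lemma \ref{lem:non-gen_FiniteCanSimPulls}, after $n(\Fcal,\gamma)<\infty$ steps we reach $\gamma'\sim_{\pre}\gamma$ with respect to which $\Fcal$ is immediately $x$-non-generic. Here Proposition \ref{prop:SPullCurvElemProp}(1) guarantees that each pull stays pre-homotopic. We now represent $\Fcal$ and $\gamma'$ transversely, with $\deg_\Fcal(x)\ge2$ in the non-annular cases, or with $[x,y]\cap\gamma'=\emptyset$ in the annular case. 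In the non-annular cases, \eqref{eqn:non-gen} gives $\pair{\Fcal}{[\gamma']}=\pair{\Phi(\Fcal)}{[\zeta]}$ directly. The only point to check is that in Case (B), after a Whitehead move merging $x$ into $y$, the foliation with $x$ forgotten is exactly the collapse of $[x,y]$. In the annular case, $\gamma'$ avoids $[x,y]$ and meets $U$ only in arcs crossing the circular leaves. Collapsing $\overline U$ then maps $\gamma'$ to a curve transverse to $\Phi(\Fcal)$, possibly only quasi-transverse through the star vertex, with the same measure. Lemma \ref{lem:QuasiTransverse} then gives equality.

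\emph{Main obstacle.} The delicate step is showing that the image curve on $S_{g,n}$ realizes $\pair{\Phi(\Fcal)}{[\zeta]}$, that is, that no bigon with $\Phi(\Fcal)$ appears after $x$ is forgotten. My plan is to argue by contradiction. Suppose such a bigon on $S_{g,n}$ exists. Then it must contain $x$ or the collapsed set, since otherwise it would already be a bigon on $S_{g,n+1}$. A bigon containing $x$ and bounded by an arc of $\gamma'$ and a leaf arc yields a simple cord from $x$ to $\gamma'$ along a prong of $x$, or along $[x,y]$ followed by the circular leaves. This cord meets $\gamma'$ before it meets any other part of the curve. That contradicts immediacy, because it produces a further canonical pull that strictly decreases the intersection number, which is the eventually-non-generic situation.
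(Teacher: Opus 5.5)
Your route is the paper's. The paper's proof of Proposition \ref{prop:PForget NonGeneric} consists of two steps: apply canonical simple pulls until $\Fcal$ is immediately $x$-non-generic with respect to $\gamma'$ (Lemma \ref{lem:non-gen_FiniteCanSimPulls}), then read off the equality from \eqref{eqn:non-gen}. The paper leaves the inequality $\pair{\Phi(\Fcal)}{[\zeta]}\le\pair{\Fcal}{[\gamma]}$ implicit, so your collapse argument for the lower bound is a useful addition. One repair is needed there. In Case (C) the point $y$ is always marked, and in Case (B) it may be, so $c(\gamma)$ can run through a puncture and need not represent $[\zeta]$ as it stands. You should push it off $y$ on the side dictated by $\gamma$, which costs arbitrarily little measure.

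Your ``main obstacle'' paragraph should be dropped: the step is not delicate, and the proposed contradiction does not work.

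\begin{itemize}
\item \emph{Why it is not delicate.} The step is exactly the last equality in \eqref{eqn:non-gen}, which is Lemma \ref{lem:QuasiTransverse} applied on $S_{g,n}$. Once $x$ has degree $\ge 2$ (or the collapse has been performed away from $\gamma'$), $\gamma'$ is transverse to $\Phi(\Fcal)$. By the Euler--Poincar\'e formula, a disk bounded by a transverse arc and a leaf arc must contain a one-prong singularity in its interior. $\Phi(\Fcal)$ has one-prongs only at punctures, so no such bigon exists.
\item \emph{Why the contradiction fails.} When $\Fcal$ is immediately $x$-non-generic with $\deg_\Fcal(x)\ge 2$, a cord along a prong of $x$ gives a simple pull that \emph{preserves} the intersection number, as the paper notes just before the proposition. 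It does not strictly decrease it, so exhibiting such a cord contradicts nothing.
\end{itemize}

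Your annular case also needs correcting. The right observation is not that $\gamma'$ crosses circular leaves of $U$ and keeps its measure under the collapse; arcs crossing those circles would in fact lose their measure. Rather, $\gamma'$ misses $\overline U$ entirely. A transverse arc is strictly monotone in the leaf-space coordinate of $U$, so any arc entering $U$ would have to reach the core leaf $[x,y]$, which $\gamma'$ avoids. The collapse therefore does not touch $\gamma'$, and Lemma \ref{lem:QuasiTransverse} gives the equality.

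The same observation covers Case (B) when $y$ is a marked point. There, merging $x$ into $y$ is not a legal Whitehead move on $S_{g,n+1}$, so immediacy must be read as $[x,y]\cap\gamma'=\emptyset$. The collapse of $[x,y]$ then happens in a neighbourhood disjoint from $\gamma'$.
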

\begin{proof}
    Suppose $\gamma$ is a pre-essential simple closed curve. By Lemma \ref{lem:non-gen_FiniteCanSimPulls}, after finitely many canonical simple pulls, we obtain $\gamma'$ with $\gamma'\sim_{\pre}\gamma$ such that $\Fcal$ is immediately $x$-non-generic wrt $\gamma'$. It follows from Equation \eqref{eqn:non-gen} that $\pair{\Fcal}{[\gamma']}_{S_{g,n+1}}=\pair{\Phi(\Fcal)}{i_*[\gamma]}_{S_{g,n}}$.
\end{proof}
    
\subsection{Generic measured foliations}
\begin{defn}
        Let $\Gamma$ be a multicurve of $S_{g,n}$. We say that $\Gamma$ {\em determines measured foliations} of $S_{g,n}$ if the intersection numbers with the elements of $\Gamma$ uniquely determine foliations, i.e., the map $\MF(S_{g,n}) \to (\Rbb_{\ge 0})^{|\Gamma|}$ sending $\Gcal \in \MF(S_{g,n})$ to $\left(\pair{[\gamma]}{\Gcal}\right)_{\gamma\in \Gamma}$ is injective.
\end{defn}

The existence of a multicurve determining measured foliations follows from the Dehn-Thurston coordinate \cite[Expos\'{e} 6.4]{FLP_ThurstonSurface}.

Recall that a multicurve $\Gamma$ on $S_{g,n}$ is filling if $\pair{[\Gamma]}{[\alpha]}>0$ for every essential simple closed curve $\alpha\subset S_{g,n}$. It is easy to show that if $\Gamma$ determines measured foliations, then $\Gamma$ is filling. 

\begin{lem}\label{lem:FillMF nonzero intersection}
    Suppose that $\Gamma$ is a filling multicurve and $\Fcal$ is a non-zero measured foliation on $S_{g,n}$. Then $\pair{\Fcal}{[\Gamma]}>0$.
\end{lem}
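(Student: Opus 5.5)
The plan is to argue by contradiction: assume $\Fcal \neq 0$ while $\pair{\Fcal}{[\Gamma]} = \sum_{\gamma\in\Gamma}\pair{\Fcal}{[\gamma]} = 0$. Since all terms are nonnegative, this means $\pair{\Fcal}{[\gamma]}=0$ for every component $\gamma\in\Gamma$. I will then produce an essential simple closed curve $\alpha$ on $S_{g,n}$ disjoint from every component of $\Gamma$, which contradicts the filling hypothesis.

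To find $\alpha$, I first decompose $\Fcal$ into minimal and annulus components $\Fcal=\bigcup_i\Fcal_i$ with supports $S_i$, as in Section \ref{subsec:NonMinimal}. Because $\Fcal\neq 0$, at least one component is nontrivial; fix one and call it $\Fcal_1$. The argument splits according to its type.

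\emph{Case 1: $\Fcal_1$ is an annulus component.} Let $\alpha$ be a core curve of the annulus $S_1$; it is essential in $S_{g,n}$. A closed leaf with positive transverse measure contributes positively to the intersection number of any curve that crosses it essentially. So $\pair{\Fcal}{[\gamma]}=0$ forces $\pair{[\alpha]}{[\gamma]}=0$ for each $\gamma\in\Gamma$, and hence $\pair{[\Gamma]}{[\alpha]}=0$. To justify the positivity claim I would use $\pair{\Fcal}{[\gamma]}\ge \pair{[\alpha]}{[\gamma]}\cdot m$, where $m>0$ is the transverse width of the annulus. This inequality comes from putting $\gamma$ in quasi-transverse position (Lemma \ref{lem:QuasiTransverse}) and counting crossings of the cylinder.

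\emph{Case 2: $\Fcal_1$ is a minimal component on a subsurface $S_1$.} If $S_1$ is the whole surface, then $\Fcal$ is minimal and $\pair{\Fcal}{[\gamma]}>0$ for every essential $\gamma$, so any component of $\Gamma$ already gives the contradiction. Otherwise, take $\alpha$ to be an essential boundary curve of the (embedded, after slight shrinking) subsurface $S_1$. Each $\gamma$ with $\pair{\Fcal}{[\gamma]}=0$ can be isotoped to miss the interior of $S_1$. The reason is that any essential arc of $\gamma$ crossing $S_1$ has positive $\Fcal_1$-measure by minimality of $\Fcal_1$ on $S_1$ (the arc version of Lemma \ref{lem:QuasiTransverse} together with minimality). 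Hence $\gamma$ is disjoint from $\alpha$ after homotopy, and $\pair{[\Gamma]}{[\alpha]}=0$.

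The remaining subtlety is the degenerate case where every boundary curve of $S_1$ is peripheral or inessential. Then $S_1$ fills $S_{g,n}$ up to punctures, so $\Fcal_1$ is minimal on the whole surface and we are back in the first subcase. Both Case 1 and Case 2 therefore contradict the filling hypothesis.

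I expect the main obstacle to be the claim that an essential arc crossing a minimal component has positive measure. I would justify it with the standard Fathi–Laudenbach–Poénaru bigon argument: a quasi-transverse arc crossing $S_1$ essentially must be transverse somewhere inside $S_1$, and every leaf segment of a minimal foliation carries positive measure on any transverse arc. Alternatively, one could cite the characterization that $\pair{\Fcal}{[\gamma]}=0$ if and only if $\gamma$ is disjoint from all minimal components and is not transverse to any annulus.
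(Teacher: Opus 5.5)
Your proposal is correct and follows the paper's proof. Both decompose $\Fcal$ into annulus and minimal components and apply the filling property of $\Gamma$ to each: the core of an annulus component, resp.\ the support of a minimal component, with minimality giving positive measure on any essential crossing. Your contrapositive version, which exhibits an essential curve disjoint from $\Gamma$ (a core curve, or an essential boundary curve of the support), together with your separate handling of the case where every boundary curve of the support is peripheral or inessential, just makes explicit the step the paper compresses into ``$\Gamma$ must intersect $S_i$.''
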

\begin{proof}
    If $\Fcal$ is minimal, then we obviously have $\pair{\Fcal}{[\Gamma]}>0$. In general, we consider the decomposition of $\Fcal$ into annulus and minimal components $\Fcal_1\cup \Fcal_2\cup \dots \cup \Fcal_k$. If there exists an annulus component $\Fcal_i$, then we have $\pair{\Fcal_i}{[\Gamma]}>0$ by the filling property of $\Gamma$. Suppose that there exists a minimal component $\Fcal_i$ supported in a subsurface $S_i\subset S$. Since it is filling, $\Gamma$ must intersect $S_i$. It follows from the minimality of $\Fcal_i$ in $S_i$, we obtain $\pair{\Fcal_i}{[\Gamma]}>0$.
\end{proof}

For $r>0$, we define a subset $\MF_{[\Gamma], B}(S_{g,n})$ of $\MF(S_{g,n})$ by
\[
    \MF_{[\Gamma], B}(S_{g,n}):= \{ \Fcal \in \MF(S_{g,n}) : \pair{[\Gamma]}{\Fcal}\le r\}.
\]
Recall that our definition $\MF(S_{g,n})$ contains the zero element $0$.
\begin{lem}\label{lem:MF_Gamma,B cpt}
    The set $\MF_{[\Gamma], B}(S_{g,n})$ is compact.
\end{lem}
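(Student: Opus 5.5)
The plan is to show that $\MF_{[\Gamma],B}(S_{g,n})$ is a closed subset of a compact set. I read it as the set of $\Fcal$ with $\pair{[\Gamma]}{\Fcal}\le r$, where $\Gamma$ is filling, as in the surrounding discussion. Without the filling hypothesis the statement fails: a curve disjoint from $\Gamma$ can carry arbitrary weight.

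\textbf{Step 1: work on the cone over $\PMF$.} I will use the standard description $\MF(S_{g,n})=\big(\PMF(S_{g,n})\times[0,\infty)\big)/\big(\PMF(S_{g,n})\times\{0\}\big)$, with $\PMF(S_{g,n})$ a sphere and hence compact. The function $\Fcal\mapsto\pair{[\Gamma]}{\Fcal}=\sum_{\gamma\in\Gamma}\pair{[\gamma]}{\Fcal}$ is continuous on $\MF(S_{g,n})$, because intersection with a fixed curve is continuous. It is also homogeneous of degree one: $\pair{[\Gamma]}{t\Fcal}=t\pair{[\Gamma]}{\Fcal}$.

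\textbf{Step 2: normalize.} By Lemma~\ref{lem:FillMF nonzero intersection}, $\pair{[\Gamma]}{\Fcal}>0$ for every nonzero $\Fcal$. So $\Sigma:=\{\Fcal:\pair{[\Gamma]}{\Fcal}=1\}$ is a continuous section of $\Proj\from\MF\setminus\{0\}\to\PMF$: the map $\Proj(\Fcal)\mapsto\Fcal/\pair{[\Gamma]}{\Fcal}$ is well defined and continuous. It is a homeomorphism onto $\Sigma$, so $\Sigma$ is compact.

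\textbf{Step 3: realize the set as a compact image.} Every $\Fcal\in\MF_{[\Gamma],B}(S_{g,n})$ can be written as $t\Gcal$ with $\Gcal\in\Sigma$ and $t=\pair{[\Gamma]}{\Fcal}\in[0,r]$. The zero foliation is covered by $t=0$. Hence $\MF_{[\Gamma],B}(S_{g,n})$ is the image of the compact set $\Sigma\times[0,r]$ under the continuous scaling map $(\Gcal,t)\mapsto t\Gcal$, and is therefore compact.

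Alternatively, one can argue sequentially with representatives. Given a sequence $\Fcal_m$ in the set, pass to a subsequence with $\Proj(\Fcal_m)$ convergent, say $\Fcal_m=t_m\Gcal_m$ with $\Gcal_m\to\Gcal\neq0$. The bound $t_m\pair{[\Gamma]}{\Gcal_m}\le r$, together with $\pair{[\Gamma]}{\Gcal}>0$, bounds $t_m$. Continuity then keeps the limit inside the set.

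\textbf{Main obstacle.} The only substantive step is positivity on the projective class, which is exactly where the filling hypothesis enters via Lemma~\ref{lem:FillMF nonzero intersection}. Apart from that, the argument needs only the standard continuity of intersection numbers and the standard topology of $\MF$ from \cite{FLP_ThurstonSurface}. I would state these explicitly as cited facts.
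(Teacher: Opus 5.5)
Your proof is correct and uses the same two inputs as the paper: compactness of $\PMF(S_{g,n})$, and the positivity $\pair{\Fcal}{[\Gamma]}>0$ for nonzero $\Fcal$ from Lemma~\ref{lem:FillMF nonzero intersection} (with the filling hypothesis you rightly make explicit). The paper argues by contradiction: an escaping sequence, rescaled by $a_k\to 0$ to converge in $\PMF$, would give a nonzero $\Gcal$ with $\pair{\Gcal}{[\Gamma]}=0$. That is exactly your sequential alternative, and your section $\Sigma$ packages the same idea directly while getting closedness of the set for free.
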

\begin{proof}
    Suppose $\MF_{[\Gamma], B}(S_{g,n})$ is not compact. There exists a sequence $(\Fcal_k)$ in $\MF_{[\Gamma], B}(S_{g,n})$ such that $\Fcal_k \to \infty$, which is equivalent to that there exists $[\alpha]\in C(S_{g,n})$ such that
    \begin{equation}\label{eqn:MFtoInfty}
        \pair{\Fcal_k}{[\alpha]}\to \infty.
    \end{equation}
    Recall that $\Proj\from \MF(\hat{\Cbb}, P_f)\setminus \{0\} \to \PMF(\hat{\Cbb}, P_f)$ denotes the projectivization map. By substituting with a subsequence, the sequence $(\Proj(\Fcal_k))$ converges to $\Proj(\Gcal)$ for some $\Gcal\in \MF(S_{g,n})\setminus \{0\}$, and thus there exists a sequence $a_k>0$ so that $a_k\cdot \Fcal_k\to \Gcal$. By \eqref{eqn:MFtoInfty}, we have $a_k\to 0$ as $k\to \infty$. It follows that
    \[
        \pair{a_k\cdot \Fcal_k}{[\Gamma]}=a_k\pair{\Fcal_k}{[\Gamma]}\le a_kB|\Gamma|,
    \]
    where $|\Gamma|$ is the number of components in $\Gamma$. Letting $k\to \infty$ we obtain $\pair{\Gcal}{[\Gamma]}=0$, which contradicts Lemma \ref{lem:FillMF nonzero intersection}.
\end{proof}

Consider $\Gamma'$ and $([\psi_l])_{l\ge1}$ in Theorem \ref{thm:MulticurveReduction}-(7). Suppose further that $\Gamma'$ fills $S_{g,n+1}$. For any $\gamma'\in \Gamma'$, we have 
\[
    \pair{[\psi_k]^{-1}\cdot \Fcal}{[\gamma']} \to \varphi(\Fcal)([\zeta]).
\]
In particular, $\pair{[\psi_k]^{-1}\cdot \Fcal}{[\Gamma']}$ is uniformly bounded. It follows from Lemma \ref{lem:MF_Gamma,B cpt} that, after replaced by an appropriate subsequence, $[\psi_k]^{-1}\cdot \Fcal$ converges to a measured foliation $\Gcal$ with respect to the weak$^*$-topology. One can show that $\Gcal$ is $x$-non-generic; see Lemma \ref{lem:xNonGeneric}. Then we will define $\Phi(\Fcal)$ as $\Phi(\Gcal)$, which is defined by Proposition \ref{prop:PForget NonGeneric}.

The argument in the previous paragraph involves three issues. First, we need to find a pre-essential, pre-non-parallel, and pre-minimal multicurve $\Gamma$ of $S_{g,n+1}$ that fills $S_{g,n+1}$. Second, the multicurve $\Gamma'$ in Theorem \ref{thm:MulticurveReduction}-(7) may be different from $\Gamma$. Third, to show that $\phi(\Fcal)$ enjoys the desired property, $\pair{\Phi(\Fcal)}{[\zeta]}=\varphi(\Fcal)[\zeta]$ for all $[\zeta]\in C(S_{g,n})$, we need $i_*[\Gamma]$ determines measured foliations of $S_{g,n}$. We resolve these issues in Proposition \ref{prop:LimitMFfromIntReduction} by using Lemma \ref{lem:Interction_bound_pre-parallel} and  Proposition \ref{prop:Multicurv_Det_MF}.

\begin{lem}\label{lem:Interction_bound_pre-parallel}
    Fix an $x$-non-generic measured foliation $\Fcal\in \MF(S_{g,n+1})$. Suppose $\Gamma$ is a pre-essential and pre-non-parallel multicurve on $S_{g,n+1}$ such that $i_*[\Gamma]$ fills $S_{g,n}$. Then the following hold.
    \begin{enumerate}
        \item For any $[\Gamma']$ that is a simple pull of $[\Gamma]$, we have $\pair{\Fcal}{[\Gamma']}\le 3\pair{\Fcal}{[\Gamma]}$.
        \item Suppose further that $\Gamma$ is pre-minimal. Let $\gamma'$ be a simple closed curve on $S_{g,n+1}$ such that $\gamma \sim_{\pre} \gamma'$ for some $\gamma\in \Gamma$. Then there exists $C=C([\gamma'],[\Gamma])>0$ such that for all $[\psi] \in \Push(\pi_1(S_{g,n},x))$, we have
        \[
            \pair{[\psi] \cdot \Fcal}{[\gamma']} \le C \pair{[\psi] \cdot \Fcal}{[\Gamma]}.
        \]
        The same statement holds with $\gamma'$ being replaced by a multicurve $\Gamma'$ that is pre-homotopic to $\Gamma$.
    \end{enumerate}
\end{lem}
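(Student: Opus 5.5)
For part (1), I would work with a representative $\Gamma$ in minimal position and a simple cord $\delta$ from $x$ to a component $\gamma_1$ with $\Gamma' = \Pull_\delta(\Gamma)$. The pulled curve $\gamma_1'$ is homotopic to a small perturbation of $\delta * \gamma_1 * \overline{\delta}$. Hence
\[
\pair{\Fcal}{[\gamma_1']}\le \pair{\Fcal}{[\gamma_1]} + 2\pair{\Fcal}{\delta},
\]
as in Equation \eqref{eqn:Int_Num_Change}, where the inequality only requires an upper bound. The task is then to choose, within the homotopy class of $\delta$ rel $x,\gamma_1$, a cord with $\pair{\Fcal}{\delta}\le \pair{\Fcal}{[\Gamma]}$. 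Since the cord starts at $x$ and is simple, I would first replace it by a cord contained in the closure of the complementary region $U$ of $\Gamma$ containing $x$, up to the homotopy classes allowed in $X_Z$. Equivalently, I would take a cord that crosses no other component of $\Gamma$. Because $i_*[\Gamma]$ fills $S_{g,n}$, $U$ is a disk in $S_{g,n}$, possibly containing one of the $n$ punctures, and $x$ lies in it. Thus $\delta$ can be isotoped within $U$ to run from $x$ to a point of $\partial U\cap\gamma_1$. I would then use that the transverse measure across the disk is controlled by the boundary: by representing $\Fcal$ quasi-transversely to $\Gamma$, every leaf arc in $U$ exits through $\partial U$. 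Hence any point of $U$ can be joined to a chosen side of $\partial U$ by an arc of measure at most $\pair{\Fcal}{\partial U}$. This uses the $x$-non-generic structure so that $x$ is not on an infinite leaf inside $U$, and $\pair{\Fcal}{\partial U}\le\pair{\Fcal}{[\Gamma]}$, as each edge of $\partial U$ lies on $\Gamma$. In the case where $\delta$ must cross other components of $\Gamma$, the same bound holds after summing over the crossed arcs, and I would still aim for a total of at most $\pair{\Fcal}{[\Gamma]}$. Together these give
\[
\pair{\Fcal}{[\Gamma']}\le\pair{\Fcal}{[\Gamma]}+2\pair{\Fcal}{[\Gamma]}=3\pair{\Fcal}{[\Gamma]}.
\]

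For part (2), I would use the cube complex. By Theorem \ref{thm:CubeCplxIso} and Theorem \ref{thm:HypCubeCplx}, $X_Z$ is connected and $\pi_1(S_{g,n},x)$ acts on it with compact quotient. The point-pushing class $[\psi]$ acts compatibly: $\pair{[\psi]\cdot\Fcal}{[\gamma']}=\pair{\Fcal}{[\psi]^{-1}[\gamma']}$. First I would complete $\gamma'$ to a vertex $[\Gamma'']\in\Vertex(X_Z)$ containing $[\gamma']$, using the bigon-removal argument of Lemma \ref{lem:p_i Surj, Efficent}. Let $N=d_{L^1}([\Gamma],[\Gamma''])$, which is finite by connectedness. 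The action is by isometries, and $[\psi]^{-1}$ moves both vertices together, so the same path of length $N$ joins $[\psi]^{-1}[\Gamma]$ to $[\psi]^{-1}[\Gamma'']$. Each step is a simple pull of a pre-essential, pre-non-parallel multicurve whose image fills $S_{g,n}$, so part (1) applies at every step. Note that $[\psi]\cdot\Fcal$ is still $x$-non-generic, since pushing preserves the leaf structure at $x$. Applying part (1) $N$ times gives
\[
\pair{[\psi]\cdot\Fcal}{[\gamma']}\le\pair{[\psi]\cdot\Fcal}{[\Gamma'']}\le 3^N\pair{[\psi]\cdot\Fcal}{[\Gamma]},
\]
so $C=3^N$ depends only on $[\gamma'],[\Gamma]$. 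The multicurve version is the same argument with $\Gamma'$ taking the place of $\Gamma''$.

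The main obstacle is the measure bound on the cord in part (1), specifically controlling $\pair{\Fcal}{\delta}$ by the boundary of the complementary region. A general simple pull need not run inside the region containing $x$, so some care is needed to show that a cord of measure at most $\pair{\Fcal}{[\Gamma]}$ exists in the relevant homotopy class. If the direct disk argument fails, I would instead argue via the pulley picture of Lemma \ref{lem:IntNumbCanonPull}. A non-canonical pull adds twice the measure of $\delta$, and $\delta$ can be taken to follow a prong from $x$, which is finite by non-genericity, to its endpoint and then along $\Gamma$ for at most the full measure of a component. This still yields the constant $3$.
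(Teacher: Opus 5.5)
Your part (2) follows the paper's argument. You complete $\gamma'$ to a vertex of $X_Z$ using surjectivity of the projections $p_i$, join it to $[\Gamma]$ by an edge path of length $N$, use that point-pushing acts by cubical automorphisms, and apply (1) along the path to get $C=3^N$. Part (1) also follows the paper's route: $\pair{\Fcal}{[\Gamma']}\le\pair{\Fcal}{[\Gamma]}+2\pair{\Fcal}{\delta}$, plus a bound $\pair{\Fcal}{\delta}\le\pair{\Fcal}{[\Gamma]}$ for a cord in the complementary region $U$ containing $x$. However, your justification of the second bound has a genuine gap.

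First, the obstacle you single out does not occur. By Definition \ref{defn:SimplePull_Multicurves}, a simple cord to $\Gamma$ meets $\Gamma$ only at its endpoint, so $\delta\subset\overline{U}$ automatically. Your hedge about cords crossing other components is therefore moot. The fallback of following the prong from $x$ would not work anyway: that prong may cross $\Gamma$ before reaching its endpoint, so the resulting path leaves the homotopy class of $\delta$.

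The real issue is the homotopy class of the cheap arc. The leaf-space argument produces \emph{some} arc of small measure from $x$ to the relevant side of $\partial U$. That arc is homotopic to $\delta$ rel $x,\gamma_1$ only when $U$ is an unpunctured disk in $S_{g,n}$; this is what the paper's proof uses (``$U$ is a disk''). You explicitly allow a puncture $y\in U$, and then the estimate is false. Let $c\subset U$ bound a disk containing exactly $x$ and $y$, and take $\Fcal=c$. This $\Fcal$ is $x$-non-generic of annular type, and $\pair{\Fcal}{[\Gamma]}=0$. For any cord, $[\gamma_1']-[\gamma_1]$ is $\pm$ a small loop around $x$ in $H_1(S_{g,n+1})$, so $\gamma_1'$ has algebraic intersection $\pm1$ with an arc from $x$ to $y$ inside $c$. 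Hence $\gamma_1'$ cannot be isotoped off $c$: outside the disk it would miss that arc, and inside it would be pre-peripheral. Since $c$ separates, $\pair{\Fcal}{[\Gamma']}\ge\pair{c}{[\gamma_1']}\ge 2>0$. In this example your claim that every leaf in $U$ exits through $\partial U$ also fails. By continuity, minimal foliations near $c$ break the inequality too, so no genericity hypothesis rescues it.

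So you must use that $U$ contains no puncture. Filling in this paper's sense does not rule out a punctured $U$, and the example satisfies every hypothesis of the lemma, so this case has to be excluded from the statement itself. Once $U$ is an honest disk your argument works, with one correction to the constant. $\partial U$ may run along an edge of $\Gamma$ twice, so $\pair{\Fcal}{\partial U}\le\pair{\Fcal}{[\Gamma]}$ can fail. Instead, use that the leaf space of $\Fcal|_{\overline U}$ is a tree covered at least twice by $\partial U$; this gives $\pair{\Fcal}{\delta}\le\frac12\pair{\Fcal}{\partial U}\le\pair{\Fcal}{[\Gamma]}$.
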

\begin{proof}
    (1) Let $\delta$ denote a simple cord from $x$ to $\Gamma$ such that $[\Pull_\delta(\Gamma)]=[\Gamma']$. There exist representatives of $\Gamma,\Gamma'$, and $\delta$ in their homotopy classes such that 
    \[
        2\pair{\Fcal}{\delta}=\left|\pair{\Fcal}{[\Gamma]}-\pair{\Fcal}{[\Gamma']}\right|.
    \]
    Since $i_*[\Gamma]$ fills $S_{g,n}$, the connected component $U$ of $S_{g,n}\setminus \Gamma$ containing $x$ and the interior of $\delta$ is a disk. We may assume that $\Fcal\pitchfork\Gamma$ and $\Fcal\pitchfork\delta$. Then $\overline{U}$ contains the pulley $W_{\Fcal,\gamma}$, where $\gamma$ is the component of $\Gamma$ that is simply pulled. It follows that
    \[
        \pair{\Fcal}{\delta}\le\pair{\Fcal}{\partial U} \le  \pair{\Fcal}{\Gamma}.
    \]
    Hence, by Equation \eqref{eqn:Int_Num_Change} and Lemma \ref{lem:IntNumbCanonPull}, we have $\pair{\Fcal}{[\Gamma']}\le 3\pair{\Fcal}{[\Gamma]}$.

    (2) Let $[Z]=i_*[\Gamma]$. Recall that $p_i\from X_Z \to X_{\zeta_i}$ is surjective (Lemma \ref{lem:p_i Surj, Efficent}). Hence, there exists a pre-minimal multicurve $\Gamma'$ on $S_{g,n+1}$ such that $\Gamma\sim_{\pre}\Gamma'$ and $\gamma'\in \Gamma'$. Consider $[\Gamma]$ and $[\Gamma']$ as vertices of $X_Z$, and define $N:=d_{X_Z^{(1)}}([\Gamma],[\Gamma'])$. Since $\pi_1(S_{g,n},x)$ acts on $X_Z$ as cubical automorphisms, we have
    \[
        \pair{\Fcal}{[\phi]^{-1} \cdot [\gamma']}\le \pair{\Fcal}{[\phi]^{-1}\cdot [\Gamma']} \le 3^N \pair{\Fcal}{[\phi]^{-1}\cdot [\Gamma]}.
    \]
    We use (1) for the second inequality. The statement with $\gamma'$ being replaced by $\Gamma'$ also follows by the same argument.
\end{proof}

\begin{prop}\label{prop:Multicurv_Det_MF}
    There exist a pre-essential, pre-non-parallel, and pre-minimal multicurve $\Gamma$ on $S_{g,n+1}$ such that $i_*[\Gamma]$ determines measured foliations of $S_{g,n}$.
\end{prop}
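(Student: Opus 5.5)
Start on $S_{g,n}$. By the Dehn--Thurston coordinates cited above (\cite[Expos\'e 6.4]{FLP_ThurstonSurface}), there is a finite multicurve $Z=\{\zeta_1,\dots,\zeta_k\}$ on $S_{g,n}$ that determines measured foliations. The $\zeta_i$ are pairwise non-homotopic essential curves; for example, take the pants curves, their twist-dual curves, and the Dehn twists of the duals about the pants curves. Put $Z$ in minimal position in $S_{g,n}$, for instance by realizing each curve as a closed geodesic for a hyperbolic metric on $S_{g,n}$, or by removing bigons. After a small perturbation, $Z$ has only transverse double points and no triple points.

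Next choose the point $x$. Pick a point of $S_{g,n}$ that does not lie on $Z$; it lies in some complementary component of $Z$. Regard this point as the extra marked point, so that $Z$ becomes a multicurve $\Gamma$ on $S_{g,n+1}$ with $i_*[\Gamma]=[Z]$ by construction. Alternatively, after fixing an identification of $S_{g,n+1}$ with $S_{g,n}\setminus\{x\}$, move $Z$ by an isotopy of $S_{g,n}$ until it misses $x$. The properties then follow directly. Each component of $\Gamma$ maps to an essential curve of $S_{g,n}$, so $\Gamma$ is pre-essential. The classes $i_*[\gamma]=[\zeta_i]$ are distinct, so $\Gamma$ is pre-non-parallel. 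For pre-minimality, the count $\pair{\Gamma}{\Gamma}$ of self-intersections of this representative equals $\pair{[Z]}{[Z]}_{S_{g,n}}$ because $Z$ is in minimal position in $S_{g,n}$. Hence
\[
\pair{[\Gamma]}{[\Gamma]}_{S_{g,n+1}}\le \pair{\Gamma}{\Gamma}=\pair{[Z]}{[Z]}_{S_{g,n}}.
\]
The reverse inequality holds because homotopies in $S_{g,n+1}$ are homotopies in $S_{g,n}$, so $i_*$ cannot increase the minimal intersection count. This gives equality, which is exactly pre-minimality. Together these show $[\Gamma]\in\Vertex(X_Z)$.

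The one step that needs care is the inequality $\pair{[\Gamma]}{[\Gamma]}_{S_{g,n+1}}\ge\pair{i_*[\Gamma]}{i_*[\Gamma]}_{S_{g,n}}$. It holds for self-intersections of a single curve as well as for pairs of distinct curves, because every representative in $S_{g,n+1}$ is also a representative in $S_{g,n}$, and the minimum is taken over a larger set in $S_{g,n}$. The only other thing to confirm is the existence of a determining multicurve with pairwise distinct, essential components. The Dehn--Thurston construction provides this, since pants curves, duals and twisted duals are pairwise non-homotopic. This completes the plan.
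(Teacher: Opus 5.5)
Your proposal is correct and follows essentially the paper's route. The paper lifts a determining multicurve to $S_{g,n+1}$ and then removes bigons by a homotopy in the puncture-filled surface $S_{g,n}$ (Lemma~\ref{lem:Make_pre-minimal}); this is the same as your choosing a minimal-position representative of $Z$ in $S_{g,n}$ that misses $x$, and your two-sided inequality spells out why the result is pre-minimal, which the paper leaves implicit.
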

\begin{proof}
    By using a multicurve on $S_{g,n}$ that determines measured foliations on $S_{g,n}$, we can obtain a multicurve $\Gamma$ on $S_{g,n+1}$ that is pre-essential and pre-non-parallel such that $i_*[\Gamma]$ determines measured foliations of $S_{g,n}$. Then, the desired conclusion follows from Lemma~\ref{lem:Make_pre-minimal}. 
\end{proof}

\begin{lem}\label{lem:Make_pre-minimal}
    Suppose that $\Gamma$ is a pre-essential and pre-non-parallel multicurve on $S_{g,n+1}$. Then, there is a multicurve $\Gamma'$ pre-homotopic to $\Gamma$ such that $\Gamma'$ is pre-minimal.
\end{lem}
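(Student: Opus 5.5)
We want: given a pre-essential, pre-non-parallel multicurve $\Gamma=\{\gamma_1,\dots,\gamma_k\}$ on $S_{g,n+1}$, find a pre-homotopic multicurve $\Gamma'$ with $\pair{[\Gamma']}{[\Gamma']}_{S_{g,n+1}}=\pair{i_*[\Gamma]}{i_*[\Gamma]}_{S_{g,n}}$. The plan is to realize $i_*[\Gamma]$ in minimal position in $S_{g,n}$ while keeping track of the point $x$, in the same spirit as the surjectivity argument in Lemma \ref{lem:p_i Surj, Efficent}.

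\emph{Step 1: minimal position in $S_{g,n}$.} Regard $\Gamma$ as a multicurve in $S_{g,n}$, where $x$ is now an ordinary point. Since no two components are pre-homotopic and each is essential in $S_{g,n}$, the standard bigon criterion \cite[Lemma 1.8]{FM_PrimerMCG} lets us remove bigons (and self-bigons are absent since components are simple). We do this by successively homotoping components of $\Gamma$ in $S_{g,n}$ across innermost bigons, arriving at a multicurve $\Gamma''$ in $S_{g,n}$, in minimal position there, with $[\Gamma'']=i_*[\Gamma]$. During the homotopy the curves may sweep across $x$; this is allowed because we only need pre-homotopy, not homotopy in $S_{g,n+1}$. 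By a small perturbation we assume $x\notin\Gamma''$, so $\Gamma''$ is a multicurve $\Gamma'$ on $S_{g,n+1}$, and each component $\gamma_i'$ satisfies $i_*[\gamma_i']=i_*[\gamma_i]$; hence $\Gamma'\sim_{\pre}\Gamma$, and $\Gamma'$ is pre-essential and pre-non-parallel.

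\emph{Step 2: compare intersection numbers.} We have the chain
\[
\pair{i_*[\Gamma']}{i_*[\Gamma']}_{S_{g,n}} \le \pair{[\Gamma']}{[\Gamma']}_{S_{g,n+1}} \le \pair{\Gamma'}{\Gamma'} = \pair{i_*[\Gamma']}{i_*[\Gamma']}_{S_{g,n}},
\]
where the first inequality holds because forgetting $x$ enlarges the class of allowed homotopies, the second is trivial, and the final equality holds because $\Gamma'$ is in minimal position in $S_{g,n}$. Hence equality holds throughout, which is exactly pre-minimality. This also shows $\Gamma'$ is already in minimal position in $S_{g,n+1}$.

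\emph{Main obstacle.} The only delicate point is Step 1: that bigon removal can be carried out simultaneously for all pairs, including intersections of distinct components, and terminates in minimal position. We use the standard fact that a collection of simple closed curves with no bigons (pairwise and no immersed bigon configurations) is in minimal position, and that each bigon removal strictly decreases $|\Gamma\cap\Gamma|$, so the process terminates. If triple points arise, we first perturb to general position. Pre-non-parallelity ensures no two components are isotopic in $S_{g,n}$, so minimal position is well defined and achievable with all curves distinct.
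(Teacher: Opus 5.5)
Your proposal is correct and matches the paper's proof, which likewise removes all bigons of $\Gamma$ by a homotopy in the puncture-filled surface $S_{g,n}$, with the curves allowed to sweep across $x$. Your Step 2 inequality chain spells out the pre-minimality check that the paper leaves implicit.
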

\begin{proof}
    The desired multicurve $\Gamma'$ can be obtained by taking a homotopy of $\Gamma$ in the puncture-filled surface $S_{g,n}$ to remove all bigons.
\end{proof}

\begin{defn}[Doubly-filling multicurve]
    A multicurve $Z$ on $S_{g,n}$ is {\it doubly-filling} if $\pair{[\alpha]}{[Z]}\ge 2$ for every essential arc $\alpha$ in $S_{g,n}$. 
\end{defn}

\begin{lem}
    There exists a doubly-filling multicurve $Z$ on $S_{g,n}$.
\end{lem}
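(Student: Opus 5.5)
The natural approach is to take two filling multicurves and show their union is doubly-filling. The key observation is that ``doubly-filling'' is implied by every essential arc meeting $Z$ at least twice. A filling multicurve automatically meets every essential arc at least once: if an essential arc $\alpha$ (joining punctures, possibly the same one) were disjoint from $Z$ up to homotopy, then the boundary of a regular neighborhood of $\alpha$ together with the puncture(s) at its ends would contain an essential simple closed curve disjoint from $Z$. That would contradict the filling property.

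\emph{Producing one intersection.} Fix a filling multicurve $Z_1$ on $S_{g,n}$, for instance one given by Dehn--Thurston coordinates as cited earlier, or any pair of filling curves. Since $\chi(S_{g,n})<0$, the regular neighborhood argument applies. If $\alpha$ is an arc from a puncture $p$ to a distinct puncture $q$, the boundary of a neighborhood of $\alpha\cup\{p,q\}$ is a curve bounding a twice-punctured disk; it is essential unless $S_{g,n}=S_{0,3}$. In that case there are no essential simple closed curves at all, so the lemma should be read with $\pair{[\alpha]}{[Z]}$ interpreted suitably, and I would treat this degenerate case separately or note that it is excluded where the lemma is used. If $p=q$, one of the two boundary curves of the neighborhood is essential, since $\alpha$ is essential. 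Hence $\pair{[\alpha]}{[Z_1]}\ge 1$ for all essential arcs.

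\emph{Producing a second intersection.} Pick a pseudo-Anosov mapping class $\phi$ and set $Z_2=\phi^N(Z_1)$ for large $N$, with $Z=Z_1\cup Z_2$ after discarding duplicate classes. $Z$ is still a multicurve in the paper's sense, since intersections are allowed. The intersection number with a union is the sum, so
\[
\pair{[\alpha]}{[Z]}=\pair{[\alpha]}{[Z_1]}+\pair{[\alpha]}{[Z_2]}\ge 1+1=2,
\]
because $Z_2$ is also filling, being the image of a filling multicurve under a homeomorphism. In fact any homeomorphic image works, and we only need $Z_2$ to share no component with $Z_1$ up to homotopy. Even that is unnecessary if we count $Z_1$ with multiplicity, so the cleanest choice is $Z_2=\phi(Z_1)$ for any $\phi$ moving every component of $Z_1$, for example a pseudo-Anosov.

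\emph{Expected obstacle.} The main point to check is the claim that a filling multicurve meets every essential arc. This requires the precise definition of ``essential arc'': not homotopic, rel the punctures, into a puncture neighborhood. One must also verify that the constructed boundary curve is essential rather than peripheral or trivial. I would handle this by a case analysis on whether the endpoints coincide, using $\chi(S_{g,n})<0$ to rule out the boundary curve being peripheral on both sides.
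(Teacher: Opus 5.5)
Your proof is correct, but it takes a different route from the paper's. The paper starts from a filling $Z_0$ and enlarges it by hand in two steps. First, it adds curves so that every pair of distinct marked points is separated by at least two components; this handles arcs between distinct punctures. Second, it adds curves so that the complementary region of each marked point $y$ is a bigon bounded by two components $\zeta_1,\zeta_2$; this handles arcs from $y$ to itself, which must exit and re-enter that bigon.

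You instead isolate a structural fact: every filling multicurve meets every essential arc at least once. You prove it via the boundary of a regular neighborhood of the arc, with a case split $p\neq q$ versus $p=q$ that breaks down only on $S_{0,3}$. You then double the count by taking the union of two filling multicurves and using that $\langle\cdot,\cdot\rangle$ of multicurves is, by the paper's definition, a sum over components.

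Your route is shorter and more conceptual. It avoids the delicate local construction of a bigon around each puncture, and the minimal-position argument needed to conclude that a loop at $y$ meets $\zeta_1\cup\zeta_2$ twice up to homotopy. It also generalizes immediately to ``$k$-filling''. The paper's construction gives explicit local structure near each marked point, but that structure is never used afterward; only the doubly-filling inequality is used, in the lemma on isolated non-filling vertices.

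Three small points to tighten:
\begin{itemize}
\item You need $Z_1$ and $Z_2$ to share no homotopy class at all. It is not enough that $\phi$ moves each component of $Z_1$, since $\phi(\zeta_1)=\zeta_2$ is possible. The condition is guaranteed for $Z_2=\phi^N(Z_1)$ with $\phi$ pseudo-Anosov and $N$ large: if $\phi^N(\zeta_i)=\zeta_j$ held for two values of $N$, a nonzero power of $\phi$ would fix $\zeta_j$, which is impossible.
\item Counting $Z_1$ with multiplicity is not available, because the paper's multicurves consist of pairwise non-homotopic curves.
\item Your $S_{0,3}$ caveat is warranted. There the statement genuinely fails, since no essential curves exist while essential arcs do. The paper's proof silently carries the same exception.
\end{itemize}
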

\begin{proof}
    Fix a filling multicurve $Z_0$ of $S_{g,n}$. By adding more simple closed curves to $Z_0$ if necessary, we can assume that for any distinct marked points $y_1$ and $y_2$, the multicurve $Z_0$ contains at least two simple closed curves separating $y_1$ and $y_2$. Then, we have $\pair{[Z]}{[\alpha]}\ge 2$ for every arc $\alpha$ joining $y_1$ and $y_2$.

    Next, we consider arcs whose endpoints are the same. Let $y$ be a marked point, by adding more simple closed curves to $Z_0$ if necessary, we may assume that there exists a representative $Z_0'$ of $[Z_0]$ such that the component of $S_{g,n}\setminus Z_0'$ containing $y$ is a bigon, formed by $\zeta_1,\zeta_2 \in Z_0'$. We have $\zeta_1\neq \zeta_2$ because they are simple closed curves. Then, for every arc $\alpha$ joining $y$ to itself, we have $\pair{[\alpha]}{[\zeta_1]\cup [\zeta_2]}\ge 2$.
\end{proof}

\begin{lem}\label{lem:Isolated non-filling vertices}
    Let $Z$ be a doubly-filling multicurve in $S_{g,n}$. Suppose that $[\Gamma]$ is a vertex in $X_Z$ that is not filling in $S_{g,n+1}$. Then, all its adjacent vertices in $X_Z$ are filling multicurves.
\end{lem}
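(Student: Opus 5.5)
The plan is to describe non-filling vertices of $X_Z$ concretely and then check that a single simple pull always produces a filling multicurve. Let $[\Gamma]\in\Vertex(X_Z)$ with a minimal-position representative $\Gamma$ on $S_{g,n+1}$; it is pre-minimal, so it is also in minimal position on $S_{g,n}$. Since $Z$ is doubly-filling, it is in particular filling, so every component of $S_{g,n}\setminus\Gamma$ is a disk or a once-punctured disk.

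The first step is to determine exactly when $\Gamma$ fails to fill $S_{g,n+1}$. Let $U$ be the component of $S_{g,n}\setminus\Gamma$ containing $x$. If $U$ is a disk, then every component of $S_{g,n+1}\setminus\Gamma$ is a disk or a once-punctured disk, and $\Gamma$ fills. If instead $U$ contains another puncture $y$, then $U\setminus\{x\}$ is a twice-punctured disk. Its boundary curve, pushed into $U$, is an essential curve of $S_{g,n+1}$ disjoint from $\Gamma$, so $\Gamma$ does not fill. There is one more possibility: $U\setminus\{x\}$ is a once-punctured disk whose closure is not embedded. Here I would use minimal position and the fact that $Z$ fills to rule out any essential curve hiding in that region. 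So the key claim is:
\begin{quote}
$[\Gamma]$ is non-filling if and only if the component of $S_{g,n}\setminus \Gamma$ containing $x$ also contains a puncture $y$ of $S_{g,n}$.
\end{quote}
This claim needs to be checked with a little care, since the once-punctured case $U\ni y$ requires the twice-punctured disk to be genuinely essential; this holds because $\chi(S_{g,n})<0$.

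The second step is to analyze an adjacent vertex $[\Gamma']=\Pull_\delta[\Gamma]$, where $\delta$ is a simple cord from $x$ to a component $\gamma_1$. By the proof of Lemma~\ref{lem:LocSurf.Omega}, we may choose representatives in which $\delta\subset U$ and $\Gamma'$ is obtained by pushing the arc of $\gamma_1$ near $\delta(1)$ across $x$. The point $x$ then moves into the neighboring component $U'$ of $S_{g,n}\setminus\Gamma$ across the edge of $\partial U$ containing $\delta(1)$. By the surjectivity discussion preceding Theorem~\ref{thm:CubeCplxIso}, $\Gamma'$ stays in minimal position. The new component containing $x$ is then essentially $U'$ (a small disk region is carved out of $U$). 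By the claim in the first step, it suffices to show that $U'$ contains no puncture, that is, that $U'$ is a disk.

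For the last step, suppose both $U$ and $U'$ contain punctures $y$ and $y'$; these may be the same puncture only if $U=U'$, which would contradict $\delta$ crossing an edge. Then we can join $y$ to $y'$ by an essential arc $\alpha$ that runs inside $U\cup U'$ and crosses $\Gamma$ exactly once, through the shared edge. Since $\Gamma$ is in minimal position and this crossing cannot be removed by bigon elimination, $\pair{[\alpha]}{[Z]}\le 1$. This contradicts $Z$ being doubly-filling. The case $y=y'$, reached through two different edges or a self-adjacent region, gives an arc from $y$ to itself that again meets $Z$ once, which is the same contradiction. The case where $\Gamma'$ is obtained by a pull from a different representative is handled by the fact that the homotopy class of $\Pull_\delta$ depends only on the cord up to homotopy rel $x,\gamma$.

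I expect the main obstacle to be confirming that the arc $\alpha$ really is essential with geometric intersection exactly $1$, rather than smaller after homotopy. That intersection number has the same parity as the count of crossings, and it cannot be $0$, since that would give a disjoint arc and contradict $Z$ filling. Together these give $\pair{[\alpha]}{[Z]}=1$.
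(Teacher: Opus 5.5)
Your proposal is correct and is essentially the paper's proof. Non-filling vertices are exactly those for which the region of $S_{g,n}\setminus\Gamma$ containing $x$ also contains a puncture; if two adjacent vertices were both non-filling, extending the pulling cord $\delta$ across the pulled edge gives an arc between the two (possibly equal) punctures that meets $Z$ once, contradicting double-filling. Two minor slips do not affect this. First, $U=U'$ can occur (an edge with $U$ on both sides), and your $y=y'$ case already covers it. Second, for the contradiction you only need $\pair{[\alpha]}{[Z]}\le 1$ together with essentiality of $\alpha$, which your parity argument supplies directly; no appeal to $Z$ being filling is needed there.
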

\begin{proof}
    Suppose $Z$ is a doubly-filling multicurve in $S_{g,n}$. Let $[\Gamma'] \in \Vertex(X_Z)$. Since $\Gamma'$ is pre-filling, the connected component of $S_{g,n+1} \setminus \Gamma'$ containing $x$ is either (a) a once-punctured disk, in which case $\Gamma'$ is filling, or (b) a twice-punctured disk, in which case $\Gamma'$ is not filling.

    Suppose $[\Gamma_1]$ and $[\Gamma_2]$ are adjacent vertices in $X_Z$ both of which are not filling. We may assume that $\Gamma_1$ and $\Gamma_2$ are representatives of $[\Gamma_1]$ and $[\Gamma_2]$ such that $\Gamma_2$ is a simple pull $\Pull_\delta(\Gamma_1)$ of $\Gamma_1$ along a cord $\delta$. For each $i\in \{1,2\}$, let $y_i(\neq x)$ be the marked points in the component of $S_{g,n+1}\setminus \Gamma_i$ containing $x$. Then, the simple cord $\delta$ can be extended to an arc $\alpha$ joining $y_1$ and $y_2$ such that $\pair{[\alpha]}{[\Gamma_1]}=1$. It contradicts that $Z$ is doubly-filling.
\end{proof}

\begin{lem}\label{lem:xNonGeneric}
    Let $\Gcal$ be a measured foliation on $S_{g,n+1}$. If there exists a pre-filling multicurve $\Gamma$ of $S_{g,n+1}$ such that
    \[
        \pair{\Gcal}{[\gamma]}=\inf_{\gamma \sim_{\pre} \alpha} \pair{\Gcal}{[\alpha]}
    \]
    for any $\gamma \in \Gamma$, then $\Gcal$ is $x$-non-generic. Moreover, if $\Gamma$ is filling, then $\Gcal$ is $x$-non-generic of non-annular type.
\end{lem}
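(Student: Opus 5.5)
We argue by contraposition. Suppose $\Gcal$ is $x$-generic; we will find some $\gamma\in\Gamma$ admitting a pre-homotopic curve with strictly smaller intersection number. The key tool is Lemma \ref{lem:IntNumbCanonPull}, which only needs $x$-genericity, not minimality: for any pre-essential $\gamma$ transverse to $\Gcal$, the canonical simple pull along the initial segment of $L_x$ up to its first hit with $\gamma$ satisfies
\[
\pair{\Gcal}{[\Pull_\delta(\gamma)]}=\pair{\Gcal}{[\gamma]}-\pair{\Gcal}{I_{\Gcal,\gamma}} .
\]
It therefore suffices to show that $L_x$ meets some component of $\Gamma$ and that the corresponding pulley has positive width.

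\textbf{Step 1: $L_x$ meets $\Gamma$.} Represent $\Gamma$ transversely (or quasi-transversely, via Lemma \ref{lem:QuasiTransverse}) to $\Gcal$. By Lemma \ref{lem:minimal component of x}, $x$ lies in the interior of the support $S_j$ of a minimal component $\Gcal_j$, and $L_x$ is a half-infinite ray dense in $S_j$. Since $\Gamma$ is pre-filling, the component of $S_{g,n+1}\setminus\Gamma$ containing $x$ is a disk punctured at $x$ and at most one other marked point. Such a disk cannot contain an infinite leaf that never exits, so $L_x$ meets $\Gamma$. Let $y$ be the first intersection point, say with $\gamma\in\Gamma$.

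\textbf{Step 2: the pulley has positive width.} Because $\Gamma$ is transverse, $\gamma$ crosses $L_x$ transversally at $y$. Nearby leaves parallel to $[x,y]$, ending on $\gamma$ on both sides, then form the pulley $W_{\Gcal,\gamma}$ (Case A of Definition \ref{defn:PulleyForgetSegments}; Case B is excluded since $\gamma$ is pre-essential). Its forgetting segment $I_{\Gcal,\gamma}$ is a nondegenerate transverse arc inside the minimal component, so $\pair{\Gcal}{I_{\Gcal,\gamma}}>0$. Hence the canonical pull $\gamma'\sim_{\pre}\gamma$ satisfies $\pair{\Gcal}{[\gamma']}<\pair{\Gcal}{[\gamma]}$, contradicting the hypothesis. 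Therefore $\Gcal$ is $x$-non-generic.

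\textbf{Step 3: non-annular type when $\Gamma$ fills.} Suppose $\Gcal$ is of annular type, with saddle connection $[x,y]$ and $\deg(x)=\deg(y)=1$, surrounded by circular leaves in $U$. Since $\Gamma$ fills $S_{g,n+1}$, the complementary component containing $x$ is a once-punctured disk. Consequently the core circles of $U$, which are essential in $S_{g,n+1}$ because they separate $x$ and $y$ from the rest, must be crossed by $\Gamma$. We then aim to show that some $\gamma\in\Gamma$ crossing $U$ can be pulled across the slit $[x,y]$ to strictly reduce its intersection with the circular leaves. Concretely, the segment of $\gamma$ through $U$ runs between its boundary components and crosses the leaves; pulling along a cord inside $U$ to the other side of $[x,y]$ changes $\gamma$ by a slit-crossing. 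I expect this is exactly the annular analogue of the pulley computation, with width equal to the transverse measure of $U$ on one side of $[x,y]$.

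\textbf{Main obstacle.} The hardest step is pinning down rigorously that this pull decreases the intersection number, that is, choosing the correct side. I would first try to mirror the proof of Lemma \ref{lem:non-gen_FiniteCanSimPulls}, using the fact that minimality of the intersection at every $\gamma$ forces $\Gcal$ to be immediately non-generic with respect to each $\gamma$. For the annular type, this means $[x,y]\cap\gamma=\emptyset$ for all $\gamma$. Then $[x,y]$ lies in the once-punctured-disk component, which cannot contain $y$ as a prong-1 point unless $y$ is itself a marked point of $S_{g,n+1}$. That would be a contradiction, since the component contains only $x$.
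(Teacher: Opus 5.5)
Your proposal is correct and follows essentially the paper's argument. Assuming $x$-genericity, pre-filling forces $\Gamma$ to meet the minimal component containing $x$, so a canonical simple pull strictly lowers some $\pair{\Gcal}{[\gamma]}$. In the filling case, the saddle connection $[x,y]$ would have to miss $\Gamma$ while joining two marked points inside a once-punctured-disk region; your immediate-non-genericity observation makes explicit why it misses $\Gamma$, which the paper leaves implicit.

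The one claim you assert without proof is that the twice-punctured-disk region cannot trap the infinite ray $L_x$. This is exactly the paper's case (2): if $L_x$ missed $\Gamma$, the support of the minimal component containing $x$ (in which $L_x$ is dense) would lie in a disk with two marked points. Such a disk can only carry an annulus component, which contradicts $x$-genericity.
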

\begin{proof}
    Suppose that $\Gcal$ is $x$-generic. Let $\Gcal'$ be the minimal component of $\Gcal$ such that $S':={\rm supp}(\Gcal')$ contains $x$ its interior with $\iota\from S'\to S_{g,n+1}$; see Lemma \ref{lem:minimal component of x}.  Either (1) there exists $\gamma \in \Gamma$ such that $\gamma$ intersects an essential simple closed curve $\alpha$ in $S'$ or (2) $S'$ is disjoint from $\Gamma$.

    If case (1) holds, then $\pair{\Gcal}{[\gamma]} > \inf_{\gamma \sim_{\pre} \alpha} \pair{\Gcal}{[\alpha]}$, which is a contradiction.
    
    Since $\Gamma$ is pre-filling, (2) implies that the image $\iota(S')$ in $S$ is a disk with two marked points. Then, $\Gcal'$ is an annulus component, which looks like Figure \ref{fig:Forget_nondeg}-(C). Then, $\Gcal$ is not $x$-generic.

    If $\Gcal$ is $x$-non-generic of annular type, then $\Gcal'$ is the annulus component described in the previous paragraph. In this case, there exists a leaf $[x,y]$ joining two marked points such that $[x,y]$ does not intersect $\Gamma$, which contradicts the assumption that $\Gamma$ is filling. 
\end{proof}

\begin{prop}\label{prop:LimitMFfromIntReduction}
    Let $\Fcal$ be an $x$-generic minimal measured foliation on $S_{g,n+1}$. Suppose that $\Gamma$ is a pre-essential, pre-non-parallel, and pre-minimal multicurve of $S_{g,n+1}$ such that $[Z]:=i_*[\Gamma]$ is doubly-filling in $S_{g,n}$. Then, there exist a filling multicurve $\Gamma'$ with $\Gamma\sim_{\pre}\Gamma'$ and a sequence of point-pushing mapping classes $([\psi_l])_{l\ge 1}$ satisfying the following.
    \begin{enumerate}
        \item $[\psi_l]\cdot \Fcal \to \Gcal$ in $\MF(S_{g,n+1})$ and $\Gcal$ is $x$-non-generic of non-annular type.
        \item $\pair{[\gamma_i]}{\Gcal} = \inf\limits_{\gamma_i \sim_{\pre} \alpha} \pair{[\alpha]}{\Gcal}=\inf\limits_{\gamma_i \sim_{\pre} \alpha} \pair{[\alpha]}{\Fcal}$ for every $\gamma_i\in\Gamma'$.
    \end{enumerate}
    Moreover, if $i_*[\Gamma]$ determines measured foliations of $S_{g,n}$, then for any pre-essential simple closed curve $\gamma\in C(S_{g,n+1})$, we have
    \[
        \pair{\Phi(\Gcal)}{i_*[\gamma]}=\inf_{\gamma\sim_{\pre}\alpha}\pair{\Fcal}{[\alpha]},
    \]
    where $\Phi(\Gcal)$ is a measured foliation on $S_{g,n}$ in Proposition \ref{prop:PForget NonGeneric}.
\end{prop}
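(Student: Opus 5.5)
Start from the multicurve $\Gamma$ and run the iterated canonical simple pulls of Lemma \ref{lem:Seq of Cannon Simple Pulls}. This gives an infinite directed edge-path $([\Gamma(l)])_{l\ge0}$ in $X_Z$. By Theorem \ref{thm:MulticurveReduction}-(6), along this path $\pair{\Fcal}{[\gamma_i(l)]}\to\varphi(\Fcal)([\zeta_i])$ for every $i$, and each coordinate projection is unbounded.

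The group $\pi_1(S_{g,n},x)$ acts on $X_Z$ cocompactly; in fact the vertices modulo the action form a finite set, since $X_Z\cong\mathscr{X}_Z$ by Theorem \ref{thm:CubeCplxIso}. So some orbit meets the path infinitely often. By Lemma \ref{lem:Isolated non-filling vertices} (recall $Z$ is doubly-filling), no two consecutive vertices are both non-filling, so the filling vertices also fall into finitely many orbits infinitely often. Pick such an orbit and a filling representative $[\Gamma']$ in it. This gives times $l_k\to\infty$ and point-pushes with $[\Gamma(l_k)]=[\phi_k]\cdot[\Gamma']$. Set $[\psi_k]:=[\phi_k]^{-1}$. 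Then
\[
\pair{[\psi_k]\cdot\Fcal}{[\gamma'_i]}=\pair{\Fcal}{[\gamma_i(l_k)]}\to\varphi(\Fcal)(i_*[\gamma'_i]).
\]
Since point-pushes act trivially on $i_*$, we have $\varphi([\psi_k]\cdot\Fcal)=\varphi(\Fcal)$. Because $\Gamma'$ fills $S_{g,n+1}$, the quantity $\pair{[\psi_k]\cdot\Fcal}{[\Gamma']}$ is bounded. Lemma \ref{lem:MF_Gamma,B cpt} then lets us pass to a subsequence with $[\psi_k]\cdot\Fcal\to\Gcal$.

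Next, verify (2) by a semicontinuity sandwich. Continuity of intersection numbers gives $\pair{\Gcal}{[\gamma_i']}=\varphi(\Fcal)(i_*[\gamma_i'])$. For any $\alpha\sim_{\pre}\gamma_i'$,
\[
\pair{\Gcal}{[\alpha]}=\lim_k\pair{\Fcal}{[\psi_k]^{-1}[\alpha]}\ge\varphi(\Fcal)(i_*[\alpha]).
\]
Hence $\inf_\alpha\pair{\Gcal}{[\alpha]}\ge\varphi(\Fcal)(i_*[\gamma'_i])=\pair{\Gcal}{[\gamma'_i]}$, so equality holds throughout. Statement (1) then follows from Lemma \ref{lem:xNonGeneric} applied to the filling multicurve $\Gamma'$, which shows that $\Gcal$ is $x$-non-generic of non-annular type.

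For the final claim, Proposition \ref{prop:PForget NonGeneric} gives $\pair{\Phi(\Gcal)}{[\zeta]}=\varphi(\Gcal)([\zeta])$ for every $[\zeta]$. By (2), $\varphi(\Gcal)$ agrees with $\varphi(\Fcal)$ on $i_*[\Gamma']=[Z]$. The main obstacle is to show that $\varphi(\Gcal)=\varphi(\Fcal)$ on every curve, not just on $[Z]$.

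\begin{itemize}
\item[(i)] The inequality $\varphi(\Gcal)\ge\varphi(\Fcal)$ holds for all curves, by the liminf argument above.
\item[(ii)] For the reverse inequality, fix a pre-essential $\gamma$. Embed it into a pre-minimal multicurve pre-homotopic to $\Gamma\cup\{\gamma\}$ (Lemma \ref{lem:Make_pre-minimal}). Run the directed path in the larger complex $X_{Z\cup\{\zeta\}}$, whose directed paths project to directed paths in $X_Z$ (asymptotic by Theorem \ref{thm:MulticurveReduction}-(5)). Using Lemma \ref{lem:Interction_bound_pre-parallel}-(2) and cocompactness, one should be able to extract, along a further subsequence of the $\psi_k$, curves $\alpha_k\sim_{\pre}\gamma$ with $\pair{[\psi_k]\cdot\Fcal}{[\alpha_k]}\to\varphi(\Fcal)(i_*[\gamma])$ and $\alpha_k$ ranging over a bounded set, hence eventually constant, which gives $\varphi(\Gcal)(i_*[\gamma])\le\varphi(\Fcal)(i_*[\gamma])$.
\end{itemize}

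If (ii) proves awkward, use the hypothesis that $i_*[\Gamma]$ determines measured foliations instead. Run the same construction with $\Gamma\cup\{\gamma\}$ to obtain a limit $\Gcal'$ with $\Phi(\Gcal')$ agreeing with $\varphi(\Fcal)$ on $Z\cup\{\zeta\}$. Injectivity then forces $\Phi(\Gcal')=\Phi(\Gcal)$, provided both are genuine measured foliations matching on $Z$. This yields the claim.
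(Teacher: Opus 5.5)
Your proposal is correct and follows the paper's proof essentially step for step. The main line is the same: the directed path of canonical simple pulls, cocompactness together with Lemma~\ref{lem:Isolated non-filling vertices} to return infinitely often to the orbit of a filling $[\Gamma']$, compactness via Lemma~\ref{lem:MF_Gamma,B cpt}, the two-sided inequality giving (2), and Lemma~\ref{lem:xNonGeneric} for (1). For the ``moreover'' clause, your fallback is exactly the paper's argument: rerun the construction with a pre-minimal version of $\Gamma\cup\{\gamma\}$, which is still doubly-filling, and use injectivity on $i_*[\Gamma]$ to get $\Phi(\Gcal')=\Phi(\Gcal)$. Route (ii), which you leave unjustified, is therefore unnecessary and can be dropped; the case $i_*[\gamma]\in[Z]$ is already covered by (2).
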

\begin{proof}
    For notational simplicity, we use $\varphi$ defined in Equation \eqref{eqn:PForget Functional} by
    \[
        \varphi(\Fcal)([\alpha]):=\inf_{i_*[\gamma]=[\alpha]} \pair{\Fcal}{[\gamma]}.
    \]
    
    By applying Theorem \ref{thm:MulticurveReduction}-(7) for $\Gamma$, we obtain a sequence of point-pushing mapping classes $([\psi_k])_{l\ge1}$ and $\Gamma'$ with $\Gamma\sim_{\pre} \Gamma'$ such that for each $\gamma \in \Gamma'$, we have
    \[
        \pair{[\psi_l] \cdot \Fcal}{[\gamma]}\to \varphi(\Fcal)(i_*[\gamma]),
    \]
    as $l \to \infty$.
    In particular, we may assume that $\Gamma'$ is filling. Indeed, since $Z$ is doubly filling, Lemma \ref{lem:Isolated non-filling vertices} implies that the projection of the infinite directed edge-path $[\Gamma(l)]$, in Theorem~\ref{thm:MulticurveReduction}-(7), to the compact quotient $X_Z/\pi_1(S_{g,n},x)$ passes through a vertex of the form $[\Gamma']\cdot \pi_1(S_{g,n})$ infinitely often, where $\Gamma'$ is filling.
    It follows that $\pair{[\psi_l] \cdot \Fcal}{[\Gamma']}$ is uniformly bounded, and so is $\pair{[\psi_l] \cdot \Fcal}{[\Gamma]}$ by Lemma \ref{lem:Interction_bound_pre-parallel}. Then, by Lemma \ref{lem:MF_Gamma,B cpt}, after passing to a subsequence of $(\psi_l)$, we obtain $[\psi_l]\cdot \Fcal\to \Gcal$ for some $\Gcal\in \MF(S_{g,n+1})$.

    Next, we show that $\Gcal$ is $x$-non-generic of non-annular type. For every $\gamma_i \in \Gamma'$, we have
    \begin{align*}
        \varphi(\Gcal)(i_*[\gamma_i])&=\inf_{\alpha\sim_{\pre}\gamma_i} \pair{\Gcal}{[\alpha]}\\
        &= \inf_{\alpha\sim_{\pre}\gamma_i} \lim_{l\to \infty} \pair{[\psi_l]\cdot \Fcal}{[\alpha]}\\
        &= \inf_{\alpha\sim_{\pre}\gamma_i} \lim_{l\to \infty} \pair{\Fcal}{[\psi_l]^{-1}\cdot[\alpha]}\\
        &\ge\varphi(\Fcal)(i_*[\gamma_i]),
    \end{align*}
    and
    \begin{align*}
        \varphi(\Gcal)(i_*[\gamma_i])&=\inf_{\alpha\sim_{\pre}\gamma_i} \pair{\Gcal}{[\alpha]}\\
        &\le \pair{\Gcal}{[\gamma_i]}\\
        &=\lim_{l\to \infty} \pair{[\psi_l]\cdot \Fcal}{[\gamma_i]}\\
        &=\varphi(\Fcal)(i_*[\gamma_i]).
    \end{align*}
    Therefore, we obtain
    \[
        \varphi(\Gcal)(i_*[\gamma_i])=\pair{\Gcal}{[\gamma_i]}=\varphi(\Fcal)(i_*[\gamma]).
    \]
    Then, since $\Gamma'$ is filling, Lemma \ref{lem:xNonGeneric} implies $\Gcal$ is $x$-non-generic of non-annular type.
    
    Suppose further that $i_*[\Gamma]$ determines measured foliations of $S_{g,n}$. Fix a pre-essential simple closed curve $\gamma'$ on $S_{g,n+1}$ with $i_*[\gamma'] \not\subset i_*[\Gamma]$, and define $\Gamma_1 := \Gamma \cup \{\gamma'\}$. We can modify $\gamma'$, keeping $i_*[\gamma']$ fixed, so that $\Gamma_1$ becomes pre-minimal; more precisely, we remove bigons between $\gamma'$ and $\Gamma$ in $S_{g,n}$ by changing $\gamma'$ only. Applying the previous argument (applied to $\Gamma$) to $\Gamma_1$, we obtain $\Gcal_1$ and $\Gamma_1'$ such that for any $\gamma \in \Gamma_1'$,
    \[
        \varphi(\Gcal_1)(i_*[\gamma]) = \pair{\Gcal_1}{[\gamma]} = \varphi(\Fcal)(i_*[\gamma]).
    \]
    Since $i_*[\Gamma]$, which is a sub-multicurve of $i_*[\Gamma_1']$, determines measured foliations of $S_{g,n}$, we conclude that $\Phi(\Gcal) = \Phi(\Gcal_1)$. In particular, we have
    \[
        \varphi(\Gcal)(i_*[\gamma']) = \varphi(\Gcal_1)(i_*[\gamma']) = \varphi(\Fcal)(i_*[\gamma']). \qedhere
    \]
\end{proof}

\subsection{Proof of the first part of Theorem \ref{theorem:MF}:  Realization, (1), and (2)}\label{subsec:Proof of Thm1 I}

\subsubsection{Proof for the case when $\Fcal$ is minimal and $x$-generic}
Consider $\Gcal$ in Proposition \ref{prop:LimitMFfromIntReduction}. By defining $\Phi(\Fcal)$ as $\Phi(\Gcal)\in \MF(S_{g,n})$, we obtain the equation $\pair{\Phi(\Fcal)}{[\zeta]}=\varphi(\Fcal)([\zeta])(:=\inf_{i_*[\gamma]=[\zeta]} \pair{\Fcal}{[\gamma]})$ for every $[\zeta]\in C(S_{g,n})$.

The map $\varphi(-)([\zeta])\from \MF(S_{g,n+1})\to \mathbb{R}$ is upper-semi continuous because it is an infimum of continuous functions
\[
    \left\{\pair{-}{[\gamma]}\from \MF(S_{g,n+1})\to \mathbb{R}\right\}_{[\gamma]\in i_*^{-1}([\zeta])}.
\]

Let us prove (2). We first show that the unstable measured foliations of point-pushing pseudo-Anosov mapping classes lie in $\Phi^{-1}(0)$. 
Let $[\psi]$ be a point-pushing pseudo-Anosov mapping class, and let $\Fcal_\psi$ denote its unstable measured foliation satisfying $[\psi]^{-1} \cdot \Fcal_\psi = \lambda^{-1} \cdot \Fcal_\psi$ for some $\lambda > 1$. For every $[\gamma] \in C(S_{g,n+1})$, we have
\[
    \pair{\Fcal_\psi}{[\psi]^k \cdot [\gamma]}
    = \pair{[\psi]^{-k} \cdot \Fcal_\psi}{[\gamma]}
    = \lambda^{-k} \pair{\Fcal_\psi}{[\gamma]}
    \to 0
\]
as $k \to \infty$.  Note that $\psi^k \circ \gamma$ is pre-homotopic to $\gamma$ for all $k \in \mathbb{Z}$.  Since the stable measured foliation of $\psi$ is the unstable measured foliation of $\psi^{-1}$, we have that all invariant measured foliations of point-pushing pseudo-Anosov mapping classes are in $\Phi^{-1}(0)$.

It is well-known that stable measured foliations of point-pushing pseudo-Anosov mapping classes are dense in $\MF(S_{g,n+1})$. A proof is as follows. The mapping class group $\MCG^+(S_{g,n+1})$ action on the space of projective measured foliations $\PMF$ is minimal \cite[Theorem 6.19]{FLP_ThurstonSurface},
so every point in $\PMF$ has a dense orbit. 
Let $[\psi]\in \Push(\pi_1(S_{g,n},x))$ be such that $[\psi]$ is pseudo-Anosov. 
For all $[\phi]\in \MCG^+(S_{g,n+1})$,
we have $[\phi]\cdot \Fcal_\psi=\Fcal_{\phi\circ \psi\circ\phi^{-1}}$ and 
$[\phi\circ \psi\circ\phi^{-1}]$ is a point-pushing pseudo-Anosov mapping class.

To complete the proof of (2), it suffices to show that $\Phi^{-1}(0)$ is a countable intersection of open dense subsets. Fix a multicurve $Z$ of $S_{g,n}$ that determines measured foliations of $S_{g,n}$. We have
\[
    \Phi^{-1}(0)=\bigcap_{m\in \mathbb{Z}^+,\,[\zeta]\in Z} \left\{\Fcal~:~\varphi(\Fcal)([\zeta])<1/m\right\}.
\]
Hence, $\Phi^{-1}(0)$ is a countable intersection of open subsets. \qed

\subsubsection{Reduction of the non-minimal case to the minimal case}\label{subsubsec:nonminimal_reduction}
Fix an $x$-generic measured foliation $\Fcal\in \MF(S_{g,n+1})$. Consider its decomposition $\Fcal_1\cup \Fcal_2\cup \dots \cup \Fcal_k$ by minimal and annulus components and their supports $S_i={\rm supp}(\Fcal_i)$'s. Suppose that $\gamma$ is an essential simple closed curve in $S_{g,n+1}$ that is in a minimal position with respect to $\Fcal$. For each $i$, we define a multi-arc $\gamma_i:= \gamma \cap S_i$ on $S_i$, which is a collection of arcs joining boundary components of $S_i$. The homotopy classes of $[\gamma_i]$'s are well-defined. We have
\[
    \pair{\Fcal}{[\gamma]}_S=\sum_{i=1}^k\pair{\Fcal_i}{[\gamma_i]}_{S_i},
\]
where the subscripts $S$ and $S_i$ indicate the surfaces where the intersection numbers are considered.

Without loss of generality, assume that $x \in {\rm int}(S_1)$; see Lemma~\ref{lem:minimal component of x}. We say that $\gamma$ is {\it combinatorially reduced with respect to $(\Fcal, x)$} if ${\rm int}(S_1) \setminus \gamma_1$ has no component that is a once–punctured disk punctured at $x$. If $\gamma$ is not combinatorially reduced, then $\gamma_1$ contains an arc component $\alpha$ whose endpoints lie on the same boundary component of $S_1$, such that $\alpha$ together with an arc $\beta \subset \partial S_1$ bounds a disk punctured at $x$. Replacing $\alpha$ in $\gamma$ by $\beta$ is a simple pull, and this operation removes the arc component $\alpha$ from $\gamma_1$.
By applying this simple pull finitely many times, we can reduce any $\gamma \in C(S_{g,n+1})$ to a combinatorially reduced form.

Suppose that $\gamma$ is combinatorially reduced. Then, we have
\begin{equation}\label{eqn:non-minimal}
    \inf_{\gamma\sim_{\pre} \gamma'} \pair{\Fcal}{[\gamma']}_{S_{g,n+1}}=\inf_{\gamma_1\sim_{\pre}\gamma_1'} \pair{\Fcal_1}{[\gamma_1']}_{S_1}+\sum_{i=2}^k\pair{\Fcal_i}{[\gamma_i]}_{S_i}.
\end{equation}
We sketch a proof of \eqref{eqn:non-minimal}.
The sequence of canonical simple pulls starting from $\gamma$ affects only $\gamma_1$, leaving $\gamma_i$ fixed for all $i \neq 1$, and the changes to $\gamma_1$ coincide with those obtained by applying canonical simple pulls directly to $\gamma_1$.
Although we have not rigorously discussed canonical reductions for properly embedded arcs in surfaces with non-empty boundary, the argument can be carried out in a manner analogous to the case of simple closed curves, and we omit the details.
Alternatively, one may collapse boundary components to punctures and consider simple closed curves bounding tubular neighborhoods of the arcs; see the related discussion in the next paragraph.

We define
    \[
        \Phi(\Fcal):=\Phi(\Fcal_1)\cup \Fcal_2\cup \dots \cup \Fcal_k.
    \]
Consider the punctured surface $\widehat{S}_1$ obtained by collapsing each boundary component of $S_1$ to a puncture. 
Then $\Fcal_1$ induces an $x$-generic minimal measured foliation $\widehat{\Fcal}_1$, and $\gamma_1$ induces a multi-arc $\widehat{\gamma}_1$ joining punctures in $\widehat{S_1}$. 
Hence it suffices to show the following: for an $x$-generic minimal $\Fcal \in \MF(S_{g,n+1})$ and a pre-essential arc $\alpha$, we have 
\begin{equation}\label{eqn:Phi Arcs}
    \Phi(\Fcal)(i_*[\alpha]) = \inf_{\alpha' \sim_{\pre} \alpha} \pair{\Fcal}{[\alpha']}.
\end{equation}
An arc joining punctures is called {\it pre-essential} if it does not bound a disk in $S_{g,n}$. 
If $\alpha$ joins distinct punctures, let $\gamma$ denote the boundary of a small tubular neighborhood of $\alpha$. 
Then, for all $\Fcal \in \MF(S)$, we have 
\[
    2\pair{\Fcal}{[\alpha]} = \pair{\Fcal}{[\gamma]}.
\]
When $\alpha$ joins the same puncture, we take one boundary component $\gamma$ of a tubular neighborhood of $\alpha$, and then 
\[
    \pair{\Fcal}{[\alpha]} = \pair{\Fcal}{[\gamma]}.
\]
Hence Equation~\eqref{eqn:Phi Arcs} for arcs $\alpha$ follows from the case of simple closed curves.

\subsection{Forgetting more than one puncture}
The following proposition implies that an $m$-puncture forgetting map $\MF(S_{g,n+m})\to \MF(S_{g,n})$ is well-defined by the $m$-iteration of one-puncture forgetting maps for measured foliations and independent of the order of forgetting.

\begin{prop}\label{prop:m Punctures}
    Consider the case when we fill $m>1$ punctures via an embedding $i\from S_{g,n+m}\to S_{g,n}$. Label by $x_1,x_2,\dots,x_m$ the $m$ punctures that are filled. Denote by $i_k\from S_{g,n+m-k+1}\to S_{g,n+m-k}$ the one-puncture-filling map, filling $x_k$, for $1\le k\le m$, such that $i=i_m\circ i_{m-1}\circ \dots \circ i_1$. Denote by $\Phi_k\from \MF(S_{g,n+m-k+1})\to \MF(S_{g,n+m-k})$ the puncture forgetting map induced by $i_k$ for $1\le k\le m$. Define $\Phi\from \MF(S_{g,n+m})\to \MF(S_{g,n})$ by $\Phi:=\Phi_m\circ \Phi_{m-1}\circ \dots \circ \Phi_1$. Then, for any $\Fcal\in \MF(S_{g,n+m})$ and $\gamma\in C(S_{g,n})$, we have
    \[
        \pair{\Phi(\Fcal)}{[\gamma]}=\inf_{ \substack{i_*[\alpha]=[\gamma] \\ [\alpha] \in C(S_{g,n+m})}}  \pair{\Fcal}{[\alpha]} 
    \]
\end{prop}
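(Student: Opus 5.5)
Induction on $m$ reduces everything to the case $m=2$, together with a ``transitivity of infima'' statement. Write $\Phi' := \Phi_{m-1}\circ\dots\circ\Phi_1$ and $i' := i_{m-1}\circ\dots\circ i_1$. By the induction hypothesis,
\[
\pair{\Phi'(\Fcal)}{[\beta]}=\inf_{i'_*[\alpha]=[\beta]}\pair{\Fcal}{[\alpha]}
\]
for every $[\beta]\in C(S_{g,n+1})$. By the one-puncture case (Theorem \ref{theorem:MF}, established in Section \ref{subsec:Proof of Thm1 I} and Proposition \ref{prop:PForget NonGeneric}) applied to $\Phi_m$,
\[
\pair{\Phi(\Fcal)}{[\gamma]}=\inf_{(i_m)_*[\beta]=[\gamma]}\pair{\Phi'(\Fcal)}{[\beta]} =\inf_{(i_m)_*[\beta]=[\gamma]}\ \inf_{i'_*[\alpha]=[\beta]}\pair{\Fcal}{[\alpha]} .
\]
The double infimum runs over those $[\alpha]\in C(S_{g,n+m})$ with $i'_*[\alpha]$ an essential class of $S_{g,n+1}$ and $i_*[\alpha]=[\gamma]$. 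So the claim is equivalent to the following: restricting to those $[\alpha]$ that remain essential at every intermediate stage does not change the infimum.

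This is the one genuine point, and I would handle it directly. Suppose $[\alpha]\in C(S_{g,n+m})$ has $i_*[\alpha]=[\gamma]$ essential, but at some intermediate stage $\alpha$ becomes peripheral, say around a filled puncture. That is impossible, because peripheral curves stay peripheral or trivial after further filling, whereas $i_*[\alpha]=[\gamma]$ is essential. The other possibility is that $\alpha$ becomes trivial at an intermediate stage, and then it stays trivial, again contradicting essentiality of $[\gamma]$. Hence every $[\alpha]$ with $i_*[\alpha]=[\gamma]$ essential has $i'_*[\alpha]$ essential. Here one must be careful with the convention that $i_*$ maps peripheral classes to $0$, and that $C$ consists of essential classes only. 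The conclusion is that the two index sets coincide, so the infima agree, and the induction closes.

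Independence of the order of forgetting then follows immediately. The right-hand side $\inf_{i_*[\alpha]=[\gamma]}\pair{\Fcal}{[\alpha]}$ depends only on $i$, not on the factorization, and measured foliations on $S_{g,n}$ are determined by their intersection functionals on $C(S_{g,n})$. The same injectivity justifies concluding equality of the $\Phi$'s in $\MF$, rather than merely equality of functionals.

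The main obstacle I expect is hidden in the first display. The one-puncture theorem is stated for $\Phi\from\MF(S_{g,n'+1})\to\MF(S_{g,n'})$ with a specific forgotten puncture $x$. At intermediate stages, the puncture being forgotten has to play the role of $x$ on a surface whose remaining punctures include the not-yet-forgotten ones. This is just relabeling, valid since $\chi<0$ persists. I would also check that the assertions of Theorem \ref{theorem:MF} (1)–(4) transfer to the composite. Upper semicontinuity of $\Fcal\mapsto\pair{\Phi(\Fcal)}{[\gamma]}$ is immediate from the infimum formula. Items (3)–(4) follow by iterating the one-puncture extremal-length equality, using that $\pi$ factors through the intermediate Teichm\"uller spaces and that the inner infima compose.
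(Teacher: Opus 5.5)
Your proof is correct. The paper leaves this proposition to the reader, and your argument is the natural intended one: compose the one-puncture formula inductively, note that non-essential classes stay non-essential under further filling so the double infimum ranges over exactly $\{[\alpha]: i_*[\alpha]=[\gamma]\}$, and deduce order-independence from injectivity of $\Fcal\mapsto\pair{\Fcal}{-}$ on $\MF(S_{g,n})$.

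Your closing remarks go beyond the proposition. Only the equality in item (3) transfers by iteration; the characterization of when the infimum is attained has no immediate multi-puncture analogue. Item (2), which you omit, also transfers, from upper semicontinuity together with $\Phi_1^{-1}(0)\subset\Phi^{-1}(0)$.
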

The proof is left to the reader.


\section{Extremal lengths and forgetting maps for measured foliations}
Throughout, we denote by $\pi\from \Tcal_{g,n+1} \to \Tcal_{g,n}$ the puncture forgetting map between Teichm\"uller spaces. In this section, we prove Theorems \ref{thm:InequalityEL} and \ref{thm:LowBddTeichDist}, which establish Theorem \ref{theorem:MF}-(3,4). We also prove Theorem 
\ref{thm:Im_horospheres_}, which classifies the $\pi$-images of horospheres.

\begin{thm}\label{thm:InequalityEL}
Consider $X \in \Tcal_{g,n+1}$ and non-zero $\Fcal \in \MF(S_{g,n+1})$. Then, we have
    \[
        \EL(\Phi(\Fcal);\pi(X))=\inf_{X'\in \pi^{-1}(\pi(X))}\EL(\Fcal;X').
    \]
The infimum is attained as the minimum if and only if $\Fcal$ is $x$-non-generic of non-annular type.
\end{thm}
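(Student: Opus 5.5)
We use Kerckhoff's formula $\EL(\Gcal;Y)=\sup_{[\alpha]}\pair{\Gcal}{[\alpha]}^2/\EL([\alpha];Y)$ and the monotonicity of extremal length under forgetting a puncture, and prove the two inequalities separately. The attainment clause is then a refinement of the upper bound in the non-generic case.

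\textbf{Lower bound.} Fix $X'\in\pi^{-1}(\pi(X))$ and $[\zeta]\in C(S_{g,n})$. For every $[\gamma]$ with $i_*[\gamma]=[\zeta]$, the inclusion $X'\setminus\{x\}\hookrightarrow \pi(X)$ gives $\EL([\gamma];X')\ge \EL([\zeta];\pi(X))$, since curves pre-homotopic to $\gamma$ form a larger family. Also $\pair{\Fcal}{[\gamma]}\ge \pair{\Phi(\Fcal)}{[\zeta]}$ by the defining property of $\Phi$ (the realization part of Theorem \ref{theorem:MF}). Fix a $\gamma$ realizing or approximating the infimum in $\varphi(\Fcal)([\zeta])$. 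Then
\[
\EL(\Fcal;X')\ge \frac{\pair{\Fcal}{[\gamma]}^2}{\EL([\gamma];X')}.
\]
Here a direct comparison is subtle, because $\EL([\gamma];X')$ is larger than $\EL([\zeta];\pi(X))$. The cleaner route is via quadratic differentials. Let $q$ be the Hubbard--Masur differential on $X'$ with horizontal foliation $\Fcal$, so $\EL(\Fcal;X')=\|q\|$. On $\pi(X)$, $q$ is meromorphic with at most a simple pole at $x$, hence integrable. Its horizontal foliation, viewed on $S_{g,n}$, has transverse measure at least $\pair{\Phi(\Fcal)}{-}$ on every class $[\zeta]$. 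Indeed, the horizontal measure of any curve in $[\zeta]$ avoiding $x$ is at least $\pair{\Fcal}{[\gamma]}$ for the corresponding $[\gamma]$, so it is at least $\varphi(\Fcal)([\zeta])$. By the minimal-norm characterization of extremal length (Gardiner/Marden--Strebel: $\EL(\Gcal;Y)\le\|q\|$ whenever $q$ has horizontal measure dominating $\Gcal$ on all curves), we get $\EL(\Phi(\Fcal);\pi(X))\le\|q\|=\EL(\Fcal;X')$.

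\textbf{Upper bound.} We produce $X'_k$ with $\EL(\Fcal;X'_k)\to\EL(\Phi(\Fcal);\pi(X))$.

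In the $x$-non-generic non-annular case, take $q_0$ on $\pi(X)$ realizing $\EL(\Phi(\Fcal);\pi(X))$. By the construction of $\Phi$ in cases (A) and (B) of Section \ref{subsec:Non-generic}, we place $x$ at the point corresponding to $y$: a zero of $q_0$ in case (B), or any point when $\deg\ge2$. Marking $x$ there gives $X'$ on which the horizontal foliation of $q_0$ is exactly $\Fcal$. In case (B), $x$ becomes a one-prong at distance zero, i.e. the saddle connection collapses; since a zero-length saddle connection is not allowed, we instead approximate. Then $\EL(\Fcal;X')=\|q_0\|$, which gives equality and hence attainment.

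In general we use the point-pushing limit from Proposition \ref{prop:LimitMFfromIntReduction}. We have $[\psi_l]\cdot\Fcal\to\Gcal$, and $\Gcal$ is non-generic non-annular with $\Phi(\Gcal)=\Phi(\Fcal)$. The point-pushes fix $\pi(X)$, so
\[
\inf_{X'}\EL(\Fcal;X')=\inf_{X'}\EL([\psi_l]\cdot\Fcal;X').
\]
Continuity of $\EL$ in the foliation, together with the non-generic case applied to $\Gcal$, gives the upper bound $\EL(\Phi(\Gcal);\pi(X))$. The annular and non-minimal cases follow similarly: in the annular case, shrink the annulus by placing $x$ near a point to approximate the collapsed star.

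\textbf{Non-attainment.} Suppose $\Fcal$ is $x$-generic, or annular, and equality holds at some $X'$. The pushed-forward $q$ from the lower-bound argument then attains the minimal norm. By uniqueness of the extremal differential, $q$ equals $q_0$, so its horizontal foliation equals $\Phi(\Fcal)$ and no measure is lost on any curve. In the generic case the leaf from $x$ is infinite and dense in its minimal component. Removing the pole at $x$ makes that trajectory regular, so the horizontal foliation of $q$ on $S_{g,n}$ is $\Fcal$ with $x$ forgotten. Some canonical simple pull strictly reduces the measure (Lemma \ref{lem:IntNumbCanonPull}), so the realized measure on some $[\zeta]$ exceeds $\varphi(\Fcal)([\zeta])$, which contradicts equality. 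The annular case is similar, since the circle leaves through $x$ contribute strictly.

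The main obstacle is making the strictness argument rigorous, namely identifying the horizontal foliation of $q$ on $\pi(X)$ with a foliation strictly dominating $\Phi(\Fcal)$.
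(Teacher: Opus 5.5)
\textbf{Gap: the non-attainment argument.} Your strictness mechanism cannot work. On $\pi(X)$ a curve in $[\zeta]$ may pass over $x$. So the height of $[\zeta]$ with respect to $q=q_{\Fcal,X'}$ is an infimum over representatives of \emph{every} class $[\gamma]$ with $i_*[\gamma]=[\zeta]$, and is therefore at most $\varphi(\Fcal)([\zeta])=\pair{\Phi(\Fcal)}{[\zeta]}$. By your own lower-bound reasoning it is exactly equal. The canonically pulled curves from Lemma \ref{lem:IntNumbCanonPull} are already among the competitors. Hence no class ever has ``realized measure exceeding $\varphi(\Fcal)([\zeta])$''. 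The strict domination you name as the main obstacle is not merely hard to prove; it is false.

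The missing idea is the pole at $x$. If $\Fcal$ is $x$-generic or of annular type, then $x$ is a one-prong singularity in every representative, since there is no collapsible saddle connection at $x$. So $q_{\Fcal,X'}$ has a simple pole at $x$ for \emph{every} $X'$ in the fiber. The paper converts this into non-attainment with Gardiner's first-variation formula: moving $x$ along its horizontal leaf strictly decreases $\EL(\Fcal;\cdot)$ (Proposition \ref{prop:GradDescFlow}), so no point of the fiber is a minimum. In the annular case it also uses $\EL(\Fcal;X)\ge\EL(\Fcal_1;X)+\EL(\Fcal_2;X)>\EL(\Fcal_1;X)\ge\EL(\Phi(\Fcal);\pi(X))$.

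Inside your framework you could argue by rigidity instead. Equality forces $|q|^{1/2}$ to be an extremal metric for the vertical foliation of $q_{\Phi(\Fcal),\pi(X)}$. By uniqueness of extremal metrics, $q$ is then a constant multiple of $q_{\Phi(\Fcal),\pi(X)}$, hence holomorphic at $x$, which is a contradiction. The ``uniqueness of the extremal differential'' you cite is stated for holomorphic differentials on $\pi(X)$, so it needs exactly this justification. Once you have $q=c\,q_0$, the contradiction comes from the pole, not from intersection numbers.

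\textbf{The rest.} Your lower bound is a genuinely different route from the paper's. The paper runs the gradient flow of $\EL(\Fcal;\cdot)$ in the fiber, extracts an $x$-non-generic limit $\Hcal$, and compares $\Phi(\Hcal)$ with $\Phi(\Gcal)$ via Lemma \ref{lem:InequELTwoNonGeneric}. Your argument is much shorter and treats all types of $\Fcal$ at once. However, the Marden--Strebel minimal-norm theorem does not literally apply to $q$, which has a simple pole at a non-puncture. Use $|q|^{1/2}$ as a test metric instead:
\begin{itemize}
\item every closed curve in $[\zeta]$ on $\pi(X)$ has $|q|^{1/2}$-length at least its horizontal measure, which is at least $\varphi(\Fcal)([\zeta])$;
\item hence $\EL([\zeta];\pi(X))\ge\pair{\Phi(\Fcal)}{[\zeta]}^2/\|q\|$;
\item Kerckhoff's formula then gives $\EL(\Phi(\Fcal);\pi(X))\le\|q\|=\EL(\Fcal;X')$.
\end{itemize}
Your generic-case upper bound is exactly the paper's. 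In case (B), drop the approximation. Collapsing $[x,y]$ is a Whitehead move, so mark $x$ exactly at the corresponding zero of $q_{\Phi(\Fcal),\pi(X)}$. Choose it in the homotopy class obtained by tracking $x$ through the equivalence, as the paper does. This gives $q_{\Fcal,X'}=q_{\Phi(\Fcal),\pi(X)}$, whereas approximating would lose attainment.
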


Recall the definitions of horoballs and horospheres in Equation \eqref{eqn:horo_ball_sphere}. As a corollary of Theorem \ref{thm:InequalityEL}, we obtain the following classification of $\pi$-images of horospheres.
\begin{thm}\label{thm:Im_horospheres_}
       For all $r>0$ and non-zero $\Fcal\in \MF(S_{g,n+1})$, the following hold.
       \begin{enumerate}
            \item If $\Phi(\Fcal)= 0$, then
                \[
                    \pi\left(S_h(\Fcal,r)\right)=\Tcal_{g,n}.
                \]
            \item If $\Phi(\Fcal)\neq 0$ and $\Fcal$ is $x$-generic or $x$-non-generic of annular type, then
                \[
                    \pi\left(S_h(\Fcal,r)\right)=B_h\left(\Phi(\Fcal),r\right).
                \]
            \item If $\Phi(\Fcal)\neq0$ and $\Fcal$ is $x$-non-generic of non-annular type, then 
                \[
                    \pi\left(S_h(\Fcal,r)\right)=\overline{B_h\left(\Phi(\Fcal),r\right)}.
                \]
       \end{enumerate}
\end{thm}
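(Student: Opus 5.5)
The plan is to deduce the statement from Theorem \ref{thm:InequalityEL}, together with two facts about the fiber $\pi^{-1}(Y)$ over a point $Y\in\Tcal_{g,n}$.

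\emph{Connectedness of the fiber.} $\pi^{-1}(Y)$ is connected; it is the universal cover of the punctured surface $Y$. The function $X'\mapsto \EL(\Fcal;X')$ is continuous on it.

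\emph{Unbounded extremal length.} The function $X'\mapsto \EL(\Fcal;X')$ is unbounded above on $\pi^{-1}(Y)$. To see this, note that $\Fcal\neq 0$ has positive intersection with some $[\gamma]\in C(S_{g,n+1})$. Letting the point $x$ escape to a cusp of $Y$ (or applying point-pushing maps), the extremal length of $\Fcal$ is bounded below via Kerckhoff/Minsky-type estimates, $\EL(\Fcal;X')\ge \pair{\Fcal}{[\alpha]}^2/\EL([\alpha];X')$. Here $\alpha$ is a curve, such as one surrounding $x$ and another puncture, whose extremal length tends to $0$ as $x$ approaches that puncture. For this we need $\pair{\Fcal}{[\alpha]}$ to be bounded below for such $\alpha$. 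If that fails for all punctures, we instead push $x$ around a long loop in $Y$, so that $X'$ is translated by point-pushing maps $[\psi]$ with $\pair{\Fcal}{[\psi]^{-1}[\gamma]}\to\infty$, using Theorem \ref{theorem:CurveReduction}-(6)-type growth along reversely-directed paths. I expect this unboundedness step to be the main technical point.

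With these in hand, for any $Y$ the image $\{\EL(\Fcal;X'):X'\in\pi^{-1}(Y)\}$ is an interval. By Theorem \ref{thm:InequalityEL} its infimum is $m(Y):=\EL(\Phi(\Fcal);Y)$, and its supremum is $\infty$. Hence $Y\in\pi(S_h(\Fcal,r))$ exactly when $r$ lies in this interval.

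\emph{Case (1).} If $\Phi(\Fcal)=0$, then $m(Y)=0<r$ for every $Y$, and the infimum is not attained: the minimum would have to be $0$, but $\EL(\Fcal;X')>0$ for $\Fcal\neq0$. So the interval is $(0,\infty)$, it contains $r$, and $\pi(S_h(\Fcal,r))=\Tcal_{g,n}$.

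\emph{Case (2).} If $\Phi(\Fcal)\ne0$ and the infimum is not attained, the interval is $(m(Y),\infty)$. So $Y$ is in the image if and only if $m(Y)<r$, that is, if and only if $Y\in B_h(\Phi(\Fcal),r)$.

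\emph{Case (3).} In the $x$-non-generic non-annular case the infimum is attained, the interval is $[m(Y),\infty)$, and we obtain $\overline{B_h(\Phi(\Fcal),r)}$.

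The attainment dichotomy is exactly the last sentence of Theorem \ref{thm:InequalityEL}, so beyond that theorem the only real inputs are the two fiber facts above: continuity of $\EL$ on a connected fiber, and the unbounded-above property.
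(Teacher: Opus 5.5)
Your proposal is correct and follows the paper's argument: the paper also combines Theorem \ref{thm:InequalityEL} with the fact that $\sup_{X'\in\pi^{-1}(Y)}\EL(\Fcal;X')=\infty$, which it calls straightforward, and leaves implicit the intermediate-value step on the connected fiber that you make explicit. One caution on your unboundedness sketch: for $n=0$ there are no cusps to push $x$ into, and Theorem \ref{thm:CurveReduction}-(6) only concerns $x$-generic minimal $\Fcal$, so the fallback is better justified by taking a point-pushing pseudo-Anosov $[\psi]$ whose unstable foliation $\Fcal_\psi$ satisfies $\pair{\Fcal}{\Fcal_\psi}>0$; then $\EL(\Fcal;[\psi]^k\cdot X)\ge \pair{\Fcal}{[\psi]^k[\beta]}^2/\EL([\beta];X)\to\infty$ for any $[\beta]\in C(S_{g,n+1})$.
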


\begin{proof}
    It is straightforward to show that for all $Y\in \Tcal_{g,n}$, we have 
    \[
        \sup_{X \in \pi^{-1}(Y)} \EL(\mathcal{F}; X) = \infty.
    \]
    Hence, $Y\in \pi(S_h(\Fcal,r))$ if and only if there exists $X\in \pi^{-1}(Y)$ with $\EL(\Fcal;X)\le r$. Then the desired conclusion follows from Theorem \ref{thm:InequalityEL}.  
\end{proof}

Suppose that $X$ is a Riemann surface and $\Fcal$ is a measured foliation on $X$. Fix a conformal metric $\sigma$ on $X$. The length of $\Fcal$ with respect to $(X,\sigma)$ is defined by
\[
    L(\Fcal;(X,\sigma)):=\inf_{\Fcal\sim \Fcal'} \int_X \rho\times \alpha_F,
\]
where $\alpha_\Fcal$ denotes the transverse measure of $\Fcal$ and $\rho\times \alpha_F$ is locally the product of $\alpha_\Fcal$ and the $\sigma$-length measure along leaves of $\Fcal$. The infimum is taken over the equivalence class of $\Fcal$. The {\it extremal length} of $\Fcal$ with respect to $X$ is defined by
\[
    \EL(\Fcal;X):= \sup_{\sigma} \frac{L(\Fcal;(X,\sigma))^2}{\Area(X,\sigma)},
\]
where the supremum is taken over conformal metrics $\sigma$ on $X$ and $\Area(X,\sigma)$ is the area of $X$ with respect to $\sigma$.

Denote by $\mathcal{QT}_{g,n}$ the holomorphic vector bundle over $\Tcal_{g,n}$ whose fiber over $X\in\Tcal_{g,n}$ is the set $Q(X)$ of quadratic differentials on $X$. Let $\mathcal{QT}_{g,n}^\times$ be the collection of non-zero quadratic differentials. By \cite{HubbardMasur_QD}, we have a homeomorphism $\MF(S_{g,n})\times \Tcal_{g,n}\to \mathcal{QT}_{g,n}^\times$ sending $(\Fcal,X)$ to $q_{\Fcal,X}$ such that the horizontal foliation of $q_{\Fcal,X}$ is equivalent to $\Fcal$. The singular Euclidean metric $\sigma_{\Fcal,X}$ induced from $q_{\Fcal,X}$ is an {\it extremal metric} of $\Fcal$, i.e., 
\[
    \EL(\Fcal;X)=\frac{L(\Fcal;(X,\sigma_{\Fcal,X}))^2}{\Area(X,\sigma_{F,X})}
\]

The following lemma is well-known. See \cite{GM_EL_Teich} and \cite[Lemma 5.1]{Minsky_TeichGeod_EndsHyp3}.

\begin{lem}\label{lem:IneqBtwELInt}
    For any measured foliations $\Fcal$ and $\Gcal$ on $S_{g,n}$ and any $X\in \Tcal_{g,n}$, we have
    \[
        \EL(\Fcal;X) \EL(\Gcal;X) \ge \pair{\Fcal}{\Gcal}^2.
    \]
    The equality holds if and only if $\Fcal$ and $\Gcal$ are the horizontal and the vertical foliations of some quadratic differential $q\in Q(X)$, respectively.
\end{lem}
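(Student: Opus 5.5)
The plan is to prove the inequality first and then the equality case, following the standard argument via Hubbard–Masur quadratic differentials and the definition of extremal length as a supremum over conformal metrics.

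For the inequality, fix $X$ and the quadratic differential $q=q_{\Fcal,X}$ whose horizontal foliation is $\Fcal$. Let $\Gcal$ be its vertical foliation. Then $\EL(\Fcal;X)=\EL(\Gcal;X)=\Area(q)$ and $\pair{\Fcal}{\Gcal}=\Area(q)$, so equality in the lemma is attained. For arbitrary $\Fcal,\Gcal$ and $X$, I would pick a general conformal metric $\sigma$ and use that $\EL(\Fcal;X)\ge L(\Fcal;\sigma)^2/\Area(\sigma)$. The cleanest route is to use $\sigma=\sigma_{\Gcal,X}$, the extremal metric of $\Gcal$. Then $\EL(\Gcal;X)=\Area(\sigma_{\Gcal,X})$ after normalizing so that the $q_{\Gcal,X}$-area equals $\EL(\Gcal;X)$. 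It then suffices to show $L(\Fcal;\sigma_{\Gcal,X})\ge \pair{\Fcal}{\Gcal}$.

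This last bound is the key step. The $\sigma_{\Gcal,X}$-length of any leaf segment of (a representative of) $\Fcal$ is at least its transverse measure with respect to $\Gcal$, since $|dz|\ge|\mathrm{Re}\,dz|$ in natural coordinates of $q_{\Gcal,X}$, whose vertical foliation is $\Gcal$. Integrating against the transverse measure $\alpha_\Fcal$ gives $\int\rho\times\alpha_\Fcal\ge i(\Fcal,\Gcal)$, using the product/integral formula for intersection numbers of measured foliations (e.g.\ approximate $\Fcal$ by weighted simple closed curves and use $\ell_{q}(\gamma)\ge\pair{\Gcal}{[\gamma]}$, then pass to limits by continuity of $\pair{-}{-}$ and of length). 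I expect this to be the main technical obstacle, because one must handle the infimum over representatives $\Fcal'\sim\Fcal$ and the density argument carefully. The result is $\EL(\Fcal;X)\,\EL(\Gcal;X)\ge \pair{\Fcal}{\Gcal}^2$.

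For the equality case, the converse direction was observed above. For the forward direction, suppose equality holds. Then every inequality in the chain is sharp. In particular, $\sigma_{\Gcal,X}$ is extremal for $\Fcal$, so by uniqueness of extremal metrics (Hubbard–Masur/Jenkins–Strebel uniqueness up to scaling) $\sigma_{\Fcal,X}$ is a constant multiple of $\sigma_{\Gcal,X}$. Hence $q_{\Fcal,X}=-c\,q_{\Gcal,X}$ for some $c>0$, since equality in $|dz|\ge|\mathrm{Re}\,dz|$ along leaves of $\Fcal$ forces those leaves to be horizontal for $q_{\Gcal,X}$. Thus $\Fcal$ and $\Gcal$ are the horizontal and vertical foliations of a single $q$, after rescaling.
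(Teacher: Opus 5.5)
Your argument is correct and is essentially the standard proof (Minsky's Lemma~5.1) that the paper cites rather than reproduces: you test $\EL(\Fcal;X)$ against the extremal metric $|q_{\Gcal,X}|^{1/2}$, bound the $\Fcal$-length from below by $\pair{\Fcal}{\Gcal}$, and in the equality case use uniqueness of extremal metrics to get $q_{\Fcal,X}=-c\,q_{\Gcal,X}$ (that sign is right only under the paper's convention that $\Gcal$ is the \emph{horizontal} foliation of $q_{\Gcal,X}$, not the vertical one as you wrote). Your ``after rescaling'' is genuinely needed: equality is invariant under $\Gcal\mapsto t\Gcal$, while the vertical foliation of $q_{\Fcal,X}$ is fixed, so the equality clause must be read up to positive scaling and for $\Fcal,\Gcal\neq 0$.
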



Recall that there are three possibilities (A), (B), and (C) for $x$-non-genericity, as illustrated in Figure~\ref{fig:Forget_nondeg}, such that (A) and (B) are of non-annular type and (C) is of annular type.

\begin{lem}[Theorem \ref{thm:InequalityEL} for non-generic case]\label{lem:InequalityELNonGeneric}
    Let $\Fcal$ be an $x$-non-generic measured foliation on $S_{g,n+1}$. For each $X\in \Tcal_{g,n+1}$, we have
    \[
        \EL(\Phi(\Fcal);\pi(X))=\inf_{X'=\pi^{-1}(\pi(X))} \EL(\Fcal;X')
    \]
    Moreover, there exists $X'\in \pi^{-1}(\pi(X))$ such that $\EL(\Fcal;X')=\EL(\Phi(\Fcal);\pi(X))$ if and only if $\Fcal$ is $x$-non-generic of non-annular type.
\end{lem}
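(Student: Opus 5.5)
We prove the two inequalities separately, and then decide when the infimum is attained.

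\emph{Lower bound.} Fix $X'\in\pi^{-1}(\pi(X))$ and let $q=q_{\Fcal,X'}$ be the quadratic differential whose horizontal foliation is $\Fcal$. Forgetting $x$ yields a finite-area singular flat metric $\sigma$ on $\pi(X)$; the point $x$ is at worst a simple pole, which is removable as a marked point. Then $\Area(\pi(X),\sigma)=\Area(X',\sigma_{\Fcal,X'})$. We claim $L(\Phi(\Fcal);(\pi(X),\sigma))\ge L(\Fcal;(X',\sigma_{\Fcal,X'}))$, or, more robustly, that the vertical foliation $\Gcal$ of $q$ satisfies $\pair{\Phi(\Fcal)}{\Gcal'}\ge\ldots$. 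A cleaner route is to use the Kerckhoff/Gardiner--Masur characterization $\EL(\Fcal;X)=\sup_{\alpha}\pair{\Fcal}{\alpha}^2/\EL(\alpha;X)$. For each $[\zeta]\in C(S_{g,n})$ and each $\gamma$ with $i_*[\gamma]=[\zeta]$, the monotonicity $\EL(\gamma;X')\ge \EL(\zeta;\pi(X))$ (the family of curves only shrinks when $x$ is filled) gives
\[
\frac{\pair{\Phi(\Fcal)}{\zeta}^2}{\EL(\zeta;\pi(X))}\le \frac{\pair{\Fcal}{\gamma}^2}{\EL(\gamma;X')}\le \EL(\Fcal;X').
\]
Here we pick $\gamma$ realizing (or, for minimal $\Fcal$, approximating) the infimum $\varphi(\Fcal)([\zeta])$, which Proposition \ref{prop:PForget NonGeneric} provides. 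This is not quite right: choosing $\gamma$ with $\EL(\gamma;X')\approx\EL(\zeta;\pi(X))$ and choosing $\gamma$ with small intersection are different choices. We therefore argue by the length--area definition instead. For the extremal metric $\rho$ of $\Phi(\Fcal)$ on $\pi(X)$, the same $\rho$ is admissible on $X'$ with equal area, and any representative of $\Fcal$ pushed into $S_{g,n}$ is a representative of $\Phi(\Fcal)$ of no greater $\rho$-length. In cases (A) and (B) this holds by construction. In case (C), collapsing the annulus $\overline U$ to a star-tree only shortens. Hence $L(\Fcal;\rho)\ge L(\Phi(\Fcal);\rho)$, which gives $\EL(\Fcal;X')\ge\EL(\Phi(\Fcal);\pi(X))$.

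\emph{Upper bound and attainment.} Let $q_0=q_{\Phi(\Fcal),\pi(X)}$. In cases (A) and (B), place $x$ at a zero or regular point of $q_0$ so that the horizontal foliation of $q_0$ on $\pi(X)\setminus\{x\}$ is $\Fcal$. In case (A), $x$ is a singularity of degree $\deg_\Fcal(x)$, so it must be a zero of order $\deg_\Fcal(x)-2$. In case (B), we split the zero $y$ of $q_0$ into a pole at $x$ joined by a horizontal saddle connection to $y$. This is realized by a point $X'$ with $x$ placed at the appropriate position, and $\Fcal$ and its relative position to $y$ are matched via Hubbard--Masur applied on $S_{g,n+1}$. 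Then $q_0$, viewed on $X'$, has horizontal foliation $\Fcal$, so $\EL(\Fcal;X')\le \EL(\Phi(\Fcal);\pi(X))$ and equality is attained. In case (C) we use a degeneration: the foliation is approximated by putting $x$ in a cylinder of $q_0$ at height $\epsilon$ with a short saddle connection $[x,y]$ of length $\epsilon$. The extremal length then tends to $\EL(\Phi(\Fcal);\pi(X))$ as $\epsilon\to0$, which establishes the infimum.

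\emph{Non-attainment in case (C).} Suppose $\EL(\Fcal;X')=\EL(\Phi(\Fcal);\pi(X))$. Then the extremal differential $q_{\Fcal,X'}$, viewed on $\pi(X)$, must achieve the length--area inequality with equality, so it is the extremal metric for $\Phi(\Fcal)$. This forces $q_{\Fcal,X'}=q_0$ up to the marked point, and the horizontal foliation of $q_0$ with $x$ added cannot contain an annular component around a saddle connection $[x,y]$ between two poles. This is because $q_0$ is holomorphic at $y$, and $y$ is not a marked point of $S_{g,n}$ in case (C); this is a contradiction. The main obstacle is making the shortening claim in the lower bound rigorous in case (C), where it relies on the uniqueness of extremal metrics.
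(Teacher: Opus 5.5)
Your lower bound (after you drop the Kerckhoff attempt) is essentially the paper's: the extremal metric of $\Phi(\Fcal)$ on $\pi(X)$, used as a test metric on $X'$. The annular case (C), however, is not actually handled.

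\emph{Upper bound in case (C).} Your sentence about putting $x$ ``in a cylinder of $q_0$ at height $\epsilon$'' is not a construction. You only get to choose the position of $x$, i.e.\ the point $X'$; the differential $q_{\Fcal,X'}$ is then determined. Moreover, $q_0$ need not have any cylinder. The annular part of $\Fcal$ is a weighted curve $c\,[\gamma_{xy}]$, where $\gamma_{xy}$ bounds a disk containing only $x$ and $y$, and its weight $c$ is fixed. The convergence $\EL(\Fcal;X')\to\EL(\Phi(\Fcal);\pi(X))$ is exactly what must be proved, and nothing in your sketch bounds $\EL(\Fcal;X')$ from above. The missing idea is the paper's:
\begin{enumerate}
\item Write $\Fcal=\Fcal_1\cup\Fcal_2$, with $\Fcal_2=c\,[\gamma_{xy}]$ and $\Fcal_1$ of non-annular type, so that $\Phi(\Fcal_1)=\Phi(\Fcal)$.
\item Take $X_0$ with $\EL(\Fcal_1;X_0)=\EL(\Phi(\Fcal);\pi(X))$ and $x_0$ on a horizontal saddle connection $[x_0,y]$ of $q_{\Fcal_1,X_0}$.
\item Slide $x_t\to y$ along this saddle connection. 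Then $\EL(\Fcal_1;X_t)$ stays constant while $\EL(\Fcal_2;X_t)\to 0$.
\item Conclude with $\sqrt{\EL(\Fcal;X_t)}\le\sqrt{\EL(\Fcal_1;X_t)}+\sqrt{\EL(\Fcal_2;X_t)}$.
\end{enumerate}

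\emph{Non-attainment in case (C).} Your argument rests on a false claim. In case (C), $y$ is a $1$-prong, and $1$-prongs occur only at marked points. So $y$ is one of the $n$ punctures of $S_{g,n}$, and $q_0$ may well have a simple pole there. The contradiction has to be found at $x$ instead. The critical leaf from $x$ ends at the marked $1$-prong $y$, so no Whitehead move removes the $1$-prong at $x$. Hence $q_{\Fcal,X'}$ has a simple pole at $x$ for every $X'$, whereas $q_0$ is holomorphic at the unmarked point $x$.

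Your extremality step also runs the wrong way. From $L(\Phi(\Fcal);\sigma)\le L(\Fcal;\sigma)$ you cannot conclude that $|q_{\Fcal,X'}|$ is extremal for $\Phi(\Fcal)$. Instead, use $|q_0|$ as a test metric for $\Fcal$ on $X'$; equality then makes it extremal for $\Fcal$, and uniqueness of the extremal metric gives the pole contradiction. The paper bypasses all of this with superadditivity,
\[
\EL(\Fcal;X')\ge\EL(\Fcal_1;X')+\EL(\Fcal_2;X')>\EL(\Fcal_1;X')\ge\EL(\Phi(\Fcal);\pi(X)),
\]
which gives the strict inequality at every $X'$ at once.

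\emph{Case (B).} The same pole confusion affects your attainment argument. $q_0$ has no pole at $x$, so the attaining $X'$ cannot ``split $y$ into a pole at $x$''; your own uniqueness argument would then forbid equality. What actually makes case (B) work is that the $1$-prong at $x$ is Whitehead-equivalent to a singularity of degree at least $2$. Accordingly, $x$ must be placed at a non-polar point of $q_0$: at $y$ if $y$ is unmarked, or on a horizontal prong of $y$ otherwise. It must also be placed along the correct lift in the fiber, which the paper arranges by tracking $x$ through the deformation from the horizontal foliation of $q_{\Fcal,X}$ to that of $q_0$.
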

\begin{proof} We analyze the two types separately.

    {\it Case 1: $\Fcal$ is $x$-non-generic of non-annular type.}\quad  Since $X$ and $\pi(X)$ are the same Riemann surfaces with different sets of marked points, they share the same set of conformal metrics. For each conformal metric $\sigma$, we have
    \[
        L(\Fcal; (X,\sigma)) \ge L(\Phi(\Fcal); (\pi(X),\sigma)),
    \]
    since the infimum is taken over a larger homotopy class for $\pi(X)$ than for $X$.
    Hence, we obtain $\EL(\Fcal;X)\ge \EL(\Phi(\Fcal);\pi(X))$. 
    
    It is easy to show that the equality $\EL(\Fcal; X') = \EL(\Phi(\Fcal);\pi(X))$ holds if and only if the extra point $x$ on $X'$ is not a pole of $q_{X',\Fcal}$ (Case~(A)). We can show the existence of such an $X'$ as follows.
    
    Consider the horizontal measured foliations $\Fcal_1$ of $q_{\Fcal,X}$ and $\Fcal_2$ of $q_{\Phi(\Fcal),\pi(X)}$. If the extra marked point $x$ is not a one-prong singularity of $\Fcal_1$ (Case (A) in Figure \ref{fig:Forget_nondeg}), then we take $X'=X$. Suppose $x$ is a one-prong singularity (Case (B) in Figure \ref{fig:Forget_nondeg}). Then $x$ is an endpoint of a segment leaf $[x,y]$ where $y$ is a singular point with $\deg_\Fcal(y)\ge2$. Let $\Fcal_1'$ be the measured foliation obtained by collapsing $[x,y]$ from $\Fcal_1$. Then $\Fcal_1'$ with $x$ forgotten and $\Fcal_2$ are equivalent measured foliations in the puncture-filled surface $S_{g,n}$. Hence, there is a continuous deformation from $\Fcal_1'$ to $\Fcal_2$. Let us consider a path $(x(t))_{t\in[0,1]}$ that traces the location of the point $x$ on $\Fcal_1$ as it undergoes two consecutive continuous deformations, $\Fcal_1 \rightsquigarrow \Fcal_1' \rightsquigarrow \Fcal_2$. The deformation $\Fcal_1' \rightsquigarrow \Fcal_2$ may involve Whitehead moves related to $x$. In this case, there is more than one way to choose the path $x(t)$, and we take any of them; see Figure \ref{fig:WHmove}.
    \begin{figure}[h!]
	   \centering
        \def\svgwidth{0.7\textwidth}
        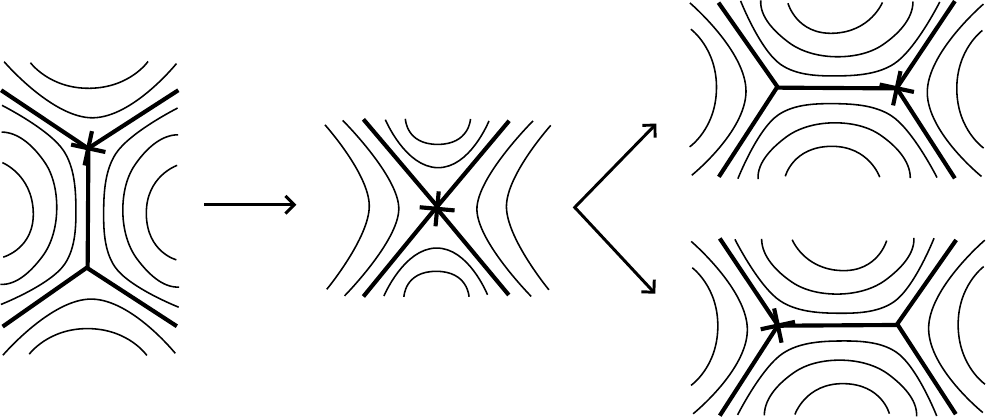
	\caption{}
    \label{fig:WHmove}
    \end{figure}
    
    Then we obtain $X'$ by moving the extra marked point $x$ on $X$ along the path $(x(t))$. It is possible that the path $(x(t))$ passes through or ends at a marked point. In this case, we take a small detour to make the path $x(t)$ avoid all the marked points. For this $X'$, $q_{\Fcal,X'}$ does not have a pole at the extra marked point $x$, and it coincides with $q_{\phi(\Fcal),\pi(X)}$ as quadratic differentials on $\pi(X)$.

    {\bf (Step 2)} Suppose that $\Fcal$ is $x$-non-generic of annular type.
    
    Then $\Fcal=\Fcal_1\cup \Fcal_2$ such that $\Fcal_2$ is the annulus component around $x$ and $y$, where $[x,y]$ is a saddle connection. Note that $\Fcal_1$ itself can be considered as an $x$-non-generic measured foliation of non-annular type. Then, we have $\Phi(\Fcal)=\Phi(\Fcal_1)$. Hence,
    \[
        \EL(\Fcal;X)\ge \EL(\Fcal_1;X)+\EL(\Fcal_2;X)>\EL(\Fcal_1;X)\ge \EL(\Phi(\Fcal);\pi(X)).
    \]
    Therefore, $\inf_{X'\in \pi^{-1}(\pi(X'))} \EL(\Fcal;X')\ge \EL(\Phi(\Fcal);\pi(X))$ and there is no $X'$ with $\EL(\Fcal;X')= \EL(\Phi(\Fcal);\pi(X))$.

    Next, we show that $\EL(\Phi(\Fcal);\pi(X)) \ge \inf_{X' \in \pi^{-1}(\pi(X))} \EL(\Fcal; X')$. Fix $X_0\in \pi^{-1}(\pi(X))$. For the quadratic differential $q_{\Fcal_1,X_0}$, let $\sigma_1$ be its singular Euclidean structure and $\Hcal$ its horizontal measured foliation. Denote by $x_0$ the marked point we forget in $X_0$. We may choose $X_0$ so that $x_0$ lies sufficiently close to $y$, ensuring that $\Hcal$ admits a saddle connection $[x_0,y]$. Define a path $(x_t)_{t\in[0,1)}$ supported in $[x_0,y]$ such that $x_t\to y$ as $t\to 1$, and let $(X_t)$ be the corresponding path in $\pi^{-1}(\pi(X))$. Then, we have $\EL(\Fcal_1;X_t) = \EL(\Phi(\Fcal);\pi(X))$ for each $t \in [0,1)$, and $\EL(\Fcal_2; X_t) \to 0$ as $t \to 1$. Hence, by the convexity of $\sqrt{\EL}$, i.e., 
    \[
        \sqrt{\EL(\Fcal;X_t)}\le \sqrt{\EL(\Fcal_1;X_t)} +\sqrt{\EL(\Fcal_2;X_t)},
    \]
    we obtain $\inf_{X' \in \pi^{-1}(\pi(X))} \EL(\Fcal; X') \le \EL(\Phi(\Fcal);\pi(X))$.
\end{proof}

The gradient descent flow $(X_t)_{t\ge0}$ of the extremal length function
$\EL(\Fcal;\cdot)\colon \pi^{-1}(Y)\to\mathbb{R}$, with initial condition
$X_0=X$, is independent of the choice of Riemannian metric within the conformal
class of the Riemann surface $Y$, where $\pi^{-1}(Y)$ is identified with the
universal cover $\widetilde{Y}$. Hence, we may assume that $Y$ is equipped with
the hyperbolic metric in its conformal structure.

\begin{prop}\label{prop:GradDescFlow}
    Let $\Fcal$ be an $x$-generic measured foliation on $S_{g,n+1}$. Consider $X\in \Tcal_{g,n+1}$ and $Y:=\pi(X)\in \Tcal_{g,n}$. Suppose that $(X_t)_{t\ge0}$ is the trajectory of the gradient descent flow of the extremal length function $\EL(\Fcal;-)\colon \pi^{-1}(Y)\to\mathbb{R}$,
    with $X_0=X$, parametrized to have unit speed with respect to the hyperbolic metric on $Y$. Consider $(X_t)$ as a path $x(t)\from \Rbb_{\ge0} \to Y$ of the extra marked point in $Y$. Then the following hold.
    \begin{enumerate}
        \item $x'(t)$ is tangent to the leaf of $q_{\Fcal,X_t}\in Q(X_t)$ at $x(t)$, which is a simple pole of $q_{\Fcal,X_t}$. 
        \item We have
        \begin{equation}\label{eqn:PunctMoving_Res}
            \frac{d}{dt}\log(\EL(\Fcal;X_t))=-2\pi {\rm Res}_{x(t)}(q_{\Fcal,X_t}\cdot x'(t))<0.
        \end{equation}
        \item Let $p\colon \widetilde{Y}\to Y$ be the universal covering map, and identify $\pi^{-1}(Y)$ with $\widetilde{Y}$. Then there exist $\epsilon>0$ and a compact subsurface $Y'\subset Y$ such that the set
        \[
            \{t\in[0,\infty): p\circ X_t\in Y'\}
        \]
        contains infinitely many pairwise disjoint intervals of length $\epsilon$.
    \end{enumerate}
    
\end{prop}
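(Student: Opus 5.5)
We first compute the derivative of $\EL(\Fcal;\cdot)$ along $\pi^{-1}(Y)$, using Gardiner's formula for the first variation of extremal length. An infinitesimal motion of the extra marked point $x$ by a tangent vector $v\in T_xY$ is represented by a Beltrami differential $\mu_v$, supported in a small disk around $x$. This is the standard vector field construction: take a smooth vector field that equals $v$ near $x$ and vanishes outside the disk, and let $\mu_v=\bar\partial$ of it. Gardiner's formula gives
\[
d\EL(\Fcal;\cdot)[\mu]=2\,\mathrm{Re}\int_X q_{\Fcal,X}\,\mu .
\]
Integrating by parts against the vector field, the pairing localizes at the pole. Because $\Fcal$ is $x$-generic, $\deg_\Fcal(x)=1$, so $q_{\Fcal,X}$ has a simple pole at $x$. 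Stokes' theorem on the complement of a shrinking disk then yields
\[
d\EL[\mu_v]=-2\pi\,\mathrm{Re}\,{\rm Res}_x(q_{\Fcal,X}\, v)
\]
up to normalization. Dividing by $\EL$ gives the logarithmic form in (2).

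Next we identify the gradient. Write $q=\frac{c}{z}\,dz^2$ near $x$ with $c\neq0$. The linear functional $v\mapsto -2\pi\,\mathrm{Re}(cv)$ is minimized, among unit vectors for the hyperbolic metric, at the $v$ that makes $cv$ real and positive, i.e.\ $q(v)=cv^2/z$. Checking the direction: the leaves of a simple pole form the prong along which $q(v)>0$ as one leaves $x$. So the steepest descent direction is tangent to the unique horizontal leaf $L_x$ emanating from $x$; pushing $x$ into its prong shortens the foliation, exactly as in the canonical simple pull picture. This proves (1). Strict negativity in (2) follows because $c\neq0$ for every $X_t$: an $x$-generic foliation stays $x$-generic as $x$ moves, since genericity is a property of $\Fcal$ alone. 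Hence the residue never vanishes along the flow, which is also globally defined because the vector field is smooth and of unit speed.

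The main obstacle is (3), recurrence into a compact part of $Y$. The plan is to argue by contradiction. Suppose that for every compact $Y'$ and every $\epsilon$, the projected path $p\circ X_t$ eventually spends only short excursions in $Y'$. Then $x(t)$ eventually lives in a cusp neighborhood of $Y$, i.e.\ it converges toward one of the $n$ punctures with only brief returns. Since the speed is unit and the path is tangent to leaves (by (1)), entering deep into a cusp and staying there means $x(t)$ runs along a horizontal trajectory spiraling into a puncture. Combined with the monotone decrease of $\EL$, this should force the limiting marked surfaces to degenerate. By the lower bound $\EL(\Fcal;X_t)\EL(\gamma;X_t)\ge\pair{\Fcal}{\gamma}^2$ (Lemma \ref{lem:IneqBtwELInt}), applied to the pre-peripheral curve $\gamma$ around $x$ and that puncture, we get $\EL(\gamma;X_t)\to0$. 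This is incompatible with $x$-genericity: a leaf from $x$ accumulating onto a puncture would make $\Fcal$ non-generic in the limit, and the bound above forces $\EL(\Fcal;X_t)\to\infty$, contradicting monotone decrease.

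To turn "recurrence" into the quantitative statement with intervals of length $\epsilon$, we use unit speed. Each visit to the thick part $Y'$ shrunk by $\epsilon$ forces at least $\epsilon$ of time spent in $Y'$. Choosing $Y'$ as the complement of small horoball cusp neighborhoods, the contradiction argument shows that infinitely many disjoint such visits occur. I expect the delicate point to be making rigorous that persistent cusp excursions force $\EL(\Fcal;X_t)$ to grow, which likely needs a uniform estimate of the residue or of the flat geometry near a cusp.
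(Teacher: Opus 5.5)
Your treatment of (1) and (2) is the paper's: Gardiner's formula plus the residue pairing at the simple pole. You derive the pairing by Stokes where the paper cites a lemma, and the maximizing direction $cv>0$ is indeed the horizontal prong at $x$. For (3) you also have the paper's ingredients: the curve $\gamma$ enclosing $x$ and the cusp puncture $y$, Lemma~\ref{lem:IneqBtwELInt}, monotonicity of $\EL(\Fcal;X_t)$, and then unit speed to produce the $\epsilon$-intervals. However, the decisive step is argued backwards and left open. No residue or flat-geometry estimate near the cusp is needed.

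Lemma~\ref{lem:IneqBtwELInt} cannot yield $\EL(\gamma;X_t)\to 0$; that comes from the conformal geometry of the fixed surface $Y$. By your unit-speed remark, the negation of (3) means $p\circ X_t$ eventually leaves every compact set. Hence it eventually stays in one punctured-disk neighborhood $D^*=\{0<|z|<1\}$ of a puncture $y=0$ and tends to $y$. A curve $\gamma$ parallel to $\partial D^*$ then represents a single class in $C(S_{g,n+1})$ for all large $t$, since the pushed point never crosses it. The annulus $\{|x(t)|<|z|<1\}$, whose core is homotopic to $\gamma$, has modulus $\frac{1}{2\pi}\log(1/|x(t)|)\to\infty$, so $\EL(\gamma;X_t)\to 0$.

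What $x$-genericity must supply is $\pair{\Fcal}{[\gamma]}>0$, a fixed fact about $\Fcal$ independent of $t$. If it vanished, $\gamma$ could be realized by leaves, and the twice-punctured disk it bounds could only carry closed leaves around a saddle connection $[x,y]$. Then $\Fcal$ would be $x$-non-generic. Your stated reason, that a leaf from $x$ accumulating on a puncture makes $\Fcal$ non-generic ``in the limit,'' does not establish this, since $\Fcal$ does not vary along the flow.

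With these two facts, $\EL(\Fcal;X_t)\ge \pair{\Fcal}{[\gamma]}^2/\EL(\gamma;X_t)\to\infty$. This contradicts the decrease of $\EL(\Fcal;X_t)$ and finishes the argument.

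Also drop the assertion that $x(t)$ runs along a horizontal trajectory spiraling into the puncture. $x'(t)$ is tangent to the leaf of $q_{\Fcal,X_t}$, which changes with $t$, so the path is not a leaf of any fixed foliation. The claim is unjustified and not needed.
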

\begin{proof}
    Let $\mu_t\in T_{X_t}\Tcal_{g,n}$ be the tangent vector of the path $X_t$ at $t$. For any $q\in Q(X_t)$, we have
    \[
        \pair{\mu_t}{q}=-\pi {\rm Re}({\rm Res}_{x(t)}(q\cdot x'(t))),
    \]
    where $x'(t)$ is considered as a holomorphic vector field near $x(t)$ extending the tangent vector $x'(t)$ at $x(t)$ \cite[Lemma 8.2]{Maxime_HoloCouch}. By Gardiner's formula \cite[Theorem 8]{Gardiner_MFQD}, the directional derivative of $\log(\EL(\Fcal;-))$ along $\mu$ is given by
    \[
        \nabla_{\mu_t} \log(\EL(\Fcal;-))=2\pair{\mu_t}{q_{\Fcal,X_t}}.
    \]
    Thus, we have
    \[
        \nabla_{\mu_t} \log(\EL(\Fcal;-))=-2\pi {\rm Re}({\rm Res}_{x(t)}(q_{\Fcal,X_t}\cdot x'(t))).
    \]
    Consider a local chart near $x(t)$ for which we have $q_{\Fcal,X_t}(z)=\frac{1}{z} dz^2$ with $0=x(t)$. Then the leaf of the horizontal foliation of $q$ starting from $0=x(t)$ is the non-negative real line, along which direction $x'(t)$ maximizes ${\rm Re}({\rm Res}_{x(t)}(q\cdot x'(t)))$ and ${\rm Res}_{x(t)}(q\cdot x'(t))$ is a non-negative real number. This completes the proofs of (1) and (2).

    Suppose, toward a contradiction, that $x(t)$ escapes into a cusp of $Y$. It follows that there exists a simple closed curve $\gamma$ of $S_{g,n+1}$ bounding a disk with two punctures one of which is $x$ such that $\EL(\gamma;X_t)\to 0$. Let $y$ be the other puncture. We have $\pair{\Fcal}{[\gamma]}>0$; otherwise, $\Fcal$ would necessarily contain a saddle connection joining $x$ and $y$, contradicting the assumption that it is $x$-generic. By Lemma \ref{lem:IneqBtwELInt} with $n$ replaced by $n+1$, we have $\EL(\Fcal;X_t)\to \infty$. This contradicts that $(X_t)$ is a gradient descent flow of $\EL(\Fcal;-)$.

    Now, we have that $(p\circ X_t)$ is recurrent to a compact subsurface
    $Y'\subset Y$. Recall that $(X_t)$ has unit speed with respect to the hyperbolic metric of $Y$. There exists a compact surface $Y''$ with $Y'\subset Y'' \subset Y$ such that whenever $(p\circ X_t)$ enters $Y'$, it remains in $Y''$ for at least
    time $\epsilon$. By replacing $Y'$ with $Y''$, we obtain the statement (3).
\end{proof}

Let us explain how the deck transformation action of $\Push(\pi_1(S_{g,n},x))$ on $\pi^{-1}(Y)$ can be interpreted as point-pushing. Fix $X_0\in \pi^{-1}(Y)$. Choose a homeomorphism $f\colon S_{g,n+1}\to X_0$ representing $X_0$ as a point of $\Tcal_{g,n+1}$. Define $x_0:=f(x)$, which we regard as the extra marked point of $X_0$. We identify $\pi_1(S_{g,n},x)$ with $\pi_1(X_0,x_0)$ via
\[
    f_*\colon \pi_1(S_{g,n},x)\to \pi_1(X_0,x_0).
\]
Every point $X'\in\pi^{-1}(Y)$ is represented by a homotopy class of paths $\alpha\colon [0,1]\to Y$ with $\alpha(0)=x_0$ and $\alpha(1)=x'$, where $x'$ is the extra marked point of $X'$. Equivalently, $X'$ is obtained from $X_0$ by pushing the extra marked point $x$ along $\alpha$. For every $[\gamma]\in\pi_1(Y,x_0)$, the image of its action $[\gamma]\cdot X'$ is represented by the homotopy class of the concatenated path $[\gamma\cdot\alpha]$, along which the extra marked point is pushed. The groups $\pi_1(Y,x')$ and $\pi_1(Y,x_0)$ are identified via conjugation by $\alpha$, namely,
\[
    \pi_1(Y,x_0)=\alpha\cdot\pi_1(Y,x')\cdot\overline{\alpha}.
\]
On the right-hand side of the equation
\[
    [\gamma\cdot\alpha]=[\alpha\cdot\overline{\alpha}\cdot\gamma\cdot\alpha],
\]
the initial path $\alpha$ represents $X'$, while the loop $\overline{\alpha}\cdot\gamma\cdot\alpha$ represents the element of $\pi_1(Y,x')$ corresponding to $\gamma\in\pi_1(Y,x_0)$.

\begin{lem}\label{lem:Limit MF of Grad Flow}
    In the setting of Proposition \ref{prop:GradDescFlow}, there exists an increasing sequence $(t_l)_{l\in \mathbb{N}}$ of positive real numbers and a sequence $([\psi_l])_{l\ge0}$ in $\Push(\pi_1(S_{g,n},x))$ such that
    \[
        [\psi_l]\cdot \Fcal \to \Hcal
        \quad\text{and}\quad
        [\psi_l]\cdot X_{t_l} \to X_\infty
        \quad\text{as}~
        l\to\infty,
    \]
    where $X_\infty \in \pi^{-1}(Y)$ and $\Hcal\in \MF(S_{g,n+1})$ is either the zero measured foliation or an $x$-non-generic measured foliation.
\end{lem}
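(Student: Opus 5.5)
We will use the recurrence of Proposition \ref{prop:GradDescFlow}-(3), compactness of the set of measured foliations with bounded intersection with a filling multicurve (Lemma \ref{lem:MF_Gamma,B cpt}), and the gradient-flow characterization of critical points via residues (Proposition \ref{prop:GradDescFlow}-(1,2)).

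\textbf{Step 1 (choosing times and deck elements).} By Proposition \ref{prop:GradDescFlow}-(3), there is a compact subsurface $Y'\subset Y$ and infinitely many disjoint intervals of length $\epsilon$ on which $p\circ X_t\in Y'$. Pick one time $t_l$ in each interval, with $t_l\uparrow\infty$. Since $p^{-1}(Y')$ is covered by $\pi_1(S_{g,n},x)$-translates of a fixed compact fundamental piece, there are deck transformations, interpreted as point-pushing classes $[\psi_l]\in\Push(\pi_1(S_{g,n},x))$ via the discussion preceding the lemma, such that $[\psi_l]\cdot X_{t_l}$ lies in a fixed compact subset $K\subset\pi^{-1}(Y)$. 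Passing to a subsequence, $[\psi_l]\cdot X_{t_l}\to X_\infty\in K$.

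\textbf{Step 2 (convergence of $[\psi_l]\cdot\Fcal$).} Extremal length is mapping-class invariant, so $\EL([\psi_l]\cdot\Fcal;[\psi_l]\cdot X_{t_l})=\EL(\Fcal;X_{t_l})\le\EL(\Fcal;X_0)$, because the flow decreases $\EL$ by \eqref{eqn:PunctMoving_Res}. Since $[\psi_l]\cdot X_{t_l}$ stays in the compact set $K$, Lemma \ref{lem:IneqBtwELInt} bounds $\pair{[\psi_l]\cdot\Fcal}{[\Gamma]}$ for a fixed filling multicurve $\Gamma$ on $S_{g,n+1}$, using the fact that $\EL(\gamma;\cdot)$ is bounded on $K$. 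Lemma \ref{lem:MF_Gamma,B cpt} then gives a further subsequence with $[\psi_l]\cdot\Fcal\to\Hcal$. By continuity of $\EL$ on $\MF\times\Tcal$, $\EL(\Hcal;X_\infty)=\lim_l\EL(\Fcal;X_{t_l})=:E_\infty$.

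\textbf{Step 3 ($\Hcal=0$ or $x$-non-generic).} This is the main obstacle. Suppose $\Hcal\ne0$ and $\Hcal$ is $x$-generic. Then, by Proposition \ref{prop:GradDescFlow}-(2), at $X_\infty$ the point $x$ is a simple pole of $q_{\Hcal,X_\infty}$. Hence the rate $-2\pi\,{\rm Res}$ in \eqref{eqn:PunctMoving_Res}, with unit speed in the hyperbolic metric and the optimal direction, is a strictly negative number $-c$, and $\log\EL$ is strictly positive. By continuity of $(\Fcal,X)\mapsto q_{\Fcal,X}$ (Hubbard--Masur), and the equivariance of the flow under deck transformations, the flow of $\EL([\psi_l]\cdot\Fcal;\cdot)$ from $[\psi_l]\cdot X_{t_l}$ decreases $\log\EL$ at rate at least $c/2$ on $[t_l,t_l+\epsilon']$ for some fixed $\epsilon'>0$ and all large $l$. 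Consequently $\log\EL(\Fcal;X_t)$ drops by at least $c\epsilon'/2$ on each of infinitely many disjoint intervals. Since $\log\EL(\Fcal;X_t)$ is monotone and bounded below by $\log E_\infty>-\infty$ (this uses $\Hcal\neq 0$), this is a contradiction.

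The delicate point is the uniform lower bound on the residue near the limit. To obtain it, I would argue that the residue at a simple pole depends continuously on $(\Hcal,X)$ and that the pole stays simple in a neighborhood, since $x$-genericity gives $\deg_\Hcal(x)=1$. If $x$ ceases to be a pole for nearby foliations, that only signals a degeneration, and the continuity statement handles it.
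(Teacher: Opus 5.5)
Your proof is correct and uses the same ingredients as the paper's (recurrence to $Y'$, deck transformations giving $[\psi_l]\cdot X_{t_l}\to X_\infty$, compactness for $[\psi_l]\cdot\Fcal$, Hubbard--Masur continuity of $q_{\Fcal,X}$, and summability of the drops of the monotone, bounded-below $\log\EL(\Fcal;X_t)$); you simply arrange them as a proof by contradiction. The paper instead first selects $t_k\in I_k$ with ${\rm Res}_{x(t_k)}(q_{\Fcal,X_{t_k}}\cdot x'(t_k))\to 0$, so it needs continuity only at the single limit point to see that $x$ is not a simple pole of $q_{\Hcal,X_\infty}$, which avoids your uniform residue bound along short flow segments; also, where you write that $\log\EL$ is strictly positive you mean $E_\infty=\EL(\Hcal;X_\infty)>0$.
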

\begin{proof}
    Recall that $p\from \widetilde{Y}\to Y$ is the universal cover and $\widetilde{Y}$ can identified with the fiber $\pi^{-1}(Y)$ of $\pi\from \Tcal_{g,n+1}\to \Tcal_{g,n}$.
    
    We first consider the case $\lim_{t\to\infty}\EL(\Fcal,X_t)> 0$.
    By Proposition \ref{prop:GradDescFlow}, there exists a sequence of disjoint intervals $I_k:=[t_k, t_k+\epsilon]$ for $k\ge1$ so that
    \[
        \int_{I_k}-2\pi {\rm Res}_{x(t)}(q_{\Fcal,X_t}\cdot x'(t))\,dt=\log \EL(\Fcal;X_{t_k+\epsilon})-\log \EL(\Fcal;X_{t_k}),
    \]
    which tends to $0$ as $k\to \infty$. Therefore, 
    there exists a sequence $(t_k)_{k=1}^\infty$ with $t_k \in I_k$ for each $k$ such that
    \begin{equation}\label{eqn:Res_to_zero}
        \lim_{k\to \infty}{\rm Res}_{x(t_k)}(q_{\Fcal,X_{t_k}}\cdot x'(t_k)) = 0.
    \end{equation}
    
    By passing to a subsequence of $(t_k)$, we may assume that $(p\circ X_{t_k})_{k\ge1}$ converges to a point in the compact subsurface $Y'$ of $Y$ introduced in Proposition \ref{prop:GradDescFlow}. Therefore, there exists a sequence $([\psi_k])$ in $\Push(\pi_1(S_{g,n},x))$ such that
    \[
        [\psi_k]\cdot X_{t_k}\to X_\infty
        \quad\text{in}~
        \pi^{-1}(Y).
    \]

    Note that
    \[
        \EL(\Fcal; X_{t_k})=\EL([\psi_k]\cdot \Fcal; [\psi_k] \cdot X_{t_k})
        \,\le\,
        \EL(\Fcal;X).
    \]
    By the continuity of $\EL\from \MF(S_{g,n+1})\times \Tcal_{g,n+1}\to \Rbb$ and the compactness of $\PMF(S_{g,n+1})$, by passing to a subsequence of $(\psi_k)$, we have $[\psi_k]\cdot \Fcal\to \Hcal$ for some $\Hcal\in \MF(S_{g,n+1})$. Moreover, we have
    \[
        [\psi_k]\cdot q_{\Fcal,X_{t_k}}=q_{[\psi_k]\cdot \Fcal,[\psi_k]\cdot X_{t_k}}\to q_{\Hcal,X_\infty}.
    \]
    
    Recall that $(X_t)$ has unit speed, which means $\|x'(t)\|=1$. By Equation \eqref{eqn:Res_to_zero}, the quadratic differential $q_{\Hcal,X_\infty}$ does not have a simple pole at the marked point $x$. Therefore, $\Hcal$ is $x$-non-generic.

    When $\lim_{t\to\infty}\EL(\Fcal,X_t)=0$, the preceding argument still works without requiring Equation~\eqref{eqn:Res_to_zero}, and consequently we obtain $\Hcal=0$.
\end{proof}

\begin{lem}\label{lem:IneqEL}
    Let $\Fcal$ and $\Fcal'$ be measured foliations of $\MF(S_{g,n})$. Suppose that $\pair{\Fcal'}{[\gamma]} \ge \pair{\Fcal}{[\gamma]}$ for every simple closed curve (hence every measured foliation) $\gamma$ on $S_{g,n+1}$. Then $\EL(\Fcal';X) \ge \EL(\Fcal;X)$ for any $X\in \Tcal_{g,n}$.
\end{lem}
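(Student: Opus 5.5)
Since the intersection numbers of $\Fcal$ with simple closed curves dominate those of $\Fcal'$ pointwise, the plan is to pass this through Kerckhoff's formula, which expresses extremal length as a supremum over simple closed curves of a ratio of intersection number squared to extremal length of the curve:
\[
    \EL(\Fcal;X)=\sup_{[\gamma]\in C(S_{g,n})}\frac{\pair{\Fcal}{[\gamma]}^2}{\EL([\gamma];X)}.
\]
Since the hypothesis is monotonicity of every numerator while the denominators depend only on $X$, taking suprema immediately gives $\EL(\Fcal';X)\ge \EL(\Fcal;X)$. So the proof reduces to justifying this formula from what is already available in the paper.

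First I will establish the inequality ``$\ge$'' in the formula: Lemma \ref{lem:IneqBtwELInt} gives $\EL(\Fcal;X)\EL([\gamma];X)\ge \pair{\Fcal}{[\gamma]}^2$ for every $[\gamma]$, hence $\EL(\Fcal;X)$ dominates the supremum. For ``$\le$'', I will use Lemma \ref{lem:IneqBtwELInt} together with its equality case: take $q=q_{\Fcal,X}$, let $\Gcal$ be its vertical foliation, so that $\EL(\Fcal;X)\EL(\Gcal;X)=\pair{\Fcal}{\Gcal}^2$. Then approximate $\Gcal$ by weighted simple closed curves $t_k[\gamma_k]\to \Gcal$ (density of weighted curves in $\MF$), and use continuity of $\EL(\cdot;X)$ and of $\pair{\Fcal}{\cdot}$ on $\MF$, plus homogeneity ($\EL$ is quadratic, intersection linear under scaling), to get
\[
    \frac{\pair{\Fcal}{[\gamma_k]}^2}{\EL([\gamma_k];X)}=\frac{\pair{\Fcal}{t_k[\gamma_k]}^2}{\EL(t_k[\gamma_k];X)}\to \frac{\pair{\Fcal}{\Gcal}^2}{\EL(\Gcal;X)}=\EL(\Fcal;X).
\]
Degenerate case: if $\Fcal=0$ the claim is trivial since $\EL(0;X)=0$; otherwise $\Gcal\neq 0$ so the denominator is positive.

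Finally, combine: for each $[\gamma]$, $\pair{\Fcal'}{[\gamma]}^2/\EL([\gamma];X)\ge \pair{\Fcal}{[\gamma]}^2/\EL([\gamma];X)$, so the suprema compare and the lemma follows. (The parenthetical ``hence every measured foliation'' in the statement follows by the same density and continuity, and is not needed.) The only step needing care is the ``$\le$'' direction of Kerckhoff's formula, specifically the continuity of $\EL$ on $\MF$ at the limit $\Gcal$; I will cite Kerckhoff's result directly if a self-contained argument via Lemma \ref{lem:IneqBtwELInt} becomes awkward.
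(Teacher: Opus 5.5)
Your argument is correct, and its core is the same as the paper's: both test against the vertical foliation $\Gcal$ of $q_{\Fcal,X}$ and use Lemma \ref{lem:IneqBtwELInt} together with its equality case. The difference is where the approximation by weighted curves enters.

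The paper does not go through Kerckhoff's formula. It uses the parenthetical ``hence every measured foliation'' (density of weighted curves plus continuity of $\pair{\cdot}{\cdot}$) to get $\pair{\Fcal'}{\Gcal}\ge\pair{\Fcal}{\Gcal}$ directly, and then concludes in one line:
\[
\EL(\Fcal';X)\EL(\Gcal;X)\ge\pair{\Fcal'}{\Gcal}^2\ge\pair{\Fcal}{\Gcal}^2=\EL(\Fcal;X)\EL(\Gcal;X).
\]

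You instead approximate inside the ratio $\pair{\Fcal}{\cdot}^2/\EL(\cdot;X)$. This forces you to use continuity of $\EL(\cdot;X)$ on $\MF$, which is exactly the step you flagged. The paper's route needs only continuity of intersection numbers. (Continuity of $\EL$ is available anyway: by the Hubbard--Masur homeomorphism quoted in the paper, $\EL(\Fcal;X)=\int_X|q_{\Fcal,X}|$ depends continuously on $\Fcal$.)

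What your route buys is Kerckhoff's formula as a byproduct. Monotonicity then becomes a triviality about suprema, and the hypothesis is only ever applied to curves. You also treat $\Fcal=0$ explicitly, where the paper's implicit division by $\EL(\Gcal;X)$ would not be legitimate.
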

\begin{proof}
    Let $q_{\Fcal,X}$ be the quadratic differential of $X$ whose horizontal measured foliation is $\Fcal$. Denote by $\Gcal$ the vertical measured foliation of $q_{\Fcal,X}$. By Lemma \ref{lem:IneqBtwELInt}, we have
    \[
        \EL(\Fcal';X)\EL(\Gcal;X) \ge \pair{\Fcal'}{\Gcal}^2 \ge \pair{\Fcal}{\Gcal}^2 = \EL(\Fcal;X) \EL(\Gcal;X).\qedhere
    \]
\end{proof}

\begin{lem}\label{lem:InequELTwoNonGeneric}
Let $\Fcal$ be an $x$-generic measured foliation on $S_{g,n+1}$ and $X\in \Tcal_{g,n+1}$. Let $\Gcal$ and $\Hcal$ be $x$-non-generic measured foliations obtained by Proposition \ref{prop:LimitMFfromIntReduction} and Lemma \ref{lem:Limit MF of Grad Flow}, respectively, which may be the zero element in $\MF(S_{g,n})$. For every $[\alpha] \in C(S_{g,n})$, we have
    \[
        \pair{\Phi(\Gcal)}{[\alpha]} \le \pair{\Phi(\Hcal)}{[\alpha]}.
    \]
    Then, by Lemma \ref{lem:IneqEL}, we have
    \[
        \EL(\Phi(\Gcal); \pi(X)) \le \EL(\Phi(\Hcal); \pi(X)).
    \]    
\end{lem}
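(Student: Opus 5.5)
The plan is to compare both sides with the infimum functional $\varphi(\Fcal)$. By Proposition \ref{prop:LimitMFfromIntReduction} and the definition of $\Phi$ in the generic case, $\Phi(\Gcal)$ realizes $\varphi(\Fcal)$:
\[
    \pair{\Phi(\Gcal)}{[\alpha]}=\inf_{i_*[\gamma]=[\alpha]}\pair{\Fcal}{[\gamma]}.
\]
So it suffices to show that $\pair{\Phi(\Hcal)}{[\alpha]}\ge \inf_{i_*[\gamma]=[\alpha]}\pair{\Fcal}{[\gamma]}$ for every $[\alpha]\in C(S_{g,n})$. If $\Hcal=0$ the right-hand side must then be $0$. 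I will handle that case by the same limiting argument as below, since the inequality $\varphi(\Fcal)\le \varphi(\Hcal)=0$ is exactly what is produced.

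Fix $[\alpha]$ and apply Proposition \ref{prop:PForget NonGeneric} to the $x$-non-generic foliation $\Hcal$. This gives a pre-essential $\gamma'$ with $i_*[\gamma']=[\alpha]$ and $\pair{\Hcal}{[\gamma']}=\pair{\Phi(\Hcal)}{[\alpha]}$. In the case $\Hcal=0$, any $\gamma'$ in $i_*^{-1}[\alpha]$ works, since every intersection number vanishes. Lemma \ref{lem:Limit MF of Grad Flow} gives $[\psi_l]\cdot\Fcal\to\Hcal$ in $\MF(S_{g,n+1})$, and the intersection pairing with a fixed curve is continuous, so
\[
    \pair{\Fcal}{[\psi_l]^{-1}\cdot[\gamma']}=\pair{[\psi_l]\cdot\Fcal}{[\gamma']}\to\pair{\Hcal}{[\gamma']}.
\]
Each $[\psi_l]$ is a point-pushing class, so it acts trivially after forgetting $x$. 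Hence $i_*([\psi_l]^{-1}\cdot[\gamma'])=i_*[\gamma']=[\alpha]$, and each term on the left is at least $\varphi(\Fcal)([\alpha])$. Passing to the limit gives $\pair{\Phi(\Hcal)}{[\alpha]}\ge\varphi(\Fcal)([\alpha])=\pair{\Phi(\Gcal)}{[\alpha]}$, which is the first claim.

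For the extremal-length inequality, I extend the curve inequality to all measured foliations, because that is how Lemma \ref{lem:IneqEL} is applied. This extension uses density of weighted simple closed curves in $\MF(S_{g,n})$ and homogeneity and continuity of intersection number in its second argument. Lemma \ref{lem:IneqEL}, applied on $S_{g,n}$ with $\Fcal'=\Phi(\Hcal)$ and $\Fcal=\Phi(\Gcal)$ at the point $\pi(X)$, then yields $\EL(\Phi(\Gcal);\pi(X))\le\EL(\Phi(\Hcal);\pi(X))$.

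The only delicate point is the invariance $i_*\circ[\psi]=i_*$ for point-pushing classes, which follows from the Birman exact sequence, together with the use of a realizing curve $\gamma'$ for $\Hcal$. Everything else is continuity, so I expect no real obstacle beyond bookkeeping of the zero case.
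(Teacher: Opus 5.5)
Your proposal is correct and follows the paper's proof. You realize $\pair{\Phi(\Hcal)}{[\alpha]}$ by a curve $\gamma'$ with $i_*[\gamma']=[\alpha]$ via Proposition~\ref{prop:PForget NonGeneric}, bound $\pair{\Phi(\Gcal)}{[\alpha]}=\varphi(\Fcal)([\alpha])$ above by $\pair{[\psi_l]\cdot\Fcal}{[\gamma']}$ using invariance of $i_*$ under point-pushing, and let $l\to\infty$; your explicit handling of $\Hcal=0$ and of the passage from curves to all measured foliations in Lemma~\ref{lem:IneqEL} only spells out what the paper leaves implicit.
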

\begin{proof}
    Since $\Hcal$ is $x$-non-generic, by Proposition \ref{prop:PForget NonGeneric} there is $[\gamma]\in C(S_{g,n+1})$ such that $i_*[\gamma]=[\alpha]$ and $\pair{\Hcal}{[\gamma]}=\pair{\Phi(\Hcal)}{i_*[\gamma]}$. Recall that $\Hcal$ is obtained as the limit of a sequence of point-pushing orbits, i.e., there exists a sequence $\left( [\psi_l]\right)_{l\ge1}$ of point-pushing mapping classes such that $[\psi_l] \cdot \Fcal \to \Hcal$ as $l\to \infty$. For each $l\ge 1$, we have
    \[
        \pair{\Phi(\Gcal)}{i_*[\gamma]}=\inf_{\gamma \sim_{\pre} \gamma'} \pair{\Fcal}{[\gamma']}\le \pair{\Fcal}{[\psi_l]^{-1} \cdot [\gamma]}=\pair{[\psi_l]\cdot\Fcal}{[\gamma]}
    \]
    By taking the limit $\lim_{l\to \infty}$, we obtain
    \[
        \pair{\Phi(\Gcal)}{i_*[\gamma]}\le \pair{\Hcal}{[\gamma]}=\pair{\Phi(\Hcal)}{i_*[\gamma]}.\qedhere
    \]
\end{proof}

Now we are ready to prove Theorem \ref{thm:InequalityEL}.

\begin{proof}[Proof of Theorem \ref{thm:InequalityEL}] By Lemma~\ref{lem:InequalityELNonGeneric}, it suffices to consider the case where $\Fcal$ is $x$-generic. We further assume that $\Fcal$ is minimal, as the non-minimal case reduces to the minimal one by the argument in Section~\ref{subsubsec:nonminimal_reduction}.

Let $\Gcal$ be the $x$-non-generic measured foliation obtained by Proposition \ref{prop:LimitMFfromIntReduction}.  Recall that we define $\Phi(\Fcal):=\Phi(\Gcal)$; see Section \ref{subsec:Proof of Thm1 I}.

For any $X'\in \pi^{-1}(\pi(X))$, we obtain
\[
    \EL(\Fcal;X') \ge \EL(\Phi(\Fcal); \pi(X')) = \EL(\Phi(\Fcal); \pi(X))
\]
from the following: We let $\Hcal$ and $X'_\infty$ be the $x$-non-generic measured foliation and the element in $\pi^{-1}(\pi(X))$ obtained by Lemma \ref{lem:Limit MF of Grad Flow} with $X$ being replaced with $X'$. Then, we have
    \begin{align*}
        \EL(\Fcal;X') & \ge \EL(\Hcal;X'_\infty) & &\\
            & \ge \EL(\Phi(\Hcal);\pi(X'_\infty))=\EL(\Phi(\Hcal);\pi(X)) & \text{(by  Lemma \ref{lem:InequalityELNonGeneric}})&\\
            & \ge \EL(\Phi(\Gcal); \pi(X)) & \text{(by Lemma \ref{lem:InequELTwoNonGeneric})}&\\
            & = \EL(\Phi(\Fcal); \pi(X)). & &
    \end{align*}
    
On the other hand, since $\Gcal$ is $x$-non-generic of non-annular type (Proposition \ref{prop:LimitMFfromIntReduction}), Lemma \ref{lem:InequalityELNonGeneric} implies that there exists $X'\in \pi^{-1}(\pi(X))$ such that $\EL(\Gcal;X')=\EL(\Phi(\Fcal);\pi(X))$. Let $([\psi_l])_{l\ge1}$ be the sequence of point-pushing mapping classes in Proposition \ref{prop:LimitMFfromIntReduction} such that $[\psi_l]\cdot \Fcal \to \Gcal$ as $l\to \infty$. Then, for each $l\ge0$, we have
    \[
        \inf_{X''\in \pi^{-1}(\pi(X))} \EL(\Fcal;X'')\le \EL(\Fcal;[\psi_l]^{-1}\cdot X')=\EL([\psi_l]\cdot \Fcal;X')
    \]
    and
    \[
        \EL([\psi_l]\cdot \Fcal;X')\to \EL(\Gcal;X')
    \]
    as $l\to \infty$.
\end{proof}

\begin{thm}\label{thm:LowBddTeichDist}
Suppose $X\in \Tcal_{g,n+1}$ and $\Fcal\in \MF(S_{g,n+1})$. Let $(X_t)_{t\ge0}$ be the Teichm\"uller geodesic generated by $\Fcal$ at $X_0=X$ with $d_{\Teich}(X_0,X_t)=t$. If $\Phi(\Fcal)\neq 0$, then for any $t>0$ we have
\[
    t-\frac{1}{2} \log \frac{\EL(\Fcal;X_0)}{\EL(\Phi(\Fcal);\pi(X_0))} \le d_{\Teich}(\pi(X_0),\pi(X_t)) \le t.
\]
This implies that
\[
    \int_0^\infty 1-||D\pi(\Dot{X_t}) ||_{\Teich}~dt \le \frac{1}{2} \log \frac{\EL(\Fcal;X_0)}{\EL(\Phi(\Fcal);\pi(X_0))}< \infty
\]
\end{thm}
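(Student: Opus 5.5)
The upper bound $d_{\Teich}(\pi(X_0),\pi(X_t))\le t$ is immediate: $\pi$ is holomorphic, so it does not increase Teichm\"uller distance (Royden's theorem that the Teichm\"uller metric equals the Kobayashi metric). Together with $d_{\Teich}(X_0,X_t)=t$ this gives the right-hand inequality. The integral statement follows from the two-sided bound by writing $d_{\Teich}(\pi(X_0),\pi(X_t))\le \int_0^t \|D\pi(\dot X_s)\|_{\Teich}\,ds$, using $\|\dot X_s\|=1$, and letting $t\to\infty$; the integrand $1-\|D\pi(\dot X_s)\|$ is nonnegative by the same contraction property.

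For the lower bound I will use Kerckhoff's formula in the form of the standard extremal length comparison: for any $\Gcal\ne 0$ and $Y,Y'\in\Tcal_{g,n}$,
\[
    d_{\Teich}(Y,Y')\ \ge\ \tfrac12\log\frac{\EL(\Gcal;Y)}{\EL(\Gcal;Y')}.
\]
I apply this with $\Gcal=\Phi(\Fcal)\neq 0$, $Y=\pi(X_0)$ and $Y'=\pi(X_t)$. The remaining task is to bound $\EL(\Phi(\Fcal);\pi(X_t))$ from above. By Theorem \ref{thm:InequalityEL} applied at the point $X_t$,
\[
    \EL(\Phi(\Fcal);\pi(X_t))=\inf_{X'\in\pi^{-1}(\pi(X_t))}\EL(\Fcal;X')\le \EL(\Fcal;X_t).
\]
Along the Teichm\"uller geodesic generated by $\Fcal$ (horizontal foliation contracted by $e^{t}$ in the sense fixed in the introduction), the horizontal foliation's extremal length decays exactly: $\EL(\Fcal;X_t)=e^{-2t}\EL(\Fcal;X_0)$. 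The quadratic differential $q_{\Fcal,X_0}$ evolves to a quadratic differential on $X_t$ with horizontal foliation $\Fcal$ and with area preserved. Its flat metric remains extremal, so the transverse measure of the horizontal foliation scales by $e^{-t}$ and $\EL$ by $e^{-2t}$.

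Combining these,
\[
    d_{\Teich}(\pi(X_0),\pi(X_t))\ge\tfrac12\log\frac{\EL(\Phi(\Fcal);\pi(X_0))}{e^{-2t}\EL(\Fcal;X_0)} = t-\tfrac12\log\frac{\EL(\Fcal;X_0)}{\EL(\Phi(\Fcal);\pi(X_0))},
\]
which is the claimed inequality. The only delicate point I expect is checking conventions. First, I must confirm the normalization of ``generated by $\Fcal$'', namely that $\Fcal$ is the contracted foliation, so that $\EL(\Fcal;X_t)$ decreases; with the paper's description of horizontal contraction by $e^t$ this holds. Second, I must confirm that Kerckhoff's formula applies to arbitrary nonzero measured foliations, including non-minimal ones, on $\Tcal_{g,n}$; this is the standard supremum over $\MF$ and needs no genericity. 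The substantive input is entirely Theorem \ref{thm:InequalityEL}. Note that the bound $\EL(\Phi(\Fcal);\pi(X_0))\le\EL(\Fcal;X_0)$ from that theorem also shows the subtracted constant is nonnegative.
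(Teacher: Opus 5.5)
Your proof is correct and is essentially the paper's argument. The upper bound comes from the holomorphicity of $\pi$. The lower bound comes from Kerckhoff's formula applied to $\Phi(\Fcal)$ together with $\EL(\Phi(\Fcal);\pi(X_t))\le \EL(\Fcal;X_t)=e^{-2t}\EL(\Fcal;X_0)$; the paper phrases this as $\EL(\Hor(q_t);X_t)\ge \EL(\Phi(\Hor(q_t));\pi(X_t))$ with $\Hor(q_t)=e^{t}\Fcal$ and uses the homogeneity of $\Phi$.

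Your one-line justification of the decay is slightly muddled. The evolved differential $q_t$ has horizontal foliation $e^{t}\Fcal$ (not $\Fcal$), so $q_{\Fcal,X_t}=e^{-2t}q_t$. The formula you actually use is nevertheless the correct one in the paper's convention.
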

\begin{proof}
The inequality $d_{\Teich}(\pi(X_0),\pi(X_t)) \le t$ follows from the facts that $\pi$ is holomorphic and that holomorphic maps do not increase Teichm\"uller distances.

Let $q_0\in Q(x_0)$ be the cotangent vector of the Teichm\"uller geodesic $(X_t)$ such that $\Hor(q_t)=e^{t}\Hor(q_0)$ and $\Hor(q_0)=\Fcal$, where $\Hor(q)$ is the horizontal measured foliation of the quadratic differential $q\in Q(Y)$ of a Riemann surface $Y$. We have
\begin{align*}
    \EL(\Fcal;X_0)=\EL(\Hor(q_0);X_0) & = \EL(\Hor(q_t);X_t)\\
    &\ge \EL(\Phi(\Hor(q_t));\pi(X_t)) \\
    &= \EL(\Phi(e^{t} \Hor(q_0));\pi(X_t))\\
    &= e^{2t} \EL(\Phi(\Fcal);\pi(X_t)).
\end{align*}
We use the fact that $\EL$ depends quadratically on the weight of measured foliations. Then, by \cite[Theorem 4]{Kerckhoff_AsympGeoTeich}, we have
\begin{align*}
    d_{\Teich}(\pi(X_0),\pi(X_t)) = & \frac{1}{2} \sup\limits_{\Gcal \in \MF(S_{g,n})} \left| \log\frac{\EL(\Gcal ; \pi(X_0)))}{\EL(\Gcal ; \pi(X_t))}\right|\\
    \ge & \frac{1}{2} \log\frac{\EL(\Phi(\Fcal) ; \pi(X_0))}{\EL(\Phi(\Fcal) ; \pi(X_t))}\\
    = & t- \frac12\log \frac{\EL(\Fcal ;X_0)}{\EL(\Phi(\Fcal) ; \pi(X_0))}.
\end{align*}
The second statement then follows from $d_{\Teich}(X_0,X_t)= t$ in $\Tcal_{g,n+1}$ for $t\ge0$.
\end{proof}


\section{Applications in complex dynamics: partial results for the global finite attractor conjecture}\label{sec:CplxDyn}
Consider a post-critically finite rational map $f\from \hat{\Cbb} \to \hat{\Cbb}$ with post-critical set $P_f$.
Denote by $\sigma_f\from \Tcal(\hat{\Cbb},P_f) \to \Tcal(\hat{\Cbb},P_f)$ Thurston's pullback map on the Teichm\"uller space. By abusing notation, we also denote by $\sigma_f\from \MF(\hat{\Cbb},P_f) \to \MF(\hat{\Cbb},P_f)$ the composition of the pullback map $f^*\from \MF(\hat{\Cbb},P_f)\to \MF(\hat{\Cbb},f^{-1}(P_f))$ and the puncture forgetting map $\Phi\from \MF(\hat{\Cbb},f^{-1}(P_f))\to \MF(\hat{\Cbb},P_f)\cup \{0\}$. Here we forget all the punctures in $f^{-1}(P_f) \setminus P_f$, which usually contains more than one puncture.

A {\it simple multicurve} $\Gamma$ of $(\hat{\Cbb},P_f)$ is a multicurve that can be represented by disjoint simple closed curves. A {\it weighted} simple multicurve is a simple multicurve with a positive real number assigned to each component.

The map $\sigma_F\from \MF(\hat{\Cbb},P_f)\to\MF(\hat{\Cbb},P_f)$ restricted to weighted simple multicurves $[\Gamma]$ are also defined by 
\[
    \sigma_f[\Gamma]=\sum_{[\alpha]\in C(\hat{\Cbb},P_f)} k_{[\alpha]}\cdot [\alpha],
\]
where $k_{[\alpha]}$ is the number of connected components of $f^{-1}(\Gamma_n)$ that are homotopic to $\alpha$.

Recall that $\Proj\from \MF(\hat{\Cbb}, P_f) \to \PMF(\hat{\Cbb}, P_f)$ denotes the projectivization map. Recall that $\Fcal$ is geodesically recurrent if a Teichm\"uller geodesic $(X_t)$ generated by $\Fcal$ is recurrent in the moduli space $\Mcal(\hat{\Cbb},P_f)$.

\begin{thm}
    Let $f$ be a post-critically finite rational map that is not a flexible Latt\`es map. Suppose $||D\sigma_f||<1$. If $\Fcal \in \MF(\hat{\Cbb}, P_f)$ is a geodesically recurrent, then $\sigma_f(\Fcal)=0$. In particular, if $|P_f| = 4$, $\sigma_f(\Fcal) = 0$ for all irrational measured foliations $\Fcal$.
\end{thm}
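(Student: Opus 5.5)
We argue by contradiction, comparing the Teichmüller geodesic generated by $\Fcal$ with its image under $\sigma_f$, using the lower bound of Theorem~\ref{thm:LowBddTeichDist} together with the contraction $\|D\sigma_f\|<1$. The first step is to realize $\sigma_f$ on $\Tcal(\hat{\Cbb},P_f)$ as a composition
\[
    \Tcal(\hat{\Cbb},P_f)\xrightarrow{f^*}\Tcal(\hat{\Cbb},f^{-1}(P_f))\xrightarrow{\pi}\Tcal(\hat{\Cbb},P_f),
\]
where $f^*$ pulls back the complex structure and $\pi$ forgets the punctures of $f^{-1}(P_f)\setminus P_f$. Then $f^*$ is an isometric embedding for the Teichmüller metric, and pullback of quadratic differentials sends the Teichmüller geodesic $(X_t)$ generated by $\Fcal$ to the Teichmüller geodesic $(f^*X_t)$ generated by $f^*\Fcal$, again with unit speed. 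Likewise $\EL(f^*\Fcal;f^*X)=d\,\EL(\Fcal;X)$, where $d=\deg f$.

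Suppose that $\sigma_f(\Fcal)=\Phi(f^*\Fcal)\neq 0$. Applying Theorem~\ref{thm:LowBddTeichDist} in its $m$-puncture form (Proposition~\ref{prop:m Punctures}; the proof there only uses extremal-length monotonicity and Kerckhoff's formula, so it carries over verbatim) to $f^*\Fcal$ and the geodesic $(f^*X_t)$ gives
\[
    d_{\Teich}(\sigma_f(X_0),\sigma_f(X_t)) \ge t - C \qquad \text{for all } t\ge 0,
\]
with $C=\tfrac12\log\bigl(d\,\EL(\Fcal;X_0)/\EL(\sigma_f(\Fcal);\sigma_f(X_0))\bigr)<\infty$. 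On the other hand, $d_{\Teich}(\sigma_f(X_0),\sigma_f(X_t))\le\int_0^t\|D\sigma_f(\dot X_s)\|\,ds$.

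Now we use recurrence. Since $\|D\sigma_f\|<1$ pointwise and $\sigma_f$ commutes with the action of the liftable subgroup (a finite-index subgroup of the mapping class group), $\|D\sigma_f\|$ is invariant under a finite-index subgroup. It therefore descends to a continuous function on a finite cover of moduli space, and so is at most $\lambda<1$ on the preimage of any compact set $K\subset\Mcal(\hat{\Cbb},P_f)$. Geodesic recurrence of $\Fcal$ means that $(X_t)$ enters $K$ at an unbounded sequence of times. Because the geodesic has unit speed, it then stays in a slightly larger compact set $K'$ for a time interval of fixed length $\epsilon$ each time (compare the argument in Proposition~\ref{prop:GradDescFlow}-(3)). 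Consequently $\int_0^t(1-\|D\sigma_f(\dot X_s)\|)\,ds\to\infty$, so $d_{\Teich}(\sigma_f X_0,\sigma_f X_t)\le t-N(t)$ with $N(t)\to\infty$. This contradicts the lower bound $t-C$, and hence $\sigma_f(\Fcal)=0$.

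For the last claim, when $|P_f|=4$ the moduli space has complex dimension one. There $\|D\sigma_f\|<1$ holds for non-Lattès maps, which is standard since $\sigma_f$ is then not a local isometry. Moreover, every irrational measured foliation generates a recurrent geodesic, because for $\mathcal{M}_{0,4}$ (a modular surface) divergence of a geodesic corresponds exactly to rational directions. The main obstacle is the uniform bound $\|D\sigma_f\|\le\lambda<1$ over compact subsets of moduli space, that is, the equivariance needed to pass to a finite cover; I would handle this via the liftable subgroup and the finite-index cover of $\Mcal$ through which the moduli correspondence factors.
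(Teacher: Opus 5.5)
Your proposal is correct and follows essentially the same route as the paper. You write $\sigma_f=\pi\circ f^*$, apply the multi-puncture version of Theorem~\ref{thm:LowBddTeichDist} to the pulled-back geodesic generated by $f^*\Fcal$, and contradict the resulting bound $\int_0^\infty\bigl(1-\|D\sigma_f(\dot X_t)\|\bigr)\,dt<\infty$ using recurrence together with a uniform bound $\|D\sigma_f\|\le\lambda<1$ over compact parts of moduli space. Your use of the finite-index liftable subgroup is slightly more careful than the paper's claim that $\|D_Y\sigma_f\|$ depends only on the moduli point. You also justify the $|P_f|=4$ clause (irrational directions give recurrent geodesics on the modular surface), which the paper leaves implicit.
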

\begin{proof}
    By Thurston's characterization \cite{DH_ThurstonChar}, the pullback map $\sigma_f\from \Tcal(\hat{\Cbb},P_f)\to \Tcal(\hat{\Cbb},P_f)$ has a unique fixed point, which we denote by $X$. Let $(X_t)_{t\ge0}$ denote the Teichm\"uller geodesic starting from $X=X_0$ generated by $\Fcal$.
    
    Suppose that $\Fcal$ is geodesically recurrent. Then there exists a compact subset $K\subset \Mcal(\hat{\Cbb},P_f)$ so that the total amount of time that $(X_t)$ spends in $K$ is infinite. This is immediate if $(X_t)$ eventually remains in $K$. Otherwise, suppose that $(X_t)$ leaves and reenters $K$ infinitely many times. There exists a compact subset $K'\subset \Mcal(\hat{\Cbb},P_f)$ whose interior contains $K$ such that the distance between the boundaries of $K$ and $K'$ is $t_0>0$. Moreover, each time $(X_t)$ enters $K$, it must remain in $K'$ for at least time $t_0$ before it can leave $K$. Hence the total amount of time that $(X_t)$ spends in $K'$ is infinite. Then we replace $K$ with $K'$.

    Let $q\from \Tcal(\hat{\mathbb{C}},P_f)\to \mathcal{M}(\hat{\mathbb{C}},P_f)$ be the quotient map onto the moduli space. The norm of the derivative $||D_Y\sigma_f||$ depends only on $q(Y)$ \cite[Proposition 3.2]{DH_ThurstonChar}. Hence the path $(q(X_t))_{t\ge0}$ in $\mathcal{M}(\hat{\mathbb{C}},P_f)$ returns infinitely often to a compact subset $K$ where $||D_Y\sigma_f||<1-\epsilon$ for some small $\epsilon=\epsilon(K)>0$. Hence, $1-||D\pi_*(\dot{X_t})||_{\Teich}\nrightarrow 0$ as $t\to \infty$. Then, by applying Theorem \ref{thm:LowBddTeichDist} for $f^*\Fcal\in \MF(\hat{\Cbb},f^{-1}(P_f))$, we obtain $\Phi(\Fcal)=0$. 
\end{proof}

Suppose that $f$ is a post-critically finite rational map. A {\it simple multicurve} $\Gamma$ of $(\hat{\Cbb},P_f)$ is a multicurve that can be represented by disjoint simple closed curves. For $n\ge 1$, we say that a simple multicurve $\Gamma$ is {\it $f^n$-invariant} if for any $\gamma \in \Gamma$, every essential connected component of $f^{-n}(\gamma)$ is homotopic to some component of $\Gamma$. Moreover, we say that $\Gamma$ is {\it completely $f^n$-invariant} if $\Gamma$ is $f^n$-invariant and every component of $\Gamma$ is an essential component of $f^{-n}(\Gamma)$. If $f$ does not have a global finite curve attractor, then there exist two possibilities.
\medskip

\noindent (1) {\bf Backward wandering (multi-)curves}

There exists a sequence of distinct homotopy classes of simple closed curves $([\gamma_n])_{n\ge0}$ such that $\gamma_{n+1}$ is homotopic to a component of $f^{-1}(\gamma_n)$ in $(\hat{\Cbb},P_f)$. 

In this case, we define $[\Gamma_1]=[\gamma_1]$ and $[\Gamma_{n+1}]:=(\sigma_f)^{n}([\Gamma_1])$ for all $n\ge1$. Then $\gamma_n\in \Gamma_n$. We define a sequence of probability measures $(\mu_u)_{n\ge0}$ by
\begin{equation}\label{eqn:ProbMsrWandering}
    \mu_n:= \frac{1}{n} \sum_{k=0}^{n-1} \delta_{\Proj([\Gamma_k])}.
\end{equation}

\noindent (2) {\bf Infinitely many virtually invariant multicurves}

There exist infinitely many pairwise distinct simple multicurves $\Gamma_1,\Gamma_2,\dots$ such that for each $n\ge1$, $[\Gamma_n]$ is completely $f^{ p_n}$-invariant for some $p_n>0$. By \cite[Theorem 1.5]{Pilgrim_Alg_Thu_Chac}, $p_n\to \infty$. By further decomposing $\Gamma_n$ into $f^{p_n}$-invariant simple sub-multicurves, we may additionally assume that each $\Gamma_n$ is {\it $f^{p_n}$-irreducible}, i.e., for every ordered pair $\gamma_i,\gamma_j\in \Gamma_n$, there exists $k>0$ so that $f^{-kp_n}(\gamma_i)$ contains a component homotopic to $\gamma_j$.

For each $\Gamma_n$, we have a linear transformation $T_n;=(\sigma_f)^{p_n}|_{\Rbb^{\Gamma_n}}\from \Rbb^{\Gamma_n}\to \Rbb^{\Gamma_n}$ on a finite dimensional vector space $\Rbb^{\Gamma_n}$. 
As a matrix, after conjugation by a permutation matrix, $T_n$ becomes a nonnegative integer irreducible matrix. By the Perron-Frobenius theorem, it has a positive eigenvector, which is unique up to constant multiple, with eigenvalue $\lambda_n\ge1$. Denote by $\Gamma_n^w$ the weighted multicurve defined by an eigenvector of $T_n$. Then, we have $\lambda_n [\Gamma_n^w]=(\sigma_f)^{p_n}([\Gamma_n^w])$ in $\MF(\hat{\Cbb},P_f)$ and $\Proj([\Gamma_n^w])=(\sigma_f)^{p_n}(\Proj([\Gamma_n^w]))$ in $\PMF(\hat{\Cbb},P_f)$. Define a measure $\mu_n$ on $\PMF(\hat{\Cbb},P_f)$ by
\begin{equation}\label{eqn:ProbMsrIntPer}
    \mu_n:=\frac{1}{p_n}\sum_{i=0}^{p_n-1} \delta_{(\sigma_f)^{\circ i}(\Proj([\Gamma_n^w]))}.
\end{equation}
By construction, $\mu_n$ is an $(\sigma_f)_*$-invariant probability measure on $\PMF(\hat{\Cbb},P_f)$.

\begin{thm}\label{thm:LimitMsr nonRecurrent}
    Let $f$ be a post-critically finite rational map with $|P_f|>3$ that is not a flexible Latt\`es map. Let $\Gamma$ be a multicurve of $(\hat{\Cbb},P_f)$. Suppose that $f$ does not have a global finite attractor of $C(\hat{\Cbb},P_f)$. Let $(\mu_n)$ be a sequence of probability measures on $\PMF(\hat{\Cbb},P_f)$ constructed in Equation \eqref{eqn:ProbMsrWandering} or \eqref{eqn:ProbMsrIntPer}. Then, for every weak$^*$-limit $\mu$ of $(\mu_n)_{n\ge0}$, we have $(\sigma_f)_*\mu=\mu$ and 
    \[
        \mu(\{\Proj(\Fcal)\in \PMF(\hat{\Cbb},P_f) ~|~ \Fcal~{\rm is~geodesically~recurrent}\})=0.
    \]
\end{thm}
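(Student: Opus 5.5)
The plan is to find a single upper semicontinuous ``expansion'' function $h$ on $\PMF(\hat{\Cbb},P_f)$ that equals $-\infty$ on geodesically recurrent classes, yet has nonnegative integral against every $\mu_n$. Passing to a weak$^*$ limit $\mu$ will then force $\mu(\{h=-\infty\})=0$.

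\emph{Setup.} Since $f$ is not a flexible Latt\`es map and $|P_f|>3$, Thurston's characterization \cite{DH_ThurstonChar} gives a unique fixed point $X\in\Tcal(\hat{\Cbb},P_f)$ of $\sigma_f$, and $\|D\sigma_f\|<1$. By the theorem preceding Theorem~\ref{thm:LimitMsr nonRecurrent}, every geodesically recurrent $\Fcal$ satisfies $\sigma_f(\Fcal)=0$. Define
\[
    h(\Proj(\Fcal)) := \log\EL(\sigma_f(\Fcal);X) - \log\EL(\Fcal;X)\in[-\infty,\infty).
\]
This is well defined on $\PMF$ because both terms are homogeneous of degree $2$ (the map $\Phi$ commutes with positive scaling).

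\emph{Properties of $h$.} I would establish three facts.
\begin{enumerate}
\item $h$ is bounded above. Since $X$ is fixed by $\sigma_f$, $f\colon \sigma_f(X)\to X$ is a degree-$d$ holomorphic cover. Pulling back the extremal quadratic differential gives $\EL(f^*\Fcal;\sigma_f(X))\le d\,\EL(\Fcal;X)$. Then Theorem~\ref{thm:InequalityEL}, or Lemma~\ref{lem:InequalityELNonGeneric} in the non-generic case, gives $\EL(\Phi(f^*\Fcal);X)\le \EL(f^*\Fcal;X')$ for the relevant $X'$ over $X$. Together these yield $h\le \log d$.
\item $h$ is upper semicontinuous. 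For each curve $\zeta$, the function $\Fcal\mapsto\pair{\Phi(f^*\Fcal)}{[\zeta]}$ is upper semicontinuous by Theorem~\ref{theorem:MF}-(1), since $f^*$ is continuous. I would write $\EL$ through Kerckhoff's formula $\EL(\Gcal;X)=\sup_{\alpha}\pair{\Gcal}{\alpha}^2/\EL(\alpha;X)$. A supremum of upper semicontinuous functions need not be upper semicontinuous, so I would first try to bound $\EL(\sigma_f\Fcal;X)$ by a compactness argument on the set $\MF_{[\Gamma],B}$ of Lemma~\ref{lem:MF_Gamma,B cpt}. That bound should exhibit $\EL(\sigma_f(\cdot);X)$ as a decreasing limit of continuous functions.
\item $h=-\infty$ exactly where $\sigma_f(\Fcal)=0$, and in particular on all geodesically recurrent classes.
\end{enumerate}

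\emph{Nonnegativity along orbits.} Write $\delta_{\Proj(\Gcal)}$ for the Dirac mass at $\Proj(\Gcal)$; both $\mu_n$ are averages of such masses along $\sigma_f$-orbits.
\begin{enumerate}
\item For the periodic construction \eqref{eqn:ProbMsrIntPer}, summing $h$ over one period telescopes. Using $(\sigma_f)^{p_n}[\Gamma_n^w]=\lambda_n[\Gamma_n^w]$, the sum equals $2\log\lambda_n\ge0$, so $\int h\,d\mu_n\ge0$.
\item For the wandering construction \eqref{eqn:ProbMsrWandering}, the sum telescopes to $\log\EL(\Gamma_n;X)-\log\EL(\Gamma_0;X)$. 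The $\Gamma_k$ are integral multicurves, each nonzero since $\gamma_k\in\Gamma_k$, so $\EL(\Gamma_n;X)\ge \mathrm{sys}(X)>0$. Hence $\int h\,d\mu_n\ge -C/n$.
\end{enumerate}
Since $h$ is upper semicontinuous and bounded above on the compact space $\PMF$, the portmanteau inequality gives
\[
    \int h\,d\mu\;\ge\;\limsup_n \int h\,d\mu_n\;\ge\;0.
\]
Therefore $\mu(\{h=-\infty\})=0$, and the recurrent set has $\mu$-measure zero.

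\emph{Invariance.} To prove $(\sigma_f)_*\mu=\mu$, note that $(\sigma_f)_*\mu_n-\mu_n$ is $0$ in the periodic case and has total mass at most $2/n$ in the wandering case. The delicate point is passing to the limit, because $\sigma_f$ on $\PMF$ is neither continuous (only upper semicontinuous via $\Phi$) nor defined where $\sigma_f\Fcal=0$. Since $\mu(\{h=-\infty\})=0$, $\sigma_f$ is $\mu$-a.e.\ defined. I would try to show that $\sigma_f$ is continuous at $\mu$-a.e.\ point, e.g.\ at $x$-non-generic points, or at points where $\Phi$ is realized exactly as in Proposition~\ref{prop:PForget NonGeneric}; the standard continuous-mapping argument would then give invariance.

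I expect the main obstacle to be the semicontinuity and continuity of $\Fcal\mapsto\EL(\sigma_f\Fcal;X)$, on which both steps rest. This is where Theorem~\ref{thm:InequalityEL} and Theorem~\ref{theorem:MF}-(1) must be combined with care.
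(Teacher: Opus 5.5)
\paragraph{The measure-zero assertion.}
This part of your argument is the paper's argument, with extremal length at the fixed point in place of hyperbolic length. The paper takes $r(\Proj(\Fcal))=\log\bigl(L(\sigma_f(\Fcal))/L(\Fcal)\bigr)$. It telescopes to get $\int r\,d\mu_n\ge\frac1n\log\frac{L([\Gamma_n])}{L([\Gamma_0])}$, respectively $\int r\,d\mu_n=\frac1{p_n}\log\lambda_n$, and concludes with Lemma~\ref{lem:IneqUpperSemiCont}. Your version is correct.

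Your choice of $\EL$ also removes what you call the main obstacle. By Theorem~\ref{theorem:MF}-(3) in its multi-puncture form (Proposition~\ref{prop:m Punctures}), with $\pi$ forgetting $f^{-1}(P_f)\setminus P_f$,
\[
    \EL(\sigma_f(\Fcal);X)=\inf_{Y'\in\pi^{-1}(X)}\EL(f^*\Fcal;Y').
\]
This is an infimum of continuous functions of $\Fcal$, so it is upper semicontinuous, and hence so is $h$.

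The same formula gives $h\le\log d$. Take $Y'$ to be the structure on $(\hat{\Cbb},f^{-1}(P_f))$ pulled back by $f$. Then $\pi(Y')=\sigma_f(X)=X$, and $f^*q_{\Fcal,X}=q_{f^*\Fcal,Y'}$ has norm $d\,\EL(\Fcal;X)$.

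Your compactness idea also works once you combine three ingredients: Lemma~\ref{lem:MF_Gamma,B cpt}, the coordinatewise upper semicontinuity of Theorem~\ref{theorem:MF}-(1), and the monotonicity Lemma~\ref{lem:IneqEL}. The paper only asserts upper semicontinuity of its hyperbolic-length ratio, and Lemma~\ref{lem:IneqEL} has no recorded analogue for hyperbolic length. So your choice of $\EL$ makes the key semicontinuity step transparent.

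\paragraph{The invariance claim.}
Your proposal does not prove $(\sigma_f)_*\mu=\mu$; the paper calls it straightforward but gives no argument. The route you suggest, continuity of $\sigma_f$ at $\mu$-a.e.\ point, is doubtful.

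By the preceding theorem, $\sigma_f$ vanishes on every geodesically recurrent class, and these classes are dense because they have full Thurston measure. Hence $\Fcal\mapsto\sigma_f(\Fcal)\in\MF(\hat{\Cbb},P_f)$ is discontinuous at every point where it is nonzero. By your own result, that is $\mu$-almost every point. This applies in particular at the $x$-non-generic or exactly-realized points you mention.

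The continuous-mapping argument would therefore need the projectivized map to be continuous at these points, even though magnitudes collapse nearby. Nothing in the paper supplies this. As written, your proposal establishes the measure-zero statement but leaves invariance open.
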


We remark that if $|P_f|=4$, then $\mu$ is a (possibly countably infinite) convex sum of $\delta_\Gamma$ for some periodic cycles $\Gamma$ of $\sigma_f$.

\begin{proof}
We first consider $\mu_n$ in Equation \eqref{eqn:ProbMsrWandering}. Let $\mu_{n_k}\to \mu$ as $k\to \infty$ in the weak$^*$-topology. The $(\sigma_f)_*$-invariance is straightforward. Define $r\from \PMF(\hat{\Cbb},P_f)\to [-\infty, \log( \deg(f))]$ by
\[
    r(\Proj(\Fcal))=\log\frac{L(\sigma_f(\Fcal))}{L(\Fcal)}
\]
where $L(\cdot)$ denotes the length with respect to the hyperbolic metric on $(\hat{\Cbb},P_f)$. Then $r$ is upper semi-continuous and $r(\Proj(\Fcal))=-\infty$ for any recurrent measured foliation $\Fcal$.

For any $n>0$, we have
\begin{align*}
    \int_{\PMF} r \,d\mu_n & = \frac{1}{n}\sum_{i=0}^{n-1} \log \frac{L(\sigma_f([\Gamma_i]))}{L([\Gamma_i])}\\
    & \ge \frac{1}{n}\sum_{i=0}^{n-1} \log \frac{L([\Gamma_{i+1}])}{L([\Gamma_i])}\\
    &\ge \frac{1}{n} \log\frac{L([\Gamma_{n}])}{L([\Gamma_0])}.
\end{align*}
Note that $\gamma_n\in \Gamma_n$ and $L([\gamma_n])\to \infty$ as $n\to \infty$. It follows from Lemma \ref{lem:IneqUpperSemiCont} that we have
\[
    0 \le \limsup_{k\to \infty} \int_{\PMF} r \,d \mu_{n_k} \le \int_{\PMF} r \, d\mu.
\]
Since $r(\Proj(\Fcal))=-\infty$ for any recurrent measured foliation $\Fcal$, we have
\[  
    \mu(\{\Proj(\Fcal)\in \PMF(\hat{\Cbb},P_f) ~|~ \Fcal~{\rm is~recurrent}.\})=0.
\]

Next, consider $\mu_n$ in Equation \eqref{eqn:ProbMsrIntPer}. For any $n>0$, we have
\begin{align*}
    \int_{\PMF(\hat{\Cbb},P_f)} r \,d\mu_n & = \frac{1}{p_n} \sum_{i=0}^{p_n-1}\log \frac{L((\sigma_f)^{i+1}(\Gamma_n^w))}{L((\sigma_f)^{i}(\Gamma_n^w))}\\
    &= \frac{1}{p_n} \log \frac{L((\sigma_f)^{p_n}(\Gamma_n^w))}{L(\Gamma_n^w)}\\
    &=\frac1{p_n}\log\lambda_n,
\end{align*}
where $\lambda_k\ge1$ is the leading eigenvalue of $T_k\from \Rbb^{\Gamma_k}\to \Rbb^{\Gamma_k}$. Hence $\int_{\PMF(\hat{\Cbb},P_f)} r \,d\mu_n\ge0$. The rest follows similarly to the previous case.
\end{proof}

\begin{lem}\label{lem:IneqUpperSemiCont}
    Let $X$ be a compact metric space and $f\from X \to [-\infty, D]$ be an upper semi-continuous function for some $D>0$. If $(\mu_k)_{k\ge 1}$ is a sequence of measures on $X$ with $\mu_k \to \mu$ with respect to the weak$^*$-topology, then
    \[
        \limsup_{k\to \infty} \int_X f \,d\mu_k \le \int_X f \, d\mu.
    \]
\end{lem}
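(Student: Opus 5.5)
This is the reverse Fatou-type inequality for upper semi-continuous functions bounded above. I will reduce it to the portmanteau statement for continuous functions by approximating $f$ from above. Since $X$ is compact and the $\mu_k$ converge weak$^*$, I assume (as in the application, where the $\mu_k$ are probability measures) that the total masses $\mu_k(X)$ are uniformly bounded and converge to $\mu(X)$. I use this to pass limits through the constant part of the approximants.

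\textbf{Step 1 (approximation from above).} For an upper semi-continuous $f\colon X\to[-\infty,D]$ on a metric space, I would use the Baire construction
\[
f_m(x):=\sup_{y\in X}\bigl(\max(f(y),-m)-m\,d(x,y)\bigr).
\]
Each $f_m$ is $m$-Lipschitz and takes values in $[-m,D]$. It satisfies $f_m\ge \max(f,-m)\ge f$. The sequence is nonincreasing in $m$, and upper semi-continuity gives $f_m(x)\downarrow f(x)$ pointwise, including at points where $f(x)=-\infty$. The key check is the pointwise convergence: if $f(x)<a$, then by upper semi-continuity $f<a$ on a ball $B(x,r)$. Points outside the ball contribute at most $D-mr$, which is below $a$ once $m$ is large. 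This is the only mildly delicate step.

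\textbf{Step 2 (weak$^*$ limit for fixed $m$).} Since $f_m$ is continuous and bounded on the compact space $X$, weak$^*$ convergence gives $\int f_m\,d\mu_k\to\int f_m\,d\mu$. Because $f\le f_m$, for each $m$
\[
\limsup_{k\to\infty}\int_X f\,d\mu_k\le \lim_{k\to\infty}\int_X f_m\,d\mu_k=\int_X f_m\,d\mu .
\]

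\textbf{Step 3 (monotone convergence).} The functions $D-f_m$ are nonnegative and increase to $D-f$. The monotone convergence theorem therefore gives $\int f_m\,d\mu\downarrow\int f\,d\mu$, with the value $-\infty$ allowed. This uses $\mu(X)<\infty$. Letting $m\to\infty$ in Step 2 yields the claim.

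The main obstacle is only the pointwise convergence $f_m\downarrow f$ in Step 1, together with keeping track of the value $-\infty$. Both are handled by the truncation $\max(f,-m)$ and by stating monotone convergence for the nonnegative functions $D-f_m$.
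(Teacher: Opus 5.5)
Your proof is correct and follows essentially the paper's route: approximate $f$ from above by a decreasing sequence of continuous functions (the paper cites Baire's theorem, while you construct the Lipschitz approximants explicitly), apply weak$^*$ convergence to each fixed approximant, and finish with monotone convergence for $\mu$. Your direct inequality $\int f\,d\mu_k\le\int f_m\,d\mu_k$ replaces the paper's Beppo Levi step for each $\mu_k$ together with Lemma~\ref{lem:IneqDoubleSeq}. Your extra assumption on the total masses is never used, and it holds automatically anyway, since weak$^*$ convergence can be tested against the constant function $1$.
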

\begin{proof}
    By Baire's theorem, the upper semi-continuous function $f$ is the limit of a monotonically decreasing sequence of continuous functions $(f_n)_{n\ge0}$. Since the family $\{f_n\}_{n>0}$ is uniformly bounded from above by $D$, by Beppo Levi's theorem, which is a slight generalization of Lebesgue's monotone convergence theorem, for any $k>0$ we have
    \[
        \int_X f \,d\mu_k = \lim_{n\to \infty} \int_{X} f_n \, d\mu_k
    \]
    By taking $\limsup_{k\to \infty}$, we have
    \[
        \limsup_{k\to \infty} \int_{X} f \, d\mu_k = \limsup_{k\to \infty}\lim_{n\to \infty} \int_{X} f_n \, d\mu_k.
    \]
    Since $f_n$ is monotonically decreasing in $n$, by Lemma \ref{lem:IneqDoubleSeq} we have
    \begin{align*}
        \limsup_{k\to \infty}\lim_{n\to \infty} \int_{X} f_n \, d\mu_k & \le \lim_{n\to \infty} \lim_{k\to \infty} \int_{X} f_n \, d\mu_k\\
        & = \lim_{n\to \infty} \int_{X} f_n \, d\mu\\
        & = \int_{X} f \, d\mu,
    \end{align*}
    where the last equality follows from Beppo Levi's theorem. Hence we have
    \[
        \limsup_{k\to \infty} \int_{X} f \, d\mu_k \le \int_{X} f \, d\mu. \qedhere
    \]
\end{proof}

\begin{lem}\label{lem:IneqDoubleSeq}
    Let $(a_{n,k})_{n,k\ge0}$ be a double sequence. Suppose that for every $k$ the sequence $(a_{n,k})_{n \ge 0}$ is monotonically decreasing. Also suppose that for every $n$ the sequence $(a_{n,k})_{k \ge0}$ converges. Then we have
    \[
        \limsup_{k \to \infty} \lim_{n \to \infty} a_{n,k} \le \lim_{n \to \infty} \lim_{k \to \infty} a_{n,k}.
    \]
\end{lem}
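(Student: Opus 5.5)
The plan is to carry out the interchange of limits by hand, using only monotonicity in $n$ and the existence of the limits in $k$. Write $b_k:=\lim_{n\to\infty} a_{n,k}$, which exists in $[-\infty,\infty)$ because $(a_{n,k})_n$ is monotonically decreasing. Likewise write $c_n:=\lim_{k\to\infty} a_{n,k}$, which exists by hypothesis.

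The first step is to observe that, since $(a_{n,k})_n$ decreases, we have $b_k=\inf_n a_{n,k}\le a_{N,k}$ for every fixed $N$ and every $k$. Taking $\limsup_{k\to\infty}$ of this pointwise inequality gives
\[
    \limsup_{k\to\infty} b_k \le \limsup_{k\to\infty} a_{N,k} = c_N
\]
for every $N$, where the equality uses that $(a_{N,k})_k$ converges.

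The second step is to note that $(c_n)_n$ is itself monotonically decreasing. This holds because $a_{n+1,k}\le a_{n,k}$ for all $k$, and passing to the limit in $k$ gives $c_{n+1}\le c_n$. Hence $\lim_{n\to\infty} c_n=\inf_N c_N$ exists in $[-\infty,\infty)$. Taking the infimum over $N$ in the inequality of the first step then yields
\[
    \limsup_k \lim_n a_{n,k}\le \lim_n\lim_k a_{n,k},
\]
which is the claim.

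There is no real obstacle. The only point needing care is the extended-real bookkeeping: values $-\infty$ are allowed, as in the application to $f\colon X\to[-\infty,D]$ in Lemma \ref{lem:IneqUpperSemiCont}. All the inequalities above remain valid in $[-\infty,\infty)$, so I would simply remark that the limits are taken in the extended reals.
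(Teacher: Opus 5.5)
Your proposal is correct and follows essentially the same argument as the paper: bound $\lim_n a_{n,k}\le a_{N,k}$, take $\limsup_k$ to get $\limsup_k\lim_n a_{n,k}\le \lim_k a_{N,k}$, then use that $(\lim_k a_{n,k})_n$ is decreasing and pass to the limit in $N$. Your explicit remark that the argument works in $[-\infty,\infty)$ is a useful addition, since the application in Lemma \ref{lem:IneqUpperSemiCont} allows $-\infty$ values.
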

\begin{proof}
    Since $(a_{n,k})$ is monotonically decreasing about $n$, for any $n,k>0$ we have
    \[
        \lim_{n\to \infty} a_{n,k} \le a_{n,k}.
    \]
    By taking $\limsup_{k\to \infty}$, for any $n \ge 0$ we have
    \[
        \limsup_{k \to \infty} \lim_{n \to \infty} a_{n,k} \le \lim_{k \to \infty} a_{n,k}.
    \]
    The monotonicity of $(a_{n,k})_{n\ge0}$ for any $k\ge0$ also implies that $(\lim_{k\to \infty} a_{n,k})_{n\ge 0}$ is also monotonically decreasing. Hence we can take $\lim_{n\to \infty}$ to the left hand side of the above inequality and obtain the conclusion.
\end{proof}

\section{Shards of memory}

In this section, we show the following theorem.
\begin{thm}\label{thm:Shards of memory}
    The set of $x$-generic minimal measured foliations $\Fcal$ with $\varphi(\Fcal)\neq 0$ is a dense subset of $\MF(S_{g,n+1})$.    
\end{thm}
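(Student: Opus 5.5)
Given an open set $U\subset \MF(S_{g,n+1})$, the plan is to find a foliation $\Fcal\in U$ that is $x$-generic, minimal, and satisfies $\varphi(\Fcal)\neq 0$. We will build it as a limit along a point-pushing-invariant construction, using the upper semi-continuity only in the harmless direction. The natural source is foliations whose intersection numbers are controlled from below uniformly over pre-homotopic curves.

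\textbf{Step 1 (seed).} We first pick a minimal, uniquely ergodic $\Fcal_0\in U$ such that the Teichm\"uller geodesic it generates, after projection by $\pi^1$, stays in a compact part of $\Ccal_{g,n}$ in a controlled way. Concretely, we would start from an $x$-non-generic foliation of non-annular type (Case (A)): take a minimal $\Fcal'\in\MF(S_{g,n})$ and put $x$ at a regular point. Then $\varphi(\Fcal')=\Fcal'\neq 0$ by Proposition \ref{prop:PForget NonGeneric}. Such foliations are dense, because lifting any minimal foliation of $S_{g,n}$ and moving $x$ by point-pushing approximates everything. By minimality of the $\MCG^+(S_{g,n+1})$-action on $\PMF$, the $\MCG^+(S_{g,n+1})$-translates of these foliations are dense. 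Moreover, $\Phi$ is equivariant under $\MCG^+(S_{g,n+1})\to \MCG^+(S_{g,n})$, so the condition $\Phi\neq 0$ is preserved under translation.

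\textbf{Step 2 (perturb to generic while keeping memory).} Next we perturb such an $\Fcal_1$ (non-generic, non-annular, with $\Phi(\Fcal_1)\neq 0$) to an $x$-generic minimal $\Fcal$ nearby. The main obstacle is that $\varphi$ is only upper semi-continuous, so $\Phi$ can collapse to $0$ under perturbation. By Theorem \ref{theorem:MF}-(2), this collapse is in fact generic, so a naive perturbation fails. The plan is to use the extremal-length characterization instead, Theorem \ref{thm:InequalityEL}: $\Phi(\Fcal)\neq 0$ is equivalent to the condition $\inf_{X'\in\pi^{-1}(Y)}\EL(\Fcal;X')>0$. It therefore suffices to produce $\Fcal$ near $\Fcal_1$ with a uniform positive lower bound on $\EL(\Fcal;X')$ over the fiber.

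We would obtain this bound by choosing $\Fcal$ so that the leaf $L_x$ emanating from $x$ spirals within a controlled region. Take a quadratic differential $q$ realizing $\Fcal_1$ on some $X$. Split the 1-prong at $x$, or slide $x$ slightly off the saddle connection, in a direction whose horizontal ray from $x$ is dense but "Diophantine", in the sense of a badly approximable direction relative to the flat structure. In the Masur–Smillie / Kerckhoff–Masur–Smillie sense such directions have full Hausdorff dimension, and generically the resulting foliation is minimal. The gradient-flow description in Proposition \ref{prop:GradDescFlow} and Lemma \ref{lem:Limit MF of Grad Flow} then shows the following. The infimum of $\EL$ over the fiber is realized as a limit $\Hcal$ of $[\psi_l]\cdot\Fcal$, and $\Hcal=0$ exactly when $\EL\to 0$ along the flow. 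The badly-approximable choice is meant to prevent $\EL\to 0$: the pushed point meets the leaf structure only at bounded "depth", so the pulley widths in Lemma \ref{lem:IntNumbCanonPull} decay summably. Consequently $\sum$ of widths $<\pair{\Fcal}{[\gamma]}$ for a fixed curve $\gamma$, which gives $\varphi(\Fcal)([\zeta])>0$ via Theorem \ref{thm:CurveReduction}-(4).

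\textbf{Step 3.} Finally, we need the summability estimate to be made quantitative. The canonical simple pulls decrease $\pair{\Fcal}{\gamma}$ by exactly the widths, and these widths are convex by Theorem \ref{thm:CurveReduction}-(5). One therefore needs the total decrease to be a small fraction of the initial value, uniformly under small perturbation of the direction. We would arrange this by choosing the slide distance of $x$ from the saddle point $y$ tiny compared with $\pair{\Fcal_1}{\gamma}$, since the total decrease is roughly bounded by twice the transverse measure of the short segment near $x$ together with the widths of later pulleys. Controlling those later pulleys is exactly where the badly-approximable hypothesis is needed, and this is the step we expect to be hardest.
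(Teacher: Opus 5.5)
Your Step~1 is essentially fine: non-annular $x$-non-generic foliations with $\Phi\neq 0$ are dense. Producing nearby $x$-generic minimal foliations is also easy, for instance by a small rotation of a flat structure with a simple pole at $x$, avoiding the countably many saddle-connection directions. But the whole content of the theorem sits in Steps~2--3, and there the argument has a genuine gap.

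You claim that a badly approximable direction forces $\inf_{X'\in\pi^{-1}(Y)}\EL(\Fcal;X')>0$, equivalently that the pulley widths have total less than $\pair{\Fcal}{[\gamma]}$. No mechanism is given for this. Bounded type is a property of the Teichm\"uller geodesic in $\Mcal_{g,n+1}$, i.e.\ of the whole $\MCG^+(S_{g,n+1})$-orbit. By contrast, $\Phi(\Fcal)\neq 0$ concerns only the point-pushing orbit, that is, the fiber of $\pi$.

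As a general principle the claim is false. The invariant foliation of a point-pushing pseudo-Anosov generates a geodesic asymptotic to an axis that projects to a closed geodesic, which is the prototype of bounded type. Yet $\Phi=0$ for it by Theorem~\ref{theorem:MF}-(2). These foliations are dense, so they occur arbitrarily close to your $\Fcal_1$, and nothing in your setup separates your directions from them.

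Taking the slide distance tiny does not help either. Theorem~\ref{thm:CurveReduction}-(5) only bounds each later width by the first one. An $x$-generic foliation undergoes infinitely many canonical pulls, and generically their widths exhaust all of $\pair{\Fcal}{[\gamma]}$; that is exactly what Theorem~\ref{theorem:MF}-(2) says. So Step~3 is not an estimate to be filled in later; it is the theorem itself.

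The missing idea is to control the pulleys directly rather than through a Diophantine condition. The paper builds $\Fcal_\infty=\lim \Fcal_i$, where each $\Fcal_i$ is eventually $x$-non-generic with respect to $\gamma_0$ and hence admits only finitely many canonical pulls (Lemma~\ref{lem:non-gen_FiniteCanSimPulls}).

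Lemma~\ref{lem:SmallChange2} passes from $\Fcal_{i-1}$, which is immediately non-generic with respect to the current pulled curve, to a nearby $\Fcal_i$ in three moves:
\begin{enumerate}
\item it arranges a saddle connection from $x$ crossing that curve $N$ times;
\item it fattens the initial segment into a pulley of width $2\epsilon$;
\item it twists along the curve.
\end{enumerate}
This yields $N$ new canonical pulls at a cost of about $2(N-1)\epsilon$ in $\varphi(\cdot)([\gamma_0])$, with $\epsilon$ chosen after $N$ is fixed.

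Lemma~\ref{lem:stability of finite simple pulls} guarantees that the pulls already performed survive all later small deformations. So the pull sequences accumulate, and the total loss stays below $\sum_i\epsilon_i$. Minimality is forced by enumerating $C(S_{g,n+1})=\{[\beta_i]\}$ and making $\pair{\Fcal_i}{[\beta_i]}>0$ robust under all subsequent perturbations.

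If anything, this points in the opposite direction from your heuristic. The good foliations are those whose leaf from $x$ passes extremely close to singular points, at scales chosen after the relevant lengths are fixed. That is Liouville-type rather than bounded-type behaviour.
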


Recall the number $n(\Fcal,\gamma)$ of canonical simple pulls that can be performed to $\gamma$ wrt $\Fcal$, which is discussed in Section \ref{subsec:Non-generic}. We have $n(\Fcal,\gamma)=\infty$ if $\Fcal$ is $x$-generic, and $n(\Fcal,\gamma)<\infty$ if $\Fcal$ is $x$-non-generic (Lemma \ref{lem:non-gen_FiniteCanSimPulls}).

For notational simplicity, we equip the topological space $\MF(S_{g,n+1})$ with a metric $d$.
 
\begin{lem}[Stability of finite sequence of canonical simple pulls]\label{lem:stability of finite simple pulls}
    Suppose $N$ is a finite non-negative integer with $N\le n(\Fcal,\gamma)\le \infty$. There exists $\epsilon = \epsilon(\Fcal,\gamma,N)>0$ such that for each $\Gcal\in \Ncal_\epsilon(\Fcal)$ and each $k \le N$, the $k^{th}$-canonical simple pull of $[\gamma]$ with respect to $\Fcal$ coincides with the $k^{th}$-canonical simple pull with respect to $\Gcal$. As a result, we obtain that the function
    \[
        n(-.\gamma)\from \MF(S_{g,n+1})\to \mathbb{Z}_{\ge0}\cup \{\infty\}
    \]
    is lower semicontinuous.
\end{lem}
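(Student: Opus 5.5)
The idea is to prove the first assertion by induction on $k\le N$. The point is that one canonical simple pull is characterized by a strict inequality among finitely many intersection numbers, and strict inequalities persist under small perturbation. Lower semicontinuity of $n(-,\gamma)$ then follows formally.

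The first step is to reformulate a single canonical pull. By Lemma \ref{lem:IntNumbCanonPull} (and its analogue for eventually $x$-non-generic foliations in Section \ref{subsec:Non-generic}), as long as fewer than $n(\Fcal,\gamma)$ pulls have been performed, the canonical simple pull $[\gamma_{k+1}]$ of $[\gamma_k]$ is the unique neighbor of $[\gamma_k]$ in the tree $X_\zeta$ with $\pair{\Fcal}{[\gamma_{k+1}]}<\pair{\Fcal}{[\gamma_k]}$. Every other neighbor $[\beta]$ satisfies
\[
\pair{\Fcal}{[\beta]}\ge \pair{\Fcal}{[\gamma_k]}+w_k,
\qquad
\pair{\Fcal}{[\gamma_{k+1}]}= \pair{\Fcal}{[\gamma_k]}-w_k,
\]
where $w_k=\pair{\Fcal}{I_{\Fcal,\gamma_k}}>0$ is the width of the pulley.

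For $\Gcal$ near $\Fcal$ I need two things:
\begin{itemize}
\item $\pair{\Gcal}{[\gamma_{k+1}]}<\pair{\Gcal}{[\gamma_k]}$;
\item $[\gamma_k]$ is not immediately $x$-non-generic for $\Gcal$, i.e.\ the decreasing neighbor for $\Gcal$ exists and is unique.
\end{itemize}
The first is immediate from continuity of $\Gcal\mapsto\pair{\Gcal}{[\alpha]}$ for the two fixed classes $[\gamma_k],[\gamma_{k+1}]$. Once some neighbor strictly decreases $\pair{\Gcal}{\cdot}$, the dichotomy from Lemma \ref{lem:IntNumbCanonPull} and the discussion preceding Proposition \ref{prop:PForget NonGeneric} settles both points:
\begin{itemize}
\item if $\Gcal$ is $x$-generic or eventually $x$-non-generic wrt $\gamma_k$, there is a unique decreasing simple pull, so it must be $[\gamma_{k+1}]$;
\item if $\Gcal$ is immediately $x$-non-generic wrt $\gamma_k$, all simple pulls preserve or strictly increase the intersection number, which contradicts the strict decrease.
\end{itemize}
So I only need to control two fixed curves at each step, not the infinitely many neighbors of $[\gamma_k]$. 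The degenerate case $\pair{\Gcal}{[\gamma_k]}=0$ is handled the same way, since the decreasing inequality fails there.

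Taking $\epsilon$ to be the minimum over $k<N$ of the radii given by continuity at the pairs $([\gamma_k],[\gamma_{k+1}])$ proves the first claim. It also shows that every $\Gcal\in\Ncal_\epsilon(\Fcal)$ admits at least $N$ canonical simple pulls, so $n(\Gcal,\gamma)\ge N$. Applying this with arbitrary finite $N\le n(\Fcal,\gamma)$ gives $\liminf_{\Gcal\to\Fcal}n(\Gcal,\gamma)\ge n(\Fcal,\gamma)$, which is lower semicontinuity, including the case $n(\Fcal,\gamma)=\infty$.

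The main obstacle is justifying the uniqueness dichotomy for $\Gcal$ that may be non-minimal or $x$-non-generic. Lemma \ref{lem:IntNumbCanonPull} is stated for $x$-generic foliations, and the non-generic case is only sketched in Section \ref{subsec:Non-generic}. I would check that its proof, which takes a quasi-transverse cord and computes $\pair{\Fcal}{\gamma}+2\pair{\Fcal}{\delta}$, only uses that the leaf from $x$ meets $\gamma_k$ before reaching another singularity. That is exactly the eventually non-generic condition.
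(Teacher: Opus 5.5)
Your proposal is correct and follows the paper's argument. A strict decrease along the canonical pull persists under small perturbation by continuity of intersection numbers, and uniqueness of the decreasing simple pull (Lemma \ref{lem:IntNumbCanonPull} and its eventually $x$-non-generic analogue) then forces the $\Gcal$-canonical pulls to agree with the $\Fcal$-canonical ones up to step $N$, which gives $n(\Gcal,\gamma)\ge N$. Your version is slightly cleaner: you use continuity only at the two fixed classes $[\gamma_k],[\gamma_{k+1}]$, rather than the paper's gap inequalities against all (infinitely many) other simple pulls. You also correctly identify the uniqueness dichotomy for arbitrary nearby $\Gcal$ as the point that the paper likewise takes for granted.
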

\begin{proof}
    Let $\gamma_k$ (resp.\@ $\gamma'_k$) be the $k^{th}$-canonical simple pull of $\gamma$ wrt $\Fcal$ (resp.\@ $\Gcal$). For each $k\le N-1$, there exists $\epsilon_k:=\pair{\Fcal}{[\gamma_{k}]}-\pair{\Fcal}{[\gamma_{k+1}]}>0$ such that $\pair{\Fcal}{[\gamma_{k}]}-\pair{\Fcal}{[\alpha]}<-\epsilon_k$ for any other simple pull $[\alpha]$ of $[\gamma_k]$. Since $(\epsilon_k)_{k=0,1,\dots,N-1}$ is a finite set, by the continuity of the intersection number, these inequality holds for a small $\epsilon$-deformation of $\Fcal$ to $\Gcal\in \Ncal_\epsilon(\Fcal)$, where $\epsilon$ depends only on $\gamma,\Fcal,$ and $N$. Then the desired conclusion follows from the fact that the canonical simple pull is the unique simple pull that decreases the intersection number (Lemma~\ref{lem:IntNumbCanonPull}).
\end{proof}


Recall that $\Fcal$ is said to be immediately $x$-non-generic wrt $\gamma$ if $\Fcal$ and $\gamma$ can be transversely represented so that $\deg_\Fcal(x)\ge2$, which is equivalent to $n(\Fcal,\gamma)=0$.

Lemma \ref{lem:SmallChange2} is the main technical lemma, and Lemma \ref{lem:SmallChange1} serves as its preparation.

\begin{lem} \label{lem:SmallChange1}
    Suppose that $\Fcal$ is immediately $x$-non-generic with respect to a simple closed curve $\gamma$ in $S_{g,n+1}$. Fix an essential simple closed curve $\beta$ in $S_{g,n+1}$. Then, for every $\epsilon>0$, there exists a measured foliation $\Fcal'$ with $d(\Fcal,\Fcal')<\epsilon$ such that
    \begin{enumerate}
        \item $\pair{\Fcal'}{[\beta]}>0$,
        \item $\deg_{\Fcal'}(x)=2$,
        \item one prong $L_x$ of $\Fcal'$ at $x$ is a saddle connection $[x,y]$,
        \item $\deg_\Fcal(y)\in \{1,3\}$, and
        \item $|[x,y]\cap \gamma|>0$.
    \end{enumerate}
\end{lem}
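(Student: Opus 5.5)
We will construct $\Fcal'$ by a sequence of small local surgeries on a representative of $\Fcal$, each of which moves $\Fcal$ by an arbitrarily small amount in $\MF(S_{g,n+1})$. We fix a transverse representative of $\Fcal$ and $\gamma$ with $\deg_\Fcal(x)\ge 2$, which exists because $\Fcal$ is immediately $x$-non-generic with respect to $\gamma$. We also fix a finite filling family of curves whose intersection numbers give local coordinates near $\Fcal$, so that "$\epsilon$-close" can be checked by bounding changes of intersection numbers with finitely many curves.

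\textbf{Step 1: securing (1).} If $\pair{\Fcal}{[\beta]}=0$, we replace $\Fcal$ by $\Fcal+\eta\,\Gcal$ for a small $\eta>0$, where $\Gcal$ is a measured foliation (for instance a weighted curve meeting $\beta$) with $\pair{\Gcal}{[\beta]}>0$. The sum is taken in the sense of a union of partial foliations, or via train track coordinates. Positivity of $\pair{\cdot}{[\beta]}$ is an open condition, so it survives all later perturbations once they are chosen small enough. We must still keep $\deg(x)\ge 2$. To do this, we choose the added component to be supported away from a small disk around $x$, or we simply redo Step 1 at the end if necessary.

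\textbf{Step 2: securing (2) and (3).} Whitehead moves do not change the class of $\Fcal$, so we first use them (splitting off prongs) to arrange $\deg(x)=2$ without changing $\Fcal$ in $\MF$, while keeping $\gamma$ transverse. Next we need one of the two prongs at $x$ to be a saddle connection. This is the main step. We slightly rotate the foliation near $x$ by a small "twist" deformation: we modify the transverse measure in a thin rectangle (a flow box) along a chosen leaf emanating from $x$. This is done by changing the weights of the train track carrying $\Fcal$ by an amount less than $\epsilon$, so that this prong lands on a singular point $y$.

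We would first try to realize this via quadratic differentials. Take $q$ with horizontal foliation $\Fcal$ on some $X$. Perturbing $q$ within its stratum by period coordinates, the set where a prescribed separatrix from $x$ is a saddle connection is dense, because saddle connections correspond to the vanishing of the imaginary part of a relative period. Horizontal saddle connections in direction $\theta$ near $0$ exist for a dense set of small rotations $e^{i\theta}q$. Any separatrix from $x$ hits $\gamma$ when followed long enough, since $\gamma$ is essential and transverse to $\Fcal$. We therefore choose a long saddle connection near the prong direction that crosses $\gamma$, which gives (5). Finally, we split the singularity at $y$ by Whitehead moves if needed so that $y$ has degree $3$, or degree $1$ if $y$ is a marked point; this gives (4).

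\textbf{Main obstacle.} The hard step is producing a saddle connection from $x$ that crosses $\gamma$ while staying $\epsilon$-close in $\MF$. A long saddle connection requires a rotation angle that is small compared with its length, and this is exactly what keeps the perturbation small. Using Masur's result on the density of saddle-connection directions (or Veech-type recurrence) for the rotated differential should make the argument work.
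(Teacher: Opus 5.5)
Your route is to realize $\Fcal$ as the horizontal foliation of a flat structure $q$ in which $x$ is a regular point, and then rotate $q$ slightly instead of twisting along $\gamma$ as the paper does. That route can work, but the step you flag as the main obstacle is not established by what you invoke.

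\textbf{The gap.} Masur's density of saddle-connection (or cylinder) directions concerns saddle connections somewhere on the surface, not ones issuing from the prescribed point $x$. Worse, in a cylinder direction the leaf through $x$ may simply be a closed loop $[x,x]$, so $y=x$ with $\deg(y)=2$, which violates (4). Your claim that any separatrix from $x$ meets $\gamma$ if followed long enough is also false in general. A prong at $x$ can be a short saddle connection or a closed leaf missing $\gamma$, or, for non-minimal $\Fcal$, it can stay in a component that $\gamma$ never enters. So, as written, nothing produces a saddle connection from $x$ to some $y\neq x$ that crosses $\gamma$ in a direction arbitrarily close to horizontal.

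\textbf{The missing idea.} What you need is the paper's density mechanism, transplanted to flat geometry.
\begin{enumerate}
\item Choose $\theta_0$ arbitrarily close to $0$ with no saddle connection in direction $\theta_0$; this excludes only countably many directions. The directional flow is then minimal, so the ray $r$ from $x$ in direction $\theta_0$ is dense.
\item Let $L_0$ be the length at which $r$ first crosses $\gamma$, and let $L>L_0$ be a length at which $r$ passes within $\eta$ of a singular point $y\neq x$.
\item Rotate further by about $\eta/L$. The thin triangle swept by the segments of length $\le L$ from $x$ either reaches $y$ or first meets another singular point; you must also check that this point is not $x$.
\item For $\eta$ small, the triangle contains no singular point up to length $L_0$, because $r|_{(0,L_0]}$ avoids them. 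So the saddle connection obtained is longer than $L_0$, stays close to $r$, and still crosses $\gamma$. Here $\gamma$ remains transverse under small rotations by compactness.
\end{enumerate}

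\textbf{Comparison with the paper.} This is exactly the paper's argument. There one makes the prong $L_x$ dense and takes a transversal $\delta$ with $\pair{\Fcal}{\delta}<\epsilon'$ from a singular point $y$ to a point $p\in L_x$ beyond the first crossing $q$ with $\gamma$. One then twists along $\gamma$ by $\pair{\Fcal}{\delta}/N$, where $N=|[x,p]\cap\gamma|$. The paper's maximal-rectangle case plays the role of your ``earlier singular point.'' The twist leaves $[x,q]$ untouched, so (5) is automatic and everything stays in measured-foliation language.

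Your rotation instead gives (1) for free. For every small rotation avoiding the finitely many directions of the segments of the $q$-geodesic representative of $\beta$, one has $\pair{\cdot}{[\beta]}>0$. So you can drop Step 1, whose sum $\Fcal+\eta\Gcal$ is not well defined in general anyway.

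\textbf{Choice of $X$.} For an arbitrary $X$, $q_{\Fcal,X}$ realizes \emph{some} representative of $\Fcal$. It may have a simple pole at $x$, in which case every rotation gives $\deg(x)=1$. It may instead have a zero there, in which case $\deg(x)\ge 3$. Your Whitehead moves act on a topological representative, not on $q$. You must therefore choose $X$ so that $x$ is a regular point of $q$, for example by building $q$ from a measured foliation transverse to your $\deg(x)=2$ representative.
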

\begin{proof}
    Note that $\mathcal{F}$ can be deformed by an arbitrarily small amount so that $\langle \mathcal{F}, [\beta] \rangle > 0$, $\deg_{\mathcal{F}}(x) = 2$ with its two prongs dense in $S_{g,n+1}$, and $\deg_{\Fcal}(z)\in\{1,3\}$ for every other singular point $z$ of $\Fcal$. Hence, we assume that $\mathcal{F}$ satisfies these properties.

    Let $L_x$ be one prong of $x$. Fix a singular point $y$ with $y \neq x$.

    Since $L_x$ is dense, for any arbitrarily small $\epsilon' > 0$, we can find a transverse arc $\delta$ from $y$ to a point $p \in L_x$ such that $\delta$ is disjoint from $\gamma$, $N := |[x, p] \cap \gamma| > 0$, and $\langle \mathcal{F}, \delta \rangle < \epsilon'$. Let $q$ be the first intersection of $L_x$ with $\gamma$. We choose $\epsilon'>0$ sufficiently small so that, for every measured foliation $\Gcal$ obtained by twisting $\Fcal$ along $\gamma$ by at most $\epsilon'$, we have $d(\Gcal,\Fcal)<\epsilon$ and $\langle \Gcal,[\beta]\rangle>0$.

    Suppose that there exists a horizontally foliated rectangle $R$ embedded in $\mathcal{F}$ such that one horizontal side is $[q, p]$ and one vertical side is $\delta$. Then, we twist $\mathcal{F}$ along $\gamma$ by $\langle \mathcal{F}, \delta \rangle / N=\epsilon'/N$ and obtain the desired measured foliation $\Fcal'$.

    Suppose that such a rectangle does not exist. There exists a maximal horizontally foliated rectangle $R'$ whose interior is embedded in $\mathcal{F}$ and boundary is piece-wise embedded such that one horizontal side is $[q, p]$ and the vertical sides are embedded in $\gamma$ and $\delta$. The side embedded in $\delta$ is strictly smaller than $\delta$, and the other horizontal side must contain a singular point $y'$ such that there exists an arc $\delta'$ from some $p'\in [q,p]$ to $y'$ with $\mathcal{F} \pitchfork \delta'$. We replace $y,\delta$, and $p$ with $y',\delta'$, and $p'$, respectively, and then apply the twisting argument used in the previous case.
\end{proof}

\begin{lem} \label{lem:SmallChange2}
    Suppose that $\Fcal$ is immediately $x$-non-generic with respect to a simple closed curve $\gamma$ in $S_{g,n+1}$. Fix an essential simple closed curve $\beta$ in $S_{g,n+1}$. Then, for every $\epsilon > 0$, there exists $\Gcal \in \MF_{g, n+1}$ such that
    \begin{enumerate}
        \item $\Gcal$ is eventually $x$-non-generic wrt $\gamma$ and of non-annular type,
        \item $d(\Fcal, \Gcal) < \epsilon$,
        \item $\varphi(\Fcal)([\gamma]) -\varphi(\Gcal)([\gamma])< \epsilon$, and
        \item $\pair{\Gcal}{[\beta]}>0$
    \end{enumerate}
\end{lem}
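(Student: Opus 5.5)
Start by applying Lemma \ref{lem:SmallChange1} with a small $\epsilon'>0$ (fixed below). This gives $\Fcal'$ with $d(\Fcal,\Fcal')<\epsilon'$ and $\pair{\Fcal'}{[\beta]}>0$. In $\Fcal'$ the point $x$ has degree $2$, one prong $L_x$ is a saddle connection $[x,y]$ with $\deg(y)\in\{1,3\}$, and $|[x,y]\cap\gamma|>0$. Since $\deg_{\Fcal'}(x)=2$ and $\Fcal'$ is transverse to $\gamma$, $\Fcal'$ is still immediately $x$-non-generic wrt $\gamma$. Equation \eqref{eqn:non-gen} then gives $\varphi(\Fcal')([\gamma])=\pair{\Fcal'}{[\gamma]}$, and the same holds for $\Fcal$. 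Continuity of intersection numbers makes $|\varphi(\Fcal)([\gamma])-\varphi(\Fcal')([\gamma])|$ small once $\epsilon'$ is small.

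Next, split the degree-$2$ singularity at $x$ along the prong $L_x=[x,y]$. This is a Whitehead-type move performed in a tiny neighborhood. It replaces the $2$-pronged point $x$ by a $1$-pronged $x$ joined by a short saddle connection to a new $3$-pronged point $y'$; the remaining prong of $x$ and the part of $[x,y]$ beyond $y'$ emanate from $y'$. Equivalently, slide $x$ a tiny transverse distance off the leaf, so that its single prong now runs parallel to $[x,y]$ and crosses $\gamma$ at the same points. Call the result $\Gcal$. The transverse measure changes only by an arbitrarily small amount near $x$, so $d(\Fcal',\Gcal)$ and $|\pair{\Gcal}{[\alpha]}-\pair{\Fcal'}{[\alpha]}|$ for $\alpha\in\{\beta,\gamma\}$ can be made arbitrarily small. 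This gives (2) and (4).

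For (1), we need $\Gcal$ to be $x$-non-generic but not immediately so. The leaf of $x$ in $\Gcal$ is a saddle connection to a point of degree $3$ or to $y$. If $\deg(y)=1$, the collapse to $y$ yields case (B) of Figure \ref{fig:Forget_nondeg}. If $\deg(y)=3$, we choose the split so the short connection ends at a point of degree $\ge 2$. Either way $\Gcal$ is of non-annular type, and it is non-generic. Since $\deg_\Gcal(x)=1$, $\Gcal$ can only be immediately non-generic if the leaf of $x$ can be made disjoint from some transverse representative of $[\gamma]$. This is ruled out because the leaf $[x,y]$ crosses $\gamma$ essentially: $[x,y]$ together with $\gamma$ forms no bigon with the foliation, by quasi-transversality (Lemma \ref{lem:QuasiTransverse}). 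This step is the main obstacle. One must check that the crossings $|[x,y]\cap\gamma|>0$ cannot be removed by an isotopy of $\gamma$ rel the marked points, or by Whitehead moves that bring $x$ to a higher-degree point. I would argue via Lemma \ref{lem:non-gen_FiniteCanSimPulls}: a normalized representative has $1\le|L_x\cap\gamma|\le 2n(\Gcal,\gamma)$, so it suffices to exhibit a normalized representative with $L_x\cap\gamma\neq\emptyset$. The split representative itself is such a representative once its singular points are of degree $1$ or $3$, which Lemma \ref{lem:SmallChange1}(4) arranges.

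For (3), we use $\varphi(\Gcal)([\gamma])\le\pair{\Gcal}{[\gamma]}\le\pair{\Fcal'}{[\gamma]}+\eta=\varphi(\Fcal')([\gamma])+\eta$. The inequality needed is $\varphi(\Fcal)([\gamma])-\varphi(\Gcal)([\gamma])<\epsilon$. Upper semicontinuity (Theorem \ref{theorem:MF}(1)) only bounds $\varphi(\Gcal)$ from above, so we need a lower bound. Here $\varphi(\Gcal)([\gamma])=\pair{\Phi(\Gcal)}{i_*[\gamma]}$ with $\Phi(\Gcal)$ obtained by collapsing $[x,y']$. By construction $\Phi(\Gcal)$ is a small perturbation of $\Phi(\Fcal')$, namely $\Fcal'$ with $x$ forgotten. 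Hence $\varphi(\Gcal)([\gamma])\ge\varphi(\Fcal')([\gamma])-\eta$, and choosing $\epsilon',\eta<\epsilon/3$ completes the proof.
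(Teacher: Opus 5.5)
\textbf{There is a genuine gap: your $\Gcal$ is not a new foliation, so (1) fails.}

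Splitting the $2$-pronged marked point $x$ into a $1$-prong at $x$ joined by a short saddle connection $[x,y']$ to a new unmarked $3$-prong $y'$ is exactly the inverse of a Whitehead move. That Whitehead move collapses a leaf from a marked point to an unmarked singularity. Hence $\Gcal=\Fcal'$ in $\MF(S_{g,n+1})$.

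Concretely, in your $\Gcal$ the leaf of $x$ is the short segment $[x,y']$. It lies in a tiny neighbourhood of $x$ and misses $\gamma$. Collapsing it restores a representative with $\deg(x)=2$ transverse to $\gamma$. So $\Gcal$ is still \emph{immediately} $x$-non-generic wrt $\gamma$, and (1) fails.

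Your argument for (1) confuses the leaf of $x$ in $\Gcal$, which is $[x,y']$, with the old prong $[x,y]$. All the crossings of $[x,y]$ with $\gamma$ now lie on the leaf $[y',y]$ emanating from $y'$, which is irrelevant. The appeal to Lemma \ref{lem:non-gen_FiniteCanSimPulls} is also circular: both that lemma and the notion of a normalized representative already assume eventual non-genericity. Your ``equivalent'' description is not equivalent either. Sliding a marked point off its leaf does not make it a $1$-prong, and a leaf running parallel to $[x,y]$ passes by $y$ rather than terminating there.

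\textbf{What is missing is a genuine, non-Whitehead deformation that inserts transverse measure.} The paper's construction has three steps.
\begin{enumerate}
\item It fattens the initial segment $[x,z]$, where $z$ is the first point of $L_x\cap\gamma$, into a pulley of width $2\epsilon_1$. Then $x$ is a $1$-prong whose nearby leaves U-turn and cross $\gamma$ twice.
\item This adds $2\epsilon_1$ of measure on $x$'s side of $\gamma$, which must be balanced on the other side. The paper thickens a leaf arc $\alpha$ that returns to $\gamma$ from the opposite side, found by Poincar\'e recurrence; in the torus case it fattens a second pulley at $y$.
\item It twists along $\gamma$ by at most $\epsilon_1$ so that the new leaf from $x$ lands exactly on the $3$-prong $y$ after its $N$ crossings. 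Without this calibration the new leaf is typically an infinite ray, i.e.\ $x$-generic.
\end{enumerate}
The result is a good normalized representative with a clean long pulley. This gives $n(\Gcal,\gamma)=N\ge 1$ and the exact count $\varphi(\Gcal)([\gamma])=\varphi(\Fcal)([\gamma])-2(N-1)\epsilon_1$, which yields (3).

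Your lower bound for (3), that ``$\Phi(\Gcal)$ is a small perturbation of $\Phi(\Fcal')$'', is true in your construction only because nothing changed. It has no justification for a real deformation. The function $\varphi(-)([\gamma])$ is only upper semicontinuous, and $\Phi^{-1}(0)$ is dense, so arbitrarily small perturbations can drop $\varphi$ drastically. The lower bound therefore has to come from an explicit count of canonical pulls and pulley widths, as above.

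Finally, in a genuine construction with $\deg(y)=1$, landing the leaf of $x$ on $y$ would produce annular type. This is why the paper instead routes the leaf past $y$ to the $3$-prong on the boundary of the new pulley. Your proposal never meets this issue only because it never makes the leaf of $x$ travel anywhere.
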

\begin{proof}
    Note that (2) follows if $\Gcal$ can be obtained from $\Fcal$ by an arbitrarily small deformation.

    By Lemma \ref{lem:SmallChange1}, we can assume that $\deg_{\Fcal}(x)=2$ and one prong $L_x$ at $x$ is a saddle connection $[x,y]$ such that $N:=|[x,y]\cap \gamma|>0$, $\deg_\Fcal(y)\in\{1,3\}$, and $\pair{\Fcal}{[\beta]}>0$. Then, (4) follows from (2) with $\epsilon$ sufficiently small.
    
    Denote by $z$ the $1^{st}$-intersection of $L_x$ with $\gamma$. Let $s$ and $s'$ be the two sides of $\gamma$. For an arc $\alpha$ contained in a leaf of $\Fcal$ and a point $p\in \partial \alpha\cap \gamma$, we say that the intersection occurs from the side $s$ (resp.\@ $s'$) if a small neighborhood of $p$ in $\alpha$ belongs to $s$ (resp.\@ $s'$). We label the sides such that the intersection at $z$ between $[x,z]$ and $\gamma$ occurs from the side $s$.\vspace{5pt}

    \noindent{\it (Case 1) $\deg_\Fcal(y)=3$.}\indent  
    We will fatten the segment $[x,z]$ into a pulley of width $2\epsilon_1 > 0$ so that $x$ has degree $1$. To achieve this, the opposite side $s'$ must also be properly fattened by $2\epsilon_1 > 0$. Then, we calibrate $\Fcal$ by twisting along $\gamma$ to obtain the desired $\Gcal$. We will discuss separately the cases where $\gamma$ is separating and non-separating.\vspace{5pt}
    
    {\it (Case i) $\gamma$ is separating.\quad} Let $S_1$ and $S_2$ be the closures of the connected components of $S_{g,n+1}\setminus \gamma$ such that $x\in S_1$. Let $\Fcal_1$ and $\Fcal_2$ be the restricted measured foliations of $\Fcal$ to $S_1$ and $S_2$ respectively. Recall $N:=|L_x\cap \gamma|$, where $L_x=[x,y]$.

    By Poincar\'{e} recurrence \cite[Theorem 5.2]{FLP_ThurstonSurface}, there exists a leaf $\alpha$ of $\Fcal_2$ with $(\alpha,\partial \alpha)\hookrightarrow (S_2,\partial S_2)$.
    
    \begin{figure}
	   \centering
        \def\svgwidth{0.7\textwidth}
        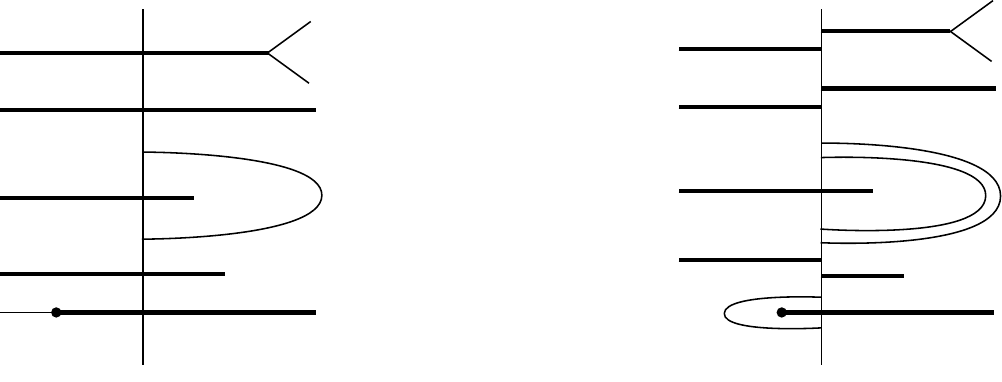
	\caption{The intersection pattern may be different. The leaf $L_x$ of the initial measured foliation is drawn in bold.}
    \label{fig:ImmediateFoli_deform}
    \end{figure}
    
    Fix an arbitrarily small $\epsilon_1>0$. We thicken the segment $[x,z]$ to a pulley of width $2\epsilon_1$ and the arc $\alpha$ to a horizontally foliated rectangle of height $\epsilon_1$, such that the equation $\pair{\Fcal_1}{\partial S_1}=\pair{\Fcal}{\partial S_2}$ is preserved. See Figure \ref{fig:ImmediateFoli_deform}. Denote by $\Fcal'$ the new fattened foliation. The new leaf $L'_x$ is then at most $N\epsilon_1$-away from $y$ after $N$ intersections with $\gamma$. For a sufficiently small $\epsilon_1$, we can find a horizontally foliated rectangle with height at most $N\epsilon_1$ for which $x$ and $y$ are opposite vertices. Then, by twisting $\Fcal'$ along $\gamma$ by at most $\epsilon_1$, we obtain a new foliation $\Gcal$ such that $[x,y]$ is a saddle connection, which is also the leaf $L_x$ of $x$, and $|L_x\cap \gamma|=N$. Moreover, $y$ is not on the boundary of the pulley of $\Gcal$ wrt $\gamma$. By choosing $\epsilon_1$ small enough, we can also obtain a good normalized representative $\Gcal$ that has a clean long pulley. See Figure~\ref{fig:CleanLongPulleys}-(A). Then $n(\Gcal,\gamma)=N$ by Lemma \ref{lem:non-gen_FiniteCanSimPulls}.
    
    Since $\pair{\Gcal}{[\gamma]}=\varphi(\Fcal)([\gamma])+2\epsilon_1$ and each canonical simple pull decreases the intersection number by $2\epsilon_1$, we have $\varphi(\Gcal)([\gamma])=\varphi(\Fcal)([\gamma])-2(N-1)\epsilon_1$. Since we can choose $\epsilon_1$ arbitrarily small with $N$ fixed, we obtain the desired measured foliation $\Gcal$.
    \vspace{5pt}
    
    {\it (Case ii) $\gamma$ is non-separating. \quad} Note that either (a) there exists an arc $\alpha$ contained in a leaf of $\Fcal$ such that $\alpha$ intersects $\gamma$ only at its boundary points, both from the side $s'$, or (b) in the cut surface $S^*:=\overline{S_{g,n+1}\setminus \gamma}$ with the induced foliation $\Fcal^*$, every non-singular leaf is an arc joining two different boundary components, which forces $g=1$.

    In Case (a), the next step is similar to (Case i).
    
    In Case (b), we can choose a representative $\Fcal$ which is the foliation of an irrational slope in the square torus such that every marked point has degree $2$. Since we assume $\chi(S_{g,n})<0$, we have $n+1>1$. Under this representative, the leaf $L_x$ is a saddle connection $[x,y]$ where $y$ is also a marked point with $y\neq x$. Let $N:=|[x,y]\cap \gamma|$. Let $p$ and $q$ be the first and the last intersection point of $L_x$ with $\gamma$. Then $[x,p]$ and $[y,q]$ intersect $\gamma$ from different sides of $\gamma$. We define a new fattened foliation $\Fcal'$ by thickening $[x,p]$ and $[y,q]$ to pulleys $W_x$ and $W_y$ of width $2\epsilon_1$. Denote by $z_x$ and $z_y$ the singular points on the boundaries of the pulleys $W_x$ and $W_y$, respectively.
    By twisting $\Fcal$ along $\gamma$, we can arrange that the new leaf $L'_x$ in $\Fcal'$ meets the boundary of $W_y$ at its $N^{th}$-intersection with $\gamma$, so that $L'_x$ ends at $z_y$. By taking $\epsilon_1$ sufficiently small, we obtain a clean long pulley; the remainder follows as in Case~(i).

    {\it (Case 2) $\deg_{\Fcal}(y)=1$.}\indent Consider the case where $\gamma$ is separating.

    The degree-3 singular point on the right-hand side in Figure \ref{fig:ImmediateFoli_deform} is replaced by a degree-1 singular point. Applying the same argument would yield an $x$-non-generic measured foliation of annular type, so a modification is needed. Instead, we twist $\mathcal{F}$ along $\gamma$ so that the leaf $L_x$ passes by $y$ and returns to the degree-3 singular point on the boundary of the new pulley (shown on the left-hand side of the right panel of Figure~\ref{fig:ImmediateFoli_deform}), thereby making $\mathcal{F}$ good wrt $\gamma$. The resulting foliation $\Gcal$ is then of non-annular type and provides a good normalized representative with a clean long pulley. See Figure~\ref{fig:CleanLongPulleys}-(B). Details are omitted.

    The case where $\gamma$ is non-separating can be treated similarly.
\end{proof}

\begin{proof}[Proof of Theorem \ref{thm:Shards of memory}]
We construct a minimal measured foliation that does not vanish under $\varphi$, from which density will also follow.

Enumerate the homotopy classes of simple closed curves such that $C(S_{g,n+1})=\{[\beta_i]\}_{i\ge1}$.
\medskip

\noindent {\bf Initial condition and the first induction step.}
We begin with an immediately $x$-non-generic $\Fcal_0$ wrt $\gamma_0$ with $\pair{\Fcal_0}{[\gamma_0]}>0$. Let $n_0=0$ and fix an arbitrarily small $\epsilon_1>0$.

We use Lemma \ref{lem:SmallChange2} to obtain a measured foliation $\Fcal_1$ such that
\begin{enumerate}
    \item $\Fcal_1$ is eventually $x$-non-generic wrt $\gamma_0$ and of non-annular type,
    \item $d(\Fcal_0,\Fcal_1)<\epsilon_1$,
    \item $\varphi(\Fcal_0)([\gamma_0])-\varphi(\Fcal_1)([\gamma_0])<\epsilon_1$, and
    \item $\pair{\Fcal_1}{[\beta_1]}>0$.
\end{enumerate}
In addition, we have the following:
\begin{enumerate}[resume]
    \item Let $n_1:=n(\Fcal_1,\gamma_0)>0$. Denote by $[\gamma_k]$ the $k^{th}$ iterate of canonical simple pulls of $[\gamma_0]$ for $0\le k\le n_1$.
    \item Then $\Fcal_1$ is immediately $x$-non-generic wrt $\gamma_{n_1}$.
\end{enumerate}

\noindent {\bf Induction process.}
For a sequence $(\epsilon_i)$ with $\epsilon_{i+1}<\epsilon_i/2$, by using Lemma \ref{lem:SmallChange2}, we iteratively obtain $\Fcal_i$ satisfying
\begin{enumerate}
    \item $\Fcal_i$ is eventually $x$-non-generic wrt $\gamma_{n_{i-1}}$ and of non-annular type,
    \item $d(\Fcal_{i-1},\Fcal_{i})<\epsilon_i$,
    \item $\varphi(\Fcal_{i-1})([\gamma_0])-\varphi(\Fcal_i)([\gamma_0])<\epsilon_i$, and
    \item $\pair{\Fcal_i}{[\beta_i]}>0$ and $|\pair{\Fcal_{i-1}}{[\beta_j]}-\pair{\Fcal_{i}}{[\beta_j]}|<\epsilon_i\cdot\pair{\Fcal_j}{[\beta_j]}$ for every $0\le j< i$. To obtain this, we choose $\Fcal_i$ close to $\Fcal_{i-1}$ and use the continuity of the intersection numbers.
\end{enumerate}
In addition, we have the following:
\begin{enumerate}[resume]
    \item Let $n_i:=n(\Fcal_i,\gamma_0)$. There exists a sequence $([\gamma_k])_{0\le k \le n_i}$ such that $[\gamma_k]$ is the $k^{th}$ iterate of canonical simple pulls of $[\gamma_0]$ with respect to $\Fcal_j$ for each $j\le i$ and $k\le n_j$, where we use the stability of canonical simple pulls (Lemma \ref{lem:stability of finite simple pulls}). See Figure \ref{fig:inductive step diagram}.
    \item $\Fcal_i$ is immediately $x$-non-generic wrt $\gamma_{n_i}$.
\end{enumerate}
\begin{figure}
    \centering
    \begin{tikzcd}
        (\Fcal_0,\gamma_0) \arrow{r}
			& (\Fcal_1,\gamma_0)  \arrow{d}{(\Pull)^{n_1}} & (\Fcal_2,\gamma_0) \arrow{d}{(\Pull)^{n_1}}\\
		& (\Fcal_1,\gamma_{n_1})\arrow{r} & (\Fcal_2,\gamma_{n_1}) \arrow{d}{(\Pull)^{n_2-n_1}}\\
		&& (\Fcal_2,\gamma_{n_2})
    \end{tikzcd}
    \caption{$\Fcal_i$ is immediately $x$-non-generic wrt $\gamma_j$ if and only if $i=j$, i.e., the diagonal entries. Vertical arrows indicate canonical simple pulls. Horizontal arrows are deformations by Lemma \ref{lem:SmallChange2}. The sequence of canonical simple pulls $(\gamma_i)$ is cumulated due to Lemma \ref{lem:stability of finite simple pulls}.}
    \label{fig:inductive step diagram}
\end{figure}

We have $\Fcal_k \to \Fcal_\infty$ such that
\[
    \varphi(\Fcal_\infty)([\gamma_0]) >\varphi(\Fcal_0)([\gamma_0]) - \sum_k \epsilon_k > 0,
\]
and for each $i\ge 1$ we have
\begin{align*}
    \pair{\Fcal_\infty}{[\beta_i]} &\ge \pair{\Fcal_i}{[\beta_i]}-\sum_{k\ge i}|\pair{\Fcal_{k}}{[\beta_i]}-\pair{\Fcal_{k+1}}{[\beta_i]}|\\
    &>\pair{\Fcal_i}{[\beta_i]}\left(1-\sum_{k\ge i}\epsilon_k\right)\\
    &>0
\end{align*}
Therefore, $\Fcal_\infty$ is minimal with $\varphi(\Fcal_\infty)\neq 0$. Moreover, it is easy to show that the measured foliations $\Fcal_0$ that are immediately $x$-non-generic wrt some simple closed curve $\gamma_0$ are dense in $\MF(S_{g,n+1})$. As $\epsilon_1>0$ may be chosen arbitrarily small, the construction above yields a dense subset of measured foliations.
\end{proof}

\section{Horospheres in the universal curves over Teichm\"uller spaces}

In this section, we discuss the relationship between $\Phi\from \MF(S_{g,n+1})\to \MF(S_{g,n})$ with the dynamics on the projective measured foliations $\PMF(S_{g,n_+1})$ and the universal curve $\Ccal_{g,n}$ over Teichm\"uller space. We first prove Theorem \ref{theorem:Horo_Limit_Set}, which relates the condition $\Phi(\Fcal)=0$ with the horospherical limit set $\Lambda_G^{horo}$. Then we answer Question \ref{question:Dyn_Ccal} for $x$-non-generic measured foliations and projectively invariant measured foliations of point-pushing pseudo-Anosov mapping classes.
\medskip

The universal curve $\Ccal_{g,n}=\Tcal_{g,n+1}/\pi_1(S_{g,n})$ can be compared with a hyperbolic $d$-manifold $M$ with ${\rm vol}(M)=\infty$ whose limit set coincides with $\partial_\infty \Hbb^d$. More precisely, consider the point-pushing $\pi_1(S_{g,n})$-action on $\PMF(S_{g,n+1})$. Since invariant projective measured foliations of point-pushing pseudo-Anosov mapping classes are dense in $\PMF(S_{g,n+1})$, it follows that the limit set of the $\pi_1(S_{g,n})$-action is the entire $\PMF(S_{g,n+1})$. The closure of each fiber of $\pi\colon \Tcal_{g,n+1}\to \Tcal_{g,n}$ in the Thurston compactification contains $\PMF(S_{g,n+1})$.

Recall the definitions of horospheres and horoballs in Equation \eqref{eqn:horo_ball_sphere}. As in hyperbolic manifolds, horospheres can be realized as strong stable manifolds of the geodesic flow on the space $\mathcal{Q}^1\Tcal_{g,n}$ of unit-area quadratic differentials. More precisely, the set
\[
    S_{\mathcal{Q}}(\Fcal,c):=\left\{\frac{q_{\Fcal,X}}{\int_X |q_{\Fcal,X}|}\in \mathcal{Q}^1\Tcal_{g,n}\;:\; X\in \Tcal_{g,n}\,{\rm and}\,\int_X |q_{\Fcal,X}|=c\right\}
\]
is a strong stable manifold of the Teichm\"uller geodesic flow on $\mathcal{Q}^1\Tcal_{g,n}$. Since $\int_X|q_{\Fcal,X}|=\EL(\Fcal,X)$, the set $S_{\mathcal{Q}}(\Fcal,c)$ consists of quadratic differentials lying over the horosphere $S_h(\Fcal,c)\subset \Tcal_{g,n}$.

\subsection{Horospherical Limit Sets: Proof of Theorem~\ref{theorem:Horo_Limit_Set}}\label{subsec:Horo_Limit_Set}
Recall the definition of horospherical limit sets, defined by Equation \eqref{eqn:Inf_EL_Group_Action}. We prove Theorem~\ref{theorem:Horo_Limit_Set}, which states that
\[
    \Lambda^{\rm hor}_G
    =
    \left\{ \Proj(\Fcal): \Phi(\Fcal)=0 \right\}.
\]

\begin{proof}[Proof of Theorem~\ref{theorem:Horo_Limit_Set}]
    Let $G=\Push(\pi_1(S_{g,n}))\le \MCG^+(S_{g,n+1})$. Since
    \[
        \inf_{X'\in\pi^{-1}(\pi(X))} \EL(\Fcal;X')\le \inf_{g\in G} \EL(\Fcal;g\cdot X),
    \]
    by Theorem \ref{thm:InequalityEL}  and Equation \eqref{eqn:Inf_EL_Group_Action}, we have $\Lambda^{\rm hor}_G\subset  \left\{ \Proj(\Fcal): \Phi(\Fcal)=0 \right\}$.

    Suppose $\Phi(\Fcal)=0$. By Theorem \ref{theorem:MF} and Proposition \ref{prop:LimitMFfromIntReduction}, there exists a sequence $([\psi_l])$ in $\Push(\pi_1(S_{g,n}))$ such that $[\psi_l]\cdot \Fcal\to 0$, which is equivalent to $\Proj(\Fcal)\in \Lambda_G^{\rm horo}$.

    The rest of the statements then follow from Theorem~\ref{theorem:MF}.
\end{proof}

\subsection{Quasi-minimality of geodesics and density of horospheres}\label{subsec:QuasiMini_Horosph}
Recall that the puncture-forgetting map $\pi\colon \Tcal_{g,n+1}\to \Tcal_{g,n}$ factors through the projections $\pi^1\colon \Tcal_{g,n+1}\to \Ccal_{g,n}$ and $\pi^2\colon \Ccal_{g,n}\to \Tcal_{g,n}$.

The $x$-non-generic measured foliations and the projectively invariant measured foliations of point-pushing pseudo-Anosov mapping classes represent two opposite extremes of Question~\ref{question:Dyn_Ccal}.

\begin{thm}\label{thm:non-generic_quasimin_horo}
    Suppose $\Fcal \in \MF(S_{g,n+1})\setminus \{0\}$ is $x$-non-generic. Let $(X_t)$ be a Teichm\"uller geodesic in $\Tcal_{g,n+1}$ generated by $\Fcal$. Then the following hold.
    \begin{enumerate}
        \item $\Phi(\Fcal)=0$ if and only if $\Fcal$ is a weighted pre-peripheral simple closed curve.
    \end{enumerate}
    Moreover, if $\Phi(\Fcal)\neq 0$, then
    \begin{enumerate}[resume]
        \item The horosphere $p^1(S_h(\Fcal,r))$ in $\Ccal_{g,n}$ is not dense.
        \item The projection $(p^1(X_t))$ in $\Ccal_{g,n}$ is quasi-minimizing.
    \end{enumerate}
\end{thm}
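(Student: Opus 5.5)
For (1), I will work through the three cases (A), (B), (C) of Section~\ref{subsec:Non-generic}, where $\Phi(\Fcal)$ is given by an explicit surgery on $\Fcal$.

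In cases (A) and (B), $\Phi(\Fcal)$ is obtained by forgetting $x$, or by collapsing the saddle connection $[x,y]$ and then forgetting $x$. This does not destroy any minimal component of $\Fcal$. It also does not destroy any annulus component, except one whose core becomes pre-peripheral. In case (A), $x$ has degree at least $2$, so $x$ is not enclosed by an annulus component around $x$ and a single other puncture. Consequently $\Phi(\Fcal)=0$ forces every component of $\Fcal$ to become null. A minimal component never becomes null, because its support remains a subsurface of $S_{g,n}$ of negative Euler characteristic. So the only way to vanish is for $\Fcal$ to be a single annulus component whose core bounds a disk containing $x$ and one other puncture, i.e.\ a weighted pre-peripheral curve. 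Case (C) is handled the same way, since there $\Phi(\Fcal)$ is $\Fcal$ with that peripheral annulus collapsed.

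For the converse, note that if $\Fcal=c[\gamma]$ with $\gamma$ pre-peripheral, then $\Fcal$ is of annular type and $\Phi(\Fcal)=0$ by construction. Alternatively, $\pair{\Fcal}{\cdot}$ can be made zero on every pre-homotopy class by pulling curves off the twice-punctured disk.

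For (3), I will combine Theorem~\ref{thm:LowBddTeichDist} with the contraction property of holomorphic maps. Since $\pi=\pi^2\circ\pi^1$ and $\pi^2$ does not increase Teichm\"uller distance,
\[
d_{\Teich}(\pi^1(X_0),\pi^1(X_t))\ \ge\ d_{\Teich}(\pi(X_0),\pi(X_t))\ \ge\ t-C,
\]
where $C=\tfrac12\log\big(\EL(\Fcal;X_0)/\EL(\Phi(\Fcal);\pi(X_0))\big)$. This quantity is finite because $\Phi(\Fcal)\neq0$ and $\EL(\Phi(\Fcal);\pi(X_0))>0$.

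For (2), I will apply Theorem~\ref{thm:Im_horospheres_}. Because $\pi^2\circ\pi^1=\pi$, we have $\pi^2(\pi^1(S_h(\Fcal,r)))=\pi(S_h(\Fcal,r))$. When $\Phi(\Fcal)\neq0$, this set is contained in $\overline{B_h(\Phi(\Fcal),r)}$, which is a proper closed subset of $\Tcal_{g,n}$: $\EL(\Phi(\Fcal);\cdot)$ is unbounded, for instance along a Teichm\"uller geodesic contracting a curve $\alpha$ with $\pair{\Phi(\Fcal)}{\alpha}>0$, by Lemma~\ref{lem:IneqBtwELInt}. Since $\pi^2$ is continuous and surjective, the preimage of the open set $\Tcal_{g,n}\setminus\overline{B_h(\Phi(\Fcal),r)}$ is a nonempty open subset of $\Ccal_{g,n}$ that misses $\pi^1(S_h(\Fcal,r))$. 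Hence the horosphere is not dense.

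I expect the main obstacle to be the case analysis in (1): verifying in case (B) that collapsing $[x,y]$ never kills a minimal component, and checking the mixed situations where $\Fcal$ has several components. I will first use the decomposition into minimal and annulus components from Section~\ref{subsec:NonMinimal} to reduce to a single component, and then treat each component type separately.
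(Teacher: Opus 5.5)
Your proposal is correct and follows the paper's own argument. Part (1) is read off from the explicit surgery description of $\Phi(\Fcal)$ for $x$-non-generic $\Fcal$ (Proposition~\ref{prop:PForget NonGeneric}), and parts (2) and (3) come from Theorem~\ref{thm:Im_horospheres_}, Theorem~\ref{thm:LowBddTeichDist}, and the fact that $\pi^2$ does not increase Teichm\"uller distance; one small point in (1) is that a minimal component (or an annulus component with pre-essential core) survives because the local surgery in cases (A)--(C) leaves a genuine measured foliation with positive transverse measure on that piece, which is nonzero in $\MF(S_{g,n})$, not because its support has negative Euler characteristic.
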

\begin{proof}
    (1) follows from Proposition \ref{prop:PForget NonGeneric}.

    (2) and (3) for an $x$-non-generic $\Fcal$ that is not a weighted pre-peripheral simple closed curve follow from Theorems \ref{thm:Im_horospheres_} and \ref{thm:LowBddTeichDist} and the implication (2)$\Rightarrow$(3) in Question \ref{question:Dyn_Ccal} (which is straightforward, as noted there).
\end{proof}

We expect that (2) and (3) of Theorem \ref{thm:non-generic_quasimin_horo} also hold when $\Fcal$ is a weighted pre-peripheral simple closed curve. This would provide an example for Question~\ref{question:Dyn_Ccal} in which (1) holds, whereas (3) and (4) do not.

Fix $X\in \Tcal_{g,n+1}$. 
We have
\[
    \mathcal{Q}^1\Tcal_{g,n+1}\cong (\MF(S_{g,n+1})\setminus\{0\})\times \PMF(S_{g,n+1}),
\]
where $\MCG^+(S_{g,n+1})$ acts diagonally.

There is a bijection between $\MF(S_{g,n+1})\setminus \{0\}$ and the horospheres in $\Tcal_{g,n+1}$ given by
\[
    \Fcal\mapsto S_{\mathcal{Q}}(\Fcal,1).
\]
Hence, $\overline{\pi_1(S_{g,n})\cdot \Fcal}=\MF(S_{g,n+1})$ if and only if $p_1(S_h(\Fcal,1))$ is dense in $\mathcal{Q}^1\Ccal_{g,n}$, where we denote by $p_1$ the projection $\mathcal{Q}^1\Tcal_{g,n+1}\to \mathcal{Q}^1\Ccal_{g,n}$ as well.

\begin{thm}
    Suppose $[\psi]\in \Push(\pi_1(S_{g,n},x))$ is a pseudo-Anosov mapping class such that $\Fcal$ is its projectively invariant measured foliation. Let $(X_t)$ be a Teichm\"uller geodesic in $\Tcal_{g,n+1}$ generated by $\Fcal$. Then the following hold.
    \begin{enumerate}
        \item $\Phi(\Fcal)=0$.
        \item The projection $p^1(X_t)$ in $\Ccal_{g,n}$ is not quasi-minimizing.
        \item (Choi--Kim \cite{ChoiKim_InvMsr}) The horosphere $p^1(S_\mathcal{Q}(\Fcal,r))$ is dense in $\mathcal{Q}^1\Ccal_{g,n}$.
    \end{enumerate}
\end{thm}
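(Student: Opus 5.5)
Item (1) should follow directly from what is already proved. If $[\psi]$ is a point-pushing pseudo-Anosov with $[\psi]\cdot\Fcal=\lambda^{\pm1}\Fcal$, then the computation in Section~\ref{subsec:Proof of Thm1 I} applies. Iterating $[\psi]^{\mp1}$ gives $\pair{\Fcal}{[\psi]^{\pm k}\cdot[\gamma]}=\lambda^{-k}\pair{\Fcal}{[\gamma]}\to0$. Since $[\psi]^{\pm k}\cdot[\gamma]\sim_{\pre}[\gamma]$, we get $\varphi(\Fcal)=0$, and by Theorem~\ref{theorem:MF}-(2) this gives $\Phi(\Fcal)=0$. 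Choosing the sign so that the iterated class contracts $\Fcal$ handles both the stable and the unstable foliation. Item (3) is quoted from Choi--Kim, so I only need to indicate why it applies. Their result gives density of the $\pi_1(S_{g,n})$-orbit of $\Fcal$ in $\MF(S_{g,n+1})$. By the bijection $\Fcal\mapsto S_{\mathcal Q}(\Fcal,1)$ noted just above the statement, this is equivalent to density of the projected horosphere in $\mathcal Q^1\Ccal_{g,n}$. Rescaling then passes from $r=1$ to general $r$.

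The real content is (2). Take $\Fcal$ to be the unstable foliation, so that $[\psi]^{-1}\cdot\Fcal=\lambda^{-1}\Fcal$ with $\lambda>1$. The plan is to use the axis of $[\psi]$ together with the deck action. Let $(Y_t)$ be the Teichm\"uller axis of $[\psi]$ in $\Tcal_{g,n+1}$. It is generated by $\Fcal$, and $[\psi]\cdot Y_t=Y_{t+\log\lambda}$, with the orientation fixed by which foliation is stretched. Any other geodesic $(X_t)$ generated by $\Fcal$ has the same vertical-stretch direction $\Fcal$. Since $\Fcal$ is uniquely ergodic, Masur's asymptoticity criterion gives that $d_{\Teich}(X_t,Y_{t+c})$ stays bounded for a suitable constant $c$. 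Quasi-minimizing is a coarse property, so it is enough to treat $(Y_t)$.

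Now $[\psi]\in\Push(\pi_1(S_{g,n},x))$ acts as a deck transformation of $\pi^1$. Hence $\pi^1(Y_{k\log\lambda})=\pi^1([\psi]^k\cdot Y_0)=\pi^1(Y_0)$ for every $k$. So $d_{\Teich}(\pi^1(Y_0),\pi^1(Y_t))$ returns to $0$ at the times $t=k\log\lambda\to\infty$. This violates $d\ge t-C$ for every constant $C$, so the projection is not quasi-minimizing. For $(X_t)$, the bounded distance passes through the $1$-Lipschitz map $\pi^1$, since $\Ccal_{g,n}$ carries the quotient Teichm\"uller metric. This gives $d(\pi^1(X_0),\pi^1(X_{t_k}))\le C'$ along the sequence $t_k=k\log\lambda-c$.

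For the stable foliation the geodesic generated by $\Fcal$ runs in the other direction along the axis. The same argument then works with $[\psi]^{-1}$ in place of $[\psi]$.

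The main obstacle is the asymptoticity step, namely that every geodesic generated by $\Fcal$ stays within bounded distance of the axis. I would first try to avoid Masur's theorem. Instead I would compare extremal lengths directly using Kerckhoff's formula: since $\EL(\Fcal;X_t)=e^{-2t}\EL(\Fcal;X_0)$ and likewise along $(Y_t)$, the two geodesics lie on matching horospheres of $\Fcal$. Uniquely ergodic stable rays sharing a horosphere level are asymptotic by Masur (1980). If that citation is acceptable, I would simply invoke it.
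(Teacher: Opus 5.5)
\textbf{Items (1) and (2).} These are fine and follow the paper's route. For (2), the paper only says that $p^1(X_t)$ approaches the closed geodesic $A_\psi/\langle[\psi]\rangle$. Your argument is exactly the justification behind that sentence: $[\psi]^k\cdot Y_0=Y_{k\log\lambda}$ on the axis, so $\pi^1(Y_{k\log\lambda})=\pi^1(Y_0)$; then Masur's asymptoticity for the uniquely ergodic $\Fcal$, together with the fact that $\pi^1$ is $1$-Lipschitz.

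Your closing remark about avoiding Masur via Kerckhoff's formula does not work by itself. Lying on matching horospheres of $\Fcal$ gives no bound on $d_{\Teich}(X_t,Y_t)$; unique ergodicity of $\Fcal$ is the essential input. Since you invoke Masur in the end, nothing is lost.

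\textbf{Item (3): the gap.} You write that Choi--Kim's result gives density of the $\pi_1(S_{g,n})$-orbit of $\Fcal$. But Theorem~10.1 of \cite{ChoiKim_InvMsr} is proved for a \emph{convex-cocompact} non-elementary subgroup $G$, with $\Proj(\Fcal)$ in its conical limit set. The subgroup $\Push(\pi_1(S_{g,n},x))$ is not convex-cocompact. Pushing $x$ around an essential simple loop gives a nontrivial multitwist $T_aT_b^{-1}$, which is reducible, whereas convex-cocompact subgroups are purely pseudo-Anosov. So the theorem cannot be applied to $\Push(\pi_1(S_{g,n},x))$ directly.

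\textbf{The missing step.} The paper supplies it by passing to a subgroup:
\begin{enumerate}
\item Choose a point-pushing pseudo-Anosov $[\phi]$ independent of $[\psi]$, for instance a conjugate of $[\psi]$ by a point-pushing class that does not preserve the fixed-point pair of $[\psi]$ in $\PMF(S_{g,n+1})$.
\item Set $H=\langle[\psi]^N,[\phi]^N\rangle$. For large $N$ this is a free, non-elementary, convex-cocompact subgroup of $\Push(\pi_1(S_{g,n},x))$ \cite{FarbLee_ConvexCocompact}.
\item $\Proj(\Fcal)$ is a fixed point of the loxodromic element $[\psi]^N\in H$, hence a conical limit point of $H$. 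Choi--Kim then gives $\overline{H\cdot\Fcal}=\MF(S_{g,n+1})$.
\item Since $H\cdot\Fcal\subseteq\pi_1(S_{g,n})\cdot\Fcal$, the orbit density you need follows.
\end{enumerate}
After that, your rescaling argument and the horosphere--orbit correspondence go through as written.
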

\begin{proof}
    (1) follows from Theorem~\ref{theorem:MF}.

    (2) There exists an axis $A_\psi$ in $\Tcal_{g,n+1}$ preserved by $[\psi]$ so that $A_\psi$ projects onto a closed geodesic in $\Ccal_{g,n}$. Then $p^1(X_t)$ approaches the closed geodesic, and thus it is not quasi-minimizing.

    (3) Since $S_\mathcal{Q}(r\cdot\Fcal,1)=S_\mathcal{Q}(\Fcal,r^2)$, we may assume that $r=1$. By the discussion above, it suffices to show $\overline{\pi_1(S_{g,n})\cdot\Fcal}=\MF(S_{g,n+1})$. In \cite[Theorem 10.1]{ChoiKim_InvMsr}, Choi--Kim showed the following: if $G$ is a non-elementary subgroup of the mapping class group $\MCG(S)$, and $\Proj(\Fcal)$ lies in the conical limit set of $G$, then $\overline{G\cdot \Fcal}=\MF(S)$. Note that for a pseudo-Anosov mapping class $[\psi]$, its invariant projective measured foliation $\Proj(\Fcal)$ lies in the conical limit set. In \cite[Theorem 10.1]{ChoiKim_InvMsr}, they assume $G$ is convex-cocompact. In general, one can find a non-elementary convex-cocompact subgroup of $G$ containing some power of $[\psi]$ \cite[Theorem 1.4]{FarbLee_ConvexCocompact}. 
\end{proof}

\bibliography{Punctureforget}
\bibliographystyle{alpha}
\end{document}

%% file: AsympRays.pdf_tex
\begingroup%
  \makeatletter%
  \providecommand\color[2][]{%
    \errmessage{(Inkscape) Color is used for the text in Inkscape, but the package 'color.sty' is not loaded}%
    \renewcommand\color[2][]{}%
  }%
  \providecommand\transparent[1]{%
    \errmessage{(Inkscape) Transparency is used (non-zero) for the text in Inkscape, but the package 'transparent.sty' is not loaded}%
    \renewcommand\transparent[1]{}%
  }%
  \providecommand\rotatebox[2]{#2}%
  \newcommand*\fsize{\dimexpr\f@size pt\relax}%
  \newcommand*\lineheight[1]{\fontsize{\fsize}{#1\fsize}\selectfont}%
  \ifx\svgwidth\undefined%
    \setlength{\unitlength}{211.41784908bp}%
    \ifx\svgscale\undefined%
      \relax%
    \else%
      \setlength{\unitlength}{\unitlength * \real{\svgscale}}%
    \fi%
  \else%
    \setlength{\unitlength}{\svgwidth}%
  \fi%
  \global\let\svgwidth\undefined%
  \global\let\svgscale\undefined%
  \makeatother%
  \begin{picture}(1,0.66604704)%
    \lineheight{1}%
    \setlength\tabcolsep{0pt}%
    \put(0,0){\includegraphics[width=\unitlength,page=1]{AsympRays.pdf}}%
    \put(0.28674806,0.63127714){\color[rgb]{0,0,0}\makebox(0,0)[lt]{\lineheight{1.25}\smash{\begin{tabular}[t]{l}$\Hcal_n$\end{tabular}}}}%
    \put(0.34174462,0.00951492){\color[rgb]{0,0,0}\makebox(0,0)[lt]{\lineheight{1.25}\smash{\begin{tabular}[t]{l}$\Vcal_n$\end{tabular}}}}%
    \put(0.66535297,0.61844332){\color[rgb]{0,0,0}\makebox(0,0)[lt]{\lineheight{1.25}\smash{\begin{tabular}[t]{l}$y_n=p_1(n)$\end{tabular}}}}%
    \put(0.68935834,0.04732886){\color[rgb]{0,0,0}\makebox(0,0)[lt]{\lineheight{1.25}\smash{\begin{tabular}[t]{l}$z_n=p_2(n)$\end{tabular}}}}%
    \put(0.2114516,0.28532446){\color[rgb]{0,0,0}\makebox(0,0)[lt]{\lineheight{1.25}\smash{\begin{tabular}[t]{l}$w_n$\end{tabular}}}}%
    \put(-0.00121066,0.31842126){\color[rgb]{0,0,0}\makebox(0,0)[lt]{\lineheight{1.25}\smash{\begin{tabular}[t]{l}$x$\end{tabular}}}}%
    \put(0.11401344,0.47340083){\color[rgb]{0,0,0}\makebox(0,0)[lt]{\lineheight{1.25}\smash{\begin{tabular}[t]{l}$p_1(0)$\end{tabular}}}}%
    \put(0.1120445,0.15811681){\color[rgb]{0,0,0}\makebox(0,0)[lt]{\lineheight{1.25}\smash{\begin{tabular}[t]{l}$p_2(0)$\end{tabular}}}}%
  \end{picture}%
\endgroup%

%% file: SimplePull.pdf_tex
\begingroup%
  \makeatletter%
  \providecommand\color[2][]{%
    \errmessage{(Inkscape) Color is used for the text in Inkscape, but the package 'color.sty' is not loaded}%
    \renewcommand\color[2][]{}%
  }%
  \providecommand\transparent[1]{%
    \errmessage{(Inkscape) Transparency is used (non-zero) for the text in Inkscape, but the package 'transparent.sty' is not loaded}%
    \renewcommand\transparent[1]{}%
  }%
  \providecommand\rotatebox[2]{#2}%
  \newcommand*\fsize{\dimexpr\f@size pt\relax}%
  \newcommand*\lineheight[1]{\fontsize{\fsize}{#1\fsize}\selectfont}%
  \ifx\svgwidth\undefined%
    \setlength{\unitlength}{367.03159603bp}%
    \ifx\svgscale\undefined%
      \relax%
    \else%
      \setlength{\unitlength}{\unitlength * \real{\svgscale}}%
    \fi%
  \else%
    \setlength{\unitlength}{\svgwidth}%
  \fi%
  \global\let\svgwidth\undefined%
  \global\let\svgscale\undefined%
  \makeatother%
  \begin{picture}(1,0.54736225)%
    \lineheight{1}%
    \setlength\tabcolsep{0pt}%
    \put(0,0){\includegraphics[width=\unitlength,page=1]{SimplePull.pdf}}%
    \put(0.49566438,0.16589223){\color[rgb]{0,0,0}\makebox(0,0)[lt]{\lineheight{1.25}\smash{\begin{tabular}[t]{l}$\delta$\end{tabular}}}}%
    \put(0.12391662,0.2720996){\color[rgb]{0,0,0}\makebox(0,0)[lt]{\lineheight{1.25}\smash{\begin{tabular}[t]{l}$x$\end{tabular}}}}%
    \put(0.68383281,0.0062579){\color[rgb]{0,0,0}\makebox(0,0)[lt]{\lineheight{1.25}\smash{\begin{tabular}[t]{l}$\gamma$\end{tabular}}}}%
    \put(0.1148067,0.13725328){\color[rgb]{0,0,0}\makebox(0,0)[lt]{\lineheight{1.25}\smash{\begin{tabular}[t]{l}$\partial_1 U$\end{tabular}}}}%
    \put(0.83434604,0.21229852){\color[rgb]{0,0,0}\makebox(0,0)[lt]{\lineheight{1.25}\smash{\begin{tabular}[t]{l}$\partial_2 U$\end{tabular}}}}%
  \end{picture}%
\endgroup%

%% file: DependenceRep.pdf_tex
\begingroup%
  \makeatletter%
  \providecommand\color[2][]{%
    \errmessage{(Inkscape) Color is used for the text in Inkscape, but the package 'color.sty' is not loaded}%
    \renewcommand\color[2][]{}%
  }%
  \providecommand\transparent[1]{%
    \errmessage{(Inkscape) Transparency is used (non-zero) for the text in Inkscape, but the package 'transparent.sty' is not loaded}%
    \renewcommand\transparent[1]{}%
  }%
  \providecommand\rotatebox[2]{#2}%
  \newcommand*\fsize{\dimexpr\f@size pt\relax}%
  \newcommand*\lineheight[1]{\fontsize{\fsize}{#1\fsize}\selectfont}%
  \ifx\svgwidth\undefined%
    \setlength{\unitlength}{470.22872805bp}%
    \ifx\svgscale\undefined%
      \relax%
    \else%
      \setlength{\unitlength}{\unitlength * \real{\svgscale}}%
    \fi%
  \else%
    \setlength{\unitlength}{\svgwidth}%
  \fi%
  \global\let\svgwidth\undefined%
  \global\let\svgscale\undefined%
  \makeatother%
  \begin{picture}(1,0.32082047)%
    \lineheight{1}%
    \setlength\tabcolsep{0pt}%
    \put(0,0){\includegraphics[width=\unitlength,page=1]{DependenceRep.pdf}}%
    \put(0.11971882,0.00397655){\color[rgb]{0,0,0}\makebox(0,0)[lt]{\lineheight{1.25}\smash{\begin{tabular}[t]{l}$\Gamma_1$\end{tabular}}}}%
    \put(0.48141244,0.00397655){\color[rgb]{0,0,0}\makebox(0,0)[lt]{\lineheight{1.25}\smash{\begin{tabular}[t]{l}$\Gamma$\end{tabular}}}}%
    \put(0.84310613,0.00397655){\color[rgb]{0,0,0}\makebox(0,0)[lt]{\lineheight{1.25}\smash{\begin{tabular}[t]{l}$\Gamma_2$\end{tabular}}}}%
    \put(0.10582243,0.11501612){\color[rgb]{0,0,0}\makebox(0,0)[lt]{\lineheight{1.25}\smash{\begin{tabular}[t]{l}$x$\end{tabular}}}}%
  \end{picture}%
\endgroup%

%% file: Cube.pdf_tex
\begingroup%
  \makeatletter%
  \providecommand\color[2][]{%
    \errmessage{(Inkscape) Color is used for the text in Inkscape, but the package 'color.sty' is not loaded}%
    \renewcommand\color[2][]{}%
  }%
  \providecommand\transparent[1]{%
    \errmessage{(Inkscape) Transparency is used (non-zero) for the text in Inkscape, but the package 'transparent.sty' is not loaded}%
    \renewcommand\transparent[1]{}%
  }%
  \providecommand\rotatebox[2]{#2}%
  \newcommand*\fsize{\dimexpr\f@size pt\relax}%
  \newcommand*\lineheight[1]{\fontsize{\fsize}{#1\fsize}\selectfont}%
  \ifx\svgwidth\undefined%
    \setlength{\unitlength}{202.42270642bp}%
    \ifx\svgscale\undefined%
      \relax%
    \else%
      \setlength{\unitlength}{\unitlength * \real{\svgscale}}%
    \fi%
  \else%
    \setlength{\unitlength}{\svgwidth}%
  \fi%
  \global\let\svgwidth\undefined%
  \global\let\svgscale\undefined%
  \makeatother%
  \begin{picture}(1,0.72390095)%
    \lineheight{1}%
    \setlength\tabcolsep{0pt}%
    \put(0,0){\includegraphics[width=\unitlength,page=1]{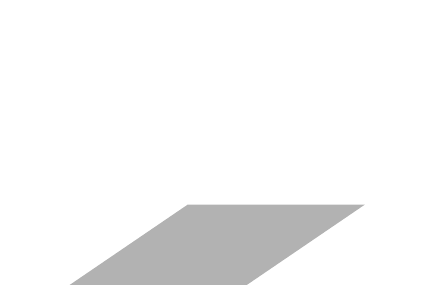}}%
    \put(0.43321046,0.13361931){\color[rgb]{0,0,0}\makebox(0,0)[lt]{\lineheight{1.25}\smash{\begin{tabular}[t]{l}$C_{m-1}$\end{tabular}}}}%
    \put(0,0){\includegraphics[width=\unitlength,page=2]{Cube.pdf}}%
    \put(0.03866329,0.0106107){\color[rgb]{0,0,0}\makebox(0,0)[lt]{\lineheight{1.25}\smash{\begin{tabular}[t]{l}$[\Gamma]$\end{tabular}}}}%
    \put(-0.07536689,0.49463956){\color[rgb]{0,0,0}\makebox(0,0)[lt]{\lineheight{1.25}\smash{\begin{tabular}[t]{l}$[\Gamma_{\{m\}}]$\end{tabular}}}}%
    \put(0.87853703,0.20011274){\color[rgb]{0,0,0}\makebox(0,0)[lt]{\lineheight{1.25}\smash{\begin{tabular}[t]{l}$[\Gamma_J]$\end{tabular}}}}%
    \put(0.87617481,0.68691239){\color[rgb]{0,0,0}\makebox(0,0)[lt]{\lineheight{1.25}\smash{\begin{tabular}[t]{l}$[\Gamma_{J\cup\{m\}}]$\end{tabular}}}}%
  \end{picture}%
\endgroup%

%% file: AnnulusArc.pdf_tex
\begingroup%
  \makeatletter%
  \providecommand\color[2][]{%
    \errmessage{(Inkscape) Color is used for the text in Inkscape, but the package 'color.sty' is not loaded}%
    \renewcommand\color[2][]{}%
  }%
  \providecommand\transparent[1]{%
    \errmessage{(Inkscape) Transparency is used (non-zero) for the text in Inkscape, but the package 'transparent.sty' is not loaded}%
    \renewcommand\transparent[1]{}%
  }%
  \providecommand\rotatebox[2]{#2}%
  \newcommand*\fsize{\dimexpr\f@size pt\relax}%
  \newcommand*\lineheight[1]{\fontsize{\fsize}{#1\fsize}\selectfont}%
  \ifx\svgwidth\undefined%
    \setlength{\unitlength}{436.7388099bp}%
    \ifx\svgscale\undefined%
      \relax%
    \else%
      \setlength{\unitlength}{\unitlength * \real{\svgscale}}%
    \fi%
  \else%
    \setlength{\unitlength}{\svgwidth}%
  \fi%
  \global\let\svgwidth\undefined%
  \global\let\svgscale\undefined%
  \makeatother%
  \begin{picture}(1,0.39554659)%
    \lineheight{1}%
    \setlength\tabcolsep{0pt}%
    \put(0,0){\includegraphics[width=\unitlength,page=1]{AnnulusArc.pdf}}%
    \put(0.02050176,0.07028966){\color[rgb]{0,0,0}\makebox(0,0)[lt]{\lineheight{1.25}\smash{\begin{tabular}[t]{l}$\gamma_1$\end{tabular}}}}%
    \put(0.13497117,0.03973822){\color[rgb]{0,0,0}\makebox(0,0)[lt]{\lineheight{1.25}\smash{\begin{tabular}[t]{l}$\delta$\end{tabular}}}}%
    \put(0.21200203,0.34843363){\color[rgb]{0,0,0}\makebox(0,0)[lt]{\lineheight{1.25}\smash{\begin{tabular}[t]{l}$\alpha$\end{tabular}}}}%
    \put(0.27820706,0.07032032){\color[rgb]{0,0,0}\makebox(0,0)[lt]{\lineheight{1.25}\smash{\begin{tabular}[t]{l}$\gamma_1'$\end{tabular}}}}%
    \put(0.33122179,0.28135083){\color[rgb]{0,0,0}\makebox(0,0)[lt]{\lineheight{1.25}\smash{\begin{tabular}[t]{l}$\gamma_i$\end{tabular}}}}%
    \put(0,0){\includegraphics[width=\unitlength,page=2]{AnnulusArc.pdf}}%
    \put(0.79317329,0.19950545){\color[rgb]{0,0,0}\makebox(0,0)[lt]{\lineheight{1.25}\smash{\begin{tabular}[t]{l}$\tilde{x}$\end{tabular}}}}%
    \put(0,0){\includegraphics[width=\unitlength,page=3]{AnnulusArc.pdf}}%
    \put(0.71783939,0.32549089){\color[rgb]{0,0,0}\makebox(0,0)[lt]{\lineheight{1.25}\smash{\begin{tabular}[t]{l}$w$\end{tabular}}}}%
  \end{picture}%
\endgroup%

%% file: Annulus.pdf_tex
\begingroup%
  \makeatletter%
  \providecommand\color[2][]{%
    \errmessage{(Inkscape) Color is used for the text in Inkscape, but the package 'color.sty' is not loaded}%
    \renewcommand\color[2][]{}%
  }%
  \providecommand\transparent[1]{%
    \errmessage{(Inkscape) Transparency is used (non-zero) for the text in Inkscape, but the package 'transparent.sty' is not loaded}%
    \renewcommand\transparent[1]{}%
  }%
  \providecommand\rotatebox[2]{#2}%
  \newcommand*\fsize{\dimexpr\f@size pt\relax}%
  \newcommand*\lineheight[1]{\fontsize{\fsize}{#1\fsize}\selectfont}%
  \ifx\svgwidth\undefined%
    \setlength{\unitlength}{440.36996436bp}%
    \ifx\svgscale\undefined%
      \relax%
    \else%
      \setlength{\unitlength}{\unitlength * \real{\svgscale}}%
    \fi%
  \else%
    \setlength{\unitlength}{\svgwidth}%
  \fi%
  \global\let\svgwidth\undefined%
  \global\let\svgscale\undefined%
  \makeatother%
  \begin{picture}(1,0.2398979)%
    \lineheight{1}%
    \setlength\tabcolsep{0pt}%
    \put(0,0){\includegraphics[width=\unitlength,page=1]{Annulus.pdf}}%
    \put(0.01607061,0.00454304){\color[rgb]{0,0,0}\makebox(0,0)[lt]{\lineheight{1.25}\smash{\begin{tabular}[t]{l}$\partial_1$\end{tabular}}}}%
    \put(0.37469368,0.00454304){\color[rgb]{0,0,0}\makebox(0,0)[lt]{\lineheight{1.25}\smash{\begin{tabular}[t]{l}$\partial_2$\end{tabular}}}}%
    \put(0.70961544,0.00454304){\color[rgb]{0,0,0}\makebox(0,0)[lt]{\lineheight{1.25}\smash{\begin{tabular}[t]{l}$\gamma$\end{tabular}}}}%
    \put(0,0){\includegraphics[width=\unitlength,page=2]{Annulus.pdf}}%
  \end{picture}%
\endgroup%

%% file: PathtoArc.pdf_tex
\begingroup%
  \makeatletter%
  \providecommand\color[2][]{%
    \errmessage{(Inkscape) Color is used for the text in Inkscape, but the package 'color.sty' is not loaded}%
    \renewcommand\color[2][]{}%
  }%
  \providecommand\transparent[1]{%
    \errmessage{(Inkscape) Transparency is used (non-zero) for the text in Inkscape, but the package 'transparent.sty' is not loaded}%
    \renewcommand\transparent[1]{}%
  }%
  \providecommand\rotatebox[2]{#2}%
  \newcommand*\fsize{\dimexpr\f@size pt\relax}%
  \newcommand*\lineheight[1]{\fontsize{\fsize}{#1\fsize}\selectfont}%
  \ifx\svgwidth\undefined%
    \setlength{\unitlength}{716.24527098bp}%
    \ifx\svgscale\undefined%
      \relax%
    \else%
      \setlength{\unitlength}{\unitlength * \real{\svgscale}}%
    \fi%
  \else%
    \setlength{\unitlength}{\svgwidth}%
  \fi%
  \global\let\svgwidth\undefined%
  \global\let\svgscale\undefined%
  \makeatother%
  \begin{picture}(1,0.25852027)%
    \lineheight{1}%
    \setlength\tabcolsep{0pt}%
    \put(0,0){\includegraphics[width=\unitlength,page=1]{PathtoArc.pdf}}%
  \end{picture}%
\endgroup%

%% file: PreEssForgetDomain.pdf_tex
\begingroup%
  \makeatletter%
  \providecommand\color[2][]{%
    \errmessage{(Inkscape) Color is used for the text in Inkscape, but the package 'color.sty' is not loaded}%
    \renewcommand\color[2][]{}%
  }%
  \providecommand\transparent[1]{%
    \errmessage{(Inkscape) Transparency is used (non-zero) for the text in Inkscape, but the package 'transparent.sty' is not loaded}%
    \renewcommand\transparent[1]{}%
  }%
  \providecommand\rotatebox[2]{#2}%
  \newcommand*\fsize{\dimexpr\f@size pt\relax}%
  \newcommand*\lineheight[1]{\fontsize{\fsize}{#1\fsize}\selectfont}%
  \ifx\svgwidth\undefined%
    \setlength{\unitlength}{127.50097381bp}%
    \ifx\svgscale\undefined%
      \relax%
    \else%
      \setlength{\unitlength}{\unitlength * \real{\svgscale}}%
    \fi%
  \else%
    \setlength{\unitlength}{\svgwidth}%
  \fi%
  \global\let\svgwidth\undefined%
  \global\let\svgscale\undefined%
  \makeatother%
  \begin{picture}(1,0.73457892)%
    \lineheight{1}%
    \setlength\tabcolsep{0pt}%
    \put(0,0){\includegraphics[width=\unitlength,page=1]{PreEssForgetDomain.pdf}}%
  \end{picture}%
\endgroup%

%% file: PrePeriForgetDomain.pdf_tex
\begingroup%
  \makeatletter%
  \providecommand\color[2][]{%
    \errmessage{(Inkscape) Color is used for the text in Inkscape, but the package 'color.sty' is not loaded}%
    \renewcommand\color[2][]{}%
  }%
  \providecommand\transparent[1]{%
    \errmessage{(Inkscape) Transparency is used (non-zero) for the text in Inkscape, but the package 'transparent.sty' is not loaded}%
    \renewcommand\transparent[1]{}%
  }%
  \providecommand\rotatebox[2]{#2}%
  \newcommand*\fsize{\dimexpr\f@size pt\relax}%
  \newcommand*\lineheight[1]{\fontsize{\fsize}{#1\fsize}\selectfont}%
  \ifx\svgwidth\undefined%
    \setlength{\unitlength}{127.75920116bp}%
    \ifx\svgscale\undefined%
      \relax%
    \else%
      \setlength{\unitlength}{\unitlength * \real{\svgscale}}%
    \fi%
  \else%
    \setlength{\unitlength}{\svgwidth}%
  \fi%
  \global\let\svgwidth\undefined%
  \global\let\svgscale\undefined%
  \makeatother%
  \begin{picture}(1,0.62150198)%
    \lineheight{1}%
    \setlength\tabcolsep{0pt}%
    \put(0,0){\includegraphics[width=\unitlength,page=1]{PrePeriForgetDomain.pdf}}%
  \end{picture}%
\endgroup%

%% file: quasitransverse.pdf_tex
\begingroup%
  \makeatletter%
  \providecommand\color[2][]{%
    \errmessage{(Inkscape) Color is used for the text in Inkscape, but the package 'color.sty' is not loaded}%
    \renewcommand\color[2][]{}%
  }%
  \providecommand\transparent[1]{%
    \errmessage{(Inkscape) Transparency is used (non-zero) for the text in Inkscape, but the package 'transparent.sty' is not loaded}%
    \renewcommand\transparent[1]{}%
  }%
  \providecommand\rotatebox[2]{#2}%
  \newcommand*\fsize{\dimexpr\f@size pt\relax}%
  \newcommand*\lineheight[1]{\fontsize{\fsize}{#1\fsize}\selectfont}%
  \ifx\svgwidth\undefined%
    \setlength{\unitlength}{488.61945205bp}%
    \ifx\svgscale\undefined%
      \relax%
    \else%
      \setlength{\unitlength}{\unitlength * \real{\svgscale}}%
    \fi%
  \else%
    \setlength{\unitlength}{\svgwidth}%
  \fi%
  \global\let\svgwidth\undefined%
  \global\let\svgscale\undefined%
  \makeatother%
  \begin{picture}(1,0.3398903)%
    \lineheight{1}%
    \setlength\tabcolsep{0pt}%
    \put(0,0){\includegraphics[width=\unitlength,page=1]{quasitransverse.pdf}}%
    \put(-0.0008139,0.02909055){\color[rgb]{0,0,0}\makebox(0,0)[lt]{\lineheight{1.25}\smash{\begin{tabular}[t]{l}$\delta$\end{tabular}}}}%
    \put(0.3104428,0.31496632){\color[rgb]{0,0,0}\makebox(0,0)[lt]{\lineheight{1.25}\smash{\begin{tabular}[t]{l}$\gamma$\end{tabular}}}}%
    \put(0,0){\includegraphics[width=\unitlength,page=2]{quasitransverse.pdf}}%
    \put(0.5882247,0.02909055){\color[rgb]{0,0,0}\makebox(0,0)[lt]{\lineheight{1.25}\smash{\begin{tabular}[t]{l}$\delta$\end{tabular}}}}%
    \put(0.89948113,0.31496627){\color[rgb]{0,0,0}\makebox(0,0)[lt]{\lineheight{1.25}\smash{\begin{tabular}[t]{l}$\gamma$\end{tabular}}}}%
    \put(0,0){\includegraphics[width=\unitlength,page=3]{quasitransverse.pdf}}%
  \end{picture}%
\endgroup%

%% file: LocalmoveCut.pdf_tex
\begingroup%
  \makeatletter%
  \providecommand\color[2][]{%
    \errmessage{(Inkscape) Color is used for the text in Inkscape, but the package 'color.sty' is not loaded}%
    \renewcommand\color[2][]{}%
  }%
  \providecommand\transparent[1]{%
    \errmessage{(Inkscape) Transparency is used (non-zero) for the text in Inkscape, but the package 'transparent.sty' is not loaded}%
    \renewcommand\transparent[1]{}%
  }%
  \providecommand\rotatebox[2]{#2}%
  \newcommand*\fsize{\dimexpr\f@size pt\relax}%
  \newcommand*\lineheight[1]{\fontsize{\fsize}{#1\fsize}\selectfont}%
  \ifx\svgwidth\undefined%
    \setlength{\unitlength}{250.89551851bp}%
    \ifx\svgscale\undefined%
      \relax%
    \else%
      \setlength{\unitlength}{\unitlength * \real{\svgscale}}%
    \fi%
  \else%
    \setlength{\unitlength}{\svgwidth}%
  \fi%
  \global\let\svgwidth\undefined%
  \global\let\svgscale\undefined%
  \makeatother%
  \begin{picture}(1,0.29789161)%
    \lineheight{1}%
    \setlength\tabcolsep{0pt}%
    \put(0,0){\includegraphics[width=\unitlength,page=1]{LocalmoveCut.pdf}}%
    \put(0.21927074,0.19024951){\color[rgb]{0,0,0}\makebox(0,0)[lt]{\lineheight{1.25}\smash{\begin{tabular}[t]{l}$I$\end{tabular}}}}%
    \put(0.58417678,0.19024951){\color[rgb]{0,0,0}\makebox(0,0)[lt]{\lineheight{1.25}\smash{\begin{tabular}[t]{l}$II$\end{tabular}}}}%
    \put(-0.00557875,0.08574195){\color[rgb]{0,0,0}\makebox(0,0)[lt]{\lineheight{1.25}\smash{\begin{tabular}[t]{l}$III$\end{tabular}}}}%
    \put(0.82997401,0.08574195){\color[rgb]{0,0,0}\makebox(0,0)[lt]{\lineheight{1.25}\smash{\begin{tabular}[t]{l}$IV$\end{tabular}}}}%
    \put(0,0){\includegraphics[width=\unitlength,page=2]{LocalmoveCut.pdf}}%
  \end{picture}%
\endgroup%

%% file: LocalmoveGlue.pdf_tex
\begingroup%
  \makeatletter%
  \providecommand\color[2][]{%
    \errmessage{(Inkscape) Color is used for the text in Inkscape, but the package 'color.sty' is not loaded}%
    \renewcommand\color[2][]{}%
  }%
  \providecommand\transparent[1]{%
    \errmessage{(Inkscape) Transparency is used (non-zero) for the text in Inkscape, but the package 'transparent.sty' is not loaded}%
    \renewcommand\transparent[1]{}%
  }%
  \providecommand\rotatebox[2]{#2}%
  \newcommand*\fsize{\dimexpr\f@size pt\relax}%
  \newcommand*\lineheight[1]{\fontsize{\fsize}{#1\fsize}\selectfont}%
  \ifx\svgwidth\undefined%
    \setlength{\unitlength}{130.29643502bp}%
    \ifx\svgscale\undefined%
      \relax%
    \else%
      \setlength{\unitlength}{\unitlength * \real{\svgscale}}%
    \fi%
  \else%
    \setlength{\unitlength}{\svgwidth}%
  \fi%
  \global\let\svgwidth\undefined%
  \global\let\svgscale\undefined%
  \makeatother%
  \begin{picture}(1,1.13281579)%
    \lineheight{1}%
    \setlength\tabcolsep{0pt}%
    \put(0,0){\includegraphics[width=\unitlength,page=1]{LocalmoveGlue.pdf}}%
  \end{picture}%
\endgroup%

%% file: LocalmoveGlue1.pdf_tex
\begingroup%
  \makeatletter%
  \providecommand\color[2][]{%
    \errmessage{(Inkscape) Color is used for the text in Inkscape, but the package 'color.sty' is not loaded}%
    \renewcommand\color[2][]{}%
  }%
  \providecommand\transparent[1]{%
    \errmessage{(Inkscape) Transparency is used (non-zero) for the text in Inkscape, but the package 'transparent.sty' is not loaded}%
    \renewcommand\transparent[1]{}%
  }%
  \providecommand\rotatebox[2]{#2}%
  \newcommand*\fsize{\dimexpr\f@size pt\relax}%
  \newcommand*\lineheight[1]{\fontsize{\fsize}{#1\fsize}\selectfont}%
  \ifx\svgwidth\undefined%
    \setlength{\unitlength}{157.40373987bp}%
    \ifx\svgscale\undefined%
      \relax%
    \else%
      \setlength{\unitlength}{\unitlength * \real{\svgscale}}%
    \fi%
  \else%
    \setlength{\unitlength}{\svgwidth}%
  \fi%
  \global\let\svgwidth\undefined%
  \global\let\svgscale\undefined%
  \makeatother%
  \begin{picture}(1,0.94830369)%
    \lineheight{1}%
    \setlength\tabcolsep{0pt}%
    \put(0,0){\includegraphics[width=\unitlength,page=1]{LocalmoveGlue1.pdf}}%
    \put(-0.0061179,0.02408794){\color[rgb]{0,0,0}\makebox(0,0)[lt]{\lineheight{1.25}\smash{\begin{tabular}[t]{l}$\gamma_1$\end{tabular}}}}%
    \put(-0.0061179,0.13866171){\color[rgb]{0,0,0}\makebox(0,0)[lt]{\lineheight{1.25}\smash{\begin{tabular}[t]{l}$\gamma_1'$\end{tabular}}}}%
    \put(0,0){\includegraphics[width=\unitlength,page=2]{LocalmoveGlue1.pdf}}%
  \end{picture}%
\endgroup%

%% file: Forget_nondeg_1.pdf_tex
\begingroup%
  \makeatletter%
  \providecommand\color[2][]{%
    \errmessage{(Inkscape) Color is used for the text in Inkscape, but the package 'color.sty' is not loaded}%
    \renewcommand\color[2][]{}%
  }%
  \providecommand\transparent[1]{%
    \errmessage{(Inkscape) Transparency is used (non-zero) for the text in Inkscape, but the package 'transparent.sty' is not loaded}%
    \renewcommand\transparent[1]{}%
  }%
  \providecommand\rotatebox[2]{#2}%
  \newcommand*\fsize{\dimexpr\f@size pt\relax}%
  \newcommand*\lineheight[1]{\fontsize{\fsize}{#1\fsize}\selectfont}%
  \ifx\svgwidth\undefined%
    \setlength{\unitlength}{188.9523003bp}%
    \ifx\svgscale\undefined%
      \relax%
    \else%
      \setlength{\unitlength}{\unitlength * \real{\svgscale}}%
    \fi%
  \else%
    \setlength{\unitlength}{\svgwidth}%
  \fi%
  \global\let\svgwidth\undefined%
  \global\let\svgscale\undefined%
  \makeatother%
  \begin{picture}(1,1.82051741)%
    \lineheight{1}%
    \setlength\tabcolsep{0pt}%
    \put(0,0){\includegraphics[width=\unitlength,page=1]{Forget_nondeg_1.pdf}}%
  \end{picture}%
\endgroup%

%% file: Forget_nondeg_2.pdf_tex
\begingroup%
  \makeatletter%
  \providecommand\color[2][]{%
    \errmessage{(Inkscape) Color is used for the text in Inkscape, but the package 'color.sty' is not loaded}%
    \renewcommand\color[2][]{}%
  }%
  \providecommand\transparent[1]{%
    \errmessage{(Inkscape) Transparency is used (non-zero) for the text in Inkscape, but the package 'transparent.sty' is not loaded}%
    \renewcommand\transparent[1]{}%
  }%
  \providecommand\rotatebox[2]{#2}%
  \newcommand*\fsize{\dimexpr\f@size pt\relax}%
  \newcommand*\lineheight[1]{\fontsize{\fsize}{#1\fsize}\selectfont}%
  \ifx\svgwidth\undefined%
    \setlength{\unitlength}{192.65743946bp}%
    \ifx\svgscale\undefined%
      \relax%
    \else%
      \setlength{\unitlength}{\unitlength * \real{\svgscale}}%
    \fi%
  \else%
    \setlength{\unitlength}{\svgwidth}%
  \fi%
  \global\let\svgwidth\undefined%
  \global\let\svgscale\undefined%
  \makeatother%
  \begin{picture}(1,1.79305276)%
    \lineheight{1}%
    \setlength\tabcolsep{0pt}%
    \put(0,0){\includegraphics[width=\unitlength,page=1]{Forget_nondeg_2.pdf}}%
  \end{picture}%
\endgroup%

%% file: Forget_nondeg_3.pdf_tex
\begingroup%
  \makeatletter%
  \providecommand\color[2][]{%
    \errmessage{(Inkscape) Color is used for the text in Inkscape, but the package 'color.sty' is not loaded}%
    \renewcommand\color[2][]{}%
  }%
  \providecommand\transparent[1]{%
    \errmessage{(Inkscape) Transparency is used (non-zero) for the text in Inkscape, but the package 'transparent.sty' is not loaded}%
    \renewcommand\transparent[1]{}%
  }%
  \providecommand\rotatebox[2]{#2}%
  \newcommand*\fsize{\dimexpr\f@size pt\relax}%
  \newcommand*\lineheight[1]{\fontsize{\fsize}{#1\fsize}\selectfont}%
  \ifx\svgwidth\undefined%
    \setlength{\unitlength}{182.18141331bp}%
    \ifx\svgscale\undefined%
      \relax%
    \else%
      \setlength{\unitlength}{\unitlength * \real{\svgscale}}%
    \fi%
  \else%
    \setlength{\unitlength}{\svgwidth}%
  \fi%
  \global\let\svgwidth\undefined%
  \global\let\svgscale\undefined%
  \makeatother%
  \begin{picture}(1,1.89410061)%
    \lineheight{1}%
    \setlength\tabcolsep{0pt}%
    \put(0,0){\includegraphics[width=\unitlength,page=1]{Forget_nondeg_3.pdf}}%
  \end{picture}%
\endgroup%

%% file: CleanLongPulley1.pdf_tex
\begingroup%
  \makeatletter%
  \providecommand\color[2][]{%
    \errmessage{(Inkscape) Color is used for the text in Inkscape, but the package 'color.sty' is not loaded}%
    \renewcommand\color[2][]{}%
  }%
  \providecommand\transparent[1]{%
    \errmessage{(Inkscape) Transparency is used (non-zero) for the text in Inkscape, but the package 'transparent.sty' is not loaded}%
    \renewcommand\transparent[1]{}%
  }%
  \providecommand\rotatebox[2]{#2}%
  \newcommand*\fsize{\dimexpr\f@size pt\relax}%
  \newcommand*\lineheight[1]{\fontsize{\fsize}{#1\fsize}\selectfont}%
  \ifx\svgwidth\undefined%
    \setlength{\unitlength}{284.85937644bp}%
    \ifx\svgscale\undefined%
      \relax%
    \else%
      \setlength{\unitlength}{\unitlength * \real{\svgscale}}%
    \fi%
  \else%
    \setlength{\unitlength}{\svgwidth}%
  \fi%
  \global\let\svgwidth\undefined%
  \global\let\svgscale\undefined%
  \makeatother%
  \begin{picture}(1,0.30910401)%
    \lineheight{1}%
    \setlength\tabcolsep{0pt}%
    \put(0,0){\includegraphics[width=\unitlength,page=1]{CleanLongPulley1.pdf}}%
    \put(0.35997521,0.29332368){\color[rgb]{0,0,0}\makebox(0,0)[lt]{\lineheight{1.25}\smash{\begin{tabular}[t]{l}$\gamma$\end{tabular}}}}%
  \end{picture}%
\endgroup%

%% file: CleanLongPulley2.pdf_tex
\begingroup%
  \makeatletter%
  \providecommand\color[2][]{%
    \errmessage{(Inkscape) Color is used for the text in Inkscape, but the package 'color.sty' is not loaded}%
    \renewcommand\color[2][]{}%
  }%
  \providecommand\transparent[1]{%
    \errmessage{(Inkscape) Transparency is used (non-zero) for the text in Inkscape, but the package 'transparent.sty' is not loaded}%
    \renewcommand\transparent[1]{}%
  }%
  \providecommand\rotatebox[2]{#2}%
  \newcommand*\fsize{\dimexpr\f@size pt\relax}%
  \newcommand*\lineheight[1]{\fontsize{\fsize}{#1\fsize}\selectfont}%
  \ifx\svgwidth\undefined%
    \setlength{\unitlength}{284.85937644bp}%
    \ifx\svgscale\undefined%
      \relax%
    \else%
      \setlength{\unitlength}{\unitlength * \real{\svgscale}}%
    \fi%
  \else%
    \setlength{\unitlength}{\svgwidth}%
  \fi%
  \global\let\svgwidth\undefined%
  \global\let\svgscale\undefined%
  \makeatother%
  \begin{picture}(1,0.30910401)%
    \lineheight{1}%
    \setlength\tabcolsep{0pt}%
    \put(0,0){\includegraphics[width=\unitlength,page=1]{CleanLongPulley2.pdf}}%
    \put(0.35997521,0.29332368){\color[rgb]{0,0,0}\makebox(0,0)[lt]{\lineheight{1.25}\smash{\begin{tabular}[t]{l}$\gamma$\end{tabular}}}}%
  \end{picture}%
\endgroup%

%% file: WHmove.pdf_tex
\begingroup%
  \makeatletter%
  \providecommand\color[2][]{%
    \errmessage{(Inkscape) Color is used for the text in Inkscape, but the package 'color.sty' is not loaded}%
    \renewcommand\color[2][]{}%
  }%
  \providecommand\transparent[1]{%
    \errmessage{(Inkscape) Transparency is used (non-zero) for the text in Inkscape, but the package 'transparent.sty' is not loaded}%
    \renewcommand\transparent[1]{}%
  }%
  \providecommand\rotatebox[2]{#2}%
  \newcommand*\fsize{\dimexpr\f@size pt\relax}%
  \newcommand*\lineheight[1]{\fontsize{\fsize}{#1\fsize}\selectfont}%
  \ifx\svgwidth\undefined%
    \setlength{\unitlength}{473.02012046bp}%
    \ifx\svgscale\undefined%
      \relax%
    \else%
      \setlength{\unitlength}{\unitlength * \real{\svgscale}}%
    \fi%
  \else%
    \setlength{\unitlength}{\svgwidth}%
  \fi%
  \global\let\svgwidth\undefined%
  \global\let\svgscale\undefined%
  \makeatother%
  \begin{picture}(1,0.42292003)%
    \lineheight{1}%
    \setlength\tabcolsep{0pt}%
    \put(0,0){\includegraphics[width=\unitlength,page=1]{WHmove.pdf}}%
  \end{picture}%
\endgroup%

%% file: ImmediateFoli_deform.pdf_tex
\begingroup%
  \makeatletter%
  \providecommand\color[2][]{%
    \errmessage{(Inkscape) Color is used for the text in Inkscape, but the package 'color.sty' is not loaded}%
    \renewcommand\color[2][]{}%
  }%
  \providecommand\transparent[1]{%
    \errmessage{(Inkscape) Transparency is used (non-zero) for the text in Inkscape, but the package 'transparent.sty' is not loaded}%
    \renewcommand\transparent[1]{}%
  }%
  \providecommand\rotatebox[2]{#2}%
  \newcommand*\fsize{\dimexpr\f@size pt\relax}%
  \newcommand*\lineheight[1]{\fontsize{\fsize}{#1\fsize}\selectfont}%
  \ifx\svgwidth\undefined%
    \setlength{\unitlength}{480.65414789bp}%
    \ifx\svgscale\undefined%
      \relax%
    \else%
      \setlength{\unitlength}{\unitlength * \real{\svgscale}}%
    \fi%
  \else%
    \setlength{\unitlength}{\svgwidth}%
  \fi%
  \global\let\svgwidth\undefined%
  \global\let\svgscale\undefined%
  \makeatother%
  \begin{picture}(1,0.38123089)%
    \lineheight{1}%
    \setlength\tabcolsep{0pt}%
    \put(0,0){\includegraphics[width=\unitlength,page=1]{ImmediateFoli_deform.pdf}}%
    \put(0.04208873,0.04382368){\color[rgb]{0,0,0}\makebox(0,0)[lt]{\lineheight{1.25}\smash{\begin{tabular}[t]{l}$x$\end{tabular}}}}%
    \put(0.26787764,0.12997927){\color[rgb]{0,0,0}\makebox(0,0)[lt]{\lineheight{1.25}\smash{\begin{tabular}[t]{l}$\alpha$\end{tabular}}}}%
    \put(0.14479357,0.00445716){\color[rgb]{0,0,0}\makebox(0,0)[lt]{\lineheight{1.25}\smash{\begin{tabular}[t]{l}$\gamma$\end{tabular}}}}%
    \put(0,0){\includegraphics[width=\unitlength,page=2]{ImmediateFoli_deform.pdf}}%
  \end{picture}%
\endgroup%